\documentclass[11pt]{article}
\usepackage{latexsym, amscd, amsfonts, mathrsfs, amsmath, amssymb, amsthm, enumitem, stmaryrd, tikz-cd, mathrsfs, bbm, url, esint, todonotes, listings, moreverb}

\usepackage[T1]{fontenc} 

\usepackage{algpseudocode}
\usepackage{algorithm}
\usepackage{subfig}

\usepackage{hyperref}
\hypersetup{colorlinks=true,citecolor=red,linkcolor=blue}

\def\bN{\mathbb{N}}

\def\bR{\mathbb{R}}
\def\bS{\mathbb{S}}

\newcommand{\R}{\mathbb{R}}
\newcommand{\tr}{\mathrm{tr}}

\newcommand{\new}{\mathrm{new}}

\newcommand{\argmin}{\mathrm{argmax}}

\newcommand{\poly}{\mathrm{poly}}
\newcommand{\ov}{\overline}
\newcommand{\wt}{\widetilde}
\newcommand{\wh}{\widehat}

\renewcommand{\hat}{\widehat}
\newcommand{\Tmat}{{\cal T}_{\mathrm{mat}}}
\newcommand{\triangleup}{\vartriangle}

\newcommand{\sdp}{\mathrm{sdp}}
\newcommand{\lp}{\mathrm{lp}}
\renewcommand{\new}{\mathrm{new}}
\newcommand{\apx}{\mathrm{apx}}
\renewcommand{\vec}{\mathrm{vec}}
\newcommand{\nnz}{\mathrm{nnz}}
\newcommand{\op}{\mathrm{op}}

\def\cB{\mathcal{B}}

\def\cD{\mathcal{D}}
\def\cI{\mathcal{I}}
\def\cK{\mathcal{K}}
\def\cN{\mathcal{N}}

\def\cP{\mathcal{P}}
\def\cS{\mathcal{S}}
\def\cT{\mathcal{T}}
\def\cW{\mathcal{W}}

\DeclareMathOperator\supp{\mathrm{supp}}
\DeclareMathOperator\tw{\mathrm{tw}}

\DeclareMathOperator\polylog{\mathrm{polylog}}

\newcommand{\rom}[1]{\textup{\uppercase\expandafter{\romannumeral#1}}}

\newtheorem{theorem}{Theorem}[section]
\newtheorem{lemma}[theorem]{Lemma}
\newtheorem{definition}[theorem]{Definition}

\newtheorem{corollary}[theorem]{Corollary}

\newtheorem{assumption}[theorem]{Assumption}

\newtheorem{remark}[theorem]{Remark}

\usepackage[margin=1in]{geometry}

\graphicspath{{./figs/}}

\definecolor{b2}{RGB}{51,153,255}
\definecolor{mygreen}{RGB}{80,180,0}

\pagestyle{plain}

\allowdisplaybreaks

\begin{document}
\title{A Faster Small Treewidth SDP Solver}
\author{
Yuzhou Gu\thanks{\texttt{yuzhougu@mit.edu}. MIT.}
\and 
Zhao Song\thanks{\texttt{zsong@adobe.com}. Adobe Research.}
}
\date{}

\begin{titlepage}
\maketitle

Semidefinite programming is a fundamental tool in optimization and theoretical computer science. It has been extensively used as a black-box for solving many problems, such as embedding, complexity, learning, and discrepancy.

One natural setting of semidefinite programming is the small treewidth setting. 
The best previous SDP solver under small treewidth setting is due to \cite{zl18}, which takes $n^{1.5} \cdot \tau^{6.5}$ time.
In this work, we show how to solve a semidefinite programming with $n \times n$ variables, $m$ constraints and $\tau$ treewidth in $n \cdot \tau^{2\omega+1/2}$ time, where $\omega$ denotes the exponent of matrix multiplication. We give the first SDP solver that runs in time in linear in number of variables under this setting.

In addition, we improve the running time that solves a linear programming with $\tau$ treewidth from $n\tau^2$ \cite{y20,dly21} to $n \tau^{(\omega+1)/2}$.

  \thispagestyle{empty}
\end{titlepage}

\newpage

\section{Introduction}

Semidefine programming is one fundamental problem in optimization and theoretical computer science. It has many applications in computer science, such as verifying the robustness of neural network \cite{rsl18}, solving the discrepancy problems \cite{b10}, sparse matrix factorization \cite{cstz22}, sparsest cut \cite{arv09}, $3$-colorable graph \cite{kms94}, terminal embeddings \cite{cn21}, quantum complexity theory \cite{jjuw11}, obtaining tight approximation ratio for MAXCUT \cite{gw95}, solving sum of squares programs \cite{bs16,fkp19}, and so on.

Mathematically, a semidefinite program can be defined as follows.\begin{definition}[Semidefinite Programming (SDP)]\label{def:sdp_primal}
Given a collection of matrices $A_1, A_2, \cdots A_m \in \R^{n \times n}$, and $b \in \R^m$, the goal is to find a matrix $X \in \R^{n \times n}$ such that 
\begin{align}\label{eqn:sdp-primal}
    \min \qquad & C \bullet X\\
    \text{s.t.} \qquad & A_i \bullet X = b_i \qquad \forall i\in [m] \nonumber\\
    & X \succeq 0 \nonumber
\end{align}
Here $C, X \in \R^{n \times n}$, $A\in \R^{m\times n^2}$, $b\in \R^m$.
\end{definition}

Over the last century, many efforts have been put into optimizing the running time of semidefinite programming \cite{s77,yn76,k80,kte88,nn89,v89,km03,bv02,lsw15,jlsw20,jlsw20,jklps20,hjstz22}. Based on how many iterations you need to solve semi-definite programming, the previous work of solving semi-definite programming can be splitted into three lines: cutting plane method \cite{lsw15,jlsw20}, log barrier function \cite{jklps20,hjstz22}, volumetric/hybrid barrier function \cite{hjstz22}.

Many graph problems can be written as semidefinite programs with certain structure.
For example, the famous Goemans-Williamson algorithm~\cite{gw95} relaxes $\mathsf{MaxCut}$ problem to a semidefinite program.
When the underlying graph has low treewidth, the corresponding SDP also has small treewidth. While the $\mathsf{MaxCut}$ problem takes exponential time in treewidth \cite{lms11} (under plausible hyposthesis like Exponetial Time Hypothesis~\cite{ip01}), when treewidth is constant, it can be easily solved in linear time; on the other hand, before our work, it was not known whether $\mathsf{MaxCut}$ SDP can be solved in nearly linear time even when treewidth is constant.
Another graph problem is Lov\'asz theta function~\cite{l79}, it is an important quantity in noisy communication and is naturally defined as a semidefinite program.

In practice, data sets on graphs often have small treewidth, see e.g.~\cite[Table 1]{zl18}, and \cite[Appendix B]{dly21}. Therefore SDP with small treewidth is a problem of practical interest.

In this work, we consider SDP for which the treewidth is small.
Formally,
\begin{definition}[Treewidth of SDP] \label{def:sdp-graph}
Given an SDP (Definition~\ref{def:sdp_primal}), we define its SDP graph as a graph $G = (V, E)$ where $V = [n]$ and
\begin{align} \label{eqn:sdp-graph}
    E(G) = \{(u,v) \in V\times V: C_{u,v}\ne 0\} \cup \bigcup_{i\in [m]} (\supp(A_i) \times \supp(A_i))
\end{align}
where $$\supp ( A ) := \{u \in V : A_{u,*} \ne 0\}.$$

Treewidth of an SDP is defined as treewidth of the SDP graph.
\end{definition}
Let $\tau$ denote the treewidth of the SDP graph.
The best previous work on small treewidth semidefinite programming is due to \cite{zl18}, which runs in $n^{1.5} \tau^{6.5}$ time.
A natural question to consider is,
\begin{center}
{\it
    Can we solve SDP much faster under treewidth assumption, i.e., in particular can we solve SDP in nearly linear time in $n$?
}
\end{center}
In this work, we answer the above question in affirmative.

\subsection{Our Results}
\subsubsection{Low-Treewidth SDP}
We make the following linear independence assumption on our SDP. This assumption is standard in IPM literature, and is also present previous works \cite{zl18}.

\begin{assumption}[Linear Independence] \label{assump:linear-independence}
The constraint matrices $A_1, \ldots, A_m \in \R^{n \times n}$ are linearly independent.
\end{assumption}

Under Assumption~\ref{assump:linear-independence}, the total number of constraints $m$ is bounded by $m \le \tau n$.
 
For the standard semidefinite programming (without low-treewidth assumptions), the best previous algorithms are due to \cite{jklps20,hjstz22}. Those algorithms are based on dual-only central path algorithms. Due to technical reasons, even with small treewidth assumption, those techniques lead to algorithm with running time at least $\Omega(m^\omega)$,\footnote{Here $\omega$ denotes the exponent of matrix multiplication, i.e., multiplying an $n \times n$ matrix with another $n \times n$ matrix takes $n^{\omega}$ time. Currently, $\omega \approx 2.373$ \cite{w12,aw21}.} and cannot solve semidefinite programming with time only linear in $n$. 

We state our main result as follows:

\begin{theorem}[Our Result on SDP, Informal Version of Theorem~\ref{thm:main_sdp_second}]\label{thm:main_sdp_informal}
There is an algorithm that solves semidefinite programming \eqref{eqn:sdp-primal} with accuracy $\epsilon>0$ in 
\begin{align*}
O( n \cdot \tau^{2\omega+0.5} \log(1/\epsilon))
\end{align*}
time.
\end{theorem}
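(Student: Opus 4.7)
The plan is to run a primal--dual interior point method using the logarithmic barrier $-\log\det(X)$ for the PSD cone, executing every linear algebra step along a fixed tree decomposition of width $\tau$. Classical self-concordance theory gives $\wt O(\sqrt{n}\log(1/\epsilon))$ Newton iterations to drive the duality gap below $\epsilon$, so the target $n\tau^{2\omega+0.5}\log(1/\epsilon)$ bound reduces to showing that each iteration can be carried out in amortized cost $\wt O(\sqrt{n}\,\tau^{2\omega+0.5})$.

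First, I would set up a block Cholesky factorization aligned with the tree. Let $(T,\{B_t\}_{t\in V(T)})$ be a tree decomposition of the SDP graph (Definition~\ref{def:sdp-graph}) of width $\tau$; by Assumption~\ref{assump:linear-independence} we have $m\le\tau n$. Following the block-sparse representation of \cite{zl18}, each iterate $X$, $X^{-1}$, and slack $S$ is encoded by $\tau\times\tau$ blocks indexed by the edges of $T$, and the Newton system per iteration amounts to solving a linear system whose coefficient matrix is $\cA^{\top}(X^{-1}\otimes X^{-1})\cA$ of dimension at most $\tau n\times\tau n$. Because elimination along a perfect elimination ordering of $T$ introduces no fill-in outside the bags, factorizing from scratch costs $O(n\tau^{2\omega})$: each bag contributes a dense $\tau^2\times\tau^2$ block (the vectorization of a $\tau\times\tau$ principal submatrix), which is eliminated in $\tau^{2\omega}$ time.

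Second, I would install a lazy factorization data structure, in the spirit of \cite{dly21} for LP and \cite{hjstz22} for SDP, that refactorizes a bag only when its block has changed by more than a calibrated threshold. Coupling $\ell_2$-stability of the central path with a path-length argument on the tree, I expect that the total number of bag refactorizations across all $\wt O(\sqrt{n})$ iterations is $\wt O(n\sqrt{\tau})$, giving total refactorization work $\wt O(n\tau^{2\omega+0.5})$. Each iteration also performs $\wt O(\sqrt{n})$ cheap block operations along a root-to-leaf chain of $T$ to read off an approximate Newton direction, and these contributions are lower order.

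The main obstacle will be proving the amortized refactorization budget when the variable is a matrix. In LP the analogous argument exploits multiplicative stability of scalar slacks along the path; for SDP one must instead control operator-norm stability of each $\tau\times\tau$ block of $X$ and $S$, transport this stability through the block elimination tree, and conclude a $\wt O(n\sqrt{\tau})$ total update count. A second subtlety is calibrating the refactorization threshold so that the resulting lazy Newton direction still satisfies the robust central-path invariants of \cite{hjstz22}, preserving $X,S\succ 0$ and a monotone decrease of the duality gap: the threshold must be tight enough for PSD preservation and for the stability of $X^{-1}\otimes X^{-1}$, yet loose enough to hit the amortized budget.
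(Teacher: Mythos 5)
Your proposal takes a genuinely different high-level route from the paper, but it contains two gaps that would need to be closed before it could become a proof.

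First, the iteration count. You propose to run a primal IPM on the original $n\times n$ PSD cone with the $n$-self-concordant log barrier, claiming $\wt O(\sqrt{n}\log(1/\epsilon))$ iterations, while encoding $X,X^{-1},S$ by $\tau\times\tau$ blocks. These two choices are in tension. On the original central path the primal iterate $X=tS^{-1}$ is generically dense, so if you restrict $X$ to the block-sparse encoding you are implicitly optimizing over the cone of chordal-sparsity PSD-completable matrices, with the off-pattern entries set by the max-determinant completion. That is exactly the reformulation of \cite{zl18} used in Section~\ref{sec:sdp_first}: the variable becomes $O(n)$ blocks $X_j\in\bS^{|J_j|}$ with PSD and overlap constraints, and the barrier you actually work with is $\sum_j(-\log\det X_j)$, whose parameter is $\sum_j|J_j|=\Theta(n\tau)$, not $n$. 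So the iteration bound should be $\wt O(\sqrt{n\tau}\cdot\text{(weight factor)})$, which is what Theorem~\ref{thm:DLYA.1} delivers for the decomposed program. You cannot simultaneously keep the $\sqrt{n}$ iteration count and the $\tau\times\tau$ block encoding.

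Second, and more fundamentally, the amortized refactorization budget you postulate --- $\wt O(n\sqrt{\tau})$ total bag refactorizations, with ``refactorize a bag only when its block has changed by more than a calibrated threshold'' --- is exactly the step you flag as the main obstacle, and the proposal contains no mechanism to make it go through. There are two missing ideas. (i) You have no way to determine \emph{which} blocks have drifted beyond the threshold without examining all $O(n)$ of them each iteration, which alone would cost $\Omega(n^{1.5})$ total. (ii) Even if you knew which blocks to touch, the Newton step itself moves every coordinate of $(x,s)$, so explicitly applying the step already costs $\Omega(n\tau^2)$ per iteration unless the iterate is maintained implicitly. The paper resolves both with machinery you do not invoke: it maintains $(x,s)$ implicitly through the multiscale representation $x = \wh x + H_{\ov x}^{-1/2}\beta_x c_x - H_{\ov x}^{-1/2}\cW^\top(\beta_x h+\epsilon_x)$ (Section~\ref{sec:framework:exact_ds}, following \cite{dly21}), and it detects drifting blocks using a Johnson--Lindenstrauss sketch maintained over a heavy-light-decomposition partition tree (\textsc{BlockBalancedSketch}, Section~\ref{sec:framework:block_balanced_sketch}). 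The $\wt O(q^2w^{-1})$ bound on detected blocks per restart window (Theorem~\ref{thm:approxds}) comes from $\ell_2$ step-size control of the robust central path, not from an operator-norm stability argument on individual blocks, and the final $n\tau^{2\omega+0.5}$ bound is obtained by balancing restart cost $\wt O(N/q\cdot n\tau^{2\omega})$ against per-block-update cost $\wt O(\tau^{2\omega})$ over $\wt O(Nq/w)$ updates. Your budget $n\sqrt{\tau}\cdot\tau^{2\omega}=n\tau^{2\omega+0.5}$ matches the target numerically, but it is fitted to the answer rather than derived, and the route through a thresholded lazy refactorization, absent the implicit representation and sketch, does not appear to lead to a proof.

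The block-Cholesky-along-the-tree observation you make (each bag's vectorized block is $\tau^2\times\tau^2$, eliminated in $\tau^{2\omega}$, giving $\wt O(n\tau^{2\omega})$ for a full factorization) is correct and is exactly Lemma~\ref{lem:block_cholesky_decomposition} in the paper; that part of the proposal is sound and is indeed what buys the $\tau^{2\omega}$ factor over the $\tau^6$ of the naive sparse Cholesky.
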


\subsubsection{Low-Treewidth LP}
Furthermore, we also improve the best previous low-treewidth LP result due to \cite{dly21} (which runs in $n \tau^2$ time). Before stating our result for LP, let us define treewidth of an LP.
\begin{definition}[Linear Programming (LP)]
Given matrix $A \in \R^{m\times n}$ and vectors $b\in \R^m$ and $c\in \R^n$, the goal is to find a vector $x \in \R^n$ such that
\begin{align}\label{eqn:lp-primal}
    \min \qquad & c^\top x\\
    \text{s.t.} \qquad & A x = b \nonumber\\
    & x \ge 0 \nonumber
\end{align}
\end{definition}
\begin{definition}[Treewidth of LP] \label{defn:lp-dual-graph}
Given an LP, we construct the LP dual graph where vertices are constraints, and there is an edge between two vertices if they share a common variable (with non-zero coefficients). Then treewidth of the LP is defined as the treewidth of its LP dual graph.
\end{definition}

\begin{theorem}[Our Result on LP, Informal Version of Theorem~\ref{thm:main_lp_formal}]\label{thm:main_lp_informal}
There is an algorithm that solves linear programming \eqref{eqn:lp-primal} with accuracy $\epsilon>0$ in
\begin{align*} 
O(n \cdot \tau^{(\omega+1)/2} \log(1/\epsilon) ) 
\end{align*}
time.
\end{theorem}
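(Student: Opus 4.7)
The plan is to instantiate a robust interior point method (IPM) equipped with an inverse-maintenance data structure that exploits the tree decomposition of width $\tau$ of the LP dual graph. I would run a log-barrier path-following scheme that converges in $\widetilde{O}(\sqrt{n})$ iterations to accuracy $\epsilon$; at each iteration the Newton step reduces to solving a linear system of the form $(A D A^\top)\Delta = r$, where $D$ is a diagonal matrix of barrier weights that changes only slowly between consecutive iterations. The key structural observation is that the sparsity pattern of $A^\top A$, and hence of $A D A^\top$, is contained in the LP dual graph (Definition~\ref{defn:lp-dual-graph}), so $A D A^\top$ itself inherits treewidth $\tau$.

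The next step is to set up a nested-dissection ordering induced by a balanced tree decomposition of the LP dual graph. Under this ordering the Cholesky factorization of $A D A^\top$ has fill-in confined to separators of size $O(\tau)$, and the corresponding elimination tree has depth $O(\log n)$. Performing each block-Cholesky operation via fast matrix multiplication rather than the standard cubic algorithm turns each of the $O(n/\tau)$ Schur-complement updates into $O(\tau^\omega)$ work, so a from-scratch factorization of $A D A^\top$ costs $O(n \tau^{\omega-1})$ rather than the $O(n\tau^2)$ figure underlying \cite{y20,dly21}.

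From here the goal is to avoid refactoring from scratch at each of the $\widetilde{O}(\sqrt{n})$ iterations. I would maintain the block Cholesky factors lazily, using the robust IPM guarantee that only $\widetilde{O}(\sqrt n)$ coordinates of $D$ change significantly per step. Each such change propagates only along the $O(\log n)$-length ancestor path of the corresponding leaf in the elimination tree, and all changes within a single iteration can be batched into one low-rank correction per affected $\tau\times\tau$ dense block. Computing these corrections via rectangular matrix multiplication, with aspect ratio tuned to the number of changes per block, should give an amortized per-iteration cost of $\widetilde{O}(\sqrt{n}\,\tau^{(\omega+1)/2})$; multiplying by the $\widetilde{O}(\sqrt{n})$ iteration count then yields the claimed $O(n\tau^{(\omega+1)/2}\log(1/\epsilon))$ bound.

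The main obstacle I anticipate is the amortized data-structure analysis: combining the robust IPM's stability bound on $D$ with the tree-structured fill pattern in a way that cleanly produces the exponent $(\omega+1)/2$, rather than either $\omega-1$ (from a single factorization) or $\omega$ (from naive per-iteration rebuilds). This requires verifying that the rectangular matrix multiplication bounds apply uniformly at every level of the elimination tree, not only at the root, and that the batching of coordinate updates respects the nested-dissection elimination order so that stale factors do not corrupt downstream blocks. Managing the interplay between the $\sqrt n$ iteration count, the $O(\log n)$ tree depth, and the rectangular block shapes is where the proof will need the most care.
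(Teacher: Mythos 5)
You have correctly identified two of the paper's ingredients: the $\widetilde O(\sqrt n)$ robust-IPM iteration count, and the new $\widetilde O(n\tau^{\omega-1})$ block Cholesky factorization (the paper's Lemma~\ref{lem:lp:TL}) that improves on the $\widetilde O(n\tau^2)$ of \cite{dly21}. But there is a gap at the very first step of the proposal that would already break the claimed bound: you solve $(ADA^\top)\Delta = r$ and apply the Newton step explicitly at each of $\widetilde O(\sqrt n)$ iterations. The Newton step $\delta_x$ is a \emph{dense} $n$-vector even when the change in $D$ is sparse, so merely writing it out costs $\Omega(n)$ per iteration and $\Omega(n^{1.5})$ total, independent of how cleverly you maintain the factorization. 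The paper (following \cite{dly21}) sidesteps this entirely by never storing $x,s$ explicitly: they are held implicitly through a multiscale representation $x = \hat x + H_{\ov x}^{-1/2}\beta_x c_x - H_{\ov x}^{-1/2}\cW^\top(\beta_x h + \epsilon_x)$ which changes in only $O(1)$ coefficients per central-path step (\textsc{ExactDS}), while JL-sketch-based data structures (\textsc{ApproxDS}/\textsc{BatchSketch}/\textsc{BlockBalancedSketch}) detect the few coordinates where the explicit approximation $\ov x$ must be refreshed. Without this implicit maintenance the target running time is out of reach, and your proposal does not mention it.

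Separately, the exponent $(\omega+1)/2$ in the paper does not come from batched rectangular matrix multiplication on Cholesky corrections. It comes from a periodic-restart schedule: the data structure is rebuilt from scratch every $q$ iterations at cost $\widetilde O(n\tau^{\omega-1})$, while between restarts the accumulated $\widetilde O(q^2)$ single-coordinate changes to $\ov x$ are each handled by a standard rank-one Cholesky update in $\widetilde O(\tau^2)$ time (Lemma~\ref{lem:known_cholesky_TDeltaLmax}). Balancing $\frac{N}{q}\cdot n\tau^{\omega-1}$ against $Nq\cdot \tau^2$ with $N=\widetilde O(\sqrt n)$ gives $q = n^{1/2}\tau^{(\omega-3)/2}$ and total $\widetilde O(n\tau^{(\omega+1)/2})$; the exponent is the geometric mean of $\omega-1$ and $2$, not a rectangular-MM exponent. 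Your lazy-batched-rectangular-MM scheme is a genuinely different route, and as you note you have not worked out the amortization. One concrete worry: without the explicit restart, the number of stale coordinate changes grows quadratically with iterations since the last rebuild, so the accumulated update work trends back toward $\Theta(n\tau^2)$ rather than the target.
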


\begin{table}[!ht]
 \centering
 \begin{tabular}{|l|l|l|l|l|} \hline 
    {\bf Problem} & {\bf Year} & {\bf Authors} & {\bf References} & {\bf Time} \\ \hline 
    LP & 2020 & Ye & \cite{y20} & $n^{1.25} \poly(\tau)$ \\ \hline 
    LP & 2021 & Dong, Lee, Ye & \cite{dly21} & $n \tau^2$ \\ \hline 
    LP & 2022 & This work & Theorem~\ref{thm:main_lp_informal} & $n \tau^{(\omega+1)/2}$ \\ \hline
    {\bf Problem} & {\bf Year} & {\bf Authors} & {\bf References} & {\bf Time} \\ \hline
    SDP & 2018 & Zhang and Lavaei & \cite{zl18} & $n^{1.5} \tau^{6.5}$ \\ \hline 
    SDP & 2022 & This work & Theorem~\ref{thm:main_sdp_informal} & $n \tau^{2\omega+0.5}$ \\ \hline
 \end{tabular}
 \caption{Summary of results on low-treewidth LP and SDP.}
 \label{tab:summary_of_our_running_time}
\end{table}

\subsubsection{Decomposable SDP}
There are cases where Theorem \ref{thm:main_sdp_informal} does not apply but we can still obtain an efficient SDP solving algorithm. One such case is summarized in the following theorem. To state the result, let us define the sparsity graph, a graph with potentially lower treewidth than the SDP graph (Definition~\ref{def:sdp-graph}).
\begin{definition}[{Sparsity graph \cite{zl18}}] \label{def:sparsity-graph}
Given an SDP (Definition~\ref{def:sdp_primal}), we define its sparsity graph as a graph $G = (V, E)$ where $V = [n]$ and
\begin{align}
E(G) = \{(u,v) \in V\times V : C_{u,v}\ne 0\} \cup \bigcup_{i\in [m]} \{(u,v)\in V\times V: A_{i,u,v} \ne 0\}.
\end{align}
\end{definition}

\begin{theorem}[Our Result on Decomposable SDP, Informal Version of Theorem~\ref{thm:main_sdp_general}] \label{thm:sdp-general}
Given an SDP \eqref{eqn:sdp-primal}.
Suppose we have a tree decomposition $\cT = \{T_1,\ldots, T_\ell\}$ of the sparsity graph $G$ (Definition~\ref{def:sparsity-graph}).
Suppose in addition that for every constraint $A_i$, we are given a connected (in the tree decomposition) set of bags $T_{i,1},\ldots, T_{i,m_i}$ such that the union of these bags contains the support of the constraint, i.e.,
\begin{align*}
\{(u,v)\in V\times V: A_{i,u,v} \ne 0\} \subseteq \bigcup_{j\in [m_i]} (T_{i,j} \times T_{i,j}).
\end{align*}

Let $\gamma_{\max}$ be the maximum number of constraints a bag correspond to.
Let $\tau$ be the maximum size of a bag.
Then the SDP can be solved in time $\wt O(n \tau^{0.5} (\tau^2 + \gamma_{\max})^\omega)$.
\end{theorem}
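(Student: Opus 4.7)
The plan is to reduce Theorem~\ref{thm:sdp-general} to the machinery developed for Theorem~\ref{thm:main_sdp_informal} by rewriting each multi-bag constraint as a family of single-bag constraints together with auxiliary scalar variables, and then rerunning the same interior-point method with the per-bag block size enlarged from $O(\tau^2)$ to $O(\tau^2 + \gamma_{\max})$.

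First I would split the constraints. For every constraint $A_i$, use the hypothesis $\supp(A_i) \subseteq \bigcup_{j=1}^{m_i} T_{i,j} \times T_{i,j}$ to decompose $A_i = \sum_{j=1}^{m_i} A_{i,j}$ with $\supp(A_{i,j}) \subseteq T_{i,j} \times T_{i,j}$. Introduce auxiliary scalars $y_{i,1},\ldots,y_{i,m_i}$ and replace the single scalar constraint $A_i \bullet X = b_i$ by the local constraints $A_{i,j} \bullet X = y_{i,j}$ for $j\in[m_i]$, together with the linear identity $\sum_j y_{i,j} = b_i$. Since $T_{i,1},\ldots,T_{i,m_i}$ is connected in $\cT$, this identity can be enforced by chaining running partial sums along the subtree's edges, so that each resulting auxiliary relation touches only two adjacent bags; absorbing each such edge relation into one of its endpoint bags perturbs the tree decomposition only by a constant factor. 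After this transformation every constraint of the augmented SDP is supported inside a single bag, and the number of auxiliary scalars attached to any bag $T_j$ is $O(\gamma_{\max})$ by the definition of $\gamma_{\max}$.

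Second, I would run the interior-point method of Theorem~\ref{thm:main_sdp_informal} on the augmented SDP. The augmented SDP has the same sparsity graph and hence the same tree decomposition $\cT$, but the ``state'' carried at bag $T_j$ now consists of the $\tau \times \tau$ principal block of $X$ restricted to $T_j$, of dimension $O(\tau^2)$, together with the $O(\gamma_{\max})$ auxiliary scalars assigned to $T_j$, giving an effective per-bag dimension $O(\tau^2 + \gamma_{\max})$. The Hessian of the log-det barrier retains the block-tree sparsity pattern of $\cT$, so the nested-dissection linear-system solves and the amortized Hessian maintenance from the proof of Theorem~\ref{thm:main_sdp_informal} carry over with $(\tau^2 + \gamma_{\max})^\omega$ replacing $\tau^{2\omega}$ at every step. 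Propagating this substitution through the running-time accounting converts the bound $\widetilde O(n \tau^{2\omega + 0.5})$ of Theorem~\ref{thm:main_sdp_informal} into $\widetilde O(n \tau^{0.5} (\tau^2 + \gamma_{\max})^\omega)$; in the worst case $\gamma_{\max} = \Theta(\tau^2)$ one indeed recovers Theorem~\ref{thm:main_sdp_informal}.

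The main obstacle I expect is the careful assignment of the auxiliary $y_{i,j}$'s and partial-sum variables to bags so that each bag receives at most $O(\gamma_{\max})$ auxiliary scalars, matching the per-bag overhead implicit in the parameter $\gamma_{\max}$. This is possible precisely because each bag that receives a piece of constraint $A_i$ is already counted toward $\gamma_{\max}$, so attaching $O(1)$ auxiliary scalars per constraint--bag incidence produces exactly an $O(\gamma_{\max})$ overhead per bag. A secondary obstacle is verifying that the robust IPM of Theorem~\ref{thm:main_sdp_informal} is insensitive to the specific form of the per-bag block as long as treewidth and per-bag dimension are controlled; this should follow from the fact that the IPM accesses the constraints only through the bag-structured Hessian and residual, both of whose costs scale with the block dimension raised to the $\omega$.
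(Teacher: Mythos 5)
Your proposal diverges from the paper's reduction in a way that introduces a real gap: the auxiliary scalar variables $y_{i,j}$ (and the running partial sums $z_\ell$) are \emph{free} real variables, and the IPM framework of Section~\ref{sec:framework} requires every variable block $x_i$ to live in a convex set $\cK_i$ equipped with a $\nu_i$-self-concordant barrier (Definition~\ref{defn:lp_general_param}). A free scalar has no such barrier, so you cannot simply hand the augmented program to the machinery behind Theorem~\ref{thm:main_sdp_informal}. The two standard ways to repair this both cost you: (i) eliminating each $y_{i,j}$ and each chained partial sum from the equality system collapses your splitting back to the single long constraint $\sum_{j\in S_i} A_{i,j}\bullet X_j = b_i$, i.e.\ exactly the reduction the paper actually uses (Eq.~\eqref{eqn:sdp-general-lp-form}); (ii) splitting each free scalar into a difference of nonnegatives and giving each a log barrier adds $\Theta\bigl(\sum_i |S_i|\bigr) = \Theta(n\gamma_{\max})$ new variable blocks, which inflates the iteration-count prefactor $n^{0.5}\nu_{\max}^{0.5}$ in Theorem~\ref{thm:algo_general} by an extra $\gamma_{\max}^{0.5}$ and does not reproduce the claimed $\wt O(n\,\tau^{0.5}(\tau^2+\gamma_{\max})^\omega)$ bound without further argument.

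The paper's actual route avoids auxiliary variables altogether. It keeps each constraint $A_i$ as a \emph{single} row spanning the blocks $\{X_j : j\in S_i\}$ in the reduced program, and the entire content of the connectedness hypothesis is absorbed in Lemma~\ref{lem:sdp-general-reduced-treewidth}: because $S_i$ is connected in $T$, the set of LP-dual bags containing constraint $i$ is connected, so $(T, B_1,\ldots,B_n)$ is already a valid tree decomposition of the LP dual graph with bag size $\tau_\lp = O(\tau^2+\gamma_{\max})$. Everything else (the block elimination tree of Lemma~\ref{lem:sdp_second_block_elim}, the fast block Cholesky of Lemma~\ref{lem:block_cholesky_decomposition}, and the parameter table in Lemma~\ref{lem:sdp_general_param}) then goes through with $\tau_\lp$ enlarged and $\nu_{\max}=O(\tau)$ unchanged, which is exactly what makes the $\tau^{0.5}$ factor in the final bound come out right. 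You have the right intuition that the per-bag ``width'' is $O(\tau^2+\gamma_{\max})$ and that the connectedness of $S_i$ is what makes the tree decomposition survive, but the mechanism for realizing that intuition should act on the \emph{constraint-side} tree decomposition, not on auxiliary primal variables. You should also note that the paper needs Lemma~\ref{lem:zl18-reduction-general} to justify that the clique-tree / sparsity-graph reduction preserves the optimum even though $S_i$ spans several bags; your proposal implicitly relies on the same fact but does not cite or reprove it.
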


A special case for which the condition in Theorem~\ref{thm:sdp-general} is satisfied is the ``Network Flow Semidefinite Program'' studied in \cite{zl18}.
\begin{definition}[Network Flow Semidefinite Program]
A network flow SDP is an SDP (Definition~\ref{def:sdp_primal}) where for every constraint matrix $A_i$, there exists a vertex $k_i \in [n]$ such that $A_{i,u,v} \ne 0 \Rightarrow k_i \in \{u,v\}$.
\end{definition}

\cite{zl18} gave an algorithm to solve network flow semidefinite programs in $\wt O(n^{1.5} \tau^{3.5} (\tau+d_{\max}m_{\max})^{3.5})$ time, where $d_{\max}$ is the maximum degree of the tree $\cT$ and $m_{\max}$ is the maximum number of network flow constraints at any vertex (i.e., $m_{\max} := \max_{k\in V}\#\{i\in [m]: k_i = k\}$).

Using Theorem~\ref{thm:sdp-general}, we can achieve almost linear running time for network flow semidefinite programs.
\begin{corollary}
The network flow semidefinite program can be solved in time $\wt O(n \tau^{0.5} (\tau^2 + \gamma_{\max})^\omega)$.
\end{corollary}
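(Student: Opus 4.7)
The plan is to show that every network flow SDP satisfies the hypotheses of Theorem~\ref{thm:sdp-general} and then invoke that theorem as a black box. Fix any tree decomposition $\cT = \{T_1, \ldots, T_\ell\}$ of the sparsity graph (Definition~\ref{def:sparsity-graph}) of maximum bag size $\tau$; such a decomposition exists by the definition of treewidth. The task then reduces to producing, for each constraint $A_i$, a connected (in $\cT$) collection of bags whose union contains every nonzero position of $A_i$.

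Consider a constraint $A_i$ with its associated center $k_i$. By the network flow condition, every nonzero entry $A_{i,u,v}$ satisfies $k_i \in \{u,v\}$. For an off-diagonal nonzero $A_{i,k_i,v}$ (or $A_{i,v,k_i}$) with $v \ne k_i$, the pair $(k_i,v)$ is an edge of the sparsity graph by Definition~\ref{def:sparsity-graph}, so by the standard tree decomposition axiom there is a bag of $\cT$ containing both $k_i$ and $v$; the diagonal entry $A_{i,k_i,k_i}$ is covered by any bag containing $k_i$. Picking one such bag for each neighbor $v$ in the support of $A_i$ yields a family $T_{i,1},\ldots,T_{i,m_i}$ whose union covers every nonzero position of $A_i$. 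Since each of these bags contains $k_i$, the axiom that the bags containing a fixed vertex form a connected subtree of $\cT$ immediately yields the required connectedness.

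At this point the hypotheses of Theorem~\ref{thm:sdp-general} are met, and applying it produces the claimed $\wt O(n\tau^{0.5}(\tau^2+\gamma_{\max})^\omega)$ running time. There is no real obstacle in this argument: the centered structure of network flow constraints makes the bag assignment completely transparent, and the only thing one needs to verify is the connectedness condition, which is essentially free once we insist that every bag chosen for $A_i$ contain its center vertex $k_i$. The harder content of the corollary is entirely hidden inside Theorem~\ref{thm:sdp-general}, which we use as a black box.
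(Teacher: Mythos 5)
Your reduction to Theorem~\ref{thm:sdp-general} is the right idea, and the paper itself states the corollary without an explicit proof, so there is no ``paper version'' to compare against beyond the surrounding remarks. However, your argument has a genuine gap in the connectedness step. You pick, for each neighbor $v$ in $\supp(A_i)\setminus\{k_i\}$, \emph{one} bag containing both $k_i$ and $v$, and then assert that because each chosen bag contains $k_i$, the chosen family is connected in $\cT$. That inference is false: the tree decomposition axiom (Definition~\ref{defn:treewidth}) guarantees that the \emph{entire} set of bags containing $k_i$ forms a connected subtree $T_{k_i}$, but an arbitrary subset of a connected subtree need not be connected. For example, if $T_{k_i}$ is a path $J_a - J_b - J_c$ and you happen to select $J_a$ and $J_c$ but not $J_b$, your family is disconnected and fails the hypothesis of Theorem~\ref{thm:sdp-general}.

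The fix is simple, and it is what the paper's remark about $\gamma_{\max} \le \tau m_{\max}$ implicitly assumes: for each constraint $A_i$, take $\{T_{i,1},\ldots,T_{i,m_i}\}$ to be \emph{all} bags of $\cT$ that contain $k_i$ (or, if you prefer a smaller set, the minimal subtree of $T_{k_i}$ spanning the bags you selected; since every selected bag lies in $T_{k_i}$, that minimal subtree also consists only of bags containing $k_i$). This set is connected by the tree decomposition axiom. Coverage still holds: any nonzero $(u,v)\in\supp(A_i)$ has $k_i\in\{u,v\}$; if $u=v=k_i$ any bag containing $k_i$ works, and if $v\ne k_i$ then $(k_i,v)$ is an edge of the sparsity graph, so some bag contains both $k_i$ and $v$, and that bag is in $T_{k_i}$. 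With this choice, a bag $J_j$ is assigned constraint $A_i$ only if $k_i\in J_j$, so $\gamma_{\max} \le \sum_{k\in J_j}\#\{i : k_i = k\} \le \tau m_{\max}$, matching the paper's remark, and Theorem~\ref{thm:sdp-general} then yields the claimed running time.
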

Note that for network flow SDPs, a trivial bound for $\gamma_{\max}$ is $\gamma_{\max} \le \tau m_{\max}$.

\subsection{Applications}
In this section, we discuss three applications: MaxCut SDP, Lovasz Theta function, and low rank matrix completion.

\subsubsection{\texorpdfstring{$\mathsf{MaxCut}$}{} SDP and \texorpdfstring{$\mathsf{MaxkCut}$}{} SDP}
For a graph, a maximum cut is a cut whose size is at least the size of any other cut. That is, it is a partition of the graph's vertices into two complementary sets $S$ and $T$, such that the number of edges between the set $S$ and the set $T$ is as large as possible. The problem of finding a maximum cut in a graph is known as the $\mathsf{MaxCut}$ Problem. This problem is NP-complete \cite{k72}, and also APX-hard \cite{py91}.
The $\mathsf{MaxkCut}$ problem is a natural generalization of $\mathsf{MaxCut}$, where we would like to partition the set of vertices into $k$ disjoint parts, and maximize the number of edges between different parts. A $\mathsf{Max2Cut}$ problem is the same as a $\mathsf{MaxCut}$ problem.

$\mathsf{MaxCut}$ and $\mathsf{MaxkCut}$ admit natural SDP relaxations.
\begin{definition}[{$\mathsf{MaxCut}$ SDP \cite{gw95} and $\mathsf{MaxkCut}$ SDP \cite{fj97}}]
Let $L_G$ be the weighted Laplacian matrix for a graph $G$ with $n$ vertices. There is a randomized algorithm to solve $\mathsf{MaxkCut}$ with an approximation ratio of $1-1/k$ based on solving
\begin{align*}
 \mathrm{maximize} & ~ \frac{k-1}{2k}  L_G \bullet X  \\
 \mathrm{subject~to} & ~ X_{i,i} = 1, & ~ \forall i \in [n] \\
 & ~ X_{i,j} \geq \frac{-1}{k-1}, & ~ \forall (i,j) \in E(G) \\
 & ~ X \succeq 0.
\end{align*}
\end{definition}
The classic Goemans-Williamson 0.878-approximation algorithm \cite{gw95} for $\mathsf{MaxCut}$ is recovered by setting $k=2$ and removing the redundant constraint $X_{i,j}\geq -1$.
This is the best possible approximation ratio under the Unique Games Conjecture \cite{kkmo07}.

In both the $\mathsf{MaxCut}$ relaxation and the $\mathsf{MaxkCut}$ relaxation, note that the SDP graph (Definition~\ref{def:sdp-graph}) is the same as the given graph $G$. Therefore SDP treewidth $\tau_{\sdp}$ is equal to treewidth of graph $G$.\footnote{To apply Theorem~\ref{thm:main_sdp_informal}, we need a way to deal with inequality constraints. See Section~\ref{sec:sdp_first:discuss_inequality} for a discussion.}

\subsubsection{Lov\'asz Theta Function}

We discuss another famous application of SDP which is the Lov\'asz theta function,
\begin{definition}[Lovasz Theta Function, \cite{l79}]\label{def:lovasz_theta}
Given a graph $G = (V,E)$ and $y_{i,j} \in \R$ for each $(i,j) \in E$. Let $n = |V|$. Let $\ov{E}$ denote the complement of $E$. Let $\ov{G} = (V, \ov{E})$. 
The Lov\'asz number $\vartheta(G)$ of a graph $G$ is the optimal value to the following semidefinite program

\begin{align*}
    \mathrm{minimize~} & ~  \begin{bmatrix} I & {\bf 1} \\ {\bf 1}^\top & 0 \end{bmatrix} \bullet X \\
    \mathrm{subject~to~} & ~ X_{i,j} = 0 , \forall (i,j) \in \ov{E} \\
    & ~ X_{n+1,n+1} = 1 \\
    & ~ X \succeq 0 .
\end{align*}
 \end{definition}

The SDP graph (Definition~\ref{def:sdp-graph}) is obtained by adding a new vertex $o$ to the complement graph $\ov G$ of input graph $G$, with an edge $(i,o)$ for all $i\in V$.
Given a tree decomposition of $\ov G$ with bag size $\tau$, a tree decomposition of the SDP graph with bag size $\tau+1$ can be obtained by adding the new vertex $o$ into every bag.
So SDP treewidth is small when $\ov G$ has small treewidth.

\subsubsection{Low Rank Matrix Completion}
In the low rank matrix completion problem, we are given a partially filled matrix $B$, where entries $(i,j)\in \Omega$ are filled. The goal is to find a matrix $X$ of smallest rank that agrees with $B$ on $\Omega$.
\begin{align*}
    \mathrm{minimize~} & ~ \mathrm{rank}(X) \\
    \mathrm{subject~to~} & ~ X_{i,j} = B_{i,j} , \forall (i,j) \in \Omega.
\end{align*}

This problem is non-convex and in general difficult to solve.
A popular relaxation \cite{candes2009exact} of the problem is to minimize the nuclear norm $\|\cdot\|_*$ instead.
\begin{align*}
    \mathrm{minimize~} & ~ \|X\|_* \\
    \mathrm{subject~to~} & ~ X_{i,j} = B_{i,j} , \forall (i,j) \in \Omega.
\end{align*}
Furthermore, \cite{candes2009exact} rewrote the above formulation in the following equivalent form:
\begin{align*}
    \mathrm{minimize~} & ~ I \bullet Y\\
    \mathrm{subject~to~} & ~ Y_{i,n+j} = B_{i,j}, \forall (i,j) \in \Omega \\
    & ~ Y \succeq 0
\end{align*}
where $Y \in \R^{2n \times 2n}$.

Let $G=([n], \Omega)$ be the adjacency graph.
The SDP graph (Definition~\ref{def:sdp-graph}) is as follows:
for every edge $(i,j)\in \Omega$, the SDP graph contains the clique $\{i,j,i+n,j+n\}$.

Given any tree decomposition of $G$, we can define a tree decomposition of the SDP graph by replacing every bag $S$ with $\{i,i+n : i\in S\}$.
So SDP treewidth is at most twice $\tw(G)$.
When $\tw(G)$ is small, SDP treewidth is small.

\subsection{Related Work}

\paragraph{Linear programming}

Linear programming is an old topic in computer science. Simplex algorithm~\cite{d47} is one of the most important algorithm in the history of linear programming, and it has exponential running time. 
Ellipsoid method reduced the running time to polinomial~\cite{k80}, however, it is in practice slower than simplex method. 
The interior point method~\cite{k84} is a major breakthrough, because it has have theoretical polynomial running time and stably fast practical performance on real-world problems.
Assuming $d$ is the number of constraints and $n$ is the number of variables, when $d = \Omega(n)$, Karmarkar's algorithm has a running time of $O^{*}(n^{3.5})$.
The running time is further improved to $O^{*}(n^3)$ in ~\cite{v87, r88} and $O^{*}(n^{2.5})$ in ~\cite{v89}. Recently, \cite{cls19} shows how to solve linear program in $O( n^{\omega} + n^{2.5-\alpha / 2} + n^{2+1/6} )$ time, where the $\omega$ is the exponent of matrix multiplication, and $\alpha$ is the dual exponent of matrix multiplication. Later, \cite{lsz19} generalize that algorithm to a more broader optimization problem called empirical risk minimization. \cite{sy21} shows how to re-produce the result in \cite{cls19} via oblivious sketching matrices instead of non-oblivious sampling matrices. When the linear programming constraint matrix is tall and dense, \cite{blss20} shows how to solve it in $\wt{O}(n d) + \poly(d)$ time. For a linear program with small treewidth $\tau$, the work \cite{dly21} shows how to solve it in $\wt{O}(n \tau^2)$ time.
 
 \paragraph{Semidefinite programming}
 
In the previous SDP literature, there are two lines of SDP solvers: second order methods and first order methods.
Second order methods usually have running time with logarithmic dependence on $\epsilon$, while first order methods usually have polynomial dependence on $\epsilon$.

Second order methods can be further splitted into several lines: first line is cutting plane methods, for example \cite{lsw15,jlsw20}. The algorithm in \cite{lsw15,jlsw20} can solve semidefinite programming in $O( m (mn^2 + m^2 + n^{\omega}) )$ time. The second line is interior point methods (associated with certain self-concordant barrier functions), for example, \cite{nn92,jklps20,hjstz22} use log-barrier function, and \cite{a00,hjstz22} use hybrid barrier function. The algorithm in \cite{jklps20} can solve SDP in $O( \sqrt{n}(mn^2 + n^{\omega} + n^{\omega}) )$ time. Furthermore, \cite{hjstz22} shows that as long as $m =\Omega(n^2)$, we can solve semidefinite programming in $O( m^{\omega} + m^{2+1/4} )$ time.
 
The interior point method is a second-order algorithm. Second-order algorithms usually have logarithmic dependence on the error parameter $1/\epsilon$. First-order algorithms do not need to use second-order information, but they usually have polynomial dependence on $1/\epsilon$. There is a long list of work focusing on first-order algorithms \cite{ak07,gh16,al17,cdst19,lp19,ytfuc19,jy11,alo16}.

For a semidefinite program with bounded treewidth, the work \cite{zl18} shows how to solve it in $\wt O(n^{1.5} \tau^{6.5})$ time.
\section{Technique Overview}

In this section, we explain the previous technique and summarize our new techniques.
\begin{itemize}
    \item In Section~\ref{sec:previous_technique}, we briefly summarize the techniques in \cite{zl18} and explain the barrier of that algorithm.
    \item In Section~\ref{sec:tech_n_poly_tau}, we provide an overview of an $n \cdot \poly (\tau)$ time algorithm.
    \item In Section~\ref{sec:bottleneck}, we briefly analyze running time of the algorithm in previous section and explain the bottlenecks.
    \item In Section~\ref{sec:improve_bottleneck}, we discuss how we improve the running time for bottlenecks, thus achieving an improved algorithm for low-treewidth SDP.
    \item In Section~\ref{sec:improve_lp_result}, we discuss how we achieve improved algorithm for low-treewidth linear program.
\end{itemize}

\subsection{An Overview of Previous Techniques}\label{sec:previous_technique}
In this section, we briefly summarize the technique in \cite{zl18}.

\cite{zl18} uses interior point method to solve SDP. However, running IPM directly would be very expensive. The first step of their algorithm is to rewrite the SDP and reduce the number of (real) variables from $n^2$ to $n\cdot \poly(\tau)$.

\paragraph{Variable reduction}
Suppose we have an SDP of form \eqref{eqn:sdp-primal} with a tree decomposition of maximum bag size $\tau$.

\cite{zl18} creates another SDP of smaller number of variables, where every bag in the tree decomposition becomes one $\tau\times \tau$ PSD matrix, which corresponds to an $\tau\times \tau$ principal minor in the original PSD matrix.

\begin{align}\label{eqn:sdp-ctc}
    \min \qquad & \sum_{j\in [b]} C_j \bullet X_j\\
    \text{s.t.} \qquad & A_i \bullet X_{j_i} = b_i \qquad \forall i\in [m] \nonumber\\
    & \cN_{i,j} (X_i) = \cN_{j,i} (X_j) \qquad \forall (i,j)\in E(T) \nonumber\\
    & X_j \succeq 0  \qquad \forall j\in [b] \nonumber
\end{align}
See Appendix~\ref{sec:sdp_first} for a more detailed explanation of the program.

The new SDP has three types of contraints:
\begin{itemize}
    \item $A_i \bullet X_{j_i} = b_i$, which corresponds to linear constraints in the original SDP;
    \item $\cN_{i,j}(X_i) = \cN_{j,i}(X_j)$, which says that when two minors overlap in the original PSD matrix, they should have the same value in overlapping positions;
    \item $X_j\ge 0$, meaning that each minor should be PSD.
\end{itemize}

\cite{zl18} proved that given any feasible solution to the new SDP, we can compute in $O(n \tau^3)$ time a succinct representation of a feasible solution to the original SDP with the same objective value. The succinct representation is of form $X = UU^\top$ where $U\in \R^{n\times \tau}$ is a rectangular matrix.

After this reduction, we run IPM iteration directly.\footnote{In fact, \cite{zl18} ran IPM on the dual SDP, because the maximum degree of a bag in the tree decomposition could be large. However, we can easily make the maximum degree $O(1)$ by adding $O(n)$ bags. This does not change the asymptotic running time but results in a simpler algorithm.}

\cite{zl18} uses interior point method to solve \eqref{eqn:sdp-ctc}. For every step, we need to perform computation of the following kind
\begin{align} \label{eqn:zl18-computation-key-step}
    \left( A_\lp H_x A_\lp^\top \right)^{-1} v 
\end{align}
where
\begin{itemize}
    \item $A_\lp = \begin{bmatrix}
    A \\ N
    \end{bmatrix} \in \R^{m_\lp \times n_\lp}$ is the constraint matrix (where $m_\lp = O(n_\lp)$) (to be more precise, each of the first $m_\sdp$ rows corresponds one original SDP constraint, and rows after that correspond to overlap constraints),
    \item $H_x$ is the Hessian matrix, a block-diagonal matrix.
\end{itemize}
Because of the low-treewidth assumption, these updates can be implemeneted efficiently, as discussed below.

\paragraph{Number of iterations}
Number of iterations we need to run is controlled by the number of variables and geometry of the primal space. (More precisely, number of iterations is square root of sum of self-concordance value of all blocks.)
Because we can have $O(n)$ $\tau\times \tau$ matrix variables, each of them is in the PSD cone, which is is $\tau$-self-concordant, the total number of iterations is
\begin{align*}
(\text{\# of matrix vars}\cdot \text{self-concordance parameter for PSD cone})^{0.5}
& = \wt O(\sqrt{ n \tau}).
\end{align*}

\paragraph{Cost per iteration}
In each step, we need to perform a computation of form Eq.~\eqref{eqn:zl18-computation-key-step}.
When the tree decomposition has $O(1)$ maximum degree, $A_\lp H_x A_\lp^\top$ is block-sparse and we can compute a Cholesky decomposition $A_\lp H_x A_\lp^\top = LL^\top$ is $O(n \cdot \poly(\tau))$ time.
Then $(A_\lp H_x A_\lp^\top)^{-1} v = L^{-\top} L^{-1} v$ can be computed in $O(n\cdot \poly(\tau))$ time by performing multiplication from right to left.

Each of the matrix factors in the above equation can be computed in $\wt O(n\tau^6)$ time, and contains $O(n\tau^4)$ non-zero entries, so cost per iteration is $\wt O(n\tau^6)$.

\paragraph{Putting it all together}

We have
\begin{align*}
 \#\mathrm{iters} \cdot \mathrm{~(cost~per~iters)} 
= & ~ \wt O(\sqrt{n\tau}) \cdot \wt O(n \tau^6)  \\
= & ~ \wt O(n^{1.5} \tau^{6.5})
\end{align*}
So the overall running time of \cite{zl18}'s algorithm is $\wt O(n^{1.5} \tau^{6.5})$.

\subsection{How to Get An \texorpdfstring{$n \cdot \poly(\tau)$}{} Algorithm?}\label{sec:tech_n_poly_tau}
\cite{zl18} solved \eqref{eqn:sdp-ctc} using IPM with $O(n^{0.5} \tau^{0.5})$ iterations and $O(n \poly(\tau))$ time per iteration. The number of iterations usually pays at least $\Omega (\sqrt{n})$  in the field of IPM methods due to known lower bound of barrier method.\footnote{\cite{nn94} showed that any self-concordant barrier of $n$-dimensional simplex or hypercube must have self-concordance parameter at least $n$. This bound is tight, because \cite{ly21} recently showed that there always exists an $n$-self-concordant barrier} Therefore we seek to improve cost per iteration.
Using robust IPM to solve linear program in tree-width setting, in each iteration, we only need to update those coordinates where there is a significant enough change, and this would reduce total number of coordinate updates to $O(n \poly(\tau))$, giving an $O(n\poly(\tau))$ algorithm.

More specifically, \cite{dly21} gave an $O(n\tau^2)$ algorithm for low-treewidth LP. Their central path equation is 
\begin{align*}
    s/t + w \nabla \phi(x) =&~ \mu\\
    Ax =&~ b \\
    A^\top y + s =&~ c 
\end{align*}
where $x$ is the primal variable, $y$ is the dual variable, $s$ is the slack variable, and $\mu$ denotes the error.
The central path is defined as the path of $(x,s)$ as $t$ goes from initial value to $0$.

Then the Newton step is
\begin{align*}
    \delta_s/ t + w H_x \delta_x =&~ \delta_\mu \\
    A \delta_x = & ~ 0 \\
    A^\top \delta_y + \delta_s =&~ 0
\end{align*}
where $H_x = \nabla^2 \phi(x)$ is the Hessian.

We only need to follow the central path approximately, meaning that when computing the steps, we could use $\ov x$ instead of $x$, where $\ov x$ is a vector close enough to $x$.
The IPM algorithm works roughly in the following sense.
\begin{itemize}
    \item $t\gets t_{\mathrm{start}}$
    \item While $t \ge t_{\mathrm{end}}$ do
    \begin{itemize}
    \item $\delta_{\mu} \gets \argmin_{\|\delta_\mu\|_{\ov x}^* = \alpha} \langle \nabla_\mu \Psi_\lambda(\|\mu_i\|_{\ov x_i}^*), \mu+\delta_\mu\rangle$
    \item $\delta_x \gets (H_{\ov x}^{-1} - H_{\ov x}^{-1} A_\lp\top (A_\lp H_{\ov x}^{-1} A_\lp^\top)^{-1} A_\lp H_{\ov x}^{-1}) \delta_\mu$
    \item $\delta_s \gets t A_\lp^\top (A_\lp H_{\ov x}^{-1} A_\lp^\top)^{-1} A_\lp H_{\ov x}^{-1} \delta_\mu$
    \item $x \gets x+\delta_x$, $s \gets s + \delta_s$
    \item $t \gets  t \cdot (1-h)$
    \end{itemize}
\end{itemize}
The parameters satisfy $t_{\mathrm{start}} / t_{\mathrm{end}} = \poly(n)$, $h = \Theta(n^{-1/2})$, so there are $\wt O(n^{1/2})$ steps.

The general LP and SDP central path algorithm (without treewidth assumption) maintains $(A H_{\ov{x}}^{-1} A^\top )^{-1}$ directly, since we don't have the benefit of Cholesky decomposition (coming from treewidth assumption). This would lead to algorithms with cost-per-iteration super-linear in $n$.

The key idea of \cite{dly21} is that, instead of maintaining the pair $(x,s)$ on the central path explicitly, we maintain them implicitly (``multiscale coefficients'')
\begin{align} \label{eqn:multiscale-coef}
x &= \wh x + H_{\ov{x}}^{-1/2} \beta_x c_x - H_{\ov{x}}^{-1/2} \cW^\top (\beta_x h + \epsilon_x) \\
s &= \wh s + H_{\ov{x}}^{1/2} \cW^\top(\beta_s h + \epsilon_s), \nonumber
\end{align}
where $\cW = L_{\ov x}^{-1} A H_{\ov x}^{-1/2}$ and $\hat x, h, \hat s$ are sparsely changing vectors.
Here $L_{\ov x}$ is the Cholesky factor in the Cholesky decomposition $AH_{\ov x}^{-1} A^\top = L_{\ov x}L_{\ov x}^\top$. Under a central path step and under a sparse update to $\ov x$, all terms in the multiscale representation \eqref{eqn:multiscale-coef} can be maintained efficiently (see Section~\ref{sec:framework:exact_ds}). Therefore if the total number of coordinate updates is $O(n)$, and the time cost per coordinate is $O(\poly(\tau))$, then the overall running time is $O(n\poly(\tau))$.

Similar ideas can be applied to our treewidth SDP case.
We maintain primal-dual pairs $(x_i,s_i)$, and let them follow the central path. In each step, we need to run approximate Newton step
\begin{align} \label{eqn:newton-step}
\delta_x &= (H_{\ov x}^{-1} - H_{\ov x}^{-1} A_\lp^\top (A_\lp H_{\ov x}^{-1} A_\lp^\top)^{-1} A_\lp H_{\ov x}^{-1}) \delta_\mu,\\
\delta_s &= t A_\lp^\top(A_\lp H_{\ov x}^{-1} A_\lp^\top)^{-1} A_\lp H_{\ov x}^{-1} \delta_\mu, \nonumber
\end{align}
where
\begin{itemize}
    \item $x = \begin{bmatrix}\vec(x_1) & \cdots & \vec(x_{n})\end{bmatrix}^\top \in \R^{n_\lp}$ is the primal variable,
    \item $s = \begin{bmatrix}\vec(s_1) & \cdots & \vec(s_{n})\end{bmatrix}^\top \in \R^{n_\lp}$ is the slack variable,
    \item $A_\lp = \begin{bmatrix}
    A \\ N
    \end{bmatrix} \in \R^{m_\lp \times n_\lp}$ is the constraint matrix (where $m_\lp = O(n_\lp)$) (to be more precise, each of the first $m_\sdp$ rows corresponds one original SDP constraint, and rows after that correspond to overlap constraints),
    \item $H_{\ov x} = \nabla^2 \phi(\ov x)\in \R^{n_\lp}$ is the Hessian matrix for some potential function $\phi$,
    \item $\delta_\mu \in \R^{n_\lp}$ is the step of the error vector $\mu$ (see Section \ref{sec:robust_ipm} for its definition).
\end{itemize}

We maintain the a block version of the multiscale coefficients.
We remark that one of the major difference for semidefinite progamming compared to linear programming is that the Hessian matrix $H_x$ is no longer a diagonal matrix.
Nevertheless, the Hessian matrix is a block diagonal matrix. Therefore its inverse and square root can be computed in $\poly(\tau)$ time.
When the tree decomposition has $O(1)$ maximum degree, our program \eqref{eqn:sdp-ctc}  has treewidth $O(\tau_\lp) = O(\tau^2)$ in the sense of Definition~\ref{def:general_lp_treewidth}.
Therefore we can still efficiently compute and maintain the Cholesky decomposition of one factor in the Newton step: $A_\lp H_{\ov x}^{-1} A_\lp^\top = L_{\ov x}L_{\ov x}^\top$.

To maintain the central path, we need to have a data structure which can support (1) central path steps \eqref{eqn:newton-step}, and (2) report coordinates on which $x$ and $\ov x$ differ too much.

The first task (central path steps) is maintained using \textsc{ExactDS} (Section~\ref{sec:framework:exact_ds}).
For the second task (finding important coordinates), a natural idea is to use Johnson–Lindenstrauss sketch (Lemma~\ref{lem:jl_matrix}).
Suppose we have a data structure which can answer range queries on $\Phi \ov x$ and $\Phi x$ (where $\Phi \in \R^{r\times n_\lp}$ is the JL matrix), then we can efficiently find the coordinates that differ much.

However, because we maintain $(x,s)$ implicitly, maintaining $\Phi x$ under central path steps and under change of multiscale representation and answering range queries is not a trivial task.
We resolve this issue by using \textsc{BlockBalancedSketch} (Section~\ref{sec:framework:block_balanced_sketch}), a block version of balanced sampling tree sketch in \cite{dly21}.
Using \textsc{BlockBalancedSketch} and \textsc{BlockVectorSketch} (Section~\ref{sec:framework:block_vector_sketch}, an easy application of segment trees), we are able to implement \textsc{BatchSketch} (Section~\ref{sec:framework:batch_sketch}), a data structure for maintaining sketches of $(x,s)$.

Furthermore, even equipped with the sketching data structures, we still need an algorithm to maintain an approximation of the sketched vector. Because we are able to control the step size in the IPM algorithm, we know that $(x,s)$ is in fact slowly changing under $\ell_2$. Therefore, we are able to maintain a sparsely changing approximation of $(x,s)$.
This is achieved using \textsc{ApproxDS} (Section~\ref{sec:framework:approx_ds}).
By removing a sampling step in the corresponding data structure in \cite{dly21}, we fix a technical issue of \cite{dly21} (see Remark~\ref{rmk:dly_linf_approx_error}).
\textsc{ApproxDS} uses \textsc{BatchSketch} as a subroutine.

Finally, by combining \textsc{ExactDS} and \textsc{ApproxDS}, we are able to maintain $(x,s)$ implicitly, and to maintain an approximation $(\ov x, \ov s)$ explicitly.
This gives our central path maintenance algorithm \textsc{CentralPathMaintenance} (Section~\ref{sec:framework:cpm_ds}).

\subsection{Bottlenecks for Running Time} \label{sec:bottleneck}
In this section we give a brief analysis of running time. The actual analysis is much more complicated, but the highlight the bottlenecks here.

In our central path maintenance data structure \textsc{CentralPathMaintenance} we regularly updates and reconstructs the data structure \textsc{ApproxDS}, because its running time is quadratic in the number of iterations between two reconstructs. (This is because after $q$ iterations, $\|x^{(q)} - x^{(0)}\|_2^2$ is of order $q^2$, and $\ell_\infty$ approximations of $x^{(q)}$ and of $x^{(0)}$ can differ in $q^2$ coordinates.)
There are $N = O(\sqrt{n_\sdp \tau_\sdp w})$ iterations (where $w$ is a parameter to be chosen, it does not affect running time as long as it is large enough) and we reconstruct the data structure after every $q$ (a parameter to be chosen) iterations.

\textbf{Restart time:} Over the entire $N$ iterations of the algorithm (Algorithm~\ref{alg:informal_cpm}), we need to do $N/q$ times reconstructions (restart). Each reconstruction takes $n \poly(\tau)$ time. Thus, the total number of restart time is $O(N/q \cdot n \poly(\tau))$.

\textbf{Update time:} Between every two restarts, there are $\wt O(q^2)$ coordinate changes. So there are $\wt O(N/q \cdot q^2) = O(N q)$ coordinate changes in total. Each coordinate change takes $O(\poly(\tau))$ time to update. So the total cost for updates is $O(N q \cdot \poly(\tau))$.

Therefore, the final running time is a result of balancing the total restart time and update time.

\begin{algorithm}[!ht]\caption{An informal version of our central path maintenance algorithm, Algorithm~\ref{alg:cpm}}\label{alg:informal_cpm}
\begin{algorithmic}[1]
\State Initialize \textsc{ExactDS} and \textsc{ApproxDS}
\For{$i = 1 \to N$}
    \If{we have not restarted for $q$ iterations}
        \State {\color{blue}/* Restart */}
        \State Restart our data structure
    \EndIf 
    \State {\color{blue}/* Move */}
    \State Run one step of central path in \textsc{ExactDS} \Comment{Implicitly maintain $x$ and $s$}
    \State Use \textsc{ApproxDS} to detect which coordinates of $(x,s)$ have heavy changes. \Comment{Compute update to $(\ov x, \ov s)$}
    \State Update \textsc{ExactDS} according to update in $(\ov x, \ov s)$
    \State Update \textsc{ApproxDS} according to update in $(\ov x, \ov s)$
\EndFor 
\end{algorithmic}
\end{algorithm}

In the following, we plug in the exact exponents of $\tau$ and compute the running time.

\paragraph{Restart}
Restart can be splitted into two steps, one is initialization and the other is output. The most time-consuming step is computing Choleky factorization in initialization.

We have $n_\sdp$ blocks, where each block is a $\tau_\sdp \times \tau_\sdp$ matrix. By structure of our SDP, the Cholesky factor $L_{\ov x}$ in $AH_{\ov x}^{-1} A^\top = LL^\top$ is $\wt O(\tau_\lp)$-sparse (i.e., every column has $\wt O(\tau_\lp)$ non-zero coordinates), where $\tau_\lp = \tau_\sdp^2$. So computing Cholesky factorization takes $\wt O(n_\lp \tau_\lp^2) = \wt O(n_\sdp \tau_\sdp^6)$ time.

So
\begin{align*}
    \text{overall restart time} &= \text{number of restarts} \cdot \text{cost per restart} \\
    &= \wt O(N/q) \cdot \wt O(n_\sdp \tau_\sdp^6) \\
    &= \wt O(n_\sdp^{1.5} w^{0.5} q^{-1} \tau_\sdp^{6.5}).
\end{align*}

\paragraph{Update}
As described in Algorithm~\ref{alg:informal_cpm}, update can be splitted into mainly four substeps: Central path (\textsc{ExactDS}) move, \textsc{ApproxDS} query (using sketching data structure \textsc{BatchSketch}), \textsc{ExactDS} update, and sketching data structure \textsc{BatchSketch} update.

\begin{itemize}
\item The central path move step takes $O(1)$ time per iteration because it only updates constant coefficients in the multiscale representation.

\item \textsc{ApproxDS} query takes $\wt O(q w^{-1})$ queries (on average) to the sketching data structure \textsc{BatchSketch} on average. Note that this also implies there are $\wt O(q w^{-1})$ variable block changes per iteration.
Every sketching data structure query takes $\wt O(\tau_\lp^2)$ time.

\item For each variable block change in $\ov x$, updating \textsc{ExactDS} takes $\wt O(\tau_\lp^3) = \wt O(\tau_\sdp^6)$ time due to the cost of updating Cholesky factorization. This is because one variable block change in $\ov x$ leads to a rank $O(\tau_\lp)$-change in $A H_{\ov x}^{-1} A$, and a rank-$1$ change in $A H_{\ov x}^{-1} A$ takes $\wt O(\tau_\lp^2)$ to update the Cholesky factor $L_{\ov x}$.
Therefore on average, every iteration takes $\wt O(q w^{-1} n_\sdp^6)$ time.

\item For each variable block change in $\ov x$, updating \textsc{BatchSketch} takes $\wt O(\tau_\lp^3) = \wt O(\tau_\sdp^6)$ time due to the cost of updating Cholesky factorization.
Therefore on average, every iteration takes $\wt O(q w^{-1} n_\sdp^6)$ time.
\end{itemize}

As analyzed above, on average, the update time for each iteration is $\wt O(q w^{-1} n_\sdp^6)$.
So
\begin{align*}
\text{overall update time} &= \text{number of iters} \cdot \text{update cost per iter} \\
&= \wt O(N) \cdot \wt O(q w^{-1} n_\sdp^6) \\
&= \wt O(n_\sdp^{0.5} w^{-0.5} q \tau_\sdp^{6.5}).
\end{align*}

Combining everything above, we have
\begin{align*}
    \text{overall running time} &= \text{overall restart time} + \text{overall update time} \\
    &= \wt O(n_\sdp^{1.5} w^{0.5} q^{-1} \tau_\sdp^{6.5}) + \wt O(n_\sdp^{0.5} w^{-0.5} q \tau_\sdp^{6.5}) \\
    &= \wt O(n_\sdp \tau_\sdp^{6.5})
\end{align*}
where in the last step we take $w = \tau_\sdp$, $q = n_\sdp^{0.5}$.

\subsection{Improve Running Time Bottleneck} \label{sec:improve_bottleneck}
In this section we discuss how to improve the running time bottleneck.
As described in the last section, the most time consuming step is computing Cholesky factorization $A H_{\ov x}^{-1} A^\top = L_{\ov x} L_{\ov x}^\top$ (which affects restart time), and updating Cholesky factorization under change of one variable block (which affects update time).

The way we achieve improvement is by using block structure in Cholesky-related computations.
Previously, we use the property that every column $L_{\ov x}$ is $\wt O(\tau_\lp)$-sparse, to achieve running time $\wt O(n_\lp \tau_\lp^2)$ for computing the Cholesky factorization.
In fact, $L_{\ov x}$ has more structures.
Because of our reduction to form~\eqref{eqn:sdp-ctc}, we can divide the indices of $L_{\ov x}$ into $\wt O(n_\sdp)$ blocks, where maximum block size is $\wt O(\tau_\lp)$, and such that every block column of $L_{\ov x}$ is $\wt O(1)$-block sparse.

Using this block structure, we are able to devise an algorithm that computes the Cholesky factorization $L_{\ov x}$ in $\wt O(n_\sdp \tau_\lp^\omega) = \wt O(n_\sdp \tau_\sdp^{2\omega})$ time (Lemma~\ref{lem:block_cholesky_decomposition}). There are a few other computations in restart, which can all be improved to $\wt O(n_\sdp \tau_\sdp^{2\omega})$ time using this idea.
Thus the time cost per restart becomes $\wt O(n_\sdp \tau_\sdp^{2\omega})$.
We have
\begin{align*}
    \text{overall restart time} &= \text{number of restarts} \cdot \text{cost per restart} \\
    &= \wt O(N/q) \cdot \wt O(n_\sdp \tau_\sdp^{2\omega}) \\
    &= \wt O(n_\sdp^{1.5} w^{0.5} q^{-1} \tau_\sdp^{2\omega + 0.5}).
\end{align*}

Furthermore, we are able to achieve $\wt O(\tau_\lp^\omega) = \wt O(\tau_\sdp^{2\omega})$ time for updating the Cholesky factorization $L_{\ov x}$.
There are a few other computations in update, which can all be improved to $\wt O(\tau_\sdp^{2\omega})$ time per variable block update.
Therefore update time for iteration becomes $\wt O(q w^{-1} \tau_\sdp^{2\omega})$ on average.
We have
\begin{align*}
\text{overall update time} &= \text{number of iters} \cdot \text{update cost per iter} \\
&= \wt O(N) \cdot \wt O(q w^{-1} n_\sdp^{2\omega}) \\
&= \wt O(n_\sdp^{0.5} w^{-0.5} q \tau_\sdp^{2\omega+0.5}).
\end{align*}

Combining everything above, we have
\begin{align*}
    \text{overall running time} &= \text{overall restart time} + \text{overall update time} \\
    &= \wt O(n_\sdp^{1.5} w^{0.5} q^{-1} \tau_\sdp^{2\omega+0.5}) + \wt O(n_\sdp^{0.5} w^{-0.5} q \tau_\sdp^{2\omega+0.5}) \\
    &= \wt O(n_\sdp \tau_\sdp^{2\omega+.5})
\end{align*}
where in the last step we take $w = \tau_\sdp$, $q = n_\sdp^{0.5}$.

\subsection{Improve Running Time of Low-Treewidth LP} \label{sec:improve_lp_result}

In this section, we discuss the technique we use to achieve a more efficient algorithm for LP with bounded treewidth.

Recall that the key idea of \cite{dly21} is, instead of maintaining the primal and slack variables on the (approximate) central path explicitly, we maintain a sparsely-changing representation (multiscale representation) of them.
The central path maintenance algorithm has two main parts (1) restart (which contains initialize and output), and (2) update.
In every iteration, if certain conditions are satisfied, we restart the data structure. Afterwards, we perform a central path move and maintain the relevant data structures \textsc{ExactDS} and \textsc{ApproxDS}.
The final running time is achieved by balancing restart time and update time.

We improve restart time by devising improved algorithms for computations related to Cholesky factorization.
One main step in initialization part is to compute the Cholesky factorization of the matrix $A H_{\ov x}^{-1} A^\top = L_{\ov x}L_{\ov x}^{\top}$.
Because of the treewidth assumption, the Cholesky factor $L_{\ov x}$ is $\wt O(\tau)$-column sparse, i.e., every column has at most $\wt O(\tau)$ non-zero entries.
Using this property, \cite{dly21} is able to compute the Choleksy factorization in $\wt O(n\tau^2)$ time.

We can do better by utilization more properties of the matrix. The key observation is that we can divide the indices of $A H_{\ov x}^{-1} A^\top$ into blocks of size $O(\tau)$, such that in the Cholesky factor is $\wt O(1)$-block sparse, i.e., every block column has at most $\wt O(1)$ non-zero block entries.

In this way, we can perform the Choleksy factorization algorithm on block level,
achieving an algorithm (c.f.~Lemma~\ref{lem:block_cholesky_decomposition},~\ref{lem:lp:TL}) running in time
\begin{align*}
& \#\text{blocks} \cdot (\#\text{column block sparsity})^2 \cdot \Tmat(\text{block size}) \\
& = \wt O(n/\tau) \cdot \wt O(1) \cdot \Tmat(\tau) \\
& = \wt O(n \tau^{\omega-1}).
\end{align*}

Using similar ideas, we are able to improve other Cholesky-related computations, including Cholesky factor inverse (c.f.~Lemma~\ref{lem:fast_cholesky_inverse}), product of Cholesky factor and a batch of vectors (c.f.~Lemma~\ref{lem:fast_cholesky_inverse_vector}), and so on. These are all bottleneck steps in the original \cite{dly21} algorithm.
By combining all these improvements, we achieve improved running time for restart.

The overall running time is sum of restart time and update time.
In the LP case, the number of iterations $N$ is $\sqrt n$.
Let $q$ be the data structure restart threshold.
Then overall restart time is given by
\begin{align*}
\text{overall restart time}
= & ~\text{number of restarts} \cdot \text{cost per restart} \\
= & ~ \wt O(N/q) \cdot \wt O(n\tau^{\omega-1}) \\
= & ~ \wt O(n^{1.5} q^{-1} \tau^{\omega-1}).
\end{align*}

Overall update time is
\begin{align*}
 \text{overall update time} 
= &~\text{number of iters} \cdot \text{update cost per iter} \\
= &~ \wt O(N) \cdot \wt O(q \tau^2)\\
= &~\wt O(N q \tau^2).
\end{align*}

Therefore
\begin{align*}
\text{overall running time} 
= & ~ \text{overall restart time} + \text{overall update time} \\ 
= & ~ \wt O(n^{1.5} q^{-1} \tau^{\omega-1}) + \wt O(n^{0.5} q \tau^2) \\
= & ~ \wt O(n\tau^{(\omega+1)/2})
\end{align*}
where the last step is by taking $q = n^{0.5} \tau^{(\omega-3)/2}$.

\section*{Acknowledgements}
The authors would like to thank Guanghao Ye and Lichen Zhang for useful discussions.

\newpage

{\bf Roadmap.}

Here, we provide an organization for the rest of the paper.

\begin{itemize}
    \item In Section~\ref{sec:preli}, we present a few basic definitions and results used in the paper.
    \item In Section~\ref{sec:framework}, we present a general framework which covers both semidefinite programming and linear programming.
    \item In Section~\ref{sec:sdp_first}, we show how to apply our general framework to semidefinite programming solver to achieve an $O(n \cdot \poly(\tau))$ algorithm.
    \item In Section~\ref{sec:sdp_second}: we show how to further improve the running time of semidefinite programming solver to achieve an $\wt O(n \tau^{2\omega+0.5})$ time algorithm.
    \item In Section~\ref{sec:sdp_general}, we show how to solve a more general kind of low-treewidth SDP (which we call decomposable SDP) using our general framework.
    \item In Section~\ref{sec:lp}, we show how to improve running time of low-treewidth linear programming to $\wt O(n \tau^{(\omega+1)/2})$.
\end{itemize}

\section{Preliminaries}\label{sec:preli}

 In Section~\ref{sec:preli:basic}, we define some basic notations.  In Section~\ref{sec:preli:barrier}, we define self concordant barrier. In Section~\ref{sec:preli:treewidth}, we define treewidth.  In Section~\ref{sec:preli:sketching}, we introduce some backgrounds for sketching matrices. 

\subsection{Basic Notions}\label{sec:preli:basic}

For any positive integer $n$, we use $[n]$ to denote the set $\{1,2,\cdots,n\}$. 
For a matrix $A$, we use $A^\top$ to denote the transpose of $A$. For two matrices $A$ and $B$, we use $A \bullet B$ and $\langle A, B \rangle$ to denote the inner product between two matrices, i.e., $\sum_{i,j} A_{i,j} B_{i,j}$. For a square matrix $A$, we use $\det(A)$ to denote the determinant of $A$. We use $\tr(A)$ to denote the trace of matrix $A$.

For two functions $f,g$, we use the shorthand $f \lesssim g$ (resp. $\gtrsim$) to indicate that $f \leq C \cdot g $ (resp. $\geq $) for an absolute constant $C$. We use $f \eqsim g$ to mean $c f \leq g \leq C f$ for constants $c,C$.

We use $\omega$ to denote the exponent of matrix multiplication, i.e., multiplying an $n \times n$ matrix with another $n \times n$ matrix takes $n^{\omega}$ time. Currently, $\omega \approx 2.373$ \cite{w12,aw21}.

We will deal with block matrices and vectors a lot. Therefore we use a block-friendly notation.

For a fixed block pattern $N= \sum_{i\in [n]} n_i$, for $x\in R^N$, we denote $x=(x_1,\ldots, x_n)$, where $x_i \in \R^{n_i}$.
Therefore $x_i$ naturally denotes one block of $x$.
When we need to refer to coordinates of $x$, we use $x_{\text{coord}~i}$.

\begin{definition}[Mixed Norm]\label{def:mixed_norm}
Let $x\in \R^N$ be a vector with fixed block pattern $N = \sum_{i\in [n]} n_i$. We define its $(p,q)$-norm $\|x\|_{p,q}$ as
the $q$-norm of the vector $(\|x_i\|_p)_{i\in [N]}$.
\begin{align*}
    \|x\|_{p,q} := \|y \|_q , \text{~where~} y = ( \| x_1 \|_p , \cdots, \| x_n \|_p ) \in \R^{n}
\end{align*}
\end{definition}
In particular $\|x\|_{0,1} = \|x\|_0$, $\|x\|_{2,2} = \|x\|_2$.

\subsection{Self-Concordant Barrier}\label{sec:preli:barrier}

We start with defining self concordant barrier,
\begin{definition}[Self-Concordant Barrier, {\cite{nesterov2003introductory}}]
A function $\phi$ is a $\nu$-self-concordant barrier for a non-empty open convex set $K$ if $\mathrm{dom} (\phi) = K$, $\phi(x) \rightarrow + \infty$ as $x \rightarrow \partial K$, and for any $x \in K$ and for any $u \in \R^n$
\begin{align*}
    D^3\phi(x) [ u, u, u ] = 2 \| u \|_{ \nabla^2 \phi(x) } \text{~~~and~~~} \| \nabla \phi(x) \|_{ ( \nabla^2 \phi(x) )^{-1} } \leq \sqrt{\nu} .
\end{align*}
A function $\phi$ is a self-concordant barrier if the first condition holds.
\end{definition}

\begin{definition}[Log-Barrier for PSD Cone] \label{defn:log-barrier-psd}
For the PSD cone $K = \cS_n := \{X \in \R^{n\times n} : X \succeq 0\}$, we define the log barrier as $\phi(X) := -\log ( \det ( X ) )$.
\end{definition}

\begin{lemma}[{\cite{nn94,hjstz22}}] \label{lem:log-barrier-psd}
Log-barrier for the $n\times n$ PSD cone is $n$-self-concordant.
\end{lemma}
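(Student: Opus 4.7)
The plan is to verify the two defining inequalities of a $\nu$-self-concordant barrier directly from the derivatives of $\phi(X) = -\log\det(X)$ on the open PSD cone $\cS_n$, taking $\nu = n$. I would first record the standard formulas for the first three directional derivatives along a symmetric direction $U$: starting from Jacobi's formula $\frac{\d}{\d t}\log\det(X+tU)\big|_{t=0} = \tr(X^{-1}U)$, one obtains $\nabla\phi(X) = -X^{-1}$, $\nabla^2\phi(X)[U,U] = \tr(X^{-1}UX^{-1}U)$, and $D^3\phi(X)[U,U,U] = -2\tr\!\bigl((X^{-1}U)^3\bigr)$. These come out quickly by differentiating $X\mapsto X^{-1}$ and using cyclicity of trace.

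The key trick is the symmetric change of variables $Y := X^{-1/2} U X^{-1/2}$, which is symmetric whenever $U$ is. Under this substitution, the Hessian quadratic form becomes $\|U\|^2_{\nabla^2\phi(X)} = \tr(Y^2) = \sum_i \lambda_i(Y)^2$ and the third derivative becomes $D^3\phi(X)[U,U,U] = -2\tr(Y^3) = -2\sum_i \lambda_i(Y)^3$. The first self-concordance inequality
\begin{align*}
\bigl|D^3\phi(X)[U,U,U]\bigr| \;\leq\; 2\,\|U\|^3_{\nabla^2\phi(X)}
\end{align*}
then reduces to the elementary bound $\bigl|\sum_i \lambda_i^3\bigr| \leq \bigl(\sum_i \lambda_i^2\bigr)^{3/2}$, which follows from the monotonicity of $\ell_p$ norms (or equivalently, from $|\lambda_i|^3 \leq \|\lambda\|_2 \cdot \lambda_i^2$).

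For the barrier parameter, I would compute $\|\nabla\phi(X)\|^2_{(\nabla^2\phi(X))^{-1}}$ explicitly. Viewing the Hessian as the linear operator $U \mapsto X^{-1}UX^{-1}$ on symmetric matrices, its inverse is $V \mapsto XVX$. Applying this to $\nabla\phi(X) = -X^{-1}$ gives $-X X^{-1} X = -X$, so
\begin{align*}
\|\nabla\phi(X)\|^2_{(\nabla^2\phi(X))^{-1}} \;=\; \langle -X^{-1},\, -X\rangle \;=\; \tr(I_n) \;=\; n.
\end{align*}
This yields $\|\nabla\phi(X)\|_{(\nabla^2\phi(X))^{-1}} = \sqrt{n}$, matching $\sqrt{\nu}$ with $\nu=n$.

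I do not anticipate a real obstacle here; the calculation is classical. The only delicate point is being careful that the diagonalization step uses the symmetric square root $X^{-1/2}$ so that $Y$ is genuinely symmetric with real eigenvalues (otherwise the inequality $|\tr(Y^3)| \leq \tr(Y^2)^{3/2}$ would need to be argued via singular values instead). With that observation in place, both self-concordance conditions reduce to one-line eigenvalue inequalities, completing the proof.
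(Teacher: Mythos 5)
Your proof is correct, and it is the classical verification from Nesterov and Nemirovskii. Note that the paper itself gives no proof of Lemma~\ref{lem:log-barrier-psd} --- it is stated as a citation to \cite{nn94,hjstz22} --- so there is no internal argument to compare against; your derivation fills that gap self-containedly.

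Two small remarks worth flagging. First, the paper's Definition~4.2 of a $\nu$-self-concordant barrier is slightly garbled as written (it states $D^3\phi(x)[u,u,u] = 2\|u\|_{\nabla^2\phi(x)}$ rather than the standard $|D^3\phi(x)[u,u,u]| \le 2\|u\|^3_{\nabla^2\phi(x)}$); your proof correctly uses the standard inequality, which is what is intended. Second, when you identify the Hessian with the operator $U \mapsto X^{-1}UX^{-1}$ and invert it to $V \mapsto XVX$, it is worth saying explicitly that this operator is restricted to the space of symmetric matrices (which is where the gradient $-X^{-1}$ lives), so that the pairing $\langle -X^{-1}, -X\rangle = \tr(I_n) = n$ is taken in the right inner-product space; this is implicit in your argument and causes no error, but making it explicit would preempt the reader wondering whether the non-symmetric part of the operator matters. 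With those cosmetic clarifications, the proof is complete: both the third-derivative bound, which reduces to $\|\lambda\|_3 \le \|\lambda\|_2$ after the symmetric change of variables $Y = X^{-1/2}UX^{-1/2}$, and the barrier parameter computation giving exactly $n$, are correct.
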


\subsection{Treewidth}\label{sec:preli:treewidth}

\begin{definition}[Tree decomposition and treewidth] \label{defn:treewidth}
Let $G=(V,E)$ be a graph, a tree decomposition of $G$ is a tree $T$ with $b$ vertices, and $b$ sets $J_1,\ldots, J_b \subseteq V$ (called bags), satisfying the following properties:
\begin{itemize}
    \item For every edge $(u,v) \in E$, there exists $j\in [b]$ such that $u,v\in J_j$;
    \item For every vertex $v\in V$, $\{j\in [b]: v\in J_j\}$ is a non-empty subtree of $T$.
\end{itemize}

The treewidth of $G$ is defined as the minimum value of $\max\{|J_j| : j\in [b]\}-1$ over all tree decompositions.
\end{definition}

\subsection{Sketching Matrices}\label{sec:preli:sketching}

\begin{lemma}[Johnson–Lindenstrauss transform, \cite{jl84}]  
\label{lem:jl_matrix}
    Let $\epsilon \in (0, 1)$ be the precision parameter.  Let $\delta \in (0, 1)$ be the failure probability. Let $A \in \R^{m \times n}$ be a real matrix.
    Let $r = \epsilon^{-2} \log(mn/\delta)$.
    For $R \in \R^{r\times n}$ whose entries are i.i.d~$\cN(0, \frac 1r)$, the following holds with probability at least $1-\delta$:
    \begin{align*}
        (1 - \epsilon)\|a_i\|_2 \le \| R  a_i\|_2 \le (1 + \epsilon)\|a_i\|_2, ~\forall i \in [m],
    \end{align*}
    where for a matrix $A$, $a_i^\top$ denotes the $i$-th row of matrix $A \in \R^{m \times n}$.
\end{lemma}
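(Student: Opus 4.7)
The plan is to follow the standard Gaussian JL argument: reduce to unit vectors, apply a chi-squared concentration bound to control $\|Ra_i\|_2^2/\|a_i\|_2^2$ for each fixed $i$, and then union bound over all $m$ rows of $A$. Since $R$ has i.i.d.~$\cN(0,1/r)$ entries, for any fixed vector $v \in \R^n$ the $r$ entries of $Rv$ are i.i.d.~$\cN(0, \|v\|_2^2/r)$ by rotation invariance of the Gaussian (each coordinate is $\langle g, v\rangle$ for an independent $g$ with $\cN(0,1/r)$ entries). Hence $r\|Rv\|_2^2/\|v\|_2^2$ has a chi-squared distribution with $r$ degrees of freedom, so the whole statement becomes a concentration fact about $\chi^2_r$.

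Concretely, I would invoke (or briefly derive from Markov's inequality applied to the moment generating function of $\chi^2_r$) the standard Laurent–Massart tail bound
\begin{align*}
\Pr\!\left[\left| \tfrac{1}{r}\chi^2_r - 1 \right| \geq t\right] \leq 2\exp(-c\, t^2 r)
\end{align*}
valid for $t \in (0,1)$ and an absolute constant $c>0$. Setting $t = \epsilon/3$ (so that $(1\pm\epsilon/3)^{1/2}$ lies inside $[1-\epsilon, 1+\epsilon]$ after taking square roots) gives, for each fixed $i \in [m]$,
\begin{align*}
\Pr\!\left[\|Ra_i\|_2 \notin \left[(1-\epsilon)\|a_i\|_2,\,(1+\epsilon)\|a_i\|_2\right]\right] \leq 2\exp(-c'\epsilon^2 r)
\end{align*}
for an absolute constant $c'>0$.

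Finally I would take a union bound over $i \in [m]$ and choose $r = C\epsilon^{-2}\log(mn/\delta)$ with $C$ large enough so that $2m\exp(-c'\epsilon^2 r) \leq \delta$; the presence of the (extraneous) $\log n$ factor only inflates the constant. I do not anticipate any genuine obstacle: the only substantive ingredient is the sub-exponential concentration of a chi-squared random variable, which is entirely standard and can either be cited or obtained in two lines from $\mathbb{E}[\exp(\lambda \chi^2_r)] = (1-2\lambda)^{-r/2}$ combined with a Chernoff bound. Everything else (rotation invariance of the Gaussian for reducing to unit vectors, the union bound, and the choice of $r$) is routine.
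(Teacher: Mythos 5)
The paper does not actually prove Lemma~\ref{lem:jl_matrix}; it is stated as a known result with a citation to \cite{jl84}. Your sketch is a correct and entirely standard derivation of the Gaussian-matrix form of the JL lemma: rotation invariance reduces each row to a single $\chi^2_r$ concentration statement, the Laurent--Massart (or Chernoff-on-MGF) tail bound gives $2\exp(-c\epsilon^2 r)$ per row, and a union bound over $m$ rows with $r = \Theta(\epsilon^{-2}\log(m/\delta))$ closes the argument. Two small remarks worth being aware of: (1) as you note, the $\log n$ inside the paper's choice $r = \epsilon^{-2}\log(mn/\delta)$ is not needed for the statement as written, and the statement is also missing an implicit constant factor in front of $\epsilon^{-2}$ that your argument correctly supplies; (2) the original \cite{jl84} proof used random orthogonal projections rather than a Gaussian matrix, so your argument, while proving the same statement, is technically the later simplified variant rather than the one in the cited reference. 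Neither point is a gap in your reasoning.
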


\subsection{Sparse Cholesky Decomposition}
In this section we state a few basic results on sparse Cholesky decomposition.
The following definition essentially come from \cite{schreiber1982new}.

\begin{definition}[Elimination tree] \label{defn:elim_tree}
Let $G$ be an undirected graph on $n$ vertices. An elimination tree $\cT$ is a rooted tree on $V(G)$ together with an ordering $\pi$ of $V(G)$ such that for any vertex $v$, its parent is the smallest (under $\pi$) element $u$ such that there exists a path $P$ from $v$ to $u$, such that $\pi(w) \le \pi(v)$ for all $w\in P-u$.
\end{definition}
\begin{lemma}[\cite{schreiber1982new}] \label{lem:elim_tree_imply_cholesky}
    Let $M$ be a PSD matrix and $\cT$ be an elimination tree of the adjacency graph of $M$ (i.e., $(i,j)\in E(G)$ iff $M_{i,j} \ne 0$) together with an elimination ordering $\pi$. Let $P$ be the permutation matrix $P_{i,v} = \mathbbm\{v=\pi(i)\}$.
    Then the Cholesky factor $L$ of $PMP^\top$ (i.e., $PMP^\top = LL^\top$) satisfies $L_{i,j}\ne 0$ only if $\pi(i)$ is an ancestor of $\pi(j)$.
\end{lemma}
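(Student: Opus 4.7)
The plan is to first reduce to the case where the elimination ordering $\pi$ is the identity on $[n]$, so that we eliminate vertices in order $1,2,\ldots,n$. This reduction is harmless: conjugation by the permutation matrix $P$ in $PMP^\top$ just relabels rows and columns consistently, and both the adjacency graph $G$ and the elimination tree $\cT$ inherit the same relabeling. Under this convention the goal becomes to show that $L_{i,j}\ne 0$ with $i>j$ implies $j$ is an ancestor of $i$ in $\cT$.

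The main tool I would use is the classical path/fill-in characterization of sparse Cholesky factorization: for $i>j$ one has $L_{i,j}\ne 0$ only if there exists a path in $G$ from $i$ to $j$ all of whose internal vertices have index strictly less than $j$. I would prove this by induction on the elimination step, maintaining the invariant that the adjacency graph of the Schur complement $M^{(k)}$ obtained after eliminating rows and columns $1,\ldots,k$ consists of exactly those pairs $\{u,v\}\subseteq\{k+1,\ldots,n\}$ such that $G$ contains a $u$-to-$v$ path whose internal vertices lie in $\{1,\ldots,k\}$. The invariant is preserved by one more elimination, since the Schur-complement update introduces a fill edge between every pair of higher-indexed neighbors of vertex $k+1$ in $M^{(k)}$, which is exactly the path-merging operation at the graph level.

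With the path characterization in hand, I would show by strong induction on $i-j$ that the existence of such an $i$-to-$j$ path forces $i$ to be an ancestor of $j$ in $\cT$. Let $p$ denote the parent of $j$ in $\cT$. By the definition of the elimination tree, $p$ is the smallest index strictly greater than $j$ for which $j$ and $p$ are connected in $G$ by a path whose internal vertices have index below $j$. The hypothesis path certifies that $i$ is a candidate, so $p \le i$. If $p=i$ the claim is immediate. Otherwise $p<i$, and concatenating the $j$-to-$i$ and $j$-to-$p$ paths gives a walk from $p$ to $i$ whose interior uses only vertices of index at most $j<p$; extracting a simple path yields a $p$-to-$i$ path with internal indices below $p$. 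The induction hypothesis applied at $i-p<i-j$ then shows that $i$ is an ancestor of $p$, hence of $j$.

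The step I expect to require the most care is the path/fill-in characterization itself, specifically verifying that the Schur-complement invariant is preserved across all $n$ elimination steps; once that invariant is established, the ancestor-chasing induction is a short combinatorial argument. A minor bookkeeping subtlety is the walk-to-path extraction in the inductive step, but this causes no trouble because deleting repeated vertices from a walk cannot raise the maximum interior index.
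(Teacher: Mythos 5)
Your proof is correct, and it is the standard argument for this classical result, which the paper cites from \cite{schreiber1982new} without reproducing a proof: establish Rose's path characterization of symbolic fill-in by tracking the sparsity pattern of successive Schur complements, then chase ancestors up the elimination tree by strong induction on $i-j$. One small imprecision: your Schur-complement invariant asserts the adjacency graph of $M^{(k)}$ \emph{consists of exactly} those pairs $\{u,v\}\subseteq\{k+1,\ldots,n\}$ joined by a path through $\{1,\ldots,k\}$; in exact arithmetic, cancellation can zero out an entry even when such a path exists, so the invariant should be stated as containment (the sparsity pattern of $M^{(k)}$ is a \emph{subset} of that pair set). Since your final deduction only uses the direction ``$L_{i,j}\ne 0$ implies a path exists,'' this does not affect the validity of the argument, but the equality claim would be false as stated.
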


The following lemma is useful in proving elimination trees.
\begin{lemma} \label{lem:prove_elim_tree}
Let $G$ be an undirected graph on $n$ vertices. Let $\cT$ be a rooted tree with $n$ vertices, satisfying the property that:
for any path $u=v_1,\cdots,v_k=v$ with $(v_i,v_{i+1})\in E(G)$ for all $i\in [k-1]$, then there exists $i\in [k]$ such that $v_i$ is an ancestor of both $u$ and $v$.
Then $\cT$ together with any post-order traversal of $\cT$ is an elimination tree.
\end{lemma}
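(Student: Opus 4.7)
My plan is to verify the defining property of Definition~\ref{defn:elim_tree} for the pair $(\cT, \pi)$ directly, leveraging the standard fact about any post-order traversal: if $u$ is a strict ancestor of $v$ in $\cT$, then $\pi(u) > \pi(v)$; equivalently, every strict descendant of a vertex precedes it in $\pi$. This will be used repeatedly together with the hypothesis that any $G$-path between two vertices contains a common $\cT$-ancestor of its endpoints.

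The central content is a minimality argument. Fix a non-root $v$ with parent $p = p_\cT(v)$. Let $u \ne v$ be any vertex admitting a $G$-path $v = w_0, w_1, \ldots, w_\ell = u$ with $\pi(w_i) \le \pi(v)$ for all $i < \ell$ and $\pi(u) > \pi(v)$. Apply the hypothesis to this path: some $w_j$ is a common ancestor of $v$ and $u$ in $\cT$. I will rule out two cases. First, $w_j$ cannot be strictly intermediate, because an intermediate $w_j$ has $\pi(w_j) < \pi(v)$ (as $w_j \ne v$) while a strict ancestor of $v$ has $\pi(w_j) > \pi(v)$ by the post-order property, a contradiction. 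Second, $w_j = v$ would make $v$ an ancestor of $u$, forcing $\pi(u) < \pi(v)$, against the assumption $\pi(u) > \pi(v)$. Therefore $w_j = u$, so $u$ is a strict ancestor of $v$ in $\cT$, which means $u$ lies in the chain $\{p, p_\cT(p), p_\cT(p_\cT(p)), \ldots\}$, and hence $\pi(u) \ge \pi(p)$. This establishes that $p$ is the $\pi$-smallest valid candidate.

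The remaining step is to certify that $p$ itself is a valid candidate, i.e., that there is a $G$-path from $v$ to $p$ with intermediates in the subtree of $v$. This is the step I expect to be the main obstacle in a strict proof, since the lemma's hypothesis is a condition on existing paths rather than a constructive statement producing them. In the intended applications, $\cT$ arises from a tree decomposition of $G$ whose adjacent bags share a vertex, which furnishes such a path explicitly; absent that structure, one can bypass this direction entirely for the downstream use in Lemma~\ref{lem:elim_tree_imply_cholesky}, since applying the trichotomy above to any fill-path certifying a nonzero $L_{i,j}$ already yields that $\pi(i)$ is an ancestor of $\pi(j)$ in $\cT$, which is the sole property of the elimination tree that the Cholesky sparsity conclusion requires.
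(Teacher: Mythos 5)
Your argument is essentially the paper's approach — apply the lemma's hypothesis to the fill path, then use the post-order constraint to force the guaranteed common ancestor onto the far endpoint — but you are noticeably more careful in executing it. The paper's proof takes an arbitrary path from $v$ to a vertex outside the subtree of $v$, observes that the common ancestor $u$ found on it satisfies $\pi(u) \ge \pi(w)$, and then jumps to the conclusion; it never explicitly makes the step (which you supply cleanly via the trichotomy) that $u$ must coincide with the far endpoint of a \emph{fill} path, since every other vertex on such a path has $\pi$-value at most $\pi(v)$ while any strict $\cT$-ancestor of $v$ has $\pi$-value strictly greater.

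Your remark about the unproved converse direction is also correct and the paper elides it: neither your argument nor the paper's shows that the $\cT$-parent of $v$ is itself reachable from $v$ by a fill path, and this does not follow from the hypothesis alone (e.g., for edgeless $G$ the hypothesis is vacuous for every rooted tree $\cT$, yet the canonical elimination tree is a forest of isolated roots). So Lemma~\ref{lem:prove_elim_tree}, read against Definition~\ref{defn:elim_tree} literally, slightly overclaims, and the paper's closing sentence (``therefore $w$ is the smallest element reachable from $v$'') presupposes reachability of $w$ without establishing it. As you observe, the one-sided inclusion you do prove — every vertex reachable from $v$ by a fill path with larger $\pi$-value is a strict $\cT$-ancestor of $v$ — is precisely what Lemma~\ref{lem:elim_tree_imply_cholesky} consumes, so the discrepancy is harmless downstream; replacing the lemma's conclusion with that one-sided statement would be the clean repair.
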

\begin{proof}
    We prove that $\cT$ satisfies Definition~\ref{defn:elim_tree}.
    Let $v\in V(G)$ and $P$ be any path from $v$ to a vertex outside the subtree rooted at $v$. By assumption, there exists a vertex $u\in P$ which is an ancestor of $v$.
    Let $w$ be the parent of $v$. Then $\pi(u) \ge \pi(w) > \pi(v)$.
    Therefore $w$ is the smallest element reachable from $v$ using only elements before $v$.
\end{proof}
When any post-order traversal of $\cT$ works, we omit the choice of $\pi$ and say $\cT$ is an elimination tree.

\begin{lemma}[\cite{george1994computer}]\label{lem:known_cholesky_TL}
Under the setting of Lemma~\ref{lem:elim_tree_imply_cholesky}, if depth of $\cT$ is at most $\tau$, then the Cholesky decomposition $L$ can be computed in $O(n \tau^2)$ time.
\end{lemma}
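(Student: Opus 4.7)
The plan is to combine the sparsity guarantee from Lemma~\ref{lem:elim_tree_imply_cholesky} with the standard outer-product (right-looking) Cholesky algorithm. By Lemma~\ref{lem:elim_tree_imply_cholesky}, after permuting by $P$, the factor $L$ of $PMP^\top$ satisfies $L_{i,j}\ne 0$ only when $\pi(i)$ is an ancestor of $\pi(j)$ in $\cT$. Since $\cT$ has depth at most $\tau$, the set of ancestors of any fixed vertex has size at most $\tau$, so every column of $L$ has at most $\tau$ nonzero rows, and those rows form an ancestor chain from $\pi(j)$ to the root.

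First I would run a symbolic factorization pass: traverse $\cT$ to record, for each column $j$, the list $\mathrm{Anc}(j)$ of ancestor indices (length $\le \tau$), and allocate storage for the nonzero entries only on these positions. This preprocessing costs $O(n\tau)$ and lets us access any entry $M_{i,k}$ with $i \in \mathrm{Anc}(k)$ in $O(1)$. Then I would process columns in the postorder $\pi$: when processing column $k$, set $L_{k,k} = \sqrt{M_{k,k}}$, compute $L_{i,k} = M_{i,k}/L_{k,k}$ for each $i\in \mathrm{Anc}(k)$, and perform the rank-one Schur-complement update
\begin{align*}
M_{i,i'} \;\leftarrow\; M_{i,i'} - L_{i,k} L_{i',k}, \qquad i,i' \in \mathrm{Anc}(k).
\end{align*}
This update only touches the $|\mathrm{Anc}(k)|\times |\mathrm{Anc}(k)| \le \tau\times \tau$ block indexed by ancestors of $k$, and crucially all of these entries already lie inside the precomputed sparsity pattern of $L$ (since any two ancestors of $k$ are themselves in ancestor relation, hence the corresponding $L$-entry is allowed to be nonzero). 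Each column therefore costs $O(\tau^2)$ arithmetic operations, for a total of $O(n\tau^2)$.

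The main obstacle is the bookkeeping: one must ensure the outer-product updates never touch indices outside the predicted sparsity pattern, and that accessing $M_{i,i'}$ for $i,i'\in \mathrm{Anc}(k)$ is $O(1)$ rather than $O(\log n)$. This is handled by storing each column's nonzeros in an array indexed by depth along the root-path, and using the precomputed ancestor lists (and the fact that the nonzero pattern of the updated matrix stays inside the pattern of $L$) to translate between global indices and local positions in $O(1)$. All of this is standard sparse-Cholesky machinery, so the proof would proceed by instantiating \cite{george1994computer} with the depth-$\tau$ elimination tree of Lemma~\ref{lem:elim_tree_imply_cholesky} and verifying the per-column $O(\tau^2)$ bound as described above.
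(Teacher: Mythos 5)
Your proposal is correct and reconstructs the standard sparse-Cholesky argument that the paper simply cites to \cite{george1994computer} without proof: bounded elimination-tree depth gives $\le \tau$ nonzeros per column confined to an ancestor chain, any two ancestors of $k$ are themselves in ancestor relation so the rank-one Schur update stays inside the predicted fill pattern, and the per-column cost is $O(\tau^2)$. The bookkeeping observation (index each column's nonzeros by depth so that the $\tau\times\tau$ dense sub-block can be addressed in $O(1)$ per entry) is the right way to close the constant-time access gap; this matches the classical treatment.
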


\begin{lemma}[\cite{dh99}]\label{lem:known_cholesky_TDeltaLmax}
Under the setting of Lemma~\ref{lem:known_cholesky_TL}, suppose we are already given the Cholesky decomposition $L$.
Let $w\in \R^n$ be a vector such that $M+ww^\top$ has the same adjacency graph as $M$.
Then we can compute $\Delta_L\in \R^{n\times n}$ such that $L+\Delta_L$ is the Cholesky factor of $M+ww^\top$ in $O(\tau^2)$ time.
\end{lemma}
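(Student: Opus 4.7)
The plan is to derive an explicit column-by-column rank-one Cholesky update and then bound its cost using the elimination-tree sparsity.

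First I would observe that since $M+ww^\top$ has the same adjacency graph as $M$, the set $S := \supp(w)$ induces a clique in that graph. A standard property of elimination trees, provable from Definition~\ref{defn:elim_tree} by a short induction on $\pi$, states that for any edge $(u,v)$ of the adjacency graph, one of $u,v$ is a $\cT$-ancestor of the other. So the elements of $S$ are totally ordered by the $\cT$-ancestor relation, and $S$ lies on a single root-to-leaf path of $\cT$; since $\cT$ has depth $\le \tau$, we get $|S|\le \tau$ and the union $\mathrm{Anc}(S)$ of $\cT$-ancestors of elements of $S$ is again a single path of length $\le \tau$.

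Next I would write down the standard rank-one Cholesky update. Set $w^{(0)} := w$ and $L' := L+\Delta_L$, and process columns in the elimination order $\pi$. At step $j$, if $w^{(j-1)}_j = 0$ leave column $j$ alone and set $w^{(j)} := w^{(j-1)}$; otherwise set $L'_{j,j} := \sqrt{L_{j,j}^2 + (w^{(j-1)}_j)^2}$, derive scalars $\alpha_j,\beta_j$ from $(L_{j,j}, w^{(j-1)}_j, L'_{j,j})$, and for each $i>j$ apply the simultaneous update $L'_{i,j} := \alpha_j L_{i,j} + \beta_j w^{(j-1)}_i$ and $w^{(j)}_i := -\beta_j L_{i,j} + \alpha_j w^{(j-1)}_i$. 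A direct induction on $j$ gives $M+ww^\top = L'(L')^\top$ at the end of the sweep.

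The complexity estimate now follows from two sparsity facts. By Lemma~\ref{lem:elim_tree_imply_cholesky}, the nonzeros of column $j$ of $L$ (and of $L'$, which has the same adjacency graph) lie only at $\cT$-ancestors of $j$; hence the update rule for $w^{(j)}$ can only create new nonzeros at ancestors of $j$, and by induction $\supp(w^{(j)}) \subseteq \mathrm{Anc}(S)$ throughout. So at most $|\mathrm{Anc}(S)| \le \tau$ columns are actively processed, each active column update touches the $\le \tau$ ancestor entries of $L_{*,j}$ and of $w^{(j-1)}$, and the overall cost is $O(\tau^2)$. The resulting $\Delta_L$ is supported in the $\mathrm{Anc}(S)\times \mathrm{Anc}(S)$ block and thus has $O(\tau^2)$ nonzeros.

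The main obstacle is the sparsity-preservation bookkeeping: one must check carefully that throughout the sweep the working vector really stays inside $\mathrm{Anc}(S)$, and that the update scalars $\alpha_j,\beta_j$ are always well-defined (equivalently, $L_{j,j}^2 + (w^{(j-1)}_j)^2 > 0$). Both points go through once we use that $M$ and $M+ww^\top$ are PSD with the stated adjacency structure. Everything else reduces to the standard rank-one Cholesky update analysis of \cite{dh99}.
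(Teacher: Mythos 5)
The paper gives no proof of this lemma; it is stated as a citation to~\cite{dh99}, so there is no in-paper argument to compare against. Your reconstruction is correct and is in essence the standard sparse rank-one Cholesky update argument of Davis--Hager, specialized to a depth-$\tau$ elimination tree. A few points worth noting. The key structural fact you invoke --- that for any edge $(u,v)$ of the adjacency graph one of $u,v$ is a $\cT$-ancestor of the other --- is not stated anywhere in the paper; Lemma~\ref{lem:prove_elim_tree} goes in the \emph{opposite} direction (it gives a sufficient condition for a tree to be an elimination tree), so you cannot lean on it. The fact you need does follow from Definition~\ref{defn:elim_tree}, but the argument is a strong induction along the parent chain (if $\pi(v)<\pi(u)$, the edge shows $\pi(\mathrm{par}(v))\le\pi(u)$, and the witnessing path for $\mathrm{par}(v)$ together with the edge exhibits a low-$\pi$ path from $\mathrm{par}(v)$ to $u$, so one recurses up until the chain hits $u$) rather than a one-step check; your phrasing ``a short induction on $\pi$'' undersells this but the fact is standard. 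Given it, the total ordering of $S=\supp(w)$ under the ancestor relation, the containment $\supp(w^{(j)})\subseteq\mathrm{Anc}(S)$ throughout the sweep, and the bound $|\mathrm{Anc}(S)|\le\tau$ all hold, and the $O(\tau^2)$ count follows. One minor simplification: your parenthetical claim that $L'$ ``has the same adjacency graph'' as $L$ is not needed (and is not quite what the hypothesis guarantees --- only the adjacency graphs of $M$ and $M+ww^\top$ agree); the induction already confines the active columns and the working vector to $\mathrm{Anc}(S)$ using the sparsity of $L$ alone, which is all the cost bound requires, and automatically places $\supp(\Delta_L)$ inside $\mathrm{Anc}(S)\times\mathrm{Anc}(S)$.
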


\newpage
\clearpage
\section{General Treewidth Program Solver Framework} \label{sec:framework}

In this section we establish a general framework for solving treewidth LP/SDP and related problems.
Our framework takes block size into consideration and do not assume various block size-related parameters are constant.
Our framework takes in certain subroutine running time as parameters. This allows us to see which subroutines are running time bottlenecks.
Our SDP and LP results follow from the general framework with minimal problem-specific results in addition.

We briefly describe the outline of this section.
\begin{itemize}
    \item In Section~\ref{sec:framework:definition}, we present the definitions and backgrounds for this Section~\ref{sec:framework}.
    \item In Section~\ref{sec:framework:cpm_ds}, we present the main data structure which is central path maintenance data structure.
    \item In Section~\ref{sec:framework:block_elim_tree}, we present several computation-related lemmas which will be heavily used in our tasks.
    \item In Section~\ref{sec:framework:being_used_in_cpm}, we present several data structures that are being used in central path maintenance, including \textsc{ExactDS} (Section~\ref{sec:framework:exact_ds}), \textsc{ApproxDS} (Section~\ref{sec:framework:approx_ds}), \textsc{BatchSketch} (Section~\ref{sec:framework:batch_sketch}) and \textsc{BlockVectorSketch}(Section~\ref{sec:framework:block_vector_sketch}).
    \item In Section~\ref{sec:framework:cpm_analysis}, we prove correctness and running time of the central path maintenance data structure.
    \item In Section~\ref{sec:framework:main}, we prove the main result (Theorem~\ref{thm:algo_general}).
\end{itemize}

\subsection{Main Statement}\label{sec:framework:definition}

We consider programs of the following form.
\begin{align}\label{eqn:lp_general}
    \min \qquad & c^\top x\\
    \text{s.t.} \qquad & A_i x = b_i \qquad \forall i\in [m] \nonumber\\
    & x_i\in \cK_i \qquad \forall i\in [n] \nonumber
\end{align}
where
\begin{itemize}
    \item $\cK_i \subset \R^{n_i}$ is a convex set .
    \item $x = (x_1,\ldots, x_n) \in \R^{n_\lp}$, where $n_\lp := \sum_{i\in [n]} n_i$ .
    \item $c = (c_1,\ldots, c_n) \in \R^{n_\lp}$, where $c_i \in \R^{n_i}$ .
    \item $A\in \R^{{m_\lp}\times n_\lp}$, $b\in \R^{m_\lp}$ .
\end{itemize}

\begin{definition}[Treewidth of a general linear program] \label{def:general_lp_treewidth}
Treewidth $\tau_\lp$ of Program~\eqref{eqn:lp_general} is defined as the treewidth (Definition~\ref{defn:treewidth}) of its the LP dual graph, where the LP dual graph is defined as follows:

The vertex set is $[m_\lp]$. There is an edge between $i,j\in [m_\lp]$ if and only if there exists $k\in [n]$ such that $A_{i,k}\ne 0$, $A_{j,k}\ne 0$.

\end{definition}

\begin{definition}[Parameters for General Treewidth Program] \label{defn:lp_general_param}
We make the following assumptions on Program~\eqref{eqn:lp_general} and define relevant parameters.
These parameters will have an impact to the final running time (Theorem~\ref{thm:algo_general}).
\begin{itemize}
    \item Assume that for each $\cK_i$, we have a $\nu_i$-self-concordant barrier function $\phi_i$.
    Let $\nu_{\max} := \max_{i\in [n]} \nu_i$.
    \item Assume that $\phi_i$, $\nabla \phi_i$ and $\nabla^2 \phi_i$ can be computed in $T_{H,i}$ time. Define $T_{H,\max} := \max_{i\in [n]} T_{H,i}$ and $T_H := \sum_{i\in [n]} T_{H,i}$.
    \item Assume that we are given a tree decomposition of the LP dual graph, such that every bag has total size at most $\tau_\lp$.
    \item Assume that we are given a block elimination tree (Definition~\ref{defn:block-elimination-tree}) with $m$ vertices and maximum block-depth $\eta$.
    The block elimination tree partitions the constraints into $m$ blocks, each of size $m_i$ ($i\in [m]$), with $\sum_{i\in [m]} m_i = m_\lp$.
    \item Assume that it takes $T_n$ time to perform matrix multiplication of two block diagonal matrices $\in \R^{n_\lp\times n_\lp}$ with block signature $(n_1,\ldots,n_n)$. We have $T_n = \sum_{i\in [n]} \Tmat(n_i)$.
    \item Assume that it takes $T_m$ time to perform matrix multiplication of two block diagonal matrices $\in \R^{m_\lp\times m_\lp}$ with block signature $(m_1,\ldots, m_m)$. We have $T_m = \sum_{i\in [m]} \Tmat(m_i)$.
    \item Assume that it takes $T_L$ time to compute Cholesky decomposition $A H A^\top = LL^\top$.
    \item Assume that it takes $T_{\Delta_L,\max}$ time to update Cholesky decomposition, i.e., given $A H A^\top = LL^\top$ and $\Delta_{H}$ supported on a single diagonal block, computing $\Delta_{L}$ such that $A (H+\Delta_{H}) A^\top = (L+\Delta_{L}) (L+\Delta_{L})^\top$.
    \item Assume that it takes $T_Z$ time to compute $L^{-1} v_i$ for all $i\in [m]$, where $v_i$ is supported on $\bigcup_{j\in \cP(i)} B_j$, where $\cP(i)$ is the set of ancestors of vertex $i$. This is used in Lemma~\ref{lem:block_balanced_sketch}.
    \item Assume that $R$ is the diameter of $\cK_i$. Assume that we are given an initial point $x$ such that $B(x,r) \subseteq \cK$.
    Assume that there exists $x$ such that $Az=b$ and $B(x,r) \subseteq \cK$.
\end{itemize}
\end{definition}

We are ready to state our main theorem.
\begin{theorem}[General Treewidth Program Solver] \label{thm:algo_general}
Under assumptions in Definition~\ref{defn:lp_general_param}, given any $0<\epsilon \le \frac 12$, we can find $x\in \cK$ with $Ax=b$ such that
\begin{align*}
c^\top x \le \min_{Ax=b, x\in \cK} c^\top x + \epsilon \|c\|_2 R
\end{align*}
in expected time
\begin{align*}
&\wt{O} ( n^{0.5} \nu_{\max}^{0.5} \cdot (T_H + T_n + T_L + T_Z + \nnz(A) + \eta m_\lp m_{\max})^{0.5} \\
&\cdot (\Tmat(n_{\max}) + T_{\Delta_L,\max} + T_{H,\max} + \eta^2 m_{\max}^2)^{0.5} \log(R/(r\epsilon))).
\end{align*}
where $\wt O$ hides $\polylog(n_\lp)$ terms.
\end{theorem}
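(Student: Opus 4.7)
The plan is to instantiate a robust interior point method (IPM) on Program~\eqref{eqn:lp_general} and to maintain the primal-dual iterates implicitly via a multiscale representation supported by a central path maintenance data structure. First, I would set up the central path equations $s/t + w\nabla\phi(x) = \mu$, $Ax=b$, $A^\top y + s = c$ as in Section~\ref{sec:tech_n_poly_tau}, choose the log-barrier-like potential $\phi = \sum_i \phi_i$ (so that $\nabla^2\phi$ is block diagonal with block sizes $n_i$), and argue via standard self-concordance theory that it suffices to run $N = \wt{O}(\sqrt{n\,\nu_{\max}}\,\log(R/(r\epsilon)))$ approximate Newton steps, thanks to $\nu_{\max}$-self-concordance on each block and the given initial point $x$ with $B(x,r)\subseteq\cK$. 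The output of the IPM then satisfies $c^\top x \le \min c^\top x + \epsilon\|c\|_2 R$, giving the required accuracy; the rest of the work is to show that each Newton step can be implemented cheaply on average.

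Next, I would invoke the \textsc{CentralPathMaintenance} data structure (to be built in Section~\ref{sec:framework:cpm_ds}), which is organized around the loop of Algorithm~\ref{alg:informal_cpm}: implicitly maintain the multiscale coefficients \eqref{eqn:multiscale-coef} with \textsc{ExactDS}, detect coordinates where $\|x_i - \ov x_i\|$ is too large by applying a Johnson–Lindenstrauss sketch (Lemma~\ref{lem:jl_matrix}) maintained by \textsc{ApproxDS}/\textsc{BatchSketch}/\textsc{BlockBalancedSketch}/\textsc{BlockVectorSketch}, and restart every $q$ iterations so that the sum-of-squares drift $\sum_i \|x^{(i)} - x^{(0)}\|_2^2 \lesssim q^2$ controls the number of block updates, yielding $\wt O(q^2 / w)$ updates between restarts on average. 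The Hessian $H_{\ov x}$ is block-diagonal with blocks of size at most $n_{\max}$, so its inverse and square root cost $O(T_{H,\max}+\Tmat(n_{\max}))$ per block, and the Cholesky factor $L_{\ov x}$ of $A H_{\ov x}^{-1} A^\top$ is sparse under the block elimination tree (Definition~\ref{defn:elim_tree}, Lemmas~\ref{lem:elim_tree_imply_cholesky}, \ref{lem:known_cholesky_TL}, \ref{lem:known_cholesky_TDeltaLmax}), so its initial computation and rank-one updates cost $T_L$ and $T_{\Delta_L,\max}$ respectively.

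The running time analysis then reduces to balancing two terms. Each restart costs
\[
T_{\text{restart}} = O(T_H + T_n + T_L + T_Z + \nnz(A) + \eta m_\lp m_{\max}),
\]
covering Hessian inversion, Cholesky factorization, initial sketch evaluations via the block elimination tree, and forming $\cW = L_{\ov x}^{-1} A H_{\ov x}^{-1/2}$; while on average each iteration between restarts costs $\wt O(q w^{-1})$ block updates, each of cost
\[
T_{\text{block}} = O(\Tmat(n_{\max}) + T_{\Delta_L,\max} + T_{H,\max} + \eta^2 m_{\max}^2),
\]
the last term coming from updating \textsc{BatchSketch} along a root-to-leaf path in the block elimination tree. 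Summing over $N$ iterations with $N/q$ restarts gives
\[
\wt O\bigl((N/q)\,T_{\text{restart}} + N\cdot (q/w)\,T_{\text{block}}\bigr),
\]
which is minimized (in $q$) at the claimed bound after choosing $w$ and $q$ appropriately; the square root structure in the theorem statement is exactly the AM-GM optimum.

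The hard part will be the implicit-representation bookkeeping: proving that under a central path step and under a sparse update to $\ov x$, every term in \eqref{eqn:multiscale-coef} can be refreshed within the advertised per-block budget while keeping $\Phi x$ correctly sketched despite never touching $x$ explicitly. This is where \textsc{BlockBalancedSketch} and \textsc{BlockVectorSketch} must be combined carefully so that a rank-$n_i$ change in $\ov x_i$ induces only an $O(\eta)$-local update of \textsc{BatchSketch}, while \textsc{ApproxDS} simultaneously maintains a slowly changing approximation $\ov x$ of the implicit $x$ whose coordinate-wise error controls the Newton step quality. I also need to verify that removing the sampling step (Remark~\ref{rmk:dly_linf_approx_error}) fixes the $\ell_\infty$ approximation guarantee of \cite{dly21} so that the robust IPM invariant is preserved with high probability; everything else then plugs into the IPM convergence guarantee to yield Theorem~\ref{thm:algo_general}.
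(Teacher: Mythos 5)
Your proposal is correct and follows essentially the same route as the paper: invoke Theorem~\ref{thm:DLYA.1} (robust IPM with $\wt{O}(\sqrt{\kappa}\log(R/(r\epsilon)))$ iterations for $\kappa = \sum_i w_i \nu_i$) and implement each step with the \textsc{CentralPathMaintenance} data structure of Theorem~\ref{thm:central_path_maintenance_general}, balancing restart cost against amortized per-iteration update cost via the threshold $q$. The only cosmetic difference is that you implicitly set $w=1$ (giving $N = \wt O(\sqrt{n\nu_{\max}})$) whereas the paper sets $w = \nu_{\max}$ (giving $N = \sqrt{n}\nu_{\max}$); since the per-iteration block-update count scales as $\wt O(q^2/w)$, both choices produce the identical final bound $\wt{O}(n^{0.5}\nu_{\max}^{0.5}\sqrt{T_{\mathrm{restart}} \cdot T_{\mathrm{block}}}\log(R/(r\epsilon)))$ after balancing.
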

We defer the proofs into Section~\ref{sec:framework:main}.

\begin{table}[!ht]
\centering
\begin{tabular}{|l|l|l|l|l|}\hline
{\bf Notation} & {\bf Meaning}   \\ \hline
$n_\lp$ & Total variable dimension   \\ \hline
$n$ & Number of variable blocks   \\ \hline
$n_i$ & Dimension of $i$-th variable block   \\ \hline
$n_{\max}$ & Max dimension of a block     \\ \hline
$m_\lp$ & Number of constraints   \\ \hline
$m$ & Number of constraint blocks   \\ \hline
$m_i$ & Size of $i$-th constraint block   \\ \hline
$m_{\max}$ & Max size of constraint block  \\ \hline
$\tau_\lp$ & Max size of a bag  \\ \hline
$\eta$ & Block height of elimination tree  \\ \hline
$\nu_i$ & Self-concordance of $i$-th block   \\ \hline
$\nu_{\max}$ & Max self-concordance of a block \\ \hline
$\nnz(A)$ & number of non-zero entries of $A$  \\ \hline
$T_n$ & Matrix mult time for $n_\lp\times n_\lp$ block-diag matrix   \\ \hline
$T_m$ & Matrix mult time for $m_\lp\times m_\lp$ block-diag matrix   \\ \hline
$T_H$ & time to compute Hessian   \\ \hline
$T_{H,\max}$ & time to compute Hessian of one block   \\ \hline
$T_L$ & time to compute Cholesky   \\ \hline
$T_{\Delta_L,\max}$ & time to update Cholesky  \\ \hline
\end{tabular}
\caption{
Parameters needed to call Theorem~\ref{thm:algo_general}.
Their values are summarized in Table~\ref{tab:args_general_with_choices}
}
\label{tab:args_general}
\end{table}

\begin{table}[!ht]
\centering
\begin{tabular}{|l|l|l|l|l|}\hline
{\bf Notation} & {\bf SDP First} & {\bf SDP Second} & {\bf Decomposable SDP} & {\bf LP} \\ \hline
$n_\lp$  & $n_\sdp \tau_\sdp^2$ & $n_\sdp \tau_\sdp^2$ & $n_\sdp\tau_\sdp^2$ & $n$ \\ \hline
$n$  & $n_\sdp$ & $n_\sdp$ & $n_\sdp$ & $n$ \\ \hline
$n_i$  & $\tau_\sdp^2$ & $\tau_\sdp^2$ & $\tau_\sdp^2$ & $1$ \\ \hline
$n_{\max}$   & $\tau_\sdp^2$ &$\tau_\sdp^2$ & $\tau_\sdp^2$ & $1$  \\ \hline
$m_\lp$   & $m_{\sdp} + n_\sdp \tau_\sdp^2$ & $m_{\sdp} + n_\sdp \tau_\sdp^2$ & $m_\sdp+n_\sdp (\tau_\sdp^2+\gamma_{\max})$ & $m$ \\ \hline
$m$   & $m_\lp$ & $n_{\sdp}$ & $n_\sdp$ & $m$ \\ \hline
$m_i$   & $1$ & $\tau_\lp$ & $\tau_\lp$ & $1$ \\ \hline
$m_{\max}$   & $1$ & $\tau_\lp$ & $\tau_\lp$ & $1$ \\ \hline
$\tau_\lp$   & $\tau_\sdp^2$ & $\tau_\sdp^2$ & $\tau_\sdp^2 + \gamma_{\max}$ & $\tau$ \\ \hline
$\eta$   & $\tau_\lp$ & $1$ & $1$ & $\tau$ \\ \hline
$\nu_i$   & $\tau_\sdp$ & $\tau_\sdp$ & $\tau_\sdp$ & $1$ \\ \hline
$\nu_{\max}$   & $\tau_\sdp$ & $\tau_\sdp$ & $\tau_\sdp$ & $1$ \\ \hline
$\nnz(A)$   & $n_\sdp \tau_\sdp^4$ & $n_\sdp \tau_\sdp^4$ & $n_\sdp \tau_\sdp^2 (\tau_\sdp^2+\gamma_{\max})$ & $n\tau$ \\ \hline
$T_n$  & $n_{\sdp} \tau_{\sdp}^{2\omega}$ & $n_{\sdp}\tau_{\sdp}^{2\omega}$ & $n_{\sdp}\tau_{\sdp}^{2\omega}$ & $n$ \\ \hline
$T_m$  & $m_\lp$ & $n_{\sdp}\tau_{\sdp}^{2\omega}$ & $n_{\sdp}(\tau_{\sdp}+\gamma_{\max})^{2\omega}$ & $n$ \\ \hline
$T_H$  & $n_{\sdp}\tau_{\sdp}^{2\omega}$ & $n_{\sdp}\tau_{\sdp}^{2\omega}$ & $n_{\sdp}\tau_{\sdp}^{2\omega}$ & $n$ \\ \hline
$T_{H,\max}$  & $n_{\max}^\omega$ & $n_{\max}^\omega$ & $n_{\max}^\omega$ & $1$ \\ \hline
$T_L$ & $n_\lp \tau_\lp^2$ & $n_\sdp \tau_\lp^\omega$ & $n_\sdp \tau_\lp^\omega$ & $n \tau^{\omega-1}$ \\ \hline
$T_{\Delta_L,\max}$ & $\tau_\lp^3$ & $\tau_\lp^\omega$ & $\tau_\lp^\omega$ & $\tau^2$ \\ \hline
\end{tabular}
\caption{
Values of parameters in different results. Definition of these parameters are provided in Table~\ref{tab:args_general}.
Validity of the choices are proved in Section~\ref{sec:sdp_first}, Section~\ref{sec:sdp_second}, Section~\ref{sec:sdp_general}, and Section~\ref{sec:lp}.
$\wt O$ notation is omitted.
}
\label{tab:args_general_with_choices}
\end{table}

\subsection{Algorithm structure and \texorpdfstring{$\textsc{CentralPathMaintenance}$}{}}\label{sec:framework:cpm_ds}
Our algorithm is a robust Interior Point Method (robust IPM).
Details of the robust IPM will be given in Section~\ref{sec:robust_ipm}.

In robust IPM, we maintain the primal-dual solution pair $(x,s)\in \R^{n_\lp} \times \R^{n_\lp}$ on the robust central path. In addition, we maintain approximations $(\ov x, \ov s) \in \R^{n_\lp} \times \R^{n_\lp}$ which change sparsely.
In each iteration, we implicitly perform update
\begin{align*}
    x &\gets x + (H_{\ov x}^{-1} - H_{\ov x}^{-1} A^\top (A H_{\ov x}^{-1} A^\top)^{-1} A H_{\ov x}^{-1}) \delta_\mu(\ov x, \ov s, \ov t), \\
    s & \gets s + t A^\top (A H_{\ov x}^{-1} A^\top)^{-1} A H_{\ov x}^{-1} \delta_\mu(\ov x, \ov s, \ov t)
\end{align*}
and explicitly maintain $(\ov x, \ov s)$ such that they remain close to $(x, s)$.

This task is handled by the \textsc{CentralPathMaintenance} data structure, which is the main data structure.
The robust IPM algorithm (Algorithm~\ref{alg:robust_ipm_main},\ref{alg:robust_ipm_centering_data_structure}) directly calls it in every iteration.
This data structure is a generalization of \textsc{CentralPathMaintenance} data structure in \cite{dly21}.

The \textsc{CentralPathMaintenance} data structure (Algorithm~\ref{alg:cpm}) has two main sub data structures, \textsc{ExactDS} (Algorithm~\ref{alg:exact_ds_part_1} and Algorithm~\ref{alg:exact_ds_part_2}) and \textsc{ApproxDS}.
\textsc{ExactDS} is used to to maintain $(x,s)$, and \textsc{ApproxDS} is used to monitor changes in $(x,s)$, and update $(\ov x, \ov s)$ when necessary.

\begin{theorem} \label{thm:central_path_maintenance_general}
Data structure \textsc{CentralPathMaintenance} (Algorithm~\ref{alg:cpm}) implicitly maintains the central path primal-dual solution pair $(x,s) \in \R^{n_{\lp}} \times \R^{n_{\lp}}$
and explicitly maintains its approximation $(\ov{x}, \ov{s}) \in \R^{n_{\lp}} \times \R^{n_{\lp}}$ using the following functions:
\begin{itemize}
    \item \textsc{Initialize}$(x \in \R^{n_{\lp}}, s \in \R^{n_{\lp}}, t_0, q)$: (See Lemma~\ref{lem:init_time_general}) Initializes the data structure with initial primal-dual solution pair $(x,s) \in \R^{n_{\lp}} \times \R^{ n_{\lp}}$, initial central path timestamp $t_0$, and a run-time tuning parameter $q$ in 
    \begin{align*} 
    \wt O(T_H + T_n + T_L + T_Z + \nnz(A) + \eta m_\lp m_{\max})
    \end{align*}
    time.
    \item \textsc{MultiplyAndMove}$(t)$: (See Lemma~\ref{lem:multiply_and_move_time_general}) It implicitly maintains
    \begin{align*}
        x &\leftarrow x + H_{\ov{x}}^{-1/2}(I-P_{\ov{x}}) H_{\ov{x}}^{-1/2} \delta_\mu(\ov{x}, \ov{s}, \ov{t})\\
        s &\leftarrow s + t \cdot  H_{\ov{x}}^{1/2} P_{\ov{x}} H_{\ov{x}}^{-1/2} \delta_\mu(\ov{x}, \ov{s}, \ov{t})
    \end{align*}
    where $H_{\ov{x}} := \nabla^2 \phi(\ov{x}) \in \R^{ n_{\lp} \times n_{\lp} }$, $P_{\ov{x}} := H_{\ov{x}}^{-1/2} A^\top (A H_{\ov{x}}^{-1} A^\top)^{-1} A H_{\ov{x}}^{-1/2} \in \R^{n_{\lp} \times n_{\lp}}$,
    and $\ov{t}$ is some earlier timestamp satisfying $|t-\ov{t}| \le \epsilon_t \cdot \ov{t}$.
    
    It also explicitly maintains $(\ov{x}, \ov{s}) \in \R^{n_{\lp} \times n_{\lp}}$ such that $\|\ov{x}_i-x_i\|_{\ov{x}_i} \le \ov{\epsilon}$ and $\|\ov{s}_i-s_i\|^*_{\ov{x}_i} \le t \ov{\epsilon} w_i$ for all $i\in [n]$ with probability at least $0.9$.
    
    Assuming the function is called at most $N$ times and $t$ is monotonically decreasing from $t_{\max}$ to $t_{\min}$, the total running time is 
    \begin{align*} 
    &\wt{O} ( n^{0.5} \nu_{\max}^{0.5} \cdot (T_H + T_n + T_L + T_Z + \nnz(A) + \eta m_\lp m_{\max})^{0.5} \\
&\cdot (\Tmat(n_{\max}) + T_{\Delta_L,\max} + T_{H,\max} + \eta^2 m_{\max}^2)^{0.5} \log(t_{\max}/t_{\min})).
    \end{align*}
    
    \item \textsc{Output}: (Lemma~\ref{lem:output_time_general}) It computes $(x,s) \in \R^{n_{\lp} \times n_{\lp}}$ exactly and outputs them in 
    \begin{align*} 
        O(\nnz(A) + T_H + \eta T_m)
    \end{align*} 
    time.
\end{itemize}
\end{theorem}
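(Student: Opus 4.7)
The plan is to prove each of the three functions by reducing to the guarantees of the two sub data structures \textsc{ExactDS} (Section~\ref{sec:framework:exact_ds}) and \textsc{ApproxDS} (Section~\ref{sec:framework:approx_ds}), whose individual bounds I would establish separately in those sections. The overall \textsc{CentralPathMaintenance} object merely coordinates between an implicit (multiscale) representation of $(x,s)$ maintained by \textsc{ExactDS} and a sketch-based change detector maintained by \textsc{ApproxDS} that produces sparse updates to $(\ov x, \ov s)$.

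For \textsc{Initialize}, I would simply add up the initialization costs of the two components. \textsc{ExactDS} needs to compute the Hessian $H_x$ (contributing $T_H$), assemble the $AH_x^{-1/2}$--type products against the constraint matrix ($T_n + \nnz(A)$), perform the Cholesky decomposition $AH_x^{-1}A^\top = LL^\top$ ($T_L$), and precompute the vectors $L^{-1} v_i$ that feed the sketching layer ($T_Z$). \textsc{ApproxDS} initializes \textsc{BatchSketch} via a single traversal of the block elimination tree together with the JL sketching matrix (Lemma~\ref{lem:jl_matrix}), costing $\wt O(\eta m_\lp m_{\max})$. For \textsc{Output}, calling the corresponding routine of \textsc{ExactDS} reconstructs the exact $(x,s)$ from its multiscale form via one multiplication by $A$, one Hessian application over all blocks, and a tree-ordered block-diagonal propagation of depth $\eta$, giving $O(\nnz(A) + T_H + \eta T_m)$.

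The bulk of the argument is \textsc{MultiplyAndMove}, which I would analyze by the restart/update splitting outlined in Section~\ref{sec:bottleneck}. Partition the $N = \wt O(\sqrt{n \nu_{\max}} \log(t_{\max}/t_{\min}))$ iterations into epochs of length $q$, restarting both sub data structures at the start of each epoch. This contributes $\wt O((N/q) \cdot T_{\mathrm{init}})$ time with $T_{\mathrm{init}} := T_H + T_n + T_L + T_Z + \nnz(A) + \eta m_\lp m_{\max}$. Within an epoch, the central path step on the multiscale coefficients is $O(1)$ block operations per call in \textsc{ExactDS}, while \textsc{ApproxDS} uses JL sketches stored in \textsc{BatchSketch} to detect the blocks on which $x$ or $s$ have drifted past the approximation threshold. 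Each flagged block triggers cost $T_{\mathrm{upd}} := \Tmat(n_{\max}) + T_{\Delta_L,\max} + T_{H,\max} + \eta^2 m_{\max}^2$, accounting for one block of Hessian recomputation, a rank--$O(n_{\max})$ Cholesky update of $L_{\ov x}$, and propagation up the depth--$\eta$ sketch tree. Taking $q = \sqrt{T_{\mathrm{init}}/T_{\mathrm{upd}}}$ balances the two contributions and yields the claimed running time; combined with the claimed closeness $\|\ov x_i - x_i\|_{\ov x_i} \le \ov \epsilon$, $\|\ov s_i - s_i\|^*_{\ov x_i} \le t \ov \epsilon w_i$, which I would read off directly from \textsc{ApproxDS}'s detection guarantee.

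The main obstacle will be verifying the total-updates bound of $\wt O(N q)$ flagged block updates across the run, since everything hinges on it. This requires, first, the $\ell_2$--stability property of the robust central path (deferred to the robust IPM analysis of Section~\ref{sec:robust_ipm}) so that only $\wt O(q^2)$ blocks accumulate enough drift per epoch to exceed the $\ell_\infty$ threshold, and second, that the JL-based sketches maintained through both central path moves and sparse updates to $\ov x$ detect every such block with high probability over all $N$ iterations by a union bound. Stitching these ingredients into a clean amortized accounting for \textsc{CentralPathMaintenance}, while carrying the approximation error $(\ov x - x, \ov s - s)$ through both sub data structures, is the delicate part of the proof.
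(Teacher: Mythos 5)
Your proposal is correct and follows essentially the same route as the paper: prove correctness and the three running-time bounds by deferring to the guarantees of \textsc{ExactDS} and \textsc{ApproxDS}, and obtain \textsc{MultiplyAndMove}'s bound by splitting into restart cost $\wt O((N/q)\cdot T_{\mathrm{init}})$ and amortized update cost $\wt O(Nq\cdot T_{\mathrm{upd}})$ (driven by $\ell_2$-stability of the robust step and the JL union bound over all queries), then balancing $q$. The minor differences — attributing $T_Z$ to \textsc{ExactDS} rather than \textsc{ApproxDS}/\textsc{BatchSketch}, folding the $\log(t_{\max}/t_{\min})$ factor into $N$ rather than into the restart count, and working with the weight $w$ implicitly set to $1$ instead of $\nu_{\max}$ so that your balanced $q=\sqrt{T_{\mathrm{init}}/T_{\mathrm{upd}}}$ differs from the paper's by a $\sqrt{\nu_{\max}}$ that cancels elsewhere — are all bookkeeping and do not change the argument.
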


Correctness and running time analysis of \textsc{CentralPathMaintenance} is deferred to Section~\ref{sec:framework:cpm_analysis} and Section~\ref{sec:framework:main}, after we establish properties of the sub data structures.

\begin{algorithm}[!ht]\caption{Central Path Maintenance. 
This is used in Algorithm~\ref{alg:robust_ipm_centering_data_structure}.
}\label{alg:cpm}
\begin{algorithmic}[1]
\State {\bf data structure} \textsc{CentralPathMaintenance} \Comment{Theorem~\ref{thm:central_path_maintenance_general}}
\State {\bf private : member}
\State \hspace{4mm} \textsc{ExactDS} $\mathsf{exact}$ \Comment{Algorithm~\ref{alg:exact_ds_part_1}, \ref{alg:exact_ds_part_2}}
\State \hspace{4mm}  \textsc{ApproxDS} $\mathsf{approx}$ \Comment{ Algorithm~\ref{alg:approxds_1}}
\State \hspace{4mm} $\ell\in \bN$
\State {\bf end members}
\Procedure{\textsc{Initialize}}{$x, s\in \R^{n_\lp}, t\in \R_+, \ov \epsilon \in (0, 1)$} \Comment{Lemma~\ref{lem:init_time_general}}
    \State $\mathsf{exact}.\textsc{Initialize}(x, s, x, s, t)$ \Comment{Algorithm~\ref{alg:exact_ds_part_1}}
    \State $\ell \leftarrow 0$
    \State $w \gets \nu_{\max}$, $N\gets \sqrt{n \nu_{\max} w}$
    \State $q \gets n^{0.5} \nu_{\max}^{0.5} (T_H+T_L+\eta T_m+T_Z)^{0.5} (\Tmat(n_{\max})+T_{\Delta_L,\max}+T_{H,\max}+\eta^2 m_{\max}^2)^{-0.5}$
    \State $\epsilon_{\apx,x} \leftarrow \ov \epsilon, \zeta_x \leftarrow 2 \alpha w^{-1/2}, \delta_\apx \leftarrow \frac 1N$
    \State $\epsilon_{\apx,s} \leftarrow \ov \epsilon \cdot \ov t \cdot w, \zeta_s \leftarrow 2\alpha \ov t w^{1/2}$
    \State $\mathsf{approx}.\textsc{Initialize}(x, s, h, \epsilon_x, \epsilon_s, H_{\ov x}^{1/2} \wh x, H_{\ov x}^{-1/2} \wh s, c_x, \beta_x, \beta_s, q, \& \mathsf{exact}, \epsilon_{\apx,x}, \epsilon_{\apx,s}, \zeta_x, \zeta_s, \delta_{\apx})$
    \State
    \Comment{Parameters from $x$ to $\beta_s$ come from $\mathsf{exact}$. $\& \mathsf{exact}$ is pointer to $\mathsf{exact}$}
\EndProcedure
\Procedure{\textsc{MultiplyAndMove}}{$t\in \R_+$} \Comment{Lemma~\ref{lem:multiply_and_move_time_general}, \ref{lem:cpm_correct_general}}
    \State $\ell\gets \ell + 1$
    \If{$|\ov t - t| > \ov t \cdot \epsilon_t$ or $\ell > q$}
        \State $x, s \gets \mathsf{exact}.\textsc{Output}()$ \Comment{Algorithm~\ref{alg:exact_ds_part_1}}
        \State \textsc{Initialize}$(x,s,t,\ov \epsilon)$
    \EndIf
    \State $\beta_x, \beta_s \gets \mathsf{exact}.\textsc{Move}()$ \Comment{Algorithm~\ref{alg:exact_ds_part_1}}
    \State $\delta_{\ov x}, \delta_{\ov s } \gets \mathsf{approx}.\textsc{MoveAndQuery}(\beta_x, \beta_s)$ \Comment{Algorithm~\ref{alg:approxds_1}}
    \State $\delta_{h}, \delta_{\epsilon_x}, \delta_{\epsilon_s}, \delta_{H_{\ov x}^{1/2} \wh x}, \delta_{H_{\ov x}^{-1/2} \wh s}, \delta_{c_x}\gets \mathsf{exact}.\textsc{Update}(\delta_{\ov x}, \delta_{\ov s})$ \Comment{Algorithm~\ref{alg:exact_ds_part_2}}
    \State $\mathsf{approx}.\textsc{Update}(\delta_{\ov x}, \delta_{h}, \delta_{\epsilon_x}, \delta_{\epsilon_s}, \delta_{H_{\ov x}^{1/2} \wh x}, \delta_{H_{\ov x}^{-1/2} \wh s}, \delta_{c_x})$
\EndProcedure
\Procedure{\textsc{Output}}{$ $} \Comment{Lemma~\ref{lem:output_time_general}}
    \State \Return $\mathsf{exact}.\textsc{Output}()$ \Comment{Algorithm~\ref{alg:exact_ds_part_1}}
\EndProcedure
\State {\bf end data structure}
\end{algorithmic}
\end{algorithm}

\subsection{Block Elimination Tree and Computation-Related Lemmas}\label{sec:framework:block_elim_tree}
Our algorithm is based on efficient computations involving Cholesky factorization and the block elimination tree.
In this section we introduce the block elimination tree and prove a few useful lemmas on running time of relevant computations.
They will be used repeatedly in the running time analysis of our algorithm.

\begin{definition}[Block elimination tree] \label{defn:block-elimination-tree}
Let $G$ be a graph with $m_\lp$ vertices. A block elimination tree is a rooted tree with $m$ vertices, satisfying the following properties.
\begin{itemize}
    \item The vertices correspond to a partition $[m_\lp] = B_1 \cup\cdots \cup B_m$, where $|B_i| = m_i$.
    \item We can compute a permutation $P$ such that for any PSD matrix $M\in \bS^{m_\lp}$ supported on $G$ (i.e., $M_{\text{coord } i, \text{coord }j}\ne 0$ only if $i=j$ or $(i,j)\in E(G)$), in the Cholesky factorization $P M P^\top = LL^\top$,
    the non-zero entries of $L e_{B_i}$ are subsets of $\bigcup_{j\in \cP(i)} B_j$, where $\cP(i)$ is the set of ancestors of vertex $i$.
    In other words, the (block) non-zero pattern of the $i$-th (block) column is the set of vertices on the path from $i$ to the root.
\end{itemize}
\end{definition}

Under the setting of Program~\ref{eqn:lp_general}, we always take $G$ to be the LP dual graph, and $M$ to be $A H_{\ov x}^{-1} A^\top$.

The following lemma is the block verion of Lemma~\ref{lem:prove_elim_tree}.
\begin{lemma}\label{lem:prove_block_elim_tree}
Let $G$ be an undirected graph on $m_\lp$ vertices. Let $\cT$ be a tree whose vertices corresponds to a partition of $[m_\lp]$.
Suppose $\cT$ satisfying the following property:
for any path $u=v_1,\ldots, v_k=v$ such that for all $i\in [k-1]$, there exists an edge connecting some pair of elements in $v_i$ and $v_{i+1}$ (i.e., there exists $a\in B_i$, $b\in B_j$ such that $(a,b)\in E(G)$), there exists $i\in [k]$ such that $v_i$ is an ancestor of both $u$ and $v$.
Then $\cT$ is a valid block elimination tree of $G$.
\end{lemma}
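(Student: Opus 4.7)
The plan is to reduce the block statement to Lemma~\ref{lem:prove_elim_tree} and Lemma~\ref{lem:elim_tree_imply_cholesky} by exhibiting a (non-block) elimination tree $\cT'$ on $[m_\lp]$ whose ancestry faithfully encodes the block ancestry of $\cT$. Concretely, I would arrange each block $B_i$ as a chain---fix any internal order $u_{i,1},\ldots,u_{i,|B_i|}$ with $u_{i,1}$ at the top---and, for every non-root block $B_i$ with $\cT$-parent $B_{p(i)}$, attach $u_{i,1}$ in $\cT'$ as a child of the bottom vertex $u_{p(i),|B_{p(i)}|}$ of the parent block's chain. The design invariant this ensures is: a vertex $u\in B_i$ is a $\cT'$-ancestor of a vertex $w\in B_j$ iff either $i=j$ and $u$ lies above $w$ in $B_i$'s chain, or $B_i$ is a strict $\cT$-ancestor of $B_j$ (in which case every vertex of $B_i$ sits above every vertex of $B_j$ in $\cT'$).

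Next I would verify the hypothesis of Lemma~\ref{lem:prove_elim_tree} for $\cT'$. Given any path $u=x_1,\ldots,x_k=v$ in $G$, collapse consecutive duplicates of the block sequence $B(x_1),\ldots,B(x_k)$ to $C_1=B(u),\ldots,C_r=B(v)$; consecutive $C_j,C_{j+1}$ are joined by the edge of $G$ across which the original path crosses between them, so $C_1,\ldots,C_r$ is a valid block-path in the sense of the current lemma. Its hypothesis therefore produces a block $B_l$ on the compressed path that is a common $\cT$-ancestor of $B(u)$ and $B(v)$. I would then extract the required $x_s$ case by case: if $B_l$ is a strict ancestor of both $B(u)$ and $B(v)$, any $x_s$ with $B(x_s)=B_l$ works by the design invariant; if $B_l=B(u)$ and $B(u)\ne B(v)$ I take $x_s=u=x_1$, which is trivially a $\cT'$-ancestor of $u$ and, because $B(u)$ is then a strict $\cT$-ancestor of $B(v)$, a $\cT'$-ancestor of $v$; the case $B_l=B(v)\ne B(u)$ is symmetric; in the subcase $B_l=B(u)=B(v)$ the chain is totally ordered, so whichever of $u,v$ sits higher in the single chain is a $\cT'$-ancestor of both endpoints and is already on the path.

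Once Lemma~\ref{lem:prove_elim_tree} certifies $\cT'$ (with any post-order ordering $\pi$) as an elimination tree of $G$, Lemma~\ref{lem:elim_tree_imply_cholesky} yields that for the corresponding permutation $P$, the Cholesky factor $L$ of $PMP^\top$ satisfies $L_{u,w}\ne 0\Rightarrow u$ is a $\cT'$-ancestor of $w$. By the design invariant this collapses to $B(u)\in\cP(B(w))$, which is exactly the column-block sparsity statement in Definition~\ref{defn:block-elimination-tree}.

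The step I expect to need the most care is the borderline case $B_l\in\{B(u),B(v)\}$ in the hypothesis-check: a generic $x_s\in B_l$ lying on the original path need not be a $\cT'$-ancestor of the endpoint in the same block, since chain positions within a block are arbitrary. Pinning the witness on the endpoint $u$ or $v$ itself---always on the path---and exploiting the total order of a chain is what makes this case go through, and it is the reason a chain (rather than any arbitrary intra-block tree) is the right intra-block structure.
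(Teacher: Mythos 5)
Your proof takes exactly the same route as the paper: replace each block with a chain, attach the top of a child's chain to the bottom of the parent's, and reduce to Lemma~\ref{lem:prove_elim_tree}; the argument is correct. The paper's version merely asserts (without detail) that $\cT'$ satisfies the hypothesis of Lemma~\ref{lem:prove_elim_tree}, whereas you verify this explicitly and correctly isolate the only delicate point—the borderline case where the common ancestor block coincides with $B(u)$ or $B(v)$, in which a generic vertex of that block on the path need not be a $\cT'$-ancestor of the endpoint and one must instead pick the endpoint itself (or, when $B(u)=B(v)$, use the total order of the chain).
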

\begin{proof}
We define another tree $\cT'$ with block pattern $(1,\ldots, 1)$.
For every vertex $v$ in $\cT$, we replace it with a path (of an arbitrary ordering of elements in $v$). For all edges in $\cT$, we connect top element of the child and the bottom element of the parent.

In this way, $\cT'$ satisfies the property that for any path $u=v_1,\cdots,v_k=v$ with $(v_i,v_{i+1})\in E(G)$ for all $i\in [m_k-1]$, then there exists $i\in [k]$ such that $v_i$ is an ancestor of both $u$ and $v$.
By Lemma~\ref{lem:prove_elim_tree}, $\cT'$ is a valid elimination tree.
This implies that $\cT$ is a valid block elimination tree.
\end{proof}

As in Definition~\ref{defn:lp_general_param}, we assume that we are given a block elimination tree with (block) depth $\eta$. We can without loss of generality assume that the blocks are labeled in postorder, i.e., for any $i$ and $j\in \cP(i)$, we have $i<j$.

Given a block elimination tree, we can efficiently perform many computations related to the Cholesky decomposition, as shown in the following lemma.
\begin{lemma} \label{lem:mat_vec_mult_time}
Assume that we are given a block elimination tree with block structure $(m_1,\ldots, m_m)$ and block depth $\eta$.
Assume that we are given the Cholesky factorization $A H A^\top = LL^{\top}$ together with inverses of the diagonal blocks of $L$, i.e., $L_{i,i}^{-1}$ for all $i\in [m]$.

Then we have the following running time for matrix-vector multiplications.
\begin{enumerate}[label=(\roman*)]
    \item For $v\in \R^{m_\lp}$, computing $L v$, $L^\top v$, $L^{-1} v$, $L^{-\top} v$ takes $O(\eta m_\lp m_{\max})$ time.
    \label{item:lem_mat_vec_mult_time_Linvv}
    \item For $v\in \R^{m_\lp}$, computing $L v$ takes $O(\|v\|_{2,0} \eta m_{\max}^2)$ time.
    \label{item:lem_mat_vec_mult_time_Lv_sparse}
    \item For $v\in \R^{m_\lp}$, computing $L^{-1} v$ takes $O(\|L^{-1}v\|_{2,0} \eta m_{\max}^2)$ time.
    \label{item:lem_mat_vec_mult_time_Linvv_sparse}
    \item For $v\in \R^{m_\lp}$, if $v$ is supported on a path in the block elimination tree, then computing $L^{-1}v$ takes $O(\eta^2 m_{\max}^2)$ time.
    \label{item:lem_mat_vec_mult_time_Linvv_path}
    \item For $v\in \R^{n_\lp}$, given $H$, computing $H^{-1/2} v$, $H^{-1} v$, $H^{1/2} v$, $H v$ takes $O(T_n)$ time.
    \label{item:lem_mat_vec_mult_time_Hinvv}
    \item For $v\in \R^{n_\lp}$, computing $A v$ takes $O(\nnz(A))$ time.
    \label{item:lem_mat_vec_mult_time_Av}
    \item For $v\in \R^{m_\lp}$, computing $A^\top v$ takes $O(\nnz(A))$ time.
    \label{item:lem_mat_vec_mult_time_ATv}
    \item For $v\in \R^{n_\lp}$, computing $\cW^\top v$ takes $O(T_n + \eta m_\lp m_{\max} + \nnz(A))$ time, where $\cW = L^{-1} A H^{-1/2}$.
    \label{item:lem_mat_vec_mult_time_WTv}
\end{enumerate}
\begin{proof}
\begin{enumerate}[label=(\roman*)]
    \item Because every (block) column of $L$ is $\eta$-block sparse, every (coordinate) column of $L$ is $\eta m_{\max}$-coordinate sparse. So computing $Lv$ takes $\nnz(L) \le \eta m_\lp m_{\max}$ time.
    
    For $L^{-1} v$, let us consider Algorithm~\ref{alg:solve_Linvv}. Recall that we assume $L_{i,i}^{-1}$ are given. So the algorithm takes $\nnz(L) + \sum_{i\in [m]} m_i^2 = \eta m_\lp m_{\max}$ time.
\begin{algorithm}[!ht]
\caption{Solving block triangular systems $L x = v$}
\label{alg:solve_Linvv}
\begin{algorithmic}[1]
\Procedure{SolveBlock}{$ $}
\State $x\leftarrow 0 \in \R^{m_\lp}$
\For{increasing $j$ with $v_j\ne 0$}
\State $x_j \leftarrow L_{j,j}^{-1} v_j$
\State $v \leftarrow v - L_{*,j} x_j$
\EndFor
\EndProcedure
\end{algorithmic}
\end{algorithm}

    Statements about $L^{\top} v$ and $L^{-\top} v$ follow from the Transposition principle \cite{bordewijk1957inter}.
    \item If $v$ is supported on a single block $i\in [m]$, then computing $L v$ takes $\sum_{j\in \cP(i)} m_j^2 = O(\eta m_{\max}^2)$ time by block column sparsity of $L$.
    So computing $Lv$ for general $v$ takes $O(\|v\|_{2,0} \eta m_{\max}^2)$ time.
    \item Consider Algorithm~\ref{alg:solve_Linvv}. By block column sparsity pattern of $L$, for every block $j\in [m]$ with $x_j \ne 0$, it takes $O(\eta m_{\max}^2)$ time to compute $x_j$ and update $v \gets v-L_{*,j} x_j$.
    So overall running time is $O(\|L^{-1} v\|_{2,0} \eta m_{\max}^2)$.
    \item Corollary of \ref{item:lem_mat_vec_mult_time_Linvv_sparse}.
    \item It takes $\sum_{i\in [n]} \Tmat(n_i) = T_n$ time to compute an eigendecomposition of $H$. Afterwards, it takes $\sum_{i\in [n]} \Tmat(n_i) = O(T_n)$ time to compute the product.
    \item It is obvious to see that computing $(Av)_i$ takes $\nnz(A_{i,*})$ time. Summation over all the indices $i$, we know the running time is $\nnz(A)$.
    \item It is easy to see that computing $(A^\top v)_j$ takes $\nnz(A_{*,j})$ time. Summation over all the indices $j$, we know the running time is $\nnz(A)$.
    \item $\cW^\top v = H^{-1/2} A^\top L^{-\top} v$.
    So the result follows from combining \ref{item:lem_mat_vec_mult_time_Linvv}\ref{item:lem_mat_vec_mult_time_ATv}\ref{item:lem_mat_vec_mult_time_Hinvv}.
\end{enumerate}
\end{proof}
\end{lemma}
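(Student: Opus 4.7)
The backbone of the proof will be the block sparsity pattern of the Cholesky factor $L$ guaranteed by the block elimination tree (Definition~\ref{defn:block-elimination-tree}): each block column $L e_{B_i}$ is supported inside $\bigcup_{j\in \cP(i)} B_j$, so each block column has at most $\eta$ nonzero blocks and thus at most $\eta m_{\max}$ nonzero coordinate entries. Consequently $\nnz(L) \le \eta m_\lp m_{\max}$. All eight running-time bounds will be derived from this single sparsity fact plus standard algorithmic primitives (forward substitution, the transposition principle, block-diagonal factorizations of $H$, and trivial bounds for $A$).

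For \ref{item:lem_mat_vec_mult_time_Linvv}, the bound for $Lv$ follows directly from $\nnz(L) \le \eta m_\lp m_{\max}$. For $L^{-1}v$ the plan is standard block forward substitution: process block indices $j$ in increasing order, set $x_j \gets L_{j,j}^{-1}(v_j - \sum_{k<j} L_{j,k} x_k)$ by subtracting off the contribution of each previously computed $x_j$ via $v \gets v - L_{*,j}x_j$. The total work is dominated by $\nnz(L)$ plus the diagonal-block solves $\sum_i m_i^2 \le \eta m_\lp m_{\max}$ (using precomputed $L_{i,i}^{-1}$). Results for $L^\top v$ and $L^{-\top}v$ follow from the transposition principle \cite{bordewijk1957inter}, which states that if a linear map $x \mapsto Mx$ admits an arithmetic circuit of size $T$ with one input and one output, then $y \mapsto M^\top y$ admits one of size $O(T)$. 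For \ref{item:lem_mat_vec_mult_time_Lv_sparse}, if $v$ is block-sparse then we pay only for the $\|v\|_{2,0}$ block columns of $L$ actually touched, each costing $O(\eta m_{\max}^2)$ work. For \ref{item:lem_mat_vec_mult_time_Linvv_sparse}, the same forward substitution procedure does work only when producing a new nonzero block of the solution, giving $O(\|L^{-1}v\|_{2,0} \eta m_{\max}^2)$. Finally \ref{item:lem_mat_vec_mult_time_Linvv_path} is the corollary of \ref{item:lem_mat_vec_mult_time_Linvv_sparse} obtained by noting that if $v$ is supported on a path of length at most $\eta$, then the substitution pattern keeps $L^{-1}v$ inside $O(\eta)$ blocks.

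For \ref{item:lem_mat_vec_mult_time_Hinvv}, since $H = \nabla^2\phi(x)$ is block diagonal with blocks of size $n_i$, one eigendecomposes each diagonal block at total cost $\sum_i \Tmat(n_i) = T_n$; after that, applying any of $H^{\pm 1/2}$ or $H^{\pm 1}$ costs another $O(T_n)$. Items \ref{item:lem_mat_vec_mult_time_Av} and \ref{item:lem_mat_vec_mult_time_ATv} are immediate: compute each coordinate of $Av$ (respectively $A^\top v$) by summing over nonzero entries in the corresponding row (column) of $A$, yielding $\nnz(A)$ total work. For \ref{item:lem_mat_vec_mult_time_WTv}, write $\cW^\top v = H^{-1/2} A^\top L^{-\top} v$ and apply the results in sequence: $L^{-\top}v$ from \ref{item:lem_mat_vec_mult_time_Linvv} costs $O(\eta m_\lp m_{\max})$, then multiplication by $A^\top$ costs $O(\nnz(A))$, then multiplication by $H^{-1/2}$ costs $O(T_n)$.

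The only step that requires genuine care is the transposition principle invocation for $L^{-\top}v$, since the forward-substitution algorithm has conditional control flow that is not literally a straight-line arithmetic circuit; I would handle this by noting that the nonzero pattern of $L$ is data-independent (it is fixed by the elimination tree once the sparsity pattern of $AHA^\top$ is known), so the substitution schedule is oblivious and can be compiled to a fixed arithmetic circuit of the required size, to which the transposition principle applies directly. The remaining bookkeeping is purely combinatorial: verifying the $\eta m_\lp m_{\max}$ and $\eta m_{\max}^2$ counts from the $\eta$-block-sparsity of $L$-columns.
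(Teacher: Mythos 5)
Your proposal matches the paper's proof essentially item for item: the same $\nnz(L) \le \eta m_\lp m_{\max}$ block-sparsity count, the same block forward substitution with precomputed $L_{i,i}^{-1}$, the transposition principle for the transposed solves, the same accounting by nonzero blocks for \ref{item:lem_mat_vec_mult_time_Lv_sparse}--\ref{item:lem_mat_vec_mult_time_Linvv_path}, block-diagonal eigendecomposition for \ref{item:lem_mat_vec_mult_time_Hinvv}, and the factorization $\cW^\top v = H^{-1/2} A^\top L^{-\top} v$ for \ref{item:lem_mat_vec_mult_time_WTv}. Your added remark that the substitution schedule is oblivious (so the transposition principle genuinely applies) is a useful clarification that the paper leaves implicit, but it is not a departure from the paper's argument.
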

\begin{lemma} \label{lem:mat_vec_mult_coord_time}
Assume that we are given a block elimination tree with block structure $(m_1,\ldots, m_m)$ and block depth $\eta$.
Assume that we are given the Cholesky factorization $A H A^\top = LL^{\top}$ together with inverses of the diagonal blocks of $L$, i.e., $L_{i,i}^{-1}$ for all $i\in [m]$.

Then we have the following running time for matrix-vector multiplications, when we only need result for a subset of coordinates.
\begin{enumerate}[label=(\roman*)]
    \item Let $S$ be a path in the block elimination tree whose one endpoint is the root. For $v\in \R^{m_\lp}$, computing $(L^{-\top}v)_S$ takes $O(\eta^2 m_{\max}^2)$ time.
    \label{item:mat_vec_mult_coord_time_LinvtvS}
    \item For $v\in \R^{m_\lp}$, for $i\in [n]$, computing $(\cW^\top v)_i$ takes $O(n_{\max}^2 + \eta^2 m_{\max}^2)$ time, where $\cW = L^{-1} A H^{-1/2}$.
    \label{item:mat_vec_mult_coord_time_Wtvi}
\end{enumerate}
\end{lemma}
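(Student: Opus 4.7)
The plan is to prove both parts by block back-substitution along the elimination tree, exploiting that in each case the relevant blocks lie on a single ancestor chain of length at most $\eta$.

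For part~\ref{item:mat_vec_mult_coord_time_LinvtvS}, I set $w := L^{-\top} v$ and recover $w_j$ for $j\in S$ by back-substituting in $L^\top w = v$. Because blocks are postordered, ancestors carry larger indices, so $L^\top$ is block upper triangular, and Definition~\ref{defn:block-elimination-tree} gives $L_{k,j}\ne 0 \Rightarrow k\in \cP(j)$. The $j$-th block equation then rearranges to
\begin{align*}
    w_j = L_{j,j}^{-\top}\Bigl(v_j - \sum_{k\in \cP(j)\setminus\{j\}} L_{k,j}^\top w_k\Bigr).
\end{align*}
Since $S$ is a root-to-descendant path, every proper ancestor of each $j\in S$ already lies in $S$, so processing $S$ from the root downward is well defined. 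Each of the at most $\eta-1$ summands is an $m_{\max}\times m_{\max}$ matrix-vector product, and the leading $L_{j,j}^{-\top}$ is applied via the precomputed $L_{j,j}^{-1}$; this gives $O(\eta m_{\max}^2)$ per block and $O(\eta^2 m_{\max}^2)$ in total.

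For part~\ref{item:mat_vec_mult_coord_time_Wtvi}, I decompose
\begin{align*}
    (\cW^\top v)_i = H_{i,i}^{-1/2}\sum_{j\in S_i} A_{j,i}^\top (L^{-\top}v)_j, \qquad S_i := \{j\in [m] : A_{j,i}\ne 0\}.
\end{align*}
The structural step is to show that $S_i$ is an ancestor chain in the block elimination tree: any two block-distinct rows $k_1 \in B_{j_1}$, $k_2\in B_{j_2}$ with $A_{k_1,i}, A_{k_2,i}\ne 0$ are joined by an edge in the LP dual graph, so Definition~\ref{defn:block-elimination-tree}, applied to an arbitrary PSD matrix supported on that graph, forces $j_1$ and $j_2$ to be in ancestor-descendant relation. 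Letting $j_0$ be the deepest block in $S_i$, we conclude $S_i \subseteq S_i' := \cP(j_0)$, a root-to-$j_0$ path of length at most $\eta$.

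Given this chain structure, I would first invoke part~\ref{item:mat_vec_mult_coord_time_LinvtvS} on $S_i'$ to obtain $(L^{-\top}v)_j$ for every $j\in S_i$ in $O(\eta^2 m_{\max}^2)$ time. The remaining summation $\sum_{j\in S_i} A_{j,i}^\top (L^{-\top}v)_j$ then costs $\sum_{j\in S_i} O(n_i m_j) \le O(\eta n_{\max} m_{\max}) \le O(n_{\max}^2 + \eta^2 m_{\max}^2)$ by AM-GM, and the final multiplication by the $n_i\times n_i$ block $H_{i,i}^{-1/2}$ adds $O(n_{\max}^2)$. The main obstacle is establishing the chain claim for $S_i$; once that is in hand, the running time drops out of part~\ref{item:mat_vec_mult_coord_time_LinvtvS} together with the AM-GM regrouping.
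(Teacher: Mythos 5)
Your proof is correct and follows essentially the same approach as the paper. The only cosmetic difference is in part~\ref{item:mat_vec_mult_coord_time_LinvtvS}, where you carry out the block back-substitution directly, while the paper observes that $L^{-1}e_S$ is supported on $S$ (so $(L^{-\top}v)_S = L_{S,S}^{-\top}v_S$) and invokes Lemma~\ref{lem:mat_vec_mult_time}\ref{item:lem_mat_vec_mult_time_Linvv} on the restricted system; for part~\ref{item:mat_vec_mult_coord_time_Wtvi}, your chain argument and cost accounting mirror the paper's.
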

\begin{proof}
\begin{enumerate}[label=(\roman*)]
    \item We have $(L^{-\top} v)_S = e_S^\top L^{-\top} v = (v^\top L^{-1} e_S)^\top$.
    By column sparsity pattern of $L$, $L^{-1} e_S$ is supported on $S$.
    So $(L^{-\top}v)_S$ depends only on $v_S$. So we only need to compute $L_{S,S}^{-\top} v_S$, which takes $O(\eta^2 m_{\max}^2)$ time By Lemma~\ref{lem:mat_vec_mult_time}\ref{item:lem_mat_vec_mult_time_Linvv}.
    \item 
    By definition of the block elimination tree, for $j,k\in [m_\lp]$, if there is an edge $(j,k)$ in the LP dual graph, then either $j$ and $k$ are in the same block, or they are in two different blocks, one is an ancestor of the other.
    Therefore, all constraints containing $i$ lie on a single path $S$.
    So $(\cW^\top v)_i = H^{-1/2}_{i,i} A_{*,i}^\top L^{-\top} v = H^{-1/2}_{i,i} A_{S,i}^\top (L^{-\top} v)_S$.
    So we can first compute $(L^{-\top}v)_S$, which takes $O(\eta^2 m_{\max}^2)$ time by \ref{item:mat_vec_mult_coord_time_LinvtvS}.
    Afterwards, it takes $O(n_{\max}^2 + n_{\max} \eta m_{\max}) = O(n_{\max}^2 + \eta^2 m_{\max}^2)$ time to compute $H_{i,i} A^{\top}_{S,i} (L^{-\top}v)_S$.
\end{enumerate}
\end{proof}

We state a generic bound on $T_Z$ (recall Definition~\ref{defn:sdp-general-assumptions}).
\begin{lemma} \label{lem:generic_bound_TZ}
$T_Z = O(\eta^2 m m_{\max}^2)$.
\end{lemma}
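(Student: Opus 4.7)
The plan is to invoke Lemma~\ref{lem:mat_vec_mult_time}\ref{item:lem_mat_vec_mult_time_Linvv_path} directly and then sum over the $m$ indices. Recall from Definition~\ref{defn:lp_general_param} that $T_Z$ is the time needed to compute $L^{-1} v_i$ for every $i \in [m]$, where each $v_i$ is supported on $\bigcup_{j \in \cP(i)} B_j$, i.e., on the union of the bags along the root-to-$i$ path in the block elimination tree.

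First, I would observe that for a fixed $i$, the support of $v_i$ is by definition a path in the block elimination tree (the ancestor chain of $i$, which has one endpoint at the root). This is exactly the hypothesis of item~\ref{item:lem_mat_vec_mult_time_Linvv_path} of Lemma~\ref{lem:mat_vec_mult_time}, so a single solve $L^{-1} v_i$ can be carried out in $O(\eta^2 m_{\max}^2)$ time.

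Next I would sum this per-index cost over all $i \in [m]$, giving a total of $O(m \cdot \eta^2 m_{\max}^2) = O(\eta^2 m m_{\max}^2)$, which is exactly the claimed bound. Since Lemma~\ref{lem:mat_vec_mult_time}\ref{item:lem_mat_vec_mult_time_Linvv_path} already assumes availability of the Cholesky factor and the inverse diagonal blocks (matching the global assumptions in Definition~\ref{defn:lp_general_param}), no extra preprocessing charge is needed.

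There is essentially no obstacle here, since all the real work is done inside Lemma~\ref{lem:mat_vec_mult_time}; the only thing to verify carefully is that the support hypothesis of item~\ref{item:lem_mat_vec_mult_time_Linvv_path} is met, i.e., that $\bigcup_{j \in \cP(i)} B_j$ forms a path in the block elimination tree with one endpoint at the root, which is immediate from the definition of an ancestor set in a rooted tree.
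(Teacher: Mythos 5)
Your proof is correct and matches the paper's own argument exactly: both invoke Lemma~\ref{lem:mat_vec_mult_time}\ref{item:lem_mat_vec_mult_time_Linvv_path} to get $O(\eta^2 m_{\max}^2)$ per solve and then multiply by $m$. Your brief verification that the support $\bigcup_{j\in\cP(i)} B_j$ is indeed a root-ended path is a small but appropriate sanity check that the paper leaves implicit.
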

\begin{proof}
Computing each $L^{-1}v_i$ takes $O(\eta^2 m_{\max}^2)$ time by Lemma~\ref{lem:mat_vec_mult_time}\ref{item:lem_mat_vec_mult_time_Linvv_path}.
So computing $m$ of them takes $O(\eta^2 m m_{\max}^2)$ time.
\end{proof}

\subsection{Data structures being used in \textsc{CentralPathMaintenance}}\label{sec:framework:being_used_in_cpm}
 In Section~\ref{sec:framework:being_used_in_cpm}, we present several data structures that are being used in central path maintenance, including:
 \begin{itemize}
     \item \textsc{ExactDS} (Section~\ref{sec:framework:exact_ds}). This data structure implicitly maintains the primal-dual solution pair $(x,s)$. This data structure is directly used by \textsc{CentralPathMaintenance}.
     \item \textsc{ApproxDS} (Section~\ref{sec:framework:approx_ds}). This data structure explicitly maintains the approximate primal-dual solution pair $(\ov x, \ov s)$.
     This data structure is directly used by \textsc{CentralPathMaintenance}.
     \item \textsc{BatchSketch} (Section~\ref{sec:framework:batch_sketch}). This data structure maintains a sketch of $x$ and $s$, using \textsc{BlockVectorSketch} and \textsc{BlockBalancedSketch}. This data structure is used by \textsc{ApproxDS}.
     \item \textsc{BlockVectorSketch}(Section~\ref{sec:framework:block_vector_sketch}).
     This data structure maintains a sketch of a vector (with block pattern) under single-point updates. This data structured is used by \textsc{BatchSketch}.
     \item \textsc{BlockBalancedSketch}(Section~\ref{sec:framework:block_balanced_sketch}). This data structure maintains a sketch of a vector of form $\cW^\top v$, under updates of $(\ov x, \ov s)$. This data structure is used by \textsc{BatchSketch}.
 \end{itemize}

\subsubsection{\textsc{ExactDS}}\label{sec:framework:exact_ds}
In this section, we present our \textsc{ExactDS} (Algorithm~\ref{alg:exact_ds_part_1} and Algorithm~\ref{alg:exact_ds_part_2}). In Theorem~\ref{thm:exactds}, we provide our theoretical statement for Algorithm~\ref{alg:exact_ds_part_1} and Algorithm~\ref{alg:exact_ds_part_2}. This is a block-based generalization of \textsc{MultiscaleRepresentation} data structure of \cite{dly21}.

\begin{definition}[Multiscale coefficients, Definition 6.4 in \cite{dly21}]
At any step the of the robust central path with approximate primal-dual solution pair $(\ov{x}, \ov{s}) \in \R^{n_{\lp}} \times \R^{n_{\lp}}$, we define
\begin{align*}
    \cW &:= L_{\ov x}^{-1} A H_{\ov x}^{-1/2},\\
    h &:= L_{\ov{x}}^{-1} A H_{\ov{x}}^{-1} \cdot \delta_\mu(\ov{x}, \ov{s}, \ov{t})
\end{align*}
where $H_{\ov{x}} = \nabla^2 \phi(\ov{x}) \in \R^{n_{\lp} \times n_{\lp}}$, and $L_{\ov{x}} \in \R^{m_{\lp} \times m_{\lp}}$ 
is the lower Cholesky factor of $A H_{\ov{x}}^{-1} A^\top \in \R^{m_{\lp} \times m_{\lp}}$.
\end{definition}

We can write the central path update using multiscale representation
\begin{align*}
x &\leftarrow x + H_{\ov{x}}^{-1} \delta_\mu(\ov{x}, \ov{s}, \ov{t}) - H_{\ov{x}}^{-1/2} \cW^\top h \\
s &\leftarrow s + t \cdot H_{\ov{x}}^{1/2} \cW^\top h
\end{align*}

\begin{theorem}[
Block-based exact data structure] \label{thm:exactds}
Data structure \textsc{ExactDS} (Algorithm~\ref{alg:exact_ds_part_1}, \ref{alg:exact_ds_part_2}) implicitly maintains the primal-dual pair $(x,s) \in \R^{n_{\lp}} \times \R^{ n_{\lp} }$, computable via the expression
\begin{align*}
x &= \wh x + H_{\ov{x}}^{-1/2} \beta_x c_x - H_{\ov{x}}^{-1/2} \cW^\top (\beta_x h + \epsilon_x) \\
s &= \wh s + H_{\ov{x}}^{1/2} \cW^\top(\beta_s h + \epsilon_s)
\end{align*}
by maintaining the variables $\ov x, \ov s\in \R^{n_\lp}$, $H_{\ov x}\in \R^{n_\lp \times n_\lp}$, $L_{\ov x} \in \R^{m_\lp\times m_\lp}$, $\wh x, \wh s, c_x\in \R^{n_\lp}$, $\epsilon_x,\epsilon_s, h\in \R^{m_\lp}$, $\beta_x, \beta_s\in \R$, $\ov \alpha\in \R$, $\ov \delta_\mu \in \R^n$.

The data structure supports the following functions:
\begin{itemize}
    \item \textsc{Initialize}$(x \in \R^{n_{\lp}}, s \in \R^{n_{\lp}}, \ov{x} \in \R^{n_{\lp}}, \ov{s} \in \R^{n_{\lp}}, \ov{t})$: Initializes the data structure in 
    \begin{align*} 
        \wt O(T_H + T_L + \nnz(A) + T_n + \eta m_\lp m_{\max})
    \end{align*}
    time, with initial value of the primal-dual pair $(x, s)$, its initial approximation $(\ov{x}, \ov{s})$, and initial approximate timestamp $\ov{t}$.
    \item \textsc{Move}$()$: Moves $(x \in \R^{n_{\lp}},s \in \R^{n_{\lp}} )$ in $O(1)$ time by updating its implicit representation.
    \item \textsc{Update}$(\delta_{\ov{x}} \in \R^{n_{\lp}}, \delta_{\ov{s}} \in \R^{n_{\lp}})$: Updates the approximation pair $(\ov{x}, \ov{s})$ to $(\ov{x}^{\new} = \ov{x} + \delta_{\ov{x}} \in \R^{ n_{\lp} }, \ov{s}^{\new} = \ov{s} + \delta_{ \ov{s}} \in \R^{n_{\lp}} )$ in \begin{align*}
    \wt O((\Tmat(n_{\max}) + T_{\Delta_L,\max} + T_{H,\max} + \eta^2 m_{\max}^2) (\|\delta_{\ov x}\|_{2,0} + \|\delta_{\ov s}\|_{2,0}))
    \end{align*}
    time, and output the changes in variables $\delta_h, \delta_{\epsilon_x}, \delta_{\epsilon_s}, \delta_{H_{\ov x}^{1/2} \wh x}, \delta_{H_{\ov x}^{-1/2} \wh s}, \delta_{\ov H_{\ov x}^{-1/2} c_x}$.
    
    Furthermore, $\delta_h, \delta_{\epsilon_x}, \delta_{\epsilon_s}$ changes in $O(\eta(\|\delta_{\ov x}\|_{2,0}+\|\delta_{\ov s}\|_{2,0}))$ blocks, and
    $\delta_{H_{\ov x}^{1/2} \wh x}, \delta_{H_{\ov x}^{-1/2} \wh s}, \delta_{\ov H_{\ov x}^{-1/2} c_x}$ changes in $O(\|\delta_{\ov x}\|_{2,0}+\|\delta_{\ov s}\|_{2,0})$ blocks.
    \item \textsc{Output}$()$: Output $x$ and $s$ in $\wt O(T_H + \nnz(A) + \eta m_\lp m_{\max})$ time.

    \item \textsc{Query}$x(i\in [n])$: Output $x_i$ in $\wt O(n_{\max}^2 + \eta^2 m_{\max}^2)$ time.
    This function is used by \textsc{ApproxDS}.
    \item \textsc{Query}$s(i\in [n])$: Output $s_i$ in $\wt O(n_{\max}^2 + \eta^2 m_{\max}^2)$ time.
    This function is used by \textsc{ApproxDS}.
\end{itemize}
\end{theorem}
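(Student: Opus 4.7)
The plan is to handle each function in turn, leaning on the block elimination tree machinery of Section~\ref{sec:framework:block_elim_tree} (specifically Lemmas~\ref{lem:mat_vec_mult_time} and \ref{lem:mat_vec_mult_coord_time}) and generalizing the scalar multiscale data structure of \cite{dly21} block-by-block. The invariant is that at all times the stored tuple $(\wh x,\wh s,c_x,\epsilon_x,\epsilon_s,h,\beta_x,\beta_s,H_{\ov x},L_{\ov x})$ reconstructs the central-path pair $(x,s)$ via the two displayed equations, and that the representation changes in a controlled, block-sparse way under \textsc{Move} and \textsc{Update}.

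For \textsc{Initialize}, I would compute the block-diagonal Hessian $H_{\ov x}$ in time $T_H$, its square root blockwise in time $T_n$ (Lemma~\ref{lem:mat_vec_mult_time}\ref{item:lem_mat_vec_mult_time_Hinvv}), form $AH_{\ov x}^{-1}A^\top$ and Cholesky-factor it in $T_L$, and compute $h=L_{\ov x}^{-1}AH_{\ov x}^{-1}\delta_\mu(\ov x,\ov s,\ov t)$ by applying $A$ (cost $\nnz(A)$ via \ref{item:lem_mat_vec_mult_time_Av}) then a block triangular solve (cost $\eta m_\lp m_{\max}$ via \ref{item:lem_mat_vec_mult_time_Linvv}). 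Initialize $\beta_x=\beta_s=0$, $\epsilon_x=\epsilon_s=0$, $\wh x=x$, $\wh s=s$, and set $c_x$ so that the invariant for $x$ holds at $t=0$; this fits the stated bound. For \textsc{Move}, the representation changes only in the two scalars $\beta_x,\beta_s$, so we simply update them, giving $O(1)$ time.

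For \textsc{Update}, write $k=\|\delta_{\ov x}\|_{2,0}+\|\delta_{\ov s}\|_{2,0}$. Each changed block of $\ov x$ induces: (i) recomputation of that diagonal block of $H_{\ov x}$ and its $\pm 1/2$ powers in time $T_{H,\max}+\Tmat(n_{\max})$; (ii) a low-rank update to $L_{\ov x}$ costing $T_{\Delta_L,\max}$; and (iii) changes to $h,\epsilon_x,\epsilon_s$ that are supported on the ancestor path of the affected block in the block elimination tree, computable by a path-restricted triangular solve in $O(\eta^2 m_{\max}^2)$ time (Lemma~\ref{lem:mat_vec_mult_coord_time}\ref{item:mat_vec_mult_coord_time_LinvtvS}). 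The vectors $H_{\ov x}^{\pm 1/2}\wh x$, $H_{\ov x}^{\pm 1/2}\wh s$, $H_{\ov x}^{-1/2}c_x$ change only on the blocks where $\ov x$ changes, giving block-sparsity $O(k)$; summing gives the stated per-call cost and block-change counts. For \textsc{Output}, recompute $\cW^\top(\beta_x h+\epsilon_x)$ and $\cW^\top(\beta_s h+\epsilon_s)$ using Lemma~\ref{lem:mat_vec_mult_time}\ref{item:lem_mat_vec_mult_time_WTv}, then add the stored components, yielding total cost $O(T_H+\nnz(A)+\eta m_\lp m_{\max})$ (the $T_n$ term is absorbed into $T_H$). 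For \textsc{Query}$x(i)$ and \textsc{Query}$s(i)$, we need only the $i$-th block of $\cW^\top(\cdot)$, which by Lemma~\ref{lem:mat_vec_mult_coord_time}\ref{item:mat_vec_mult_coord_time_Wtvi} costs $O(n_{\max}^2+\eta^2 m_{\max}^2)$; adding the $O(1)$ stored blocks gives the stated bound.

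The main obstacle is the \textsc{Update} analysis: we must verify both that the changes to $h$ are confined to the ancestor paths of the updated blocks (so that the $\eta$-factor block-sparsity claim holds) and that the interaction between the Cholesky update to $L_{\ov x}$ and the subsequent re-solve for $h,\epsilon_x,\epsilon_s$ composes within the $O(T_{\Delta_L,\max}+\eta^2 m_{\max}^2)$ per-block budget. The key tool is that, by Definition~\ref{defn:block-elimination-tree}, the $i$-th block column of $L_{\ov x}^{-1}$ is supported on $\bigcup_{j\in\cP(i)} B_j$, so both the update of $L_{\ov x}$ (which affects at most these blocks) and the difference vectors propagate only along an ancestor path of block-depth $\eta$, allowing every subsidiary computation to be localized. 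Once this localization is verified, the remaining steps are direct bookkeeping arguments analogous to those in \cite{dly21}, lifted to the block setting.
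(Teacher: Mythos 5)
Your proposal follows essentially the same approach the paper uses: it proves the theorem by combining a correctness invariant with per-function running time bounds, each derived from the block elimination tree lemmas (Lemmas~\ref{lem:mat_vec_mult_time} and~\ref{lem:mat_vec_mult_coord_time}). Your localization observation (changes to $h,\epsilon_x,\epsilon_s$ stay on ancestor paths, hence $O(\eta k)$ block sparsity) is exactly the mechanism the paper uses in Lemma~\ref{lem:exactds_update_time}, and your per-function running time accounting matches the paper's Lemmas~\ref{lem:exactds_init_time}--\ref{lem:exactds_query_time} line by line.

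The one place where you defer nontrivial work to ``direct bookkeeping analogous to \cite{dly21}'' is the correctness of \textsc{Update}. The paper's Lemma~\ref{lem:exactds_correctness} spends most of its effort here: \textsc{Update} is decomposed into two phases, \textsc{Update$H$} (absorbing the change from $H_{\ov x}$ to $H_{\ov x}^{\new}$) and \textsc{Update$\cW$} (absorbing the change from $L_{\ov x}$ to $L_{\ov x}^{\new}$), and in each phase the specific formulas for $\delta_{\wh x}, \delta_{\wh s}, \delta_{c_x}, \delta_{h}, \delta_{\epsilon_x}, \delta_{\epsilon_s}$ are algebraically verified to leave the represented $(x,s)$ unchanged. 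This telescoping identity (e.g.\ verifying that the contribution from $\delta_{\wh x}$ exactly cancels the drift introduced by replacing $H_{\ov x}^{-1/2}$ with $(H_{\ov x}^{\new})^{-1/2}$ and $L_{\ov x}^{-\top}$ with $(L_{\ov x}^{\new})^{-\top}$) is not automatic from the $\cite{dly21}$ scalar case, because in the block setting $\Delta_{H_{\ov x}^{-1/2}}$ does not commute with $H_{\ov x}^{-1/2}$. Your sketch is not wrong---the algebra does go through, precisely because the update formulas in Algorithm~\ref{alg:exact_ds_part_2} were designed to make it telescope---but a complete proof would need to carry out this verification rather than cite it as bookkeeping, and it is worth knowing that it is the bulk of the argument.

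One small factual wrinkle in your \textsc{Initialize} accounting: setting $c_x$ ``so that the invariant for $x$ holds at $t=0$'' is not quite the mechanism---the invariant holds initially because $\beta_x=\beta_s=0$ and $\epsilon_x=\epsilon_s=0$, so the $c_x$ and $\cW^\top$ terms contribute nothing; $c_x$ is precomputed as $H_{\ov x}^{-1/2}\ov\delta_\mu$ so that subsequent \textsc{Move} steps are $O(1)$. The $T_n$ term you cite for $H_{\ov x}^{-1/2}$ also covers computing $\gamma_i(\ov x,\ov s,\ov t)$ blockwise (which requires $\nabla^2\phi_i(\ov x_i)^{-1}$), another ingredient of \textsc{Initialize} your account omits but which fits the same budget.
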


\begin{algorithm}[!ht]\caption{ Variation of Algorithm 5 in page 29 in \cite{dly21}. 
This is used in Algorithm~\ref{alg:cpm}. }\label{alg:exact_ds_part_1}
\begin{algorithmic}[1]
\State {\bf data structure} \textsc{ExactDS} \Comment{Theorem~\ref{thm:exactds} 
}
\State {\bf members}
    \State \hspace{4mm} $\ov{x} , \ov{s} \in \R^{n_{\lp}}$, $H_{\ov{x}} \in \R^{n_{\lp} \times n_{\lp}}$, $L_{\ov{x}}\in \R^{m_{\lp} \times m_{\lp}}$
    \State \hspace{4mm} $\wh x, \wh s, c_x \in \R^{n_{\lp}}$, $\epsilon_x, \epsilon_s, h\in \R^{m_{\lp}}, \beta_x, \beta_s\in \R$
    \State \hspace{4mm} $\ov{\alpha} \in \R, \ov{\delta}_\mu \in \R^{n}$
    \State \hspace{4mm} $\ov{t} \in \R_+$
    \State \hspace{4mm} $k\in \bN$
\State {\bf end members}
\Procedure{Initialize}{$x,s, \ov{x}, \ov{s} \in \R^{n_\lp}, \ov{t} \in \R_+$} \Comment{Lemma~\ref{lem:exactds_init_time}}
    \State $\ov{x} \gets \ov{x}$, $\ov{x} \gets \ov{s}$, $\ov{t} \gets \ov{t}$
    \State $\wh{x} \gets x$, $\wh{s} \gets s$, $\epsilon_x \gets 0$, $\epsilon_s \gets 0$, $\beta_x \gets 0$, $\beta_s \gets 0$
    \State $H_{\ov{x}} \gets \nabla^2 \phi( \ov{x} )$
    \State Find lower Cholesky factor $L_{\ov{x}}$ where $L_{\ov{x}} L_{\ov{x}}^\top = A H_{\ov{x}}^{-1} A^\top$
    \State \textsc{Initialize$H$}($\ov{x}, \ov{s}, H_{\ov{x}}, L_{\ov{x}}$)
\EndProcedure
\Procedure{Initialize$H$}{$\ov{x}, \ov{s}, H_{\ov{x}}, L_{\ov{x}}$}
    \For{$i \in [n]$}
        \State $( \ov{\delta}_{\mu})_i \gets - \frac{ \alpha \sinh( \frac{\lambda}{w_i} \gamma_i( \ov{x}, \ov{s}, \ov{t} ) ) }{ \gamma_i( \ov{x}, \ov{s}, \ov{t} ) } \cdot \mu_i( \ov{x}, \ov{s}, \ov{t} )$
        \State $\ov{\alpha} \gets \ov{\alpha} \cdot w_i^{-1} \cosh^2( \frac{\lambda}{w_i} \gamma_i( \ov{x}, \ov{s}, \ov{t} ) )$
    \EndFor 
    \State $c_x \gets H_{\ov{x}}^{-1/2} \ov{\delta}_{\mu}$
    \State $h \gets L_{\ov{x}}^{-1} A H_{\ov{x}}^{-1} \ov{\delta}_{\mu}$
\EndProcedure 
\Procedure{Move}{$ $} \Comment{Lemma~\ref{lem:exactds_move_time}}
    \State $\beta_x \gets \beta_x + (\ov{\alpha})^{-1/2}$
    \State $\beta_s \gets \beta_s + \ov{t} \cdot (\ov{\alpha})^{-1/2}$
    \State \Return $\beta_x, \beta_s$
\EndProcedure 
\Procedure{Output}{$ $} \Comment{Lemma~\ref{lem:exactds_output_time}}
    \State \Return $\wh x + H_{\ov{x}}^{-1/2} \beta_x c_x - H_{\ov x}^{-1/2} \cW^\top (\beta_x h + \epsilon_x), \wh s + H_{\ov x}^{1/2} \cW^\top (\beta_s h + \epsilon_s)$
\EndProcedure
\Procedure{Query$x$}{$i\in [n]$} \Comment{Lemma~\ref{lem:exactds_query_time}}
    \State \Return $\wh x_i + H_{\ov x,(i,i)}^{-1/2} \beta_x c_{x,i} - H_{\ov x}^{-1/2} (\cW^\top (\beta_x h + \epsilon_x))_i$
\EndProcedure
\Procedure{Query$s$}{$i\in [n]$} \Comment{Lemma~\ref{lem:exactds_query_time}}
    \State \Return $\wh s_i + H_{\ov x,(i,i)}^{1/2} (\cW^\top (\beta_s h + \epsilon_s))_i$.
\EndProcedure
\State {\bf end data structure}
\end{algorithmic}
\end{algorithm}

\begin{algorithm}[!ht]\caption{\textsc{ExactDS} Algorithm~\ref{alg:exact_ds_part_1} continued.
}\label{alg:exact_ds_part_2}
\begin{algorithmic}[1]
\State {\bf data structure} \textsc{ExactDS} \Comment{Theorem~\ref{thm:exactds}}
\Procedure{Update}{$\delta_{\ov{x}}, \delta_{\ov{s}}$} \Comment{Lemma~\ref{lem:exactds_update_time}} 
    \State $\Delta_{H_{\ov x}} \gets \nabla^2 \phi( \ov x + \delta_{\ov x}) - H_{\ov x}$
    \Comment{$\Delta_{H_{\ov x}}$ is non-zero only for diagonal blocks $i$ for which $\delta_{\ov{x},_i} \ne 0$}
    \State \textsc{Update$H$}$( \delta_{\ov{x}} , \delta_{\ov{s}}, \Delta_{H_{\ov x}} )$
    \State Find $\Delta_{L_{\ov x}}$ where $(L_{\ov x}+\Delta_{L_{\ov x}})(L_{\ov x}+\Delta_{L_{\ov x}})^\top = A (H_{\ov x}+\Delta_{H_{\ov x}}) A^\top$
    \State \textsc{Update$\cW$}$(\Delta_{L_{\ov x}}, \Delta_{H_{\ov x}})$
    \State $\ov{x} \gets \ov x + \delta_{ \ov{x}}$, $\ov{s} \gets \ov{s} + \delta_{\ov s}$
    \State $H_{\ov{x}} \gets H_{\ov x} + \Delta_{H_{\ov x}}$, $L_{\ov{x}} \gets L_{\ov x} + \Delta_{L_{\ov x}}$
    \State \Return $\delta_{h}, \delta_{\epsilon_x}, \delta_{\epsilon_s}, \delta_{H_{\ov x}^{1/2} \wh x}, \delta_{H_{\ov x}^{-1/2} \wh s}, \delta_{c_x}$
\EndProcedure 
\Procedure{Update$H$}{$\delta_{\ov{x}}$, $\delta_{\ov{s}}$, $\Delta_H$} 
    \State $S \gets \{ i \in [n] ~|~ \delta_{\ov{x},i} \ne 0 \mathrm{~or~} \delta_{\ov{s},i} \ne 0 \}$
    \State $\delta_{ \ov{\delta}_{\mu} } \gets 0$
    \For{$i \in S$}
        \State Let $\gamma_i = \gamma_i(\ov{x}, \ov{s}, \ov{t})$, $\gamma_i^{\new} = \gamma_i( \ov{x} + \delta_{\ov x}, \ov{s} + \delta_{\ov s}, \ov{t} )$, $\mu_i^{\new} = \mu_i ( \ov{x} + \delta_{\ov x}, \ov{s} + \delta_{\ov s}, \ov{t} )$
        \State $\ov{\alpha} \gets \ov{\alpha} - \alpha^2 \cdot w_i^{-1} \cosh^2( \frac{\lambda}{w_i} \gamma_i ) + \alpha^2 w_i^{-1} \cosh^2( \frac{\lambda}{ w_i } \gamma_i^{\new} )$
        \State $\delta_{\ov{\delta}_{\mu},i} \gets - \alpha \sinh( \frac{\lambda}{w_i} {\gamma}_i^{\new} ) \cdot \frac{ 1 }{ \gamma^{\new}_i } \cdot \mu^{\new}_i - \ov{\delta}_{\mu,i}$
    \EndFor 
    \State $\delta_{c_x} \gets \Delta_{H_{\ov x}^{-1/2}} \cdot (\ov{\delta}_{\mu} + \delta_{ \ov{\delta}_{\mu}}) + H_{\ov x}^{-1/2} \cdot \delta_{ \ov{\delta}_{\mu} }$
    \State $\delta_{h} \gets L_{\ov{x}}^{-1} A ( \Delta_{H_{\ov x}^{-1}} \cdot (\ov{\delta}_{\mu} + \delta_{ \ov{\delta}_{\mu}}) + H_{\ov x}^{-1} \cdot \delta_{\ov{\delta}_{\mu} } )$
    \State $\delta_{\epsilon_x} \gets -\beta_x \delta_{h}$
    \State $\delta_{\epsilon_s} \gets -\beta_s \delta_{h}$
    \State $\delta_{\wh{x}} \gets \beta_x ( -\Delta_{H_{\ov x}^{-1/2}} \cdot (c_x + \delta_{c_x}) - H_{\ov x}^{-1/2} \delta_{c_x} ) + \Delta_{H_{\ov x}^{-1/2}} \cdot \cW^\top \cdot (\beta_x h + \epsilon_x)$ 
    \State $\delta_{\wh{s}} \gets -\Delta_{ H_{\ov x}^{1/2} } \cdot \cW^\top (\beta_s h + \epsilon_s)$ \label{line:exactds_updateh_breakpoint}
    \State $\ov \delta_\mu \gets \ov \delta_\mu + \delta_{\ov \delta_\mu}$, $c_x \gets c_x + \delta_{c_x}$, $h \gets h + \delta_{h}$
    \State $\epsilon_x \gets \epsilon_x + \delta_{\epsilon_x}$, $\epsilon_s \gets \epsilon_s + \delta_{\epsilon_s}$
    \State $\wh{x} \gets \wh{x} + \delta_{ \wh{x}}$, $\wh{s} \gets \wh{s} + \delta_{ \wh{s}}$
\EndProcedure 
\Procedure{Update$\cW$}{$\Delta_H, \Delta_{L}$}
    \State $\delta_{\wh{x}} \gets ( H_{\ov x} + \Delta_{H_{\ov x}} )^{-1/2} \cdot \Delta_{H_{\ov x}^{-1/2}} \cdot A^\top L_{\ov x}^{-\top} (\beta_x h + \epsilon_x)$ 
    \State $\delta_{\wh{s}} \gets -( H_{\ov x} + \Delta_{H_{\ov x}} )^{1/2} \cdot \Delta_{H_{\ov x}^{-1/2}} \cdot A^\top L_{\ov x}^{-\top} (\beta_s h + \epsilon_s)$
    \State $\delta_{\epsilon_x} \gets \Delta_{L_{\ov x}}^\top \cdot L_{\ov x}^{-\top} \cdot (\beta_x h + \epsilon_x)$
    \State $\delta_{\epsilon_s} \gets \Delta_{L_{\ov x}}^\top \cdot L_{\ov x}^{-\top} \cdot (\beta_s h + \epsilon_s)$ \label{line:exactds_updatew_breakpoint}
    \State $\wh{x} \gets \wh{x} + \delta_{ \wh{x}}$, $\wh{s} \gets \wh{s} + \delta_{ \wh{s}}$
    \State $\epsilon_x \gets \epsilon_x + \delta_{\epsilon_x}$, $\epsilon_s \gets \epsilon_s + \delta_{\epsilon_s}$
\EndProcedure 
\State {\bf end data structure}
\end{algorithmic}
\end{algorithm}

\begin{proof}[Proof of Theorem~\ref{thm:exactds}]
By combining Lemma~\ref{lem:exactds_correctness}, \ref{lem:exactds_init_time}, \ref{lem:exactds_update_time}, \ref{lem:exactds_output_time}.
\end{proof}

\begin{lemma} \label{lem:exactds_correctness}
\textsc{ExactDS} correctly maintains an implicit representation of $(x, s)$, i.e., invariant
\begin{align*}
    x &= \wh x + H_{\ov{x}}^{-1/2} \beta_x c_x - H_{\ov{x}}^{-1/2} \cW^\top (\beta_x h + \epsilon_x), \\
s &= \wh s + H_{\ov{x}}^{1/2} \cW^\top(\beta_s h + \epsilon_s)
\end{align*}
always holds.
\end{lemma}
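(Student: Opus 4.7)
The plan is to argue by induction on the sequence of data structure calls, showing that after each of \textsc{Initialize}, \textsc{Move}, and \textsc{Update}, the stated explicit formulas evaluate to the current $(x,s)$. The base case is immediate: after \textsc{Initialize} we have $\beta_x = \beta_s = 0$ and $\epsilon_x = \epsilon_s = 0$, so the formula collapses to $\wh x = x$ and $\wh s = s$.

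For \textsc{Move}, only $\beta_x$ and $\beta_s$ change, by $\ov\alpha^{-1/2}$ and $\ov t\,\ov\alpha^{-1/2}$ respectively. The corresponding increment in the formula for $x$ is $\ov\alpha^{-1/2} H_{\ov x}^{-1/2}(c_x - \cW^\top h)$; substituting $c_x = H_{\ov x}^{-1/2}\ov\delta_\mu$, $h = L_{\ov x}^{-1} A H_{\ov x}^{-1}\ov\delta_\mu$, and $\cW = L_{\ov x}^{-1} A H_{\ov x}^{-1/2}$ collapses this to $\ov\alpha^{-1/2} H_{\ov x}^{-1/2}(I - P_{\ov x}) H_{\ov x}^{-1/2}\ov\delta_\mu$, which is exactly the prescribed Newton step for $x$ (and the computation for $s$ is analogous). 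So the invariant is preserved.

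The \textsc{Update} case splits into \textsc{Update$H$} and \textsc{Update$\cW$}, followed by committing the new values of $\ov x,\ov s, H_{\ov x},L_{\ov x}$. Here we must show the evaluated $(x,s)$ does not change. In \textsc{Update$H$}, the choices $\delta_{c_x}$ and $\delta_h$ are designed so that $c_x^{\new} = H_{\ov x,\new}^{-1/2}\ov\delta_{\mu,\new}$ and $h^{\new} = L_{\ov x}^{-1} A H_{\ov x,\new}^{-1}\ov\delta_{\mu,\new}$, which follows by direct expansion using $\Delta_{H_{\ov x}^{-1/2}} = H_{\ov x,\new}^{-1/2} - H_{\ov x}^{-1/2}$ and the analogous identity for $H^{-1}$. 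The updates $\delta_{\epsilon_x} = -\beta_x\delta_h$ and $\delta_{\epsilon_s} = -\beta_s\delta_h$ then keep the aggregates $\beta_x h + \epsilon_x$ and $\beta_s h + \epsilon_s$ invariant, and the $\delta_{\wh x}, \delta_{\wh s}$ of line~\ref{line:exactds_updateh_breakpoint} cancel the residual from $H_{\ov x}^{-1/2}\to H_{\ov x,\new}^{-1/2}$ in front of $\beta_x c_x$ along with a partial contribution to the $\cW^\top$ term. In \textsc{Update$\cW$} we further update $L_{\ov x}$: the $\delta_{\epsilon_x},\delta_{\epsilon_s}$ assignments encode the shift $L_{\ov x}^{-\top} \to L_{\ov x,\new}^{-\top}$ via $\Delta_{L_{\ov x}}^\top L_{\ov x}^{-\top}$, and the $\delta_{\wh x},\delta_{\wh s}$ of line~\ref{line:exactds_updatew_breakpoint} finish absorbing the $H_{\ov x}^{-1/2} \to H_{\ov x,\new}^{-1/2}$ residual from the $\cW^\top$ term that \textsc{Update$H$} left behind.

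The main obstacle is the joint bookkeeping of \textsc{Update$H$} and \textsc{Update$\cW$}: because $H_{\ov x}^{-1/2}$, $\cW$, $h$, $c_x$, and $\ov\delta_\mu$ all depend on $H_{\ov x}$, the compensation for their simultaneous change must be split between the two subroutines in exactly the right way. The critical identity to verify is that the contributions $\Delta_{H_{\ov x}^{-1/2}}\cdot H_{\ov x}^{-1/2}$ (from \textsc{Update$H$}) and $H_{\ov x,\new}^{-1/2}\cdot\Delta_{H_{\ov x}^{-1/2}}$ (from \textsc{Update$\cW$}) sum to $\Delta_{H_{\ov x}^{-1}}$, via $H_{\ov x,\new}^{-1} = (H_{\ov x}^{-1/2} + \Delta_{H_{\ov x}^{-1/2}})^2$. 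Given this identity, the remaining work is careful substitution and collection of terms while tracking which variable holds its old versus new value at each intermediate step.
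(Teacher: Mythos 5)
Your proposal follows the same approach as the paper's own proof: a case analysis over \textsc{Initialize}, \textsc{Move}, and \textsc{Update}, with the update step split into \textsc{Update$H$} and \textsc{Update$\cW$} and verified by tracking which residuals each subroutine absorbs. The key identity you isolate, $\Delta_{H_{\ov x}^{-1}} = \Delta_{H_{\ov x}^{-1/2}} H_{\ov x}^{-1/2} + H_{\ov x,\new}^{-1/2}\Delta_{H_{\ov x}^{-1/2}}$, is exactly what makes the two subroutines' $\delta_{\wh x},\delta_{\wh s}$ contributions sum correctly, and your \textsc{Move} calculation is a more explicit version of the paper's one-line justification; the paper differs only in that it carries out the term-by-term algebraic cancellation in full rather than pointing to it as remaining bookkeeping.
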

\begin{proof}
\textsc{Initialize}:
Initialization satisfies the invariant because $\wh x = x$, $\wh s = s$, $\epsilon_x = \epsilon_s = 0$, $\beta_x = \beta_s = 0$.
Furthermore, we correctly initialize $H_{\ov x}$, $L_{\ov x}$, $\ov \alpha$, $\ov \delta_\mu$, $c_x = H_{\ov x}^{-1/2} \ov \delta_\mu$, $h = L_{\ov x}^{-1} A H_{\ov x}^{-1} \ov \delta_\mu$.

\textsc{Move}:
By inspecting terms with coefficient $\beta_x, \beta_s$, we see that we move in the correct direction and step size.

\textsc{Update}:
We would like to prove that \textsc{Update} does not change the value of $(x,s)$.
First note that $H_{\ov x}$ and $L_{\ov x}$, $\ov \alpha$, $\ov \delta_\mu$ are update correctly.
The remaining updates are separated into two steps: $\textsc{Update$H$}$ and $\textsc{Update$\cW$}$.

\textbf{Step $\textsc{Update$H$}$:}
Write $H_{\ov x}^{\new} := H_{\ov x} + \Delta_{H_{\ov x}}$.
Immediately after Algorithm~\ref{alg:exact_ds_part_2}, Line~\ref{line:exactds_updateh_breakpoint}, we have
\begin{align*}
    c_x + \delta_{c_x} &= H_{\ov x}^{-1/2} \ov \delta_\mu + \Delta_{H_{\ov x}^{-1/2}} (\ov{\delta}_{\mu} + \delta_{ \ov{\delta}_{\mu}}) + H_{\ov x}^{-1/2} \cdot \delta_{ \ov{\delta}_{\mu} }, \\
    &= (H_{\ov x}^{\new})^{-1/2} (\ov{\delta}_{\mu} + \delta_{ \ov{\delta}_{\mu}})\\
    h+\delta_h &= L_{\ov{x}}^{-1} A H_{\ov x}^{-1} \ov \delta_\mu  + L_{\ov{x}}^{-1} A ( \Delta_{H_{\ov x}^{-1}} \cdot (\ov{\delta}_{\mu} + \delta_{ \ov{\delta}_{\mu}}) + H_{\ov x}^{-1} \cdot \delta_{\ov{\delta}_{\mu} } )\\
    &= L_{\ov x}^{-1} A (H_{\ov x}^{\new})^{-1}(\ov \delta_\mu + \delta_{\ov \delta_\mu}).
\end{align*}
So $c_x$ and $h$ are updated correctly.
Furthermore, immediately after Algorithm~\ref{alg:exact_ds_part_2}, Line~\ref{line:exactds_updateh_breakpoint}, we have
\begin{align*}
    &~\delta_{\wh x} + (H_{\ov x}^{\new})^{-1/2} \beta_x (c_x + \delta_{c_x}) - H_{\ov x}^{-1/2} c_x \\
    &~ - ((H_{\ov x}^{\new})^{-1/2}\cW^\top (\beta_x (h+\delta_h) + (\epsilon_x + \delta_{\epsilon_x})) - H_{\ov x}^{-1/2} \cW^\top (\beta_x h + \epsilon_x)) \\
    =&~\beta_x ( -\Delta_{H_{\ov x}^{-1/2}} \cdot (c_x + \delta_{c_x}) - H_{\ov x}^{-1/2} \delta_{c_x} ) + \Delta_{H_{\ov x}^{-1/2}} \cdot \cW^\top \cdot (\beta_x h + \epsilon_x) \\
    &~+ (H_{\ov x}^{\new})^{-1/2} \beta_x (c_x + \delta_{c_x}) - H_{\ov x}^{-1/2} c_x \\
    &~ - ((H_{\ov x}^{\new})^{-1/2} \cW^\top (\beta_x (h+\delta_h) + (\epsilon_x + \delta_{\epsilon_x})) - H_{\ov x}^{-1/2} \cW^\top (\beta_x h + \epsilon_x)) \\
    =&~ \Delta_{H_{\ov x}^{-1/2}} \cdot \cW^\top \cdot (\beta_x h + \epsilon_x) \\
    &~ - ((H_{\ov x}^{\new})^{-1/2} \cW^\top (\beta_x (h+\delta_h) + (\epsilon_x + \delta_{\epsilon_x})) - H_{\ov x}^{-1/2} \cW^\top (\beta_x h + \epsilon_x)) \\
    =&~ - (H_{\ov x}^{\new})^{-1/2}\cW^\top (\beta_x \delta_h + \delta_{\epsilon_x}) \\
    =&~ 0.
\end{align*}
So $x$ is updated correctly, i.e., after \textsc{Update$H$} finishes, we have
\begin{align*}
    x = \wh x + (H_{\ov x}^{\new})^{-1/2} \beta_x c_x - (H_{\ov x}^{\new})^{-1/2} \cW^\top (\beta_x h+\epsilon_x).
\end{align*}

Immediately after Algorithm~\ref{alg:exact_ds_part_2}, Line~\ref{line:exactds_updateh_breakpoint}, we have
\begin{align*}
    &~\delta_{\wh s} + (H_{\ov x}^{\new})^{-1/2}\cW^\top (\beta_s (h+\delta_h) + (\epsilon_s + \delta_{\epsilon_s})) - H_{\ov x}^{1/2} \cW^\top (\beta_s h + \epsilon_s)  \\
    =&~ -\Delta_{ H_{\ov x}^{1/2} } \cdot \cW^\top (\beta_s h + \epsilon_s) \\
    & + (H_{\ov x}^{\new})^{-1/2}\cW^\top (\beta_s (h+\delta_h) + (\epsilon_s + \delta_{\epsilon_s})) - H_{\ov x}^{1/2} \cW^\top (\beta_s h + \epsilon_s) \\
    =&~ (H_{\ov x}^{\new})^{-1/2}\cW^\top (\beta_s \delta_h \delta_{\epsilon_s}) \\
    =&~0.
\end{align*}
So $s$ is updated correctly, i.e., after \textsc{Update$H$} finishes, we have
\begin{align*}
    s = \wh s + (H_{\ov x}^{\new})^{-1/2}\cW^\top (\beta_s h + \epsilon_s).
\end{align*}
This proves correctness of \textsc{Update$H$}.

\textbf{Step $\textsc{Update$\cW$}$:}
Let $H_{\ov x}^\new := H_{\ov x} + \Delta_{H_{\ov x}}$ and $L_{\ov x}^\new := L_{\ov x} + \Delta_{L_{\ov x}}$.
Immediately after Algorithm~\ref{alg:exact_ds_part_2}, Line~\ref{line:exactds_updatew_breakpoint}, we have
\begin{align*}
&~ \delta_{\wh x} - ((H_{\ov x}^\new)^{-1} A^\top (L_{\ov x}^\new)^{-\top} (\beta_x h + (\epsilon_x + \delta_{\epsilon_x}))- ((H_{\ov x}^\new)^{-1/2} H_{\ov x}^{-1/2} A^\top L_{\ov x}^{-\top} (\beta_x h + \epsilon_x))) \\
=&~ (H_{\ov x}^{\new})^{-1/2}  \Delta_{H_{\ov x}^{-1/2}}  A^\top L_{\ov x}^{-\top} (\beta_x h + \epsilon_x) \\
&~- ((H_{\ov x}^\new)^{-1} A^\top (L_{\ov x}^\new)^{-\top} (\beta_x h + (\epsilon_x + \delta_{\epsilon_x}))- (H_{\ov x}^\new)^{-1/2} H_{\ov x}^{-1/2} A^\top L_{\ov x}^{-\top} (\beta_x h + \epsilon_x))) \\
=&~ (H_{\ov x}^{\new})^{-1} A^\top (L_{\ov x}^{-\top} - (L_{\ov x}^{\new})^{-\top})(\beta_x h + \epsilon_x)
- (H_{\ov x}^\new)^{-1} A^\top (L_{\ov x}^\new)^{-\top} \delta_{\epsilon_x} \\
=&~0.
\end{align*}
So $x$ is updated correctly, i.e., after \textsc{Update$\cW$} finishes, we have
\begin{align*}
   x = \wh x + (H_{\ov x}^{\new})^{-1/2} \beta_x c_x - (H_{\ov x}^{\new})^{-1} A^\top (L_{\ov x}^{\new})^{-\top} (\beta_x h+\epsilon_x).
\end{align*}

Immediately after Algorithm~\ref{alg:exact_ds_part_2}, Line~\ref{line:exactds_updatew_breakpoint}, we have
\begin{align*}
&~ \delta_{\wh s} + (A^\top (L_{\ov x}^\new)^{-\top} (\beta_s h + (\epsilon_s + \delta_{\epsilon_s}))- ((H_{\ov x}^\new)^{1/2} H_{\ov x}^{-1/2} A^\top L_{\ov x}^{-\top} (\beta_s h + \epsilon_s))) \\
=&~- (H_{\ov x}^{\new})^{1/2}  \Delta_{H_{\ov x}^{-1/2}}  A^\top L_{\ov x}^{-\top} (\beta_s h + \epsilon_s) \\
&~+ (A^\top (L_{\ov x}^\new)^{-\top} (\beta_s h + (\epsilon_s + \delta_{\epsilon_s}))- (H_{\ov x}^\new)^{1/2} H_{\ov x}^{-1/2} A^\top L_{\ov x}^{-\top} (\beta_s h + \epsilon_s)) \\
=&~ (A^\top ((L_{\ov x}^{\new})^{-\top}-L_{\ov x}^{-\top})(\beta_s h + \epsilon_s)
+ A^\top (L_{\ov x}^\new)^{-\top} \delta_{\epsilon_s} \\
=&~0.
\end{align*}
So $s$ is updated correctly, i.e., after \textsc{Update$\cW$} finishes, we have
\begin{align*}
s = \wh s + A^\top (L_{\ov x}^{\new})^{-\top} (\beta_s h+\epsilon_s).
\end{align*}
\end{proof}

\begin{lemma} \label{lem:exactds_init_time}
\textsc{ExactDS.Initialize} (Algorithm~\ref{alg:exact_ds_part_1}) runs in $O(T_H + T_L + \nnz(A) + T_n + \eta m_\lp m_{\max})$ time.
\end{lemma}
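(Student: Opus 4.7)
The plan is to walk through \textsc{ExactDS.Initialize} (Algorithm~\ref{alg:exact_ds_part_1}) line by line and bound the cost of each step using the matrix-vector multiplication primitives established in Lemma~\ref{lem:mat_vec_mult_time} together with the parameters in Definition~\ref{defn:lp_general_param}. Since the procedure is essentially a sequence of primitive operations, the proof is a straightforward accounting exercise with no single conceptual obstacle; the main thing to be careful about is to apply the right block-structured multiplication lemma (with the right sparsity hypothesis) at each step so that the bounds add up cleanly.

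First, the trivial assignments of $\ov x, \ov s, \ov t, \wh x, \wh s, \epsilon_x, \epsilon_s, \beta_x, \beta_s$ take $O(n_\lp + m_\lp)$ time, which is dominated by $\nnz(A) + T_n$. Next, forming $H_{\ov x} = \nabla^2 \phi(\ov x)$ costs $T_H = \sum_{i\in [n]} T_{H,i}$ by assumption. Computing the Cholesky factorization $L_{\ov x}L_{\ov x}^\top = A H_{\ov x}^{-1} A^\top$ costs $T_L$ by assumption. In the course of producing $L_{\ov x}$ we also obtain the diagonal block inverses $L_{\ov x, (i,i)}^{-1}$, so the hypotheses of Lemma~\ref{lem:mat_vec_mult_time} are in force for the remainder of the analysis.

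The remaining work happens inside \textsc{Initialize$H$}. The \textbf{for} loop iterates over $i \in [n]$; each iteration only touches the $i$-th diagonal block of $H_{\ov x}$ and evaluates a constant number of scalar functions of $\gamma_i(\ov x,\ov s,\ov t)$ and $\mu_i(\ov x,\ov s,\ov t)$, so its cost is $O(T_{H,i})$ and the total over $i$ is $O(T_H)$. Computing $c_x = H_{\ov x}^{-1/2} \ov\delta_\mu$ costs $O(T_n)$ by Lemma~\ref{lem:mat_vec_mult_time}\ref{item:lem_mat_vec_mult_time_Hinvv}. Finally, $h = L_{\ov x}^{-1} A H_{\ov x}^{-1} \ov\delta_\mu$ is computed from right to left: $H_{\ov x}^{-1}\ov\delta_\mu$ costs $O(T_n)$ by Lemma~\ref{lem:mat_vec_mult_time}\ref{item:lem_mat_vec_mult_time_Hinvv}, multiplying by $A$ costs $O(\nnz(A))$ by Lemma~\ref{lem:mat_vec_mult_time}\ref{item:lem_mat_vec_mult_time_Av}, and solving with $L_{\ov x}$ costs $O(\eta m_\lp m_{\max})$ by Lemma~\ref{lem:mat_vec_mult_time}\ref{item:lem_mat_vec_mult_time_Linvv}.

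Summing all contributions yields the claimed running time
\begin{align*}
O(T_H + T_L + \nnz(A) + T_n + \eta m_\lp m_{\max}),
\end{align*}
which completes the proof. The only place where any subtlety could creep in is the last step: one must use the $L^{-1}v$ bound of Lemma~\ref{lem:mat_vec_mult_time}\ref{item:lem_mat_vec_mult_time_Linvv} rather than the sparsity-based variants \ref{item:lem_mat_vec_mult_time_Lv_sparse} or \ref{item:lem_mat_vec_mult_time_Linvv_sparse}, since at initialization we have no a priori sparsity on the right-hand side.
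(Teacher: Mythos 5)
Your proof follows essentially the same line-by-line accounting as the paper's, and the final bound matches. There is one small imprecision in the loop analysis: you bound each iteration by $O(T_{H,i})$, arguing it "evaluates a constant number of scalar functions of $\gamma_i$ and $\mu_i$," but computing $\gamma_i(\ov x, \ov s, \ov t) = \|\mu_i(\ov x, \ov s, \ov t)\|_{\ov x_i}^*$ requires forming (or applying) $(\nabla^2 \phi_i(\ov x_i))^{-1}$, which costs $\Tmat(n_i)$ time. The paper accounts for this explicitly, giving the loop a cost of $T_H + T_n$, not $T_H$. It is not automatic from Definition~\ref{defn:lp_general_param} that $T_{H,i} \gtrsim \Tmat(n_i)$ (the definition only says $T_{H,i}$ covers evaluating $\phi_i$, $\nabla\phi_i$, $\nabla^2\phi_i$, not inverting). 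In the end your total is unaffected because $T_n$ already shows up in your bounds for $c_x$ and $h$, but the per-step justification for the loop should cite the $\Tmat(n_i)$ cost of the inverse rather than absorbing it into $T_{H,i}$.
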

\begin{proof}

Computing $H_{\ov x}$ takes $T_H$ time.
Computing $L_{\ov x}$ takes $T_L$ time.

It remains to analyze the running time of $\textsc{Initialize$H$}$.
Computing $\mu_i(\ov x, \ov s, \ov t)$ takes $T_{H,i}$ time.
Computing $\gamma_i(\ov x, \ov s, \ov t)$ takes $\Tmat(n_i)$ time because it involves computing inverse of $\nabla^2 \phi_i(x_i)$.
So computing $\ov \delta_\mu$ and $\ov \alpha$ takes $T_H + T_n$ time.

Computing $c_x$ takes $T_n$ time by Lemma~\ref{lem:mat_vec_mult_time}\ref{item:lem_mat_vec_mult_time_Hinvv}.
Computing $h$ takes $O(T_n + \eta m_\lp m_{\max} + \nnz(A))$ time by Lemma~\ref{lem:mat_vec_mult_time}\ref{item:lem_mat_vec_mult_time_Hinvv}\ref{item:lem_mat_vec_mult_time_Av}\ref{item:lem_mat_vec_mult_time_Linvv}.
\end{proof}

\begin{lemma} \label{lem:exactds_move_time}
\textsc{ExactDS.Move} (Algorithm~\ref{alg:exact_ds_part_1}) runs in $O(1)$ time.
\end{lemma}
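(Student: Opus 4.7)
The proof is essentially an inspection of the \textsc{Move} procedure in Algorithm~\ref{alg:exact_ds_part_1}. The plan is simply to read off each line of the procedure and account for its cost, noting that \textsc{Move} is designed to be a constant-time implicit update precisely because it only modifies the scalar coefficients $\beta_x$ and $\beta_s$ in the multiscale representation rather than touching any of the vector-valued state.

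Concretely, I would argue as follows. The procedure consists of exactly three steps: (i) the update $\beta_x \gets \beta_x + (\ov\alpha)^{-1/2}$, (ii) the update $\beta_s \gets \beta_s + \ov t \cdot (\ov\alpha)^{-1/2}$, and (iii) returning the pair $(\beta_x, \beta_s)$. Since $\ov\alpha$ and $\ov t$ are stored as scalars (members of the data structure), each of steps (i) and (ii) requires one square root (or reciprocal thereof), one multiplication, and one addition on real numbers, all of which take $O(1)$ time in the standard word/real-RAM model. Step (iii) returns two scalars, which is also $O(1)$. Summing gives the claimed $O(1)$ bound.

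The only conceptual point worth flagging is why nothing else needs to be touched: the vectors $\wh x, \wh s, c_x, h, \epsilon_x, \epsilon_s$ and the matrices $H_{\ov x}, L_{\ov x}$ all remain unchanged during a \textsc{Move}; the displacement along the central path is absorbed entirely into the scalars $\beta_x, \beta_s$ via the identities
\begin{align*}
x &= \wh x + H_{\ov x}^{-1/2} \beta_x c_x - H_{\ov x}^{-1/2} \cW^\top (\beta_x h + \epsilon_x), \\
s &= \wh s + H_{\ov x}^{1/2} \cW^\top (\beta_s h + \epsilon_s),
\end{align*}
so there is nothing to recompute. Thus there is no hidden obstacle, and the lemma follows immediately.
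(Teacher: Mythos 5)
Your proof is correct and follows the same approach as the paper's (which simply states that all steps in \textsc{Move} are $O(1)$); you add the helpful explicit observation that only the scalars $\beta_x$, $\beta_s$ are touched while all vector- and matrix-valued state is left untouched, but the substance is identical.
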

\begin{proof}
All steps in this procedure can be done in $O(1)$ time.
\end{proof}

\begin{lemma} \label{lem:exactds_update_time}
Each call of \textsc{ExactDS.Update} (Algorithm~\ref{alg:exact_ds_part_2}) runs in
\begin{align*}
O((\Tmat(n_{\max}) + T_{\Delta_L,\max} + T_{H,\max} + \eta^2 m_{\max}^2) (\|\delta_{\ov x}\|_{2,0} + \|\delta_{\ov s}\|_{2,0}))
\end{align*}
time.
Furthermore, $\delta_h, \delta_{\epsilon_x}, \delta_{\epsilon_s}$ changes in $O(\eta(\|\delta_{\ov x}\|_{2,0}+\|\delta_{\ov s}\|_{2,0}))$ blocks, and
$\delta_{H_{\ov x}^{1/2} \wh x}, \delta_{H_{\ov x}^{-1/2} \wh s}, \delta_{\ov H_{\ov x}^{-1/2} c_x}$ changes in $O(\|\delta_{\ov x}\|_{2,0}+\|\delta_{\ov s}\|_{2,0})$ blocks.
\end{lemma}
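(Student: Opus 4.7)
The plan is to walk through \textsc{UpdateH} and \textsc{UpdateW} line by line, bounding the block support of each newly produced quantity and the arithmetic cost to form it. Let $k := \|\delta_{\ov x}\|_{2,0} + \|\delta_{\ov s}\|_{2,0}$, and let $S \subseteq [n]$ be the (at most $k$) blocks where $\delta_{\ov x}$ or $\delta_{\ov s}$ is nonzero. The structural fact I would use throughout is that block diagonal factors preserve block support, while applying $A$ or $L_{\ov x}^{\pm 1}$ to a vector supported on $S$ spreads it to the union of the $|S|$ root-to-block paths in the block elimination tree, blowing up the block sparsity by at most a factor of $\eta$.

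First I would handle the ``outer'' updates in \textsc{Update}. The matrix $\Delta_{H_{\ov x}}$ is block diagonal with support exactly $S$; each of its $|S|$ nonzero diagonal blocks, together with the associated $\Delta_{H_{\ov x}^{\pm 1/2}}$ and $\Delta_{H_{\ov x}^{-1}}$, is produced in $T_{H,\max}+\Tmat(n_{\max})$ time. Forming $\Delta_{L_{\ov x}}$ is done by $|S|$ successive Cholesky updates (Lemma~\ref{lem:known_cholesky_TDeltaLmax}) at total cost $O(k\,T_{\Delta_L,\max})$, giving a $\Delta_{L_{\ov x}}$ whose nonzero block columns are the ancestors (in the elimination tree) of the blocks in $S$.

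Next I would analyze \textsc{UpdateH}. Looping over $S$ produces $\delta_{\ov\delta_\mu}$ with block support $S$ in $O(k(T_{H,\max}+\Tmat(n_{\max})))$ time, and then $\delta_{c_x}$ is supported on $S$ and computed in $O(k\,\Tmat(n_{\max}))$ time since both summands involve block diagonal matrices acting on vectors supported on $S$. The vector $\delta_h = L_{\ov x}^{-1} A\,(\Delta_{H_{\ov x}^{-1}}(\ov\delta_\mu+\delta_{\ov\delta_\mu}) + H_{\ov x}^{-1}\delta_{\ov\delta_\mu})$ has an inner argument supported on $S$; passing through $A$ and then $L_{\ov x}^{-1}$ yields block sparsity $O(k\eta)$, and Lemma~\ref{lem:mat_vec_mult_time}\ref{item:lem_mat_vec_mult_time_Linvv_path} applied to each of the $|S|$ paths gives cost $O(k\eta^2 m_{\max}^2)$. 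The scalar multiples $\delta_{\epsilon_x}$ and $\delta_{\epsilon_s}$ inherit the same support and cost. For $\delta_{\wh x}$ and $\delta_{\wh s}$, the outermost factor is the block diagonal $\Delta_{H_{\ov x}^{\pm 1/2}}$ supported on $S$, so I only need to read $O(k)$ blocks of $\cW^\top(\beta_x h+\epsilon_x)$ and $\cW^\top(\beta_s h+\epsilon_s)$; Lemma~\ref{lem:mat_vec_mult_coord_time}\ref{item:mat_vec_mult_coord_time_Wtvi} bounds each block query by $O(n_{\max}^2+\eta^2 m_{\max}^2)$, producing $O(k)$-block outputs in $O(k(\Tmat(n_{\max})+\eta^2 m_{\max}^2))$ time.

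The analysis of \textsc{UpdateW} is parallel: $\Delta_{L_{\ov x}}^\top L_{\ov x}^{-\top}(\beta_x h+\epsilon_x)$ and its analogue for $s$ are supported on the $O(k\eta)$ ancestor-blocks of $S$ and can be evaluated path by path in $O(k\eta^2 m_{\max}^2)$ time via Lemma~\ref{lem:mat_vec_mult_coord_time}\ref{item:mat_vec_mult_coord_time_LinvtvS}, while the $\wh x,\wh s$ updates in \textsc{UpdateW} are once more block-diagonal factors acting on a vector we read at only $O(k)$ coordinates and fit in the same budget. Summing every contribution gives the claimed $O(k(\Tmat(n_{\max})+T_{\Delta_L,\max}+T_{H,\max}+\eta^2 m_{\max}^2))$ running time, and the sparsity claims follow by combining the above per-line bounds ($O(k\eta)$ blocks for quantities that pass through $L_{\ov x}^{-1}$ or $L_{\ov x}^{-\top}$, $O(k)$ blocks otherwise). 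The main obstacle I expect is the bookkeeping in \textsc{UpdateW}: I need to certify that $\Delta_{L_{\ov x}}$ really is supported on the ancestor-paths of $S$ (which follows from the defining property of the block elimination tree and Lemma~\ref{lem:known_cholesky_TDeltaLmax}) and to reorganize the $\wh x,\wh s$ updates so that every matrix-vector product is either block diagonal acting on a sparse input, or a coordinate-restricted query that falls under Lemma~\ref{lem:mat_vec_mult_coord_time}; once this is in place the rest is routine.
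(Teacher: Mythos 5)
Your proposal is correct and follows essentially the same approach as the paper's proof: a line-by-line pass through \textsc{Update$H$} and \textsc{Update$\cW$}, tracking block support using the block elimination tree (block-diagonal factors preserve support, $A$ and $L_{\ov x}^{\pm 1}$ spread support to ancestor paths), and invoking Lemma~\ref{lem:mat_vec_mult_time} and Lemma~\ref{lem:mat_vec_mult_coord_time} for the matrix-vector products. One small point of bookkeeping: the cost of forming $\Delta_{L_{\ov x}}$ is budgeted directly as $O(\|\delta_{\ov x}\|_{2,0}\, T_{\Delta_L,\max})$ by the parameter's definition in Definition~\ref{defn:lp_general_param}, rather than via Lemma~\ref{lem:known_cholesky_TDeltaLmax} (which is a particular instantiation used later to set $T_{\Delta_L,\max}$); this does not change the conclusion.
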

\begin{proof}
Computing $\Delta_{H_{\ov x}}$ takes $O(T_{H,\max} \|\delta_{\ov x}\|_{2,0})$ time, and $\nnz( \Delta_{H_{\ov x}} ) = O(n_{\max}^2 \|\delta_{\ov x}\|_{2,0})$.

Computing $\Delta_{L_{\ov x}}$ takes $O(T_{\Delta_L,\max} \|\delta_{\ov x}\|_{2,0})$ time, and $\nnz(\Delta_{L_{\ov x}}) = O(T_{\Delta_L,\max} \|\delta_{\ov x}\|_{2,0})$.

It remains to analyze two parts \textsc{Update$H$} and \textsc{Update$\cW$}. We will analyze these two parts separately in the next a few paragraphs.

{\bf Part 1.} 

\textsc{Update$H$}: 

Computing $\ov \alpha^{\new}$ and
$\delta_{\ov \delta_\mu}$ takes
\begin{align*} 
    O(\Tmat(n_{\max}) \cdot (\|\delta_{\ov x}\|_{2,0} + \|\delta_{\ov s}\|_{2,0}))
\end{align*}
time, and we have
\begin{align*} 
\|\delta_{\ov \delta_\mu} \|_0 \le \|\delta_{\ov x}\|_{2,0} + \|\delta_{\ov s}\|_{2,0}.
\end{align*}

Computing $\Delta_{ H_{\ov x}^{-1/2}} $, $\Delta_{ H_{\ov x}^{1/2} }$, $\Delta_{ H_{\ov x}^{-1} }$
takes $O(\Tmat(n_{\max}) \|\delta_{\ov x}\|_{2,0})$ time, and they all have $\nnz$ at most $O(n_{\max}^2 \|\delta_{\ov x}\|_{2,0})$.

Computing $\delta_{c_x}$ takes $\nnz(\Delta_{ H_{\ov x}^{-1/2} }) +  n_{\max}^2\|\delta_{\ov \delta_\mu}\|_0$ time.

Computing $\delta_{h}$ takes 
\begin{align*} 
& ~ O(\nnz(\Delta_{ H_{\ov x}^{-1/2} }) + n_{\max}^2 \|\delta_{\ov \delta_\mu} \|_0 + \eta^2 m_{\max}^2 \|\delta_{\ov x}\|_{2,0}) \\
= & ~ O((n_{\max}^2 + \eta^2 m_{\max}^2) (\|\delta_{\ov x}\|_{2,0} + \|\delta_{\ov s}\|_{2,0}))
\end{align*}
time by Lemma~\ref{lem:mat_vec_mult_coord_time}\ref{item:mat_vec_mult_coord_time_LinvtvS}.
Also, $\delta_{h}$ is supported on $O(\|\delta_{x}\|_{2,0})$ paths in the block elimination tree, thus $O(\eta \|\delta_x\|_{2,0})$ blocks, or $O(\eta n_{\max} \|\delta_x\|_{2,0})$ dimension.

We compute $\delta_{\wh x}$ and $\delta_{\wh s}$ from left to right.
This takes 
\begin{align*} 
 & ~ O(\|\delta_{c_x}\|_0 + \nnz(\Delta_{ H_{\ov x}^{1/2} } )+\nnz(\Delta_{H_{\ov x}^{-1/2}}) + \eta^2 m_{\max}^2 \|\delta_x\|_{2,0}) \\
= & ~ O((n_{\max}^2 + \eta^2 m_{\max}^2) (\|\delta_{\ov x}\|_{2,0} + \|\delta_{\ov s}\|_{2,0}))
\end{align*}
time by Lemma~\ref{lem:mat_vec_mult_coord_time}\ref{item:mat_vec_mult_coord_time_Wtvi}.

Remaining computations are all cheap.

{\bf Part 2.}

\textsc{Update$\cW$}:

Computing $\delta_{\wh x}$ and $\delta_{\wh s}$ takes 
\begin{align*} 
O((\eta^2 m_{\max}^2 + \Tmat(n_{\max}))\|\delta_{\ov x}\|_{2,0})
\end{align*}
time by Lemma~\ref{lem:mat_vec_mult_time}.

To compute $\delta_{\epsilon_x}$ and $\delta_{\epsilon_s}$, we first compute $(L^{-\top} (\beta_x h + \epsilon_x))_S$, where $S \in [m]$ is the row support of $\Delta_{L}$, which can be decomposed into at most $\|\delta_{\ov x}\|_{2,0}$ paths.
This takes 
\begin{align*} 
O(\eta^2 m_{\max}^2 \|\delta_{\ov x}\|_{2,0}) 
\end{align*}
time by Lemma~\ref{lem:mat_vec_mult_coord_time}\ref{item:mat_vec_mult_coord_time_LinvtvS}.
So computing $\delta_{\epsilon_x}$ and $\delta_{\epsilon_s}$ takes 
\begin{align*} 
O(\nnz(\Delta_L) + \eta^2 m_{\max}^2 \|\delta_{\ov x}\|_{2,0})
\end{align*}
time.

Combining everything finishes the proof of running time.

For the claim on output sparsity, note that $\delta_h, \delta_{\epsilon_x}, \delta_{\epsilon_s}$ change in $O(\|\delta_{\ov x}\|_{2,0} + \|\delta_{\ov s}\|_{2,0})$ paths, and  that
$\delta_{H_{\ov x}^{1/2} \wh x}, \delta_{H_{\ov x}^{-1/2} \wh s}, \delta_{\ov H_{\ov x}^{-1/2} c_x}$ change only in the support of $\delta_{\ov x}$ and support of $\delta_{\ov s}$.
\end{proof}

\begin{lemma} \label{lem:exactds_output_time}
\textsc{ExactDS.Output} (Algorithm~\ref{alg:exact_ds_part_1}) runs in $O(\nnz(A) + T_n + \eta m_\lp m_{\max})$ time and correctly outputs $(x,s)$.
\end{lemma}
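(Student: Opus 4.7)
Correctness is essentially free from \textsc{Initialize} plus the invariants proved in Lemma~\ref{lem:exactds_correctness}: the data structure maintains
\begin{align*}
x &= \wh x + H_{\ov x}^{-1/2} \beta_x c_x - H_{\ov x}^{-1/2} \cW^\top(\beta_x h + \epsilon_x), \\
s &= \wh s + H_{\ov x}^{1/2} \cW^\top(\beta_s h + \epsilon_s)
\end{align*}
as an invariant across \textsc{Initialize}, \textsc{Move}, and \textsc{Update}. The \textsc{Output} procedure simply evaluates the right-hand sides of these identities, so returning these vectors is correct by the invariant. The main content of the lemma is therefore the running time.

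For the time bound, I would evaluate each expression from right to left so that every intermediate product is a vector rather than a matrix. For the primal piece, the key subexpression is $\cW^\top(\beta_x h + \epsilon_x)$, and I would apply Lemma~\ref{lem:mat_vec_mult_time}\ref{item:lem_mat_vec_mult_time_WTv} directly, which bounds the cost of a $\cW^\top$-vector multiply by $O(T_n + \eta m_\lp m_{\max} + \nnz(A))$. Prepending a factor of $H_{\ov x}^{-1/2}$ and adding $H_{\ov x}^{-1/2} \beta_x c_x$ each cost an additional $O(T_n)$ by Lemma~\ref{lem:mat_vec_mult_time}\ref{item:lem_mat_vec_mult_time_Hinvv}, and the final vector sums with $\wh x$ cost $O(n_\lp) = O(T_n)$. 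The dual piece $\wh s + H_{\ov x}^{1/2} \cW^\top(\beta_s h + \epsilon_s)$ is analyzed identically, with $H_{\ov x}^{1/2}$ in place of $H_{\ov x}^{-1/2}$, and again both are covered by Lemma~\ref{lem:mat_vec_mult_time}\ref{item:lem_mat_vec_mult_time_Hinvv}.

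Summing the two pieces gives a total of $O(\nnz(A) + T_n + \eta m_\lp m_{\max})$ time, matching the claimed bound. There is no genuine obstacle here; the only thing to be careful about is to compute $\cW^\top v = H_{\ov x}^{-1/2} A^\top L_{\ov x}^{-\top} v$ in right-to-left order (triangular solve, then apply $A^\top$, then apply the block-diagonal $H_{\ov x}^{-1/2}$) so that each piece of Lemma~\ref{lem:mat_vec_mult_time} can be invoked on a single vector and the $\nnz(A)$, $T_n$, and $\eta m_\lp m_{\max}$ contributions stay additive rather than multiplicative.
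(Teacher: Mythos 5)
Your proof is correct and follows the same route as the paper: correctness is delegated to the invariant established in Lemma~\ref{lem:exactds_correctness}, and the running time bound comes from Lemma~\ref{lem:mat_vec_mult_time}\ref{item:lem_mat_vec_mult_time_WTv} for the $\cW^\top v$ products together with Lemma~\ref{lem:mat_vec_mult_time}\ref{item:lem_mat_vec_mult_time_Hinvv} for the outer $H_{\ov x}^{\pm 1/2}$ applications. You simply spell out the right-to-left evaluation order and the per-piece costs that the paper's one-line proof leaves implicit.
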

\begin{proof}
Correctness is by Lemma~\ref{lem:exactds_correctness}.
By Lemma~\ref{lem:mat_vec_mult_time}\ref{item:lem_mat_vec_mult_time_WTv}.
\end{proof}

\begin{lemma} \label{lem:exactds_query_time}
\textsc{ExactDS.Query$x$} (Algorithm~\ref{alg:exact_ds_part_1}) and \textsc{ExactDS.Query$s$} (Algorithm~\ref{alg:exact_ds_part_1}) runs in $O(n_{\max}^2 + \eta^2 m_{\max}^2)$ time and returns the correct answer.
\end{lemma}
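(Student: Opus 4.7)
The plan is to split the argument into a correctness part and a running-time part, each short.

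For correctness, I would invoke the invariant established in Lemma~\ref{lem:exactds_correctness}:
\begin{align*}
x &= \wh x + H_{\ov x}^{-1/2} \beta_x c_x - H_{\ov x}^{-1/2} \cW^\top (\beta_x h + \epsilon_x), \\
s &= \wh s + H_{\ov x}^{1/2} \cW^\top (\beta_s h + \epsilon_s).
\end{align*}
Since $H_{\ov x}$ is block diagonal with block signature $(n_1,\ldots,n_n)$, so are $H_{\ov x}^{1/2}$ and $H_{\ov x}^{-1/2}$, and their $(i,i)$ blocks equal $H_{\ov x,(i,i)}^{1/2}$ and $H_{\ov x,(i,i)}^{-1/2}$. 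Reading off the $i$-th block of the invariant therefore yields exactly the expression returned by \textsc{Query}$x(i)$ and \textsc{Query}$s(i)$ in Algorithm~\ref{alg:exact_ds_part_1}.

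For running time, the dominant step is evaluating $(\cW^\top v)_i$ for $v = \beta_x h + \epsilon_x$ (resp.\ $v = \beta_s h + \epsilon_s$). By Lemma~\ref{lem:mat_vec_mult_coord_time}\ref{item:mat_vec_mult_coord_time_Wtvi}, a single block of $\cW^\top v$ can be computed in $O(n_{\max}^2 + \eta^2 m_{\max}^2)$ time, using that all constraints touching block $i$ lie on a single root-to-node path in the block elimination tree, so that only $v_S$ for that path $S$ is needed. All other operations in the returned expression—forming $\beta_x c_{x,i}$, adding $\wh x_i$, and multiplying the resulting $n_i$-dimensional vector by the $n_i\times n_i$ block $H_{\ov x,(i,i)}^{-1/2}$ (which is maintained alongside $H_{\ov x}$ via its eigendecomposition, cf.~Lemma~\ref{lem:mat_vec_mult_time}\ref{item:lem_mat_vec_mult_time_Hinvv})—cost $O(n_{\max}^2)$. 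Summing gives $O(n_{\max}^2 + \eta^2 m_{\max}^2)$; \textsc{Query}$s$ is analogous but omits the $c_x$ term and uses $H_{\ov x,(i,i)}^{1/2}$.

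The only subtle point is ensuring the per-block square root/inverse square root of $H_{\ov x}$ is available at $O(n_{\max}^2)$ per matrix-vector product rather than at $\Tmat(n_{\max})$ per query. This is not really an obstacle because these factorizations are computed once per affected block during \textsc{Initialize} and \textsc{Update$H$} (which are already charged $T_H$ and $T_{H,\max}$ respectively), and then cached; so queries only pay for matrix-vector multiplications. Aside from this bookkeeping observation, the lemma is a direct application of Lemmas~\ref{lem:exactds_correctness} and~\ref{lem:mat_vec_mult_coord_time}.
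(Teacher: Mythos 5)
Your proof is correct and follows the same route as the paper, which simply cites the implicit-representation invariant (Lemma~\ref{lem:exactds_correctness}) for correctness and Lemma~\ref{lem:mat_vec_mult_coord_time}\ref{item:mat_vec_mult_coord_time_Wtvi} for the running time. Your elaboration of the $O(n_{\max}^2)$ cost of the remaining per-block operations and the caching of $H_{\ov x,(i,i)}^{\pm 1/2}$ is a reasonable fleshing-out of what the paper leaves as "obvious."
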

\begin{proof}
Correctness is obvious. Running time follows from Lemma~\ref{lem:mat_vec_mult_coord_time}\ref{item:mat_vec_mult_coord_time_Wtvi}.
\end{proof}

\subsubsection{\textsc{ApproxDS}}\label{sec:framework:approx_ds}
In this section we introduce \textsc{ApproxDS}, data structure for maintaining a sparsely-changing approximation of $(x,s)$.

\begin{algorithm}[!ht] \caption{This is used in Algorithm~\ref{alg:cpm}.}\label{alg:approxds_1}
\begin{algorithmic}[1]
\small
\State {\bf data structure} \textsc{ApproxDS} \Comment{Theorem~\ref{thm:approxds}}
\State {\bf private : members}
\State \hspace{4mm} $\epsilon_{\apx,x}, \epsilon_{\apx,s}, \zeta_x, \zeta_s\in \R$
\State \hspace{4mm} $\ell \in \bN$
\State \hspace{4mm} \textsc{BatchSketch} $\mathsf{bs}$ \Comment{This maintains a sketch of $H_{\ov x}^{1/2} x$ and $H_{\ov x}^{-1/2} s$. See Algorithm~\ref{alg:batch_sketch_init_update} and \ref{alg:batch_sketch_query}.}

\State \hspace{4mm} \textsc{ExactDS*} $\mathsf{exact}$ \Comment{This is a pointer to the \textsc{ExactDS} (Algorithm~\ref{alg:exact_ds_part_1}, \ref{alg:exact_ds_part_2}) we maintain in parallel to \textsc{ApproxDS}.}

\State \hspace{4mm} $\wt x, \wt s \in \R^{n_\lp}$
\Comment{$(\wt x, \wt s)$ is a sparsely-changing approximation of $(x, s)$. They have the same value as $(\ov x, \ov s)$, but for these local variables we use $(\wt x, \wt s)$ to avoid confusion.}

\State {\bf end members}

\Procedure{\textsc{Initialize}}{$x,s \in \R^{n_\lp}, h, \epsilon_x, \epsilon_s \in \R^{m_\lp}, H_{\ov{x}}^{1/2} \wh{x}, H_{\ov{x}}^{-1/2} \wh{s}, c_x \in \R^{n_\lp}, \beta_x, \beta_s\in \R, q\in \bN,
\textsc{ExactDS*}~\mathsf{exact}, \epsilon_{\apx,x}, \epsilon_{\apx,s}, \delta_{\apx}\in \R$}
    \State $\ell \gets 0$, $q \gets q$
    \State $\epsilon_{\apx,x} \gets \epsilon_{\apx,x}, \epsilon_{\apx,s} \gets \epsilon_{\apx,s}$
    
    \State $\mathsf{bs}.\textsc{Initialize}(x, h, \epsilon_x, \epsilon_s, H_{\ov x}^{1/2} \wh x, H_{\ov x}^{-1/2} \wh s, c_x, \beta_x, \beta_s, \delta_\apx/q)$
    \Comment{Algorithm~\ref{alg:batch_sketch_init_update}}
    \State $\wt x \gets x, \wt s \gets s$
    \State $\mathsf{exact} \gets \mathsf{exact}$
\EndProcedure
\Procedure{\textsc{Update}}{$\delta_{\ov x} \in \R^{n_\lp}, \delta_{h}, \delta_{\epsilon_x}, \delta_{\epsilon_s} \in \R^{m_\lp}, \delta_{H_{\ov x}^{1/2} \wh x}, \delta_{H_{\ov x}^{-1/2} \wh s}, \delta_{H_{\ov x}^{-1/2} c_x} \in \R^{n_\lp}$}
    \State {$\mathsf{bs}.\textsc{Update}(\delta_{\ov x}, \delta_{h}, \delta_{\epsilon_x}, \delta_{\epsilon_s}, \delta_{H_{\ov x}^{1/2} \wh x}, \delta_{H_{\ov x}^{-1/2} \wh s}, \delta_{H_{\ov x}^{-1/2} c_x})$}
    \Comment{Algorithm~\ref{alg:batch_sketch_init_update}}
    \State $\ell \gets \ell+1$
\EndProcedure 
\Procedure{MoveAndQuery}{$\beta_x, \beta_s\in \R$}
    \State $\mathsf{bs}.\textsc{Move}(\beta_x, \beta_s)$
    \Comment{Do not update $\ell$ yet}
    \State $\delta_{\wt x} \gets \textsc{Query$x$}(\epsilon_{\apx,x}/(2\log q+1))$ \Comment{Algorithm~\ref{alg:approxds_2}}
    \State $\delta_{\wt s} \gets \textsc{Query$s$}(\epsilon_{\apx,s}/(2\log q+1))$ \Comment{Algorithm~\ref{alg:approxds_2}}
    \State $\wt x \gets \wt x + \delta_{\wt x}$, $\wt s \gets \wt s + \delta_{\wt s}$
    \State \Return $(\delta_{\wt x}, \delta_{\wt s})$
\EndProcedure
\State {\bf end data structure}
\end{algorithmic}
\end{algorithm}

\begin{algorithm}[!ht] \caption{\textsc{ApproxDS} Algorithm~\ref{alg:approxds_1} continued.}\label{alg:approxds_2}
\begin{algorithmic}[1]
\State {\bf data structure} \textsc{ApproxDS} \Comment{Theorem~\ref{thm:approxds}}
\State {\bf private:}
\Procedure{\textsc{Query$x$}}{$\epsilon \in \R$}
    \State $\cI \gets 0$
    \For{$j=0\to \lfloor \log_2 \ell\rfloor$}
    \If{$\ell \bmod {2^j}=0$}
        \State $\cI \gets \cI \cup \mathsf{bs}.\textsc{Query$x$}(\ell-2^j+1, \epsilon)$
        \Comment{Algorithm~\ref{alg:batch_sketch_query}}
    \EndIf
    \EndFor
    \State $\delta_{\wt x} \gets 0$
    \For{all $i\in \cI$} \label{line:algo_approxds_queryx_xinI}
        \State $x_i \gets \mathsf{exact}.\textsc{Query}x(i)$
        \Comment{Algorithm~\ref{alg:exact_ds_part_1}}
        \If{$\|\wt x_i - x_i\|_{\wt x_i} > \epsilon$} \label{line:algo_approxds_queryx_checkxi}
            \State $\delta_{\wt x,i} \gets x_i-\wt x_i$
        \EndIf
    \EndFor
    \State \Return $\delta_{\wt x}$
\EndProcedure
\Procedure{\textsc{Query$s$}}{$\epsilon \in \R$}
    \State $\cI \gets 0$
    \For{$j=0\to \lfloor \log_2 \ell\rfloor$}
    \If{$\ell \bmod {2^j}=0$}
        \State $\cI \gets \cI \cup \mathsf{bs}.\textsc{Query$s$}(\ell-2^j+1, \epsilon)$
        \Comment{Algorithm~\ref{alg:batch_sketch_query}}
    \EndIf
    \EndFor
    \State $\delta_{\wt s} \gets 0$
    \For{all $i\in \cI$}
        \State $s_i \gets \mathsf{exact}.\textsc{Query}s(i)$
        \Comment{Algorithm~\ref{alg:exact_ds_part_1}}
        \If{$\|\wt s_i - s_i\|_{\wt x_i}^* > \epsilon$}
            \State $\delta_{\wt s,i} \gets s_i-\wt s_i$
        \EndIf
    \EndFor
    \State \Return $\delta_{\wt s}$
\EndProcedure
\State {\bf end data structure}
\end{algorithmic}
\end{algorithm}

\begin{theorem}\label{thm:approxds}
Given parameters $\epsilon_{\apx,x}, \epsilon_{\apx,s} \in (0, 1), \delta_\apx \in (0,1)$, $\zeta_{x}, \zeta_{s}\in \R$ such that 
\begin{align*}
\|H_{\ov x^{(\ell)}}^{1/2} x^{(\ell)}-H_{\ov x^{(\ell)}}^{1/2} x^{(\ell+1)}\|_2 \le \zeta_{x}, ~~~ \|H_{\ov x^{(\ell)}}^{-1/2} s^{(\ell)}-H_{\ov x^{(\ell)}}^{-1/2} s^{(\ell+1)}\|_2 \le \zeta_{s}
\end{align*}
for all $\ell \in \{0,\ldots,q-1\}$,
data structure \textsc{ApproxDS} (Algorithm~\ref{alg:approxds_1} and Algorithm~\ref{alg:approxds_2}) supports the following operations:
\begin{itemize}
\item $\textsc{Initialize}(x,s \in \R^{n_\lp}, h, \epsilon_x, \epsilon_s \in \R^{m_\lp}, H_{\ov{x}}^{1/2} \wh{x}, H_{\ov{x}}^{-1/2} \wh{s}, c_x \in \R^{n_\lp}, \beta_x, \beta_s \in \R, q\in \bN$, $\textsc{ExactDS*}~\mathsf{exact}$, $\epsilon_{\apx,x}, \epsilon_{\apx,s}, \delta_{\apx}\in \R)$: Initialize the data structure in $$\wt O(T_H + T_n + T_L + r\cdot (n \cdot n_{\max}^2 + T_Z + \nnz(A)))$$ time.

\item $\textsc{MoveAndQuery}(\beta_x, \beta_s\in \R)$:
Update values of $\beta_x, \beta_s$ by calling $\textsc{BatchSketch}.\textsc{Move}$.
This effectively moves $(x^{(\ell)}, s^{(\ell)})$ to $(x^{(\ell+1)}, s^{(\ell+1)})$ while keeping $\ov x^{(\ell)}$ unchanged.

Then return two sets $L_x^{(\ell)}, L_s^{(\ell)} \subset [n]$ where
\begin{align*}
L_x^{(\ell)} &\supseteq \{i \in [n] : \|H_{\ov x^{(\ell)}}^{1/2} x^{(\ell)}_i - H_{\ov x^{(\ell)}}^{1/2} x^{(\ell+1)}_i\|_2 \ge \epsilon_{\apx,x}\},\\
L_s^{(\ell)} &\supseteq \{i \in [n] : \|H_{\ov x^{(\ell)}}^{-1/2} s^{(\ell)}_i - H_{\ov x^{(\ell)}}^{-1/2} s^{(\ell+1)}_i\|_2 \ge \epsilon_{\apx,s}\},
\end{align*}
satisfying
\begin{align*}
\sum_{0\le \ell \le q-1} |L_x^{(\ell)}| = \wt O(\epsilon_{\apx,x}^{-2} \zeta_x^2 q^2), \\
\sum_{0\le \ell \le q-1} |L_s^{(\ell)}| = \wt O(\epsilon_{\apx,s}^{-2} \zeta_s^2 q^2).
\end{align*}

For every query, with probability at least $1-\delta_\apx / q$, the return values are correct.

Furthermore, total time cost over all queries is at most
\begin{align*}
\wt O\left( (\epsilon_{\apx,x}^{-2} \zeta_{x}^2 + \epsilon_{\apx,s}^{-2} \zeta_{s}^2)q^2 r (n_{\max}^2 + \eta^2 m_{\max}^2)\right).
\end{align*}

\item $\textsc{Update}(\delta_{\ov x}, \delta_{h}, \delta_{\epsilon_x}, \delta_{\epsilon_s}, \delta_{H_{\ov x}^{1/2} \wh x}, \delta_{H_{\ov x}^{-1/2} \wh s}, \delta_{H_{\ov x}^{-1/2} c_x})$: Update sketches of $H_{\ov x^{(\ell)}}^{1/2} x^{(\ell+1)}$ and $H_{\ov x^{(\ell)}}^{-1/2} s^{(\ell+1)}$ by calling \textsc{BatchSketch}.\textsc{Update}.
This effectively moves $\ov x^{(\ell)}$ to $\ov x^{(\ell+1)}$ while keeping $(x^{(\ell+1)}, s^{(\ell+1)})$ unchanged. Then advance timestamp $\ell$.

Each update costs
\begin{align*}
\wt O(& (T_{H,\max} + T_{\Delta L,\max} + \Tmat(n_{\max}) + r \eta^2 m_{\max}^2 ) \|\delta_{\ov x}\|_{2,0} \\
& + r m_{\max}  \cdot (\|\delta_{h}\|_0 + \|\delta_{\epsilon_x}\|_0 + \|\delta_{\epsilon_s}\|_0) \\
& + r n_{\max} \cdot (\|\delta_{H_{\ov x}^{1/2} \wh x}\|_0+\| \delta_{H_{\ov x}^{-1/2} \wh s}\|_0+\|\delta_{H_{\ov x}^{-1/2} c_x}\|_0))
\end{align*}
time.
\end{itemize}

\end{theorem}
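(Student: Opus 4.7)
The theorem has three parts corresponding to \textsc{Initialize}, \textsc{MoveAndQuery}, and \textsc{Update}. The first and third are essentially pass-throughs to the corresponding \textsc{BatchSketch} operations, so their correctness and running time bounds are consequences of the guarantees of \textsc{BatchSketch} (Section~\ref{sec:framework:batch_sketch}) plus constant-time bookkeeping of $\wt x, \wt s, \ell, \epsilon_{\apx,x}, \epsilon_{\apx,s}$. The heart of the proof is therefore \textsc{MoveAndQuery}, for which I need to establish (i) correctness of the returned set $L_x^{(\ell)}$, (ii) the cardinality bound $\sum_\ell |L_x^{(\ell)}| = \wt O(\epsilon_{\apx,x}^{-2} \zeta_x^2 q^2)$, and (iii) the aggregate running time. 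The $s$-side is symmetric.

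For correctness, note that at iteration $\ell$ the routine \textsc{Query}$x$ always queries $\mathsf{bs}.\textsc{Query$x$}(\ell, \epsilon)$ (the $j=0$ dyadic interval), which captures the just-executed \textsc{Move}. Suppose $i$ satisfies $\|H_{\ov x^{(\ell)}}^{1/2}(x^{(\ell)}_i - x^{(\ell+1)}_i)\|_2 \ge \epsilon_{\apx,x}$. Since the query threshold is $\epsilon = \epsilon_{\apx,x}/(2\log q + 1) < \epsilon_{\apx,x}$, by \textsc{BatchSketch} correctness $i$ is returned into $\cI$ with high probability. At Line~\ref{line:algo_approxds_queryx_xinI}, the exact $x_i$ is retrieved via $\mathsf{exact}.\textsc{Query$x$}(i)$, and since $\wt x_i$ has not been updated since at least $x^{(\ell)}_i$, the secondary check $\|\wt x_i - x_i\|_{\wt x_i} > \epsilon$ passes (using that $\wt x_i$ is close enough to $x^{(\ell)}_i$ that $\|\cdot\|_{\wt x_i} \eqsim \|H^{1/2}_{\ov x^{(\ell)}} \cdot\|_2$ on the relevant block), so $i$ enters $L_x^{(\ell)}$. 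A union bound over the $\wt O(q \log q)$ sketch queries, each correct with probability $1-\delta_\apx/q$, yields overall correctness.

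For the cardinality bound, I use a dyadic decomposition argument. Over a dyadic window of length $2^j$, Cauchy-Schwarz gives $\|H^{1/2}(x^{(a+2^j)} - x^{(a)})\|_2^2 \le 2^j \sum_{\ell'=a}^{a+2^j-1}\zeta_x^2 = 4^j \zeta_x^2$. The number of coordinates whose $\ell_2$ change over this window exceeds the threshold $\epsilon_{\apx,x}/(2\log q+1)$ is therefore $\wt O(4^j \zeta_x^2/\epsilon_{\apx,x}^2)$. There are $q/2^j$ windows of length $2^j$ queried over the $q$ iterations, and $j$ ranges over $0,\ldots,\lceil \log_2 q\rceil$. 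Summing yields
\begin{align*}
\sum_\ell |L_x^{(\ell)}| \le \sum_j (q/2^j)\cdot \wt O(4^j \zeta_x^2/\epsilon_{\apx,x}^2) = \wt O(q^2 \zeta_x^2/\epsilon_{\apx,x}^2).
\end{align*}

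For running time, each iteration performs $O(\log q)$ sketch queries whose cost is folded into the per-returned-coordinate accounting of \textsc{BatchSketch}. For each $i\in \cI$ we then pay $O(n_{\max}^2 + \eta^2 m_{\max}^2)$ for $\mathsf{exact}.\textsc{Query$x$}(i)$ by Lemma~\ref{lem:exactds_query_time}. Multiplying the total $\sum|\cI|$ size (which is bounded by the same $\wt O(\epsilon_{\apx,x}^{-2}\zeta_x^2 q^2)$ estimate as above, up to sketch query overhead of $r$ per returned element) yields the stated aggregate time. The \textsc{Initialize} and \textsc{Update} running times plug in the costs of \textsc{BatchSketch}.\textsc{Initialize} and \textsc{BatchSketch}.\textsc{Update} respectively. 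The main subtle point is the dyadic-decomposition cardinality bound and threading the $1/(2\log q+1)$ factor through both the sketch query threshold and the secondary $\|\cdot\|_{\wt x_i}$ check so that genuinely heavy single-step coordinates are never missed.
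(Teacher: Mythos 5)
Your correctness argument does not follow the paper's proof and has a real gap. You argue that a heavy \emph{single-step} coordinate (one with $\|H_{\ov x^{(\ell)}}^{1/2}(x^{(\ell)}_i - x^{(\ell+1)}_i)\|_2 \ge \epsilon_{\apx,x}$) is caught by the $j=0$ dyadic query and then passes the secondary check at Line~\ref{line:algo_approxds_queryx_checkxi}. But that check compares $x^{(\ell+1)}_i$ against $\wt x_i = \ov x^{(\ell)}_i$, not against $x^{(\ell)}_i$, and $\|\cdot\|_{\wt x_i}$ is \emph{exactly} $\|H_{\ov x^{(\ell)},(i,i)}^{1/2}(\cdot)\|_2$ (not merely $\eqsim$), so there is no norm-slack to appeal to: the gap $\|\ov x^{(\ell)}_i - x^{(\ell)}_i\|_{\ov x^{(\ell)}_i}$ can be as large as $\ov\epsilon = \epsilon_{\apx,x}$, which overwhelms the secondary threshold $\epsilon = \epsilon_{\apx,x}/(2\log q + 1)$. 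Concretely, if the approximation $\ov x^{(\ell)}_i$ happens to be "ahead of" $x^{(\ell)}_i$ in the direction of the step, the quantity $\|\ov x^{(\ell)}_i - x^{(\ell+1)}_i\|$ can be tiny even though $\|x^{(\ell)}_i - x^{(\ell+1)}_i\|$ is large, and the secondary check legitimately returns false. Your argument also leaves unexplained why the $j\ge 1$ windows are queried at all.

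What the paper actually proves is the contrapositive in terms of \emph{accumulated} drift: it fixes the last timestamp $\ell'$ at which block $i$ passed through $\cI$, decomposes $[\ell',\ell]$ into at most $2\log q$ dyadic windows that were queried in between, and observes that $i$ failing the sketch test over each such window caps the per-window drift at $\epsilon$. Telescoping gives $\|\ov x^{(\ell)}_i - x^{(\ell+1)}_i\|_{\ov x^{(\ell)}_i} \le (2\log q + 1)\epsilon = \epsilon_{\apx,x}$ whenever $i\notin L_x^{(\ell)}$. This is the reason the threshold is divided by $2\log q + 1$, and it is also the fix to the \cite{dly21} issue flagged in Remark~\ref{rmk:dly_linf_approx_error}: the per-window test and the top-down BFS in \textsc{BatchSketch} are designed specifically so that the change of $\ov x$ between windows is accounted for block-by-block. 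Your cardinality estimate, by contrast, is essentially sound — summing Cauchy--Schwarz over dyadic windows does give $\wt O(\epsilon_{\apx,x}^{-2}\zeta_x^2 q^2)$, though note that what you are bounding is closer to $\sum_\ell |\cI^{(\ell)}|$ (needed for the running-time claim) than $\sum_\ell |L_x^{(\ell)}|$; the paper bounds the latter directly from the secondary-check threshold via $\epsilon^2\sum_\ell |L_x^{(\ell)}| \le \sum_{\ell,i} \|\ov x^{(\ell)}_i - x^{(\ell+1)}_i\|^2 \le q^2\zeta_x^2$, and bounds the former separately using the \textsc{BatchSketch} per-query guarantee with the extra $\sum_t\|\ov x^{(t)}-\ov x^{(t+1)}\|_{2,0}$ term you omitted. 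Your running-time accounting for \textsc{Initialize}, \textsc{Update}, and the per-coordinate \textsc{ExactDS} queries matches the paper.
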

\begin{remark} \label{rmk:dly_linf_approx_error}
In \cite{dly21}, Theorem 6.14 (their counterpart of Theorem~\ref{thm:approxds}) is not used properly, making their algorithm incorrect.
For \cite[Theorem 6.14]{dly21} to apply, they need $\|H_{\ov x^{(\ell)}}^{1/2} x^{(\ell)} - H_{\ov x^{(\ell+1)}}^{1/2} x^{(\ell+1)}\|_2 \le \zeta_x$.
However, in the proof of \cite[Theorem 6.1]{dly21}, they only prove
$\|H_{\ov x^{(\ell)}}^{1/2} x^{(\ell)} - H_{\ov x^{(\ell)}}^{1/2} x^{(\ell+1)}\|_2 \le \zeta_x$, and the change in $\ov x$ is ignored.
Because \cite[Algorithm 8]{dly21} uses sampling instead of top-down BFS as in Algorithm~\ref{alg:batch_sketch_query}, the change in $\ov x$ could potentially make $\|H_{\ov x^{(\ell)}}^{1/2} x^{(\ell)} - H_{\ov x^{(\ell+1)}}^{1/2} x^{(\ell+1)}\|_2$ very large, and make their algorithm unable to find large changes in $x_i$.
Similar issue also happens for $H_{\ov x}^{-1/2} s$.
We fix these issues by replacing their sampling step with a top-down BFS in \textsc{BatchSketch}, and considering change in $\ov x$ in the analysis of \textsc{ApproxDS}.
\end{remark}

\begin{proof}[Proof of Theorem~\ref{thm:approxds}]
\textbf{Correctness:}
We prove that with high probability ($1-1/\poly(n_\lp)$), for all blocks $i\in [n]$ not in $\supp \delta_{\wt x}$, we have $\|\wt x^{(\ell)} - x^{(\ell+1)}\|_{\wt x^{(\ell)}} \le \epsilon_{\apx, x}$. Recall that $\wt x^{(t)}$ and $\ov x^{(t)}$ are equal for all $t$ because updates $\delta_{\wt x}$ are always applied to $\ov x$.

Fix current timestamp $\ell$ and a block $i\in [n]$.
Suppose that $\ov x_i$ is not updated at timestamp $\ell$, i.e., $\ov x^{(\ell)}_i \ne \ov x^{(\ell+1)}_i$. If $i\in \cI$ then we are done because it is checked that $\|\ov x^{(\ell)}_i - x^{(\ell+1)}_i\|_{\ov x^{(\ell)}_i} \le \epsilon \le \epsilon_{\apx,x}$ at Algorithm~\ref{alg:approxds_2} Line~\ref{line:algo_approxds_queryx_checkxi}.
Let $\ell'$ be the last time where block $i$ was in $\cI$ in Algorithm~\ref{alg:approxds_2} Line~\ref{line:algo_approxds_queryx_xinI}, i.e., $\ell' = \max \{t: t \le \ell-1, i\in \cI \text{ in Algorithm~\ref{alg:approxds_2} Line~\ref{line:algo_approxds_queryx_xinI}}\}$.
Then we can divide the interval $[\ell', \ell]$ into $k\le 2\log q$ intervals, i.e., we can find $\ell' \le t_0 < \cdots < t_k = \ell$ such $k\le 2\log q$, and for every $o\in [k]$, there exists $j$ such that $t_{o-1} = t_o-2^j$ and $t_o \bmod 2^j = 0$.
Then by our assumption on $\ell'$, we have
\begin{align*}
    \|\ov x_i^{(\ell'+1)} - x_i^{(\ell'+1)}\|_{\ov x_i^{(\ell'+1)}} \le \epsilon
\end{align*}
where $\epsilon = \epsilon_{\apx,x}/(2\log q+1)$, because whether or not $\ov x^{(\ell')}_i \ne \ov x^{(\ell'+1)}_i$, this bound always holds.
Furthermore, $i\not \in \cI$ at time $t_1,\ldots, t_k$, so
\begin{align*}
    \|H_{\ov x^{(t_o+1)}}^{1/2} x^{(t_o+1)} - H_{\ov x^{(t_{o+1})}}^{1/2} x^{(t_{o+1}+1)}\| \le \epsilon,
\end{align*}
which implies 
\begin{align*}
    \|x^{(t_o+1)}_i - x^{(t_{o+1}+1)}_i\|_{\ov x_i^{(\ell'+1)}} \le \epsilon
\end{align*}
because $\ov x_i^{(\ell'+1)} = \cdots = \ov x_i^{(\ell)}$.
So
\begin{align*}
    &\|\ov x^{(\ell)}_i - x^{(\ell+1)}_i\|_{\ov x^{(\ell)}_i} \\
    &\le~
    \|\ov x_i^{(\ell'+1)} - x_i^{(\ell'+1)}\|_{\ov x_i^{(\ell'+1)}}
    + \sum_{o\in [k]} \|x^{(t_o+1)}_i - x^{(t_{o+1}+1)}_i\|_{\ov x_i^{(\ell'+1)}}\\
    &\le~ (k+1) \epsilon \le \epsilon_{\apx, x}.
\end{align*}
This proves that
\begin{align*}
\|\ov x^{(\ell+1)}_i - x^{(\ell+1)}_i\|_{\ov x^{(\ell+1)}_i} \le \epsilon_{\apx, x}.
\end{align*}
Therefore
\begin{align*}
    L_x^{(\ell)} &\supseteq \{i \in [n] : \|H_{\ov x^{(\ell)}}^{1/2} x^{(\ell)}_i - H_{\ov x^{(\ell)}}^{1/2} x^{(\ell+1)}_i\|_2 \ge \epsilon_{\apx,x}\}.
\end{align*}

On the other hand, because Algorithm~\ref{alg:approxds_2} Line~\ref{line:algo_approxds_queryx_checkxi}, all elements $i\in L_x^{(\ell)}$ satisfy
\begin{align*}
\|\ov x^{(\ell)}_i - x^{(\ell+1)}_i\|_{\ov x^{(\ell)}} \ge \epsilon.
\end{align*}
So
\begin{align*}
    &~\epsilon^2 \sum_{0\le \ell \le q-1} |L_x^{\ell)}| \\
    \le &~ \sum_{0\le \ell \le q-1} \sum_{i\in L_x^{(\ell)}} \|\ov x^{(\ell)}_i - x^{(\ell+1)}_i\|_{\ov x^{(\ell)}_i}^2 \\
    \le &~ \sum_{i\in [n]} q \sum_{0\le \ell \le q-1} \|x^{(\ell)}_i - x^{(\ell+1)}_i\|_{\ov x^{(\ell)}_i}^2 \\
    = &~ q \sum_{0\le \ell \le q-1} \|H_{\ov x^{(\ell)}}^{1/2} x^{(\ell)} - H_{\ov x^{(\ell)}}^{1/2} x^{(\ell+1)}\|_2\\
    \le &~ q^2 \zeta_x^2.
\end{align*}
This proves that
\begin{align*}
    \sum_{0\le \ell \le q-1} |L_x^{(\ell)}| = \wt O(\epsilon_{\apx,x}^{-2} \zeta_x^2 q^2).
\end{align*}

The proof for $s$ is similar and omitted.

\textbf{Running time:}

\textsc{Initialize}: By \textsc{Initialize} part of Theorem~\ref{thm:batch_sketch}.

\textsc{Update}: By \textsc{Update} part of Theorem~\ref{thm:batch_sketch}.

\textsc{Query$x$} and \textsc{Query$s$}:
Let $\cI^{(\ell)}_x$ be the set $\cI$ in $\textsc{Query}x$ at timestamp $\ell$.
Then by Theorem~\ref{thm:batch_sketch}, we have
\begin{align*}
    |\cI^{(\ell)}_x| = \sum_{0\le j\le \lfloor \log_2 \ell\rfloor, 2^j|\ell} O(&\epsilon^{-2} 2^j \sum_{\ell -2^j+1 \le t \le \ell} \|H_{\ov x^{(t)}}^{1/2} x^{(t)} - H_{\ov x^{(t)}}^{1/2} x^{(t+1)}\|_2^2 \\
    & + \sum_{\ell-2^j+1 \le t \le \ell-1} \|\ov x^{(t)} - \ov x^{(t+1)}\|_{2,0}).
\end{align*}
We have 
\begin{align*}
    \sum_{0\le \ell \le q-1}  \sum_{0\le j\le \lfloor \log_2 \ell\rfloor, 2^j|\ell} O(\epsilon^{-2} 2^j \sum_{\ell -2^j+1 \le t \le \ell} \|H_{\ov x^{(t)}}^{1/2} x^{(t)} - H_{\ov x^{(t)}}^{1/2} x^{(t+1)}\|_2^2)
    = \wt O(\epsilon^{-2} \zeta_x^2 q^2)
\end{align*}
and
\begin{align*}
    &~ \sum_{0\le \ell \le q-1} \sum_{0\le j\le \lfloor \log_2 \ell\rfloor, 2^j|\ell} O(\sum_{\ell-2^j+1 \le t \le \ell-1} \|\ov x^{(t)} - \ov x^{(t+1)}\|_{2,0}) \\
    =&~ \sum_{0\le \ell \le q-1}  \sum_{0\le j\le \lfloor \log_2 \ell\rfloor, 2^j|\ell} O(\sum_{\ell-2^j+1 \le t \le \ell-1} \|\ov x^{(t)} - \ov x^{(t+1)}\|_{2,0})\\
    =&~ \wt O(\sum_{0\le \ell \le q-1} L_x^{(\ell)})\\
    =&~ \wt O(\epsilon^{-2} \zeta_x^2 q^2)
\end{align*}
because the dyadic intervals $[\ell-2^j+1,\ell]$ together cover the whole inteval $[0, q-1]$ at most $O(\log q) = \wt O(1)$ times.

Therefore we have proved that
\begin{align*}
    \sum_{0\le \ell \le q-1}  \|\cI_x^{(\ell)}\| = \wt O(\epsilon^2 \zeta_x^2 q^2) = \wt O(\epsilon_{\apx,x}^2 \zeta_x^2 q^2).
\end{align*}

Then the running time bound for \textsc{Query$x$} follows from \textsc{Query} part of Theorem~\ref{thm:exactds} and \textsc{Query$x$} part of Theorem~\ref{thm:batch_sketch}.

The proof for $s$ is similar and omitted.
\end{proof}

\subsubsection{\textsc{BatchSketch}}\label{sec:framework:batch_sketch}

In this section we introduce \textsc{BatchSketch}, our data structure for maintaining a sketch of $H_{\ov x}^{1/2} x$ and $H_{\ov x}^{-1/2} s$.

For this task, we need another tree structure on the set of variable blocks.
\begin{definition}[Partition tree\footnote{This is called a sampling tree in \cite{dly21}. We changed the name because this data structure has little to do with sampling.}]
A partition tree $(\cS, \chi)$ of $\R^{n}$ consists of a constant degree rooted tree $\cS = (V, E)$ and a labeling of the vertices $\chi: V\to 2^{[n]}$, such that
\begin{itemize}
    \item $\chi(\mathrm{root}) = [n]$
    \item If $v$ is a leaf node of $\cS$, then $|\chi(v)|=1$
    \item For any node $v$ of $\cS$, the set $\{\chi(c): \text{$c$ is a child of $v$}\}$ forms a partition of $\chi(v)$.
\end{itemize}
\end{definition}
The partition tree does not need to (but can) have any relationship with the block elimination tree.
The partition tree will need to satisfy $O(1)$ maximum degree and $\wt O(1)$ depth.
We will choose the partition tree in Section~\ref{sec:framework:block_balanced_sketch} so that the \textsc{BlockBalancedSketch} data structure can be efficiently maintained.

\begin{algorithm}[!ht]\caption{This is used by Algorithm~\ref{alg:approxds_1} and \ref{alg:approxds_2}.}\label{alg:batch_sketch_init_update}
\begin{algorithmic}[1]
\State {\bf data structure} \textsc{BatchSketch} \Comment{Theorem~\ref{thm:batch_sketch}}
\State {\bf memebers}
\State \hspace{4mm} $\Phi\in \R^{r\times n_\lp} $ \Comment{All sketches need to share the same sketching matrix}
\State \hspace{4mm} $\cS,\chi$ partition tree
\State \hspace{4mm} $\ell \in \bN$ \Comment{Current timestamp}
\State \hspace{4mm} \textsc{BlockBalancedSketch} $\mathsf{sketch}\cW^\top h$, $\mathsf{sketch}\cW^\top \epsilon_x$, $\mathsf{sketch}\cW^\top \epsilon_s$ 
\Comment{Algorithm \ref{alg:block_balanced_sketch_1}}
\State \hspace{4mm} \textsc{BlockVectorSketch} $\mathsf{sketch}H_{\ov x}^{1/2} \wh x$, $\mathsf{sketch}H_{\ov x}^{-1/2} \wh s$, $\mathsf{sketch} c_x$
\Comment{Algorithm \ref{alg:block_vector_sketch}}
\State \hspace{4mm} $\beta_x, \beta_s \in \R$
\State \hspace{4mm} $(\mathsf{history}[t])_{t\ge 0}$
\Comment{Snapshot of data at timestamp $t$. See Remark~\ref{rmk:snapshot}.}
\State {\bf end memebers}
\Procedure{Initialize}{$x \in \R^{n_\lp}, h, \epsilon_x, \epsilon_s \in \R^{m_\lp}, H_{\ov{x}}^{1/2} \wh{x}, H_{\ov{x}}^{-1/2} \wh{s}, c_x\in \R^{n_\lp}, \beta_x, \beta_s \in \R, \delta_\apx \in \R$}
    \State Construct partition tree $(\cS, \chi)$ as in Definition~\ref{defn:partition_tree_construction}
    \State $r \gets \Theta(\log^3(n_\lp)\log(1/\delta_\apx))$
    \State Initialize $\Phi\in \R^{r \times n_\lp}$ with iid $\cN(0, \frac 1{r})$
    \State $\beta_x \gets \beta_x$, $\beta_s \gets \beta_s$
    \State $\mathsf{sketch}\cW^\top h.\textsc{Initialize}( {\cal S} , {\chi}, \Phi, x, h )$ 
    \Comment{Algorithm~\ref{alg:block_balanced_sketch_1}}
    \State $\mathsf{sketch}\cW^\top \epsilon_x.\textsc{Initialize}( {\cal S} , {\chi}, \Phi, x, \epsilon_x )$ 
    \Comment{Algorithm~\ref{alg:block_balanced_sketch_1}}
    \State $\mathsf{sketch}\cW^\top \epsilon_s.\textsc{Initialize}( {\cal S} , {\chi}, \Phi, x, \epsilon_s )$ 
    \Comment{Algorithm~\ref{alg:block_balanced_sketch_1}}
    \State $\mathsf{sketch}H_{\ov x}^{1/2} \wh x.\textsc{Initialize}( {\cal S} , {\chi}, \Phi, H_{\ov{x}}^{1/2} \wh{x} )$
    \Comment{Algorithm~\ref{alg:block_vector_sketch}}
    \State $\mathsf{sketch}H_{\ov x}^{-1/2} \wh s.\textsc{Initialize}( {\cal S} , {\chi}, \Phi, H_{\ov{x}}^{-1/2} \wh{s} )$
    \Comment{Algorithm~\ref{alg:block_vector_sketch}}
    \State $\mathsf{sketch} c_x.\textsc{Initialize}( {\cal S} , {\chi}, \Phi, c_x )$
    \Comment{Algorithm~\ref{alg:block_vector_sketch}}
    \State $\ell \gets 0$
    \State Make snapshot $\mathsf{history}[\ell]$ \Comment{Remark~\ref{rmk:snapshot}}
\EndProcedure
\Procedure{Move}{$\beta_x, \beta_s\in \R$}
    \State $\beta_x \gets \beta_x$, $\beta_s \gets \beta_s$
    \Comment{Do not update $\ell$ yet}
\EndProcedure
\Procedure{Update}{$\delta_{\ov x} \in \R^{n_\lp}, \delta_{h}, \delta_{\epsilon_x}, \delta_{\epsilon_s} \in \R^{m_{\lp}}, \delta_{H_{\ov x}^{1/2} \wh{x}}, \delta_{ H_{\ov x}^{-1/2} \wh{s} } , \delta_{c_x} \in \R^{n_\lp}$}
    \State $\mathsf{sketch}\cW^\top h.\textsc{Update}(\delta_{\ov x}, \delta_{h})$
    \Comment{Algorithm~\ref{alg:block_balanced_sketch_2}}
    \State $\mathsf{sketch}\cW^\top \epsilon_x.\textsc{Update}(\delta_{\ov x}, \delta_{\epsilon_x})$
    \Comment{Algorithm~\ref{alg:block_balanced_sketch_2}}
    \State $\mathsf{sketch}\cW^\top \epsilon_s.\textsc{Update}(\delta_{\ov x}, \delta_{\epsilon_s})$
    \Comment{Algorithm~\ref{alg:block_balanced_sketch_2}}
    \State $\mathsf{sketch}H_{\ov x}^{1/2} \wh x.\textsc{Update}( \delta_{ H_{\ov{x}}^{1/2} \wh{x} } )$
    \Comment{Algorithm~\ref{alg:block_vector_sketch}}
    \State $\mathsf{sketch}H_{\ov x}^{-1/2} \wh s.\textsc{Update}(  \delta_{ H_{\ov{x}}^{-1/2} \wh{s}} )$ 
    \Comment{Algorithm~\ref{alg:block_vector_sketch}}
    \State $\mathsf{sketch}c_x.\textsc{Update}( \delta_{ c_x } )$
    \Comment{Algorithm~\ref{alg:block_vector_sketch}}
    \State $\ell \gets \ell+1$
    \State Make snapshot $\mathsf{history}[\ell]$ \Comment{Remark~\ref{rmk:snapshot}}
\EndProcedure
\State {\bf end data structure}
\end{algorithmic}
\end{algorithm}

\begin{algorithm}[!ht]\caption{\textsc{BatchSketch} Algorithm~\ref{alg:batch_sketch_init_update} continued. This is used by Algorithm~\ref{alg:approxds_1} and \ref{alg:approxds_2}.}\label{alg:batch_sketch_query}
\begin{algorithmic}[1]
\State {\bf data structure} \textsc{BatchSketch} \Comment{Theorem~\ref{thm:batch_sketch}}
\State {\bf private:}
    \Procedure{Query$x$Sketch}{$v\in \cS$}
        \Comment{Return the value of $\Phi_{\chi(v)} (H_{\ov x}^{1/2} x)_{\chi(v)}$}
        \State \Return $-\beta_x \mathsf{sketch}\cW^\top h.\textsc{Query}(v) - \mathsf{sketch}\cW^\top \epsilon_x.\textsc{Query}(v)
        + \mathsf{sketch} H_{\ov x}^{1/2} \wh x.\textsc{Query}(v) +
        \beta_x \mathsf{sketch} c_x.\textsc{Query}(v)$
        \Comment{Algorithm~\ref{alg:block_vector_sketch}, \ref{alg:block_balanced_sketch_1}}
    \EndProcedure
    \Procedure{Query$s$Sketch}{$v\in \cS$}
        \Comment{Return the value of $\Phi_{\chi(v)} (H_{\ov x}^{-1/2} s)_{\chi(v)}$}
        \State \Return $\beta_s \mathsf{sketch}\cW^\top h.\textsc{Query}(v) + \mathsf{sketch}\cW^\top \epsilon_s.\textsc{Query}(v)
        + \mathsf{sketch} H_{\ov x}^{-1/2} \wh s.\textsc{Query}(v)$
        \Comment{Algorithm~\ref{alg:block_vector_sketch}, \ref{alg:block_balanced_sketch_1}}
    \EndProcedure
\State {\bf public:}
\Procedure{Query$x$}{$\ell' \in \bN, \epsilon \in \R$}
    \State $L_0=\{\text{root}(\cS)\}$
    \State $S \gets \emptyset$
    \For{$d = 0 \to \infty$}
        \If{$L_d=\emptyset$}
        \State \Return $S$
        \EndIf
        \State $L_{d+1} \gets \emptyset$
        \For{$v \in L_d$}
            \If{$v$ is a leaf node}
            \State $S \gets S \cup \{v\}$
            \Else
            \For{$u$ child of $v$}
            \If{$\|\textsc{Query$x$Sketch}(u) - \mathsf{history}[\ell'].\textsc{Query$x$Sketch}(u)\|_2 > 0.9\epsilon$} \label{line:batch_sketch_queryx}
                \State $L_{d+1} \gets L_{d+1} \cup \{ u \}$
            \EndIf
            \EndFor
            \EndIf
        \EndFor 
    \EndFor 
\EndProcedure
\Procedure{Query$s$}{$\ell' \in \bN, \epsilon \in \R$}
    \State Same as \textsc{Query}$x$, except for replacing $\textsc{Query$x$Sketch}$ in Line~\ref{line:batch_sketch_queryx} with $\textsc{Query$s$Sketch}$.
\EndProcedure
\State {\bf end structure}
\end{algorithmic}
\end{algorithm}

\begin{theorem}\label{thm:batch_sketch}
Data structure \textsc{BatchSketch} (Algorithm~\ref{alg:batch_sketch_init_update}, \ref{alg:batch_sketch_query}) supports the following operations:
\begin{itemize}
\item $\textsc{Initialize}(x \in \R^{n_\lp}, h, \epsilon_x, \epsilon_s \in \R^{m_\lp}, H_{\ov{x}}^{1/2} \wh{x}, H_{\ov{x}}^{-1/2} \wh{s}, c_x \in \R^{n_\lp}, \beta_x, \beta_s \in \R, \delta \in \R)$: Initialize the data structure in $$\wt O(T_H + T_n + T_L + r\cdot (n n_{\max}^2 + T_Z + \nnz(A)))$$ time, where $r = \Theta(\log^3(n_\lp) \log (1/\delta))$.

\item $\textsc{Move}(\beta_x, \beta_s\in \R)$: Update values of $\beta_x, \beta_s\in \R$ in $O(1)$ time. This effectively moves $(x^{(\ell)}, s^{(\ell)})$ to $(x^{(\ell+1)}, s^{(\ell+1)})$ while keeping $\ov x^{(\ell)}$ unchanged.

\item $\textsc{Update}(\delta_{\ov x}\in \R^{n_\lp}, \delta_{h}, \delta_{\epsilon_x}, \delta_{\epsilon_s} \in \R^{m_\lp}, \delta_{H_{\ov x}^{1/2} \wh x}, \delta_{H_{\ov x}^{-1/2} \wh s}, \delta_{H_{\ov x}^{-1/2} c_x}\in \R^{n_\lp})$:
Update sketches of $H_{\ov x^{(\ell)}}^{1/2} x^{(\ell+1)}$ and $H_{\ov x^{(\ell)}}^{-1/2} s^{(\ell+1)}$ by updating sketches of $\cW^\top h, \cW^\top \epsilon_x, \cW^\top \epsilon_s, H_{\ov x}^{1/2} \wh x, H_{\ov x}^{-1/2} \wh s, c_x$.
This effectively moves $\ov x^{(\ell)}$ to $\ov x^{(\ell+1)}$ while keeping $(x^{(\ell+1)}, s^{(\ell+1)})$ unchanged.
Then advance timestamp $\ell$.

Each update costs
\begin{align*}
\wt O(& (T_{H,\max} + T_{\Delta L,\max} + \Tmat(n_{\max}) + r \eta^2 m_{\max}^2 ) \|\delta_{\ov x}\|_{2,0} \\
& + r m_{\max} \cdot (\|\delta_{h}\|_0 + \|\delta_{\epsilon_x}\|_0 + \|\delta_{\epsilon_s}\|_0) \\
& + r n_{\max} \cdot (\|\delta_{H_{\ov x}^{1/2} \wh x}\|_0+\| \delta_{H_{\ov x}^{-1/2} \wh s}\|_0+\|\delta_{c_x}\|_0))
\end{align*}
time.

\item $\textsc{Query}x(\ell' \in \bN, \epsilon \in\R)$:
Given timestamp $\ell'$, return a set $S\subseteq [n]$ where
\begin{align*}
    S &\supseteq \{i \in [n]: \|H_{\ov x^{(\ell')}}^{1/2} x^{(\ell')}_i - H_{\ov x^{(\ell)}}^{1/2} x^{(\ell+1)}_i\|_2 \ge \epsilon\},
\end{align*}
and
\begin{align*}
    |S| & = O(\epsilon^{-2} (\ell-\ell'+1) \sum_{\ell' \le t \le \ell} \|H_{\ov x^{(t)}}^{1/2} x^{(t)} - H_{\ov x^{(t)}}^{1/2} x^{(t+1)}\|_2^2 + \sum_{\ell ' \le t \le \ell-1} \|\ov x^{(t)} - \ov x^{(t+1)}\|_{2,0})
\end{align*}
where $\ell$ is the current timestamp.

For every query, with probability at least $1-\delta$, the return values are correct, and costs at most
\begin{align*} 
\wt O(r \eta^2 m_{\max}^2 
\cdot (\epsilon^{-2} (\ell-\ell'+1) \sum_{\ell' \le t \le \ell} \|H_{\ov x^{(t)}}^{1/2} x^{(t)} - H_{\ov x^{(t)}}^{1/2} x^{(t+1)}\|_2^2 + \sum_{\ell ' \le t \le \ell-1} \|\ov x^{(t)} - \ov x^{(t+1)}\|_{2,0}))
\end{align*}
running time.

\item $\textsc{Query}s(\ell' \in \bN, \epsilon \in\R)$:
Given timestamp $\ell'$, return a set $S\subseteq [n]$ where
\begin{align*}
    S &\supseteq \{i \in [n]: \|H_{\ov x^{(\ell')}}^{-1/2} s^{(\ell')}_i - H_{\ov x^{(\ell)}}^{-1/2} s^{(\ell+1)}_i\|_2 \ge \epsilon\}
\end{align*}
and
\begin{align*}
    |S| & = O(\epsilon^{-2}  (\ell-\ell'+1)\sum_{\ell' \le t \le \ell} \|H_{\ov s^{(t)}}^{-1/2} x^{(t)} - H_{\ov x^{(t)}}^{-1/2} s^{(t+1)}\|_2^2 + \sum_{\ell' \le t \le \ell-1} \|\ov x^{(t)} - \ov x^{(t+1)}\|_{2,0})
\end{align*}
where $\ell$ is the current timestamp.

For every query, with probability at least $1-\delta$, the return values are correct, and costs at most
\begin{align*} 
\wt O(r \eta^2 m_{\max}^2 \cdot(\epsilon^{-2}  (\ell-\ell'+1)\sum_{\ell' \le t \le \ell} \|H_{\ov s^{(t)}}^{-1/2} x^{(t)} - H_{\ov x^{(t)}}^{-1/2} s^{(t+1)}\|_2^2 + \sum_{\ell' \le t \le \ell-1} \|\ov x^{(t)} - \ov x^{(t+1)}\|_{2,0}))
\end{align*}
running time.
\end{itemize}
\end{theorem}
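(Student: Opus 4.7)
The plan is to reduce Theorem~\ref{thm:batch_sketch} to properties of the component data structures \textsc{BlockBalancedSketch} and \textsc{BlockVectorSketch} together with a top-down BFS analysis on the partition tree. The invariant to maintain is that for each partition-tree node $v\in \cS$, the sub-sketches stored allow us to compute $\Phi_{\chi(v)}(H_{\ov x}^{1/2} x)_{\chi(v)}$ and $\Phi_{\chi(v)}(H_{\ov x}^{-1/2} s)_{\chi(v)}$ on demand. This follows from the \textsc{ExactDS} identity (Theorem~\ref{thm:exactds})
\begin{align*}
H_{\ov x}^{1/2} x &= H_{\ov x}^{1/2} \wh x + \beta_x c_x - \cW^\top(\beta_x h + \epsilon_x), \\
H_{\ov x}^{-1/2} s &= H_{\ov x}^{-1/2} \wh s + \cW^\top(\beta_s h + \epsilon_s),
\end{align*}
so that the composite sketch is a linear combination of the six sub-sketches computed in \textsc{Query$x$Sketch} and \textsc{Query$s$Sketch}.

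For \textsc{Initialize}, I would invoke the Initialize routines of the three \textsc{BlockBalancedSketch} instances (which carry the $T_L, T_Z, \nnz(A), T_H$ costs associated with setting up $\cW$) and the three \textsc{BlockVectorSketch} instances (costing $\wt O(r n n_{\max}^2)$ for the initial full sketches), and add $\wt O(r n_\lp)$ for sampling the JL matrix $\Phi$ with $r=\Theta(\log^3(n_\lp)\log(1/\delta))$ rows, which is enough for a union bound. The \textsc{Move} call is a trivial scalar assignment. For \textsc{Update}, I would account for each sub-sketch separately: \textsc{BlockBalancedSketch}.\textsc{Update} costs will absorb the $T_{H,\max}+T_{\Delta L,\max}+\Tmat(n_{\max})+r\eta^2 m_{\max}^2$ per changed block in $\ov x$ (propagation along a path in the block elimination tree) plus $rm_{\max}$ per changed block in $h, \epsilon_x, \epsilon_s$, while \textsc{BlockVectorSketch}.\textsc{Update} costs $rn_{\max}$ per changed block in $\wh x, \wh s, c_x$. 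Snapshotting via the $\mathsf{history}$ array is supported implicitly in $\wt O(1)$ amortized per timestamp using persistent data structures (or by storing deltas).

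The technical heart of the proof is \textsc{Query$x$} (and the analogous \textsc{Query$s$}). The algorithm performs a top-down BFS on $\cS$, keeping a node $v$ alive iff its sketch difference $\|\Phi_{\chi(v)}((H_{\ov x}^{1/2} x)^{(\ell')}_{\chi(v)} - (H_{\ov x}^{1/2} x)^{(\ell+1)}_{\chi(v)})\|_2$ exceeds $0.9\epsilon$. By the JL guarantee (Lemma~\ref{lem:jl_matrix}) applied to all $\mathrm{poly}(n_\lp)$ candidate difference vectors across levels of $\cS$ (this is why $r$ carries a $\log^3$ factor), with probability $\ge 1-\delta$ every sketch norm is within a $(1\pm 0.1)$ factor of the true $\ell_2$ norm. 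Thus every $i$ with $\|(H_{\ov x^{(\ell')}}^{1/2} x^{(\ell')})_i - (H_{\ov x^{(\ell)}}^{1/2} x^{(\ell+1)})_i\|_2 \ge \epsilon$ survives to a leaf and is returned. The size bound follows from charging: the number of surviving nodes at depth $d$ is at most $O(\epsilon^{-2})$ times the squared $\ell_2$ mass of the true difference restricted to the corresponding subsets; by $O(1)$ degree and $\wt O(1)$ depth of $\cS$ this gives $|S| = \wt O(\epsilon^{-2} \|\Delta\|_2^2)$ where $\Delta = H_{\ov x^{(\ell')}}^{1/2} x^{(\ell')} - H_{\ov x^{(\ell)}}^{1/2} x^{(\ell+1)}$. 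Decomposing $\Delta$ into a telescoping sum across timesteps and using Cauchy--Schwarz $\|\sum_{t=\ell'}^{\ell} \Delta_t\|_2^2 \le (\ell-\ell'+1)\sum_t \|\Delta_t\|_2^2$ yields the first term in the $|S|$ bound. The second term $\sum_t \|\ov x^{(t)}-\ov x^{(t+1)}\|_{2,0}$ accounts for coordinates where the base change itself swaps $H_{\ov x^{(t)}}^{1/2}$ for $H_{\ov x^{(t+1)}}^{1/2}$, contributing blocks that are automatically added (by the BFS not being able to rule them out); bounding these by the $\ov x$-sparsity ensures they are small. The runtime bound for each surviving node is $\wt O(r\eta^2 m_{\max}^2)$ by the query costs of \textsc{BlockBalancedSketch} and \textsc{BlockVectorSketch}.

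The main obstacle I expect is the careful accounting in the query analysis, specifically the interaction between changes in $x,s$ and changes in $\ov x$ (which shifts the very norm $\|\cdot\|_{\ov x}$ in which we measure proximity). This is the issue flagged in Remark~\ref{rmk:dly_linf_approx_error} that broke the \cite{dly21} argument, and the reason we use a deterministic top-down BFS rather than random sampling on $\cS$: the BFS guarantees that once a block enters the support of some $\Delta_{\ov x^{(t)}}$, it is forced into the candidate set $S$ at that timestep, and then subsequent changes in $\|\cdot\|_{\ov x^{(t')}}$ can be controlled by the corresponding $\|\ov x^{(t)} - \ov x^{(t+1)}\|_{2,0}$ term rather than propagating unboundedly through the $\ell_2$ norm on the sketched vector. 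Once this accounting is set up, the remaining pieces are routine applications of JL and the block-sparsity of $\cW^\top = H^{-1/2}A^\top L^{-\top}$ established by the block elimination tree (Lemma~\ref{lem:mat_vec_mult_coord_time}).
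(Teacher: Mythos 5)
Your proposal follows essentially the same route as the paper: reduce to \textsc{BlockBalancedSketch} and \textsc{BlockVectorSketch} via the multiscale decomposition of $H_{\ov x}^{1/2}x$ and $H_{\ov x}^{-1/2}s$; account initialization and update costs by composing the sub-data-structure guarantees; and for queries, apply Johnson--Lindenstrauss with $r=\Theta(\log^3(n_\lp)\log(1/\delta))$ rows and a top-down BFS on the partition tree, bounding $|S|$ via a telescoping-plus-Cauchy--Schwarz estimate on the drift $\|z^{(t)}-y^{(t)}\|_2^2$ together with a separate sparsity term for the $\ov x$-changed blocks.

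One part of your description is slightly off, however. You write that ``the BFS guarantees that once a block enters the support of some $\Delta_{\ov x^{(t)}}$, it is forced into the candidate set $S$ at that timestep.'' The BFS does not force such a block into $S$: a node is kept alive only if its sketch difference exceeds $0.9\epsilon$, which may or may not happen for a block where $\ov x$ changed. The actual mechanism in the paper is a posteriori and simpler: split $S=T\cup U$, where $T$ is the subset of returned blocks $i$ with $\ov x_i^{(t)}\ne\ov x_i^{(t+1)}$ for some $t$. Since each such $i$ contributes at most once, $|T|\le\sum_t\|\ov x^{(t)}-\ov x^{(t+1)}\|_{2,0}$ trivially, regardless of what the BFS decides. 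For $U=S\setminus T$, the $\ov x$-norm is constant on those blocks across the whole window, so the telescoping sum is clean ($y_i^{(t+1)}=z_i^{(t)}$ there), and the Cauchy--Schwarz step applies to give $|U|=O(\epsilon^{-2}(\ell-\ell'+1)\sum_t\|z^{(t)}-y^{(t)}\|_2^2)$. Your phrasing suggests the BFS itself controls the $T$-type blocks, which is not needed and not how the argument works. With this adjustment, the rest of your accounting matches the paper's proof.
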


\begin{remark}[Snapshot]\label{rmk:snapshot}
In our data structures, we use persistent data structure techniques (e.g., \cite{driscoll1989making}) to keep a snapshot of the data structure after every update. This allows us to support query to historical data.
This incurs an $O(\log n_\lp)=\wt O(1)$ multiplicative factor in all running times, which we ignore in our analysis.
\end{remark}

\begin{proof}[Proof of Theorem~\ref{thm:batch_sketch}]
\textbf{Correctness}:
Correctness of \textsc{Update} follows from combining Lemma~\ref{lem:block_vector_sketch} and Lemma~\ref{lem:block_sketch_correct}. In the following we focus on correctness of queries.

Let us analyze \textsc{Query}$x$.
For $0\le t\le q$, define $y^{(t)} = H_{\ov x^{(t)}}^{1/2} x^{(t)}$,
$z^{(t)} = H_{\ov x^{(t)}}^{1/2} x^{(t+1)}$.
By Johnson-Lindenstrauss (Lemma~\ref{lem:jl_matrix}), for our choice of $r$, with probability $1-\delta_\apx/\poly(n_\lp)$, all sketches are $0.01$-accurate, i.e.,
\begin{align*}
    0.99 \le \frac{\|\Phi_{\chi(u)} y^{(\ell')}_{\chi(u)}-\Phi_{\chi(u)} z^{(\ell)}_{\chi(u)}\|_2}{\|y^{(\ell')}_{\chi(u)}-z^{(\ell)}_{\chi(u)}\|_2} \le 1.01
\end{align*}
for all $u\in V(\cS)$.

If for some $i\in [n]$, $\|y^{(\ell')}_i - z^{(\ell)}_i\|_2 \ge \epsilon$, then all its ancestors $u$ in the partition tree all satisfy $\|y^{(\ell')}_{\chi(u)} - z^{(\ell)}_{\chi(u)}\|_2 \ge \epsilon$ and $\|\Phi_{\chi(u)}(y^{(\ell')}_{\chi(u)} - z^{(\ell)}_{\chi(u)})\|_2 \ge 0.9\epsilon$.
So $i\in S$ with high probability.
This proves that $S \supseteq \{i \in [n]: \|y^{(\ell)}_i - z^{(\ell')}_i\|_2 \ge \epsilon\}$ with high probability.

Now let us bound the size of $S$. 
There are two types of elements in $S$.
The first type comes from changes in $\ov x$.
Let $T = \{i : i\in S, \ov x^{(t)}_i \ne \ov x^{(t+1)} \text{ for some } \ell'\le t \le \ell-1\}$.
Then $|T| \le \sum_{\ell' \le t \le \ell-1} \|\ov x^{(t)}-\ov x^{(t+1)}\|_{2,0}$.

The second type is blocks $i\in [n]$ where $\ov x$ does not change.
Define $U = S-T$. For all $i\in [n]$ with $\|y^{(\ell')}_i - z^{(\ell)}_i\|_2 \le 0.8 \epsilon$, with high probability $i\not \in S$.
So with high probability, all $i\in S$ satisfy $\|y^{(\ell')}_i - z^{(\ell)}_i\|_2 \ge 0.8\epsilon$. Then we have
\begin{align*}
    (0.8\epsilon)^2 |U| &\le \sum_{i\in U} \|z^{(\ell)}_i - y^{(\ell')}_i\|_2^2 \\
    &\le (\ell-\ell'+1) \sum_{\ell'\le t\le \ell} \sum_{i\in U} \|z^{(t)}_i - y^{(t)}_i\|_2^2 \\
    &\le (\ell-\ell'+1) \sum_{\ell'\le t\le \ell} \|z^{(t)} - y^{(t)}\|_2^2.
\end{align*}
So $|U| = O(\epsilon^{-2} \sum_{\ell'\le t\le \ell} \|z^{(t)} - y^{(t)}\|_2^2)$.

This proves that
\begin{align*}
    |S| = |T| + |U| \le O(\epsilon^{-2} \sum_{\ell'\le t\le \ell} \|z^{(t)} - y^{(t)}\|_2^2 + \sum_{\ell' \le t \le \ell-1} \|\ov x^{(t)}-\ov x^{(t+1)}\|_{2,0}).
\end{align*}

Proof of correctness of \textsc{Query}$s$ is similar and omitted.

\textbf{Running time:}

\textsc{Initialize}: Follows from Lemma~\ref{lem:block_vector_sketch} and Lemma~\ref{lem:block_sketch_init}.

\textsc{Update}: Follows from Lemma~\ref{lem:block_vector_sketch} and Lemma~\ref{lem:block_sketch_update}.

\textsc{Query}$x$: 
Let us bound the number of calls to $\textsc{Query$x$Sketch}$.
Let $C = \epsilon^{-2} \sum_{\ell'\le t\le \ell} \|z^{(t)} - y^{(t)}\|_2^2 + \sum_{\ell' \le t \le \ell-1} \|\ov x^{(t)}-\ov x^{(t+1)}\|_{2,0}$ for simplicity.
For every level $d$, there can be at most $O(C)$ vertices $u$ in level $d$ such that $\|y^{(\ell')}_{\chi(u)} - z^{(\ell)}_{\chi(u)}\|_2 \ge \epsilon/2$ by similar discussion as the correctness analysis, because all such $u$'s have disjoint $\chi(u)$.
So $L_d = O(C)$.
By our assumption that the partition tree has $\wt O(1)$ depth,
the total size of all $L_d$ satisfies $\sum_{d\ge 0} |L_d| = \wt O(C)$.
So the number of calls to $\textsc{Query$x$Sketch}$ is at most $\wt O(C)$.

Then the result follows from Lemma~\ref{lem:block_vector_sketch} and Lemma~\ref{lem:block_sketch_query}.

\textsc{Query}$s$: Proof is similar to \textsc{Query}$x$ and is omitted.
\end{proof}

\subsubsection{\textsc{BlockVectorSketch}}\label{sec:framework:block_vector_sketch}
In this section we present \textsc{BlockVectorSketch} (Algorithm~\ref{alg:block_vector_sketch}), data structure for maintaining a vector with block structure under sparse changes. This is generalization of \textsc{VectorSketch} in \cite{dly21}.

\begin{lemma}[
]\label{lem:block_vector_sketch}
Given a partition tree $(\cS,\chi)$ of $\R^{n}$, and a JL sketching matrix $\Phi \in \R^{r \times n_\lp}$, the data structure \textsc{BlockVectorSketch} (Algorithm~\ref{alg:block_vector_sketch}) maintains $\Phi_{\chi(v)} x_{\chi(v)}$ for all nodes $v$ in the partition tree implicitly through the following functions:
\begin{itemize}
    \item \textsc{Initialize}$(\cS, \chi,\Phi)$: Initializes the data structure in $O(r n_\lp)$ time.
    \item \textsc{Update}$(\delta_{x} \in \R^{n_\lp})$: Maintains the data structure for $x\leftarrow x + \delta_{x}$ in $O(r \|\delta_{x}\|_0\log n)$ time.
    \item \textsc{Query}$(v\in V(\cS))$: Outputs $\Phi_{\chi(v)}x_{\chi(v)}$ in $O(r \log n)$ time.
\end{itemize}
\end{lemma}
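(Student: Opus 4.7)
The plan is to store, at every node $v$ of the partition tree $\cS$, the local sketch vector $y_v := \Phi_{\chi(v)} x_{\chi(v)} \in \R^r$. Because $\chi(v)$ is the disjoint union of $\chi(u)$ over the children $u$ of $v$, we have the additive identity
\begin{align*}
y_v \;=\; \sum_{u \text{ child of } v} y_u,
\end{align*}
so both initialization and incremental updates can be handled by bottom-up aggregation. Correctness of \textsc{Update} and \textsc{Query} then reduces to this identity, and all running times reduce to counting how many nodes we touch.

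For \textsc{Initialize}, first compute $y_v$ at every leaf: since $|\chi(v)|=1$, the cost at a leaf whose single block has size $n_v$ is $O(r n_v)$, summing to $O(r n_\lp)$ across all leaves. Then perform a single bottom-up pass summing each node's children into its own $y_v$; because $\cS$ has constant degree and $O(n)$ nodes, this pass costs $O(rn) \le O(r n_\lp)$.

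For \textsc{Update}$(\delta_x)$, each non-zero coordinate $i$ of $\delta_x$ lies in a unique leaf $v_i$ and affects $y_v$ only for ancestors $v$ of $v_i$. We walk from $v_i$ up to the root, adding $\delta_{x,i}\,\Phi_{*,i}$ to each $y_v$ in $O(r)$ time per node. The partition tree constructed via Definition~\ref{defn:partition_tree_construction} has depth $\wt O(\log n)$, so the total cost is $O(r \|\delta_x\|_0 \log n)$. For \textsc{Query}$(v)$, we just read off the stored $y_v$; the extra $O(\log n)$ factor accounts for the persistent-data-structure overhead needed to answer historical snapshot queries (see Remark~\ref{rmk:snapshot}), since \textsc{BatchSketch} accesses old versions of the sketch through $\mathsf{history}[\cdot]$.

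The main obstacle is really just guaranteeing the $O(\log n)$ depth of the partition tree, since every Update/Query bound depends on it; this is what the balanced recursive bisection in Definition~\ref{defn:partition_tree_construction} must deliver (a constant-degree tree whose leaves correspond to singleton blocks, balanced so that every leaf-to-root path has logarithmic length). Once that depth bound is in place, correctness of all three routines follows immediately from the additive identity for $y_v$, and the claimed running times follow from summing leaf costs for \textsc{Initialize} and from counting ancestor paths of length $O(\log n)$ for \textsc{Update} and \textsc{Query}.
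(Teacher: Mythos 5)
Your proof takes a genuinely different route from the paper's. The paper's \textsc{BlockVectorSketch} (Algorithm~\ref{alg:block_vector_sketch}) does \emph{not} store a sketch at each node of the partition tree $\cS$ and aggregate along leaf-to-root paths. Instead, it orders the leaves of $\cS$ so that every $\chi(v)$ is a contiguous interval in $[n]$, builds a standard \emph{segment tree} $\cT$ over $[n]$ with $\R^r$-valued nodes, and implements \textsc{Query}$(v)$ as a range-sum query on that segment tree. Updates modify $O(\log n)$ segment-tree nodes per changed coordinate, and queries touch $O(\log n)$ segment-tree nodes, regardless of how $\cS$ is shaped. The immediate payoff, which the paper highlights right after the lemma ("the running time does not depend on depth of the partition tree"), is that the bounds hold for an arbitrary constant-degree partition tree.

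This matters because the lemma as stated makes no hypothesis about the depth of $\cS$. Your construction only achieves the claimed $O(r\|\delta_x\|_0 \log n)$ update cost if the partition tree has $O(\log n)$ depth, and you explicitly flag this as "the main obstacle." So strictly speaking you have proved a weaker statement — a version of the lemma with an extra depth-of-$\cS$ hypothesis. In the paper's actual application the partition tree from Definition~\ref{defn:partition_tree_construction} does have $\wt O(1)$ depth, so your data structure would function correctly inside \textsc{BatchSketch}, but your argument does not deliver the lemma in the generality it is stated. There is also a small wrinkle in how you account for the $\log n$ factor on \textsc{Query}: you attribute it entirely to persistence overhead (Remark~\ref{rmk:snapshot}), since your read is $O(r)$, whereas in the paper the $\log n$ is intrinsic to the segment-tree range query; persistence is handled uniformly as a separate $\wt O(1)$ multiplicative factor that the paper suppresses everywhere. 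This is only a bookkeeping discrepancy, but it makes your accounting look like it is using the persistence factor to patch a constant-time query up to the stated bound rather than matching the algorithm as written.
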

Note that in Theorem~\ref{lem:block_vector_sketch}, the running time does not depend on depth of the partition tree.
\begin{proof}
Correctness follows from from guarantees of segment trees.

Running time analysis is as follows.
\begin{itemize}
    \item \textsc{Initialize}: Computing $\Phi_{i} x_i$ for all $i\in [n]$ takes $O(r n_\lp)$ time. Afterwards, building $\cT$ takes $O(r n)$ time.
    \item \textsc{Update}: For every modified coordinate, it takes $O(r \log n)$ time to update $\cT$. So total time cost is $O(r \|\delta_{x}\|_0 \log n)$.
    \item \textsc{Query}: Takes $O(r\log n)$ time because of segment tree query time.
\end{itemize}
\end{proof}

\begin{algorithm}[!ht]\caption{This is used in Algorithm~\ref{alg:batch_sketch_init_update} and \ref{alg:batch_sketch_query}.
}\label{alg:block_vector_sketch}
\begin{algorithmic}[1]
\State {\bf data structure} \textsc{BlockVectorSketch} \Comment{Lemma~\ref{lem:block_vector_sketch}}
    \State {\bf private: members}
    \State \hspace{4mm} $\Phi \in \R^{r \times n_\lp}$
    \State \hspace{4mm} partition tree $( {\cal S}, \chi )$
    \State \hspace{4mm} $x \in \R^{n_\lp}$
    \State \hspace{4mm} Segment tree $\cT$ on $[n]$ with values in $\R^r$
    \State {\bf end members}
    \Procedure{Initialize}{${\cal S}, \chi: \text{partition tree}, \Phi \in \R^{r \times n_{\lp}}, x \in \R^{n_{\lp}}$}
        \State $({\cal S}, \chi) \gets ( {\cal S} , \chi)$
        \State $\Phi \gets \Phi$
        \State $x \gets x$
        \State Order leaves of $\cS$ (variable blocks) such that every node $\chi(v)$ corresponds to a contiguous interval $\subseteq [n]$.
        \State Build a segment tree $\cT$ on $[n]$ such that each segment tree interval $I\subseteq [n]$ maintains $\Phi_I x_I \in \R^r$.
    \EndProcedure
    
    \Procedure{Update}{$\delta_{x} \in \R^{n_{\lp}}$}
        \For{all $i\in [n_\lp]$ such that $\delta_{ x,{\text{coord}~i}} \ne 0$}
            \State Let $j\in [n]$ be such that $i$ is in $j$-th block
            \State Update $\cT$ at $j$-th coordinate $\Phi_j x_j \gets \Phi_j x_j + \Phi_{\text{coord}~i} \cdot \delta_{ x,{\text{coord}~i}}$.
            \State $x_{\text{coord}~i} \gets x_{\text{coord}~i} + \delta_{ x,{\text{coord}~i}}$
        \EndFor
    \EndProcedure
    \Procedure{Query}{$v \in V(\cS)$}
        \State Find interval $I$ corresponding to $\chi(v)$
        \State \Return range sum of $\cT$ on interval $I$
    \EndProcedure
\State {\bf end data structure}
\end{algorithmic}
\end{algorithm}

\subsubsection{\textsc{BlockBalancedSketch}} \label{sec:framework:block_balanced_sketch}
In this section we present \textsc{BlockBalancedSketch}, data structure for maintaining a sketch of a vector of form $\cW^\top h$, where $\cW = L_{\ov x}^{-1} A H_{\ov x}^{-1/2}$ and $h\in \R^{m_\lp}$ is a vector changing sparsely.
\begin{lemma}
\label{lem:block_balanced_sketch}
Given the constraint matrix $A$, a block elimination tree $\cT$ with height $\eta$, a JL matrix $\Phi \in \R^{r \times n_\lp}$, and a partition tree $(\cS, \chi)$ constructed as in Definition~\ref{defn:partition_tree_construction}
with height $\wt O(1)$, the data structure \textsc{BlockBalancedSketch} (Algorithm \ref{alg:block_balanced_sketch_1}, \ref{alg:block_balanced_sketch_2}, \ref{alg:block_balanced_sketch_3}), maintains $\Phi_{\chi(v)} (\cW^\top h)_{\chi(v)}$ for each $v\in V(\cS)$ through the following operations
\begin{itemize}
    \item \textsc{Initialize}$(\cS, \chi, \Phi, \ov x, h)$: Initializes the data structure in $$\wt O(T_H + T_n + T_L + r\cdot (n n_{\max}^2 + T_Z + \nnz(A)))$$ time.
    \item \textsc{Update}$(\delta_{\ov x}, \delta_{h})$: Updates all sketches in $\cS$ implicitly to reflect $(\cW, h)$ updating to $(\cW^\new, h^\new)$ in $$\wt O((T_{H,\max} + T_{\Delta L, \max} + \Tmat(n_{\max}) + r\eta^2 m_{\max}^2) \|\delta_{\ov x}\|_{2,0} + r m_{\max} \|\delta_h\|_{2,0})$$ time.
    \item \textsc{Query}$(v)$: Outputs $\Phi_{\chi(v)} (\cW^\top h)_{\chi(v)}$ in $\wt O(r \eta^2 m_{\max}^2)$ time.
\end{itemize}
\end{lemma}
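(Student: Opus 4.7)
The plan is to exploit the factorization $\cW^\top h = H_{\ov x}^{-1/2} A^\top L_{\ov x}^{-\top} h$ and maintain for each partition-tree node $v$ the length-$r$ sketch $Y_v := \Phi_{\chi(v),*} (\cW^\top h)_{\chi(v)}$ implicitly, organized along the block elimination tree. Let $u := L_{\ov x}^{-\top} h \in \R^{m_\lp}$; then $(\cW^\top h)_j = H_{\ov x,j,j}^{-1/2} \sum_k A_{k,j} u_k$, and crucially the rows $k$ for which $A_{k,j} \ne 0$ all lie on a single root-to-leaf path in the block elimination tree (by the defining property of the tree decomposition). Thus each block of $\cW^\top h$ only depends on $u$ restricted to one path of length $\eta$.

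For \textsc{Initialize}, we compute $H_{\ov x}$ in time $T_H$, form the Cholesky factor $L_{\ov x}$ in time $T_L$, and form block-wise $H_{\ov x}^{-1/2}$ in time $T_n$ via Lemma~\ref{lem:mat_vec_mult_time}\ref{item:lem_mat_vec_mult_time_Hinvv}. We then invoke the hypothesis on $T_Z$ to compute, for each $i \in [m]$, the restriction of $L^{-\top}$ to the relevant path (total cost $T_Z$), and use these to evaluate $u$ implicitly. For each of the $r$ rows of $\Phi$, forming $(\cW^\top h)$ block-by-block and sketching costs $\nnz(A) + n n_{\max}^2$ work; aggregating the sketches up the partition tree via segment-tree combination (Lemma~\ref{lem:block_vector_sketch}) is nearly free. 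This yields the claimed initialization time $\wt O(T_H + T_n + T_L + r(n n_{\max}^2 + T_Z + \nnz(A)))$.

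For \textsc{Update}$(\delta_{\ov x}, \delta_h)$, we handle the two perturbations separately. The $\delta_h$ part is easier: each non-zero block of $\delta_h$ changes $u = L_{\ov x}^{-\top} h$ only on an ancestor path of length $\eta$ (by block-column sparsity of $L^{-\top}$), and this in turn changes $(\cW^\top h)_{\chi(v)}$ only for variable blocks whose constraint support meets that path, at an incremental sketch cost of $r m_{\max}$ per non-zero block of $\delta_h$. The $\delta_{\ov x}$ part requires, per variable block touched: (i) updating $H_{\ov x}$ and $H_{\ov x}^{\pm 1/2}$ at cost $T_{H,\max}+\Tmat(n_{\max})$; (ii) updating $L_{\ov x}$ via a rank-$O(n_{\max})$ Cholesky update at total cost $T_{\Delta L,\max}$ (each rank-one update costs $T_{\Delta L,\max}/n_{\max}$ by Lemma~\ref{lem:known_cholesky_TDeltaLmax}-style reasoning); (iii) propagating the resulting change in $u$ and in $H_{\ov x}^{-1/2}$ to the $O(\eta)$ affected paths, at sketch-update cost $r\eta^2 m_{\max}^2$ per block via Lemma~\ref{lem:mat_vec_mult_coord_time}\ref{item:mat_vec_mult_coord_time_LinvtvS}. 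Summing gives the stated update bound.

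For \textsc{Query}$(v)$, we return the maintained sketch, reading off the stored aggregate via the partition-tree segment structure; any lazy pending propagation along a single elimination-tree path is flushed in $\wt O(r\eta^2 m_{\max}^2)$ time via Lemma~\ref{lem:mat_vec_mult_coord_time}. The main technical obstacle is correctly bounding the cost of a single $\delta_{\ov x}$-block update: the Cholesky update, although localized in the elimination tree, causes a rank-$O(n_{\max})$ perturbation of $L_{\ov x}$ whose effect on $\cW^\top h$ propagates along an entire root-to-leaf path. Ensuring that the partition tree $(\cS,\chi)$ constructed in Definition~\ref{defn:partition_tree_construction} is compatible with the block elimination tree, so that only $\wt O(1)$ partition-tree ancestors per affected block need re-sketching, is where the $\wt O(1)$-depth and balance properties of $(\cS,\chi)$ are essential; this is the step that needs the most care in the full proof.
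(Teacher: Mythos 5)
Your high-level factorization $\cW^\top h = H_{\ov x}^{-1/2} A^\top L_{\ov x}^{-\top} h$ and the observation that each variable block of $\cW^\top h$ depends on a single root-to-leaf path in $\cT$ match the paper's setup, and your account of the initialization cost is essentially right. However, there are two genuine gaps in the update/query argument.

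First, your claim that ``each non-zero block of $\delta_h$ changes $u = L_{\ov x}^{-\top} h$ only on an ancestor path of length $\eta$'' is incorrect. Since $L_{\ov x}^{-1}$ has block entry $(L_{\ov x}^{-1})_{j,i}\ne 0$ only when $j$ is an ancestor of $i$, the transpose satisfies $(L_{\ov x}^{-\top})_{i,j}\ne 0$ only when $j\in\cP(i)$. Consequently a perturbation $\delta_{h,j}$ affects $u_i$ for all $i$ with $j\in\cP(i)$, i.e.\ on the entire \emph{subtree} of descendants of $j$, which can be $\Omega(m)$ blocks. So the naive argument does not give the $\wt O(r m_{\max}\|\delta_h\|_{2,0})$ bound; some mechanism is needed to avoid touching that whole subtree.

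Second, you flag the rank-$O(n_{\max})$ Cholesky update as ``the step that needs the most care'' but do not supply the needed idea. When $L_{\ov x}$ is perturbed on a block-column path, $L_{\ov x}^{-\top}$ changes on a dense pattern, so $Z_v := J_v L_{\ov x}^{-\top}$ changes at essentially every balanced-tree node $v$; naively refreshing them is far too expensive. The paper's resolution is the lazy-boundary device: each sketch at $v\in\cB$ is split as $y_v = y_v^\triangleup + y_v^\triangledown$ with $y_v^\triangledown = Z_v (I-I_{\Lambda(v)})h$ stored and $y_v^\triangleup = Z_v I_{\Lambda(v)} h$ recomputed at query time (feasible because $|\Lambda(v)|=O(\eta)$ by Lemma~\ref{lem:block-sketch-lambda-size}, which in turn relies on the heavy-light ordering used to build $\cB$). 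The stored state carries a per-node timestamp $t_v$ and the invariant $(L[t]-L[t_v])\cdot I_{\ov\Lambda(v)} = 0$ is maintained; because a Cholesky update on column path $\cP(\mathsf{low}^\cT(i))$ only violates this invariant at the $\wt O(1)$ nodes $v$ on the path $S = \cP^\cB(\Lambda^\circ(\mathsf{low}^\cT(i)))$, only those $Z_v$ need eager refresh. This same mechanism is what rescues the $\delta_h$ case: a block $\delta_{h,i}$ touches $y_v^\triangledown$ only for the $\wt O(1)$ nodes $v$ with $i\in\ov\Lambda(v)$, since for all other $v$ one has $i\in\Lambda(v)$ and $y_v^\triangledown$ is by definition unaffected. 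Without this triangle-split plus the $\Lambda$-boundary bound, neither the update nor the query time bound in the lemma is attainable by the route you outline.
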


\begin{algorithm}[!ht]\caption{This is used in Algorithm~\ref{alg:batch_sketch_init_update} and \ref{alg:batch_sketch_query}.
}  \label{alg:block_balanced_sketch_1}
\begin{algorithmic}[1]
\State {\bf data structure} \textsc{BlockBalancedSketch} \Comment{Lemma~\ref{lem:block_balanced_sketch}}
\State {\bf private: members}
    \State \hspace{4mm} $\Phi \in \R^{r \times n_\lp}$
    \State \hspace{4mm} Partition tree $(\cS, \chi)$ with balanced binary tree ${\cal B}$
    \State \hspace{4mm} $t \in \mathbb{N}$
    \State \hspace{4mm} $h \in \R^{m_\lp}$, $\ov{x} \in \R^{n_\lp}$, $H_{\ov x} \in \R^{n_\lp \times n_\lp}$
    \State \hspace{4mm} $\{ L[t] \in \R^{m_\lp \times m_\lp} \}_{t\geq 0}$
    \State \hspace{4mm} $\{ J_v \in \R^{r \times m_\lp} \}_{v\in \cS}$
    \State \hspace{4mm} $\{ Z_v \in \R^{r \times m_\lp} \}_{v\in \cB}$
    \State \hspace{4mm} $\{ y_v^{\triangledown} \in \R^r \}_{v\in \cB}$ 
    \State \hspace{4mm} $\{ t_v \in \mathbb{N} \}_{v\in \cB}$
\State {\bf end members}
\Procedure{Initialize}{${\cal S}, \chi: \text{partition tree}, \Phi \in \R^{r \times n_{\lp}}, \ov{x} \in \R^{n_{\lp}}, h \in \R^{m_{\lp}}$} \Comment{Lemma~\ref{lem:block_sketch_init}}
    \State $({\cal S}, \chi ) \gets ( {\cal S}, \chi )$
    \State $\Phi \gets \Phi$
    \State $t \gets 0$, $h \gets h$
    \State Compute $H_{\ov x} \gets \nabla^2 \phi( \ov{x} )$
    \State Find lower Cholesky factor $L_{\ov x}[t]$ of $A H_{\ov x}^{-1} A^\top$
    \For{all $v \in {\cal S}$}
        \State $J_v \gets \Phi_{ \chi(v) } H_{\ov x}^{-1/2} A^\top $ 
    \EndFor 
    \For{all $v \in {\cal B}$}
        \State $Z_v \gets J_v L_{\ov x}[t]^{-\top}$
        \State $y_v^{\triangledown} \gets Z_v ( I - I_{\Lambda(v)} ) h$
        \State $t_v \gets t$
    \EndFor 
\EndProcedure
\Procedure{Query}{$v \in {\cal S}$} \Comment{Lemma~\ref{lem:block_sketch_query}}
    \If{$v \in {\cal S} \backslash {\cal B}$}
        \State \Return $J_v \cdot L_{\ov x}[t]^{-\top} h$
    \EndIf
    \State $\Delta_{L_{\ov x}} \gets ( L_{\ov x}[t] - L_{\ov x}[t_v] ) \cdot I_{\Lambda(v)} $
    \State $\delta_{Z_v} \gets -( L_{\ov x}[t]^{-1} \cdot \Delta_{L_{\ov x}} \cdot Z_v^\top )^\top$
    \State $Z_v \gets Z_v + \delta_{Z_v}$
    \State $\delta_{y_v^{\triangledown}} \gets \delta_{Z_v} \cdot (I - I_{\Lambda(v)}) h$
    \State $y_v^{\triangledown} \gets y_v^{\triangledown} + \delta_{y_v^{\triangledown}}$
    \State $t_v \gets t$
    \State $y_v^{\triangleup} \gets Z_v \cdot I_{\Lambda(v)} \cdot h$
    \State \Return $y_v^{\triangleup} + y_v^{\triangledown}$
\EndProcedure 
\State {\bf end data structure}
\end{algorithmic}
\end{algorithm}

\begin{algorithm}[!ht]\caption{
\textsc{BlockBalancedSketch} Algorithm~\ref{alg:block_balanced_sketch_1} continued.
This is used in Algorithm~\ref{alg:batch_sketch_init_update} and \ref{alg:batch_sketch_query}.
} \label{alg:block_balanced_sketch_2}
\begin{algorithmic}[1]
\State {\bf data structure} \textsc{BlockBalancedSketch} \Comment{Lemma~\ref{lem:block_balanced_sketch}}
\Procedure{Update}{$ \delta_{\ov{x}} \in \R^{n_{\lp}} , \delta_{h} \in \R^{m_{\lp}}$}
    \For{$i \in [n]$ where $\delta_{\ov x,i} \ne 0$}
        \State \textsc{Update$\ov x$}$(\delta_{\ov x,i})$
    \EndFor 
    \For{all $\delta_{h,i} \ne 0$} \label{line:alg:block_balanced_sketch_2_updateh_start}
        \State $v \gets \Lambda^{\circ}(i)$
        \For{all $u \in {\cal P}^{\cal B}(v)$}
            \State $y_u^{\triangledown} \gets y_v^\triangledown + Z_u \cdot I_{ \{ i\} } \cdot \delta_{h}$
        \EndFor
    \EndFor 
    \label{line:alg:block_balanced_sketch_2_updateh_end}
    \State $h \gets h + \delta_{h}$
\EndProcedure 
\Procedure{Update$\ov x$}{$ \delta_{\ov{x},i} \in \R^{n_i} $} \Comment{Lemma~\ref{lem:block_sketch_update_block}}  
    \State $t \gets t+1$ 
    \State $\ov{x}_i \gets \ov x_i + \delta_{\ov{x},i}$
    \State $\Delta_{H_{\ov x}, (i,i)} \gets \nabla^2 \phi_i(\ov{x}_i) - H_{\ov x,(i,i)}$
    \State Find $\Delta_{L_{\ov x}}$ such that $L_{\ov x}[t] \gets L_{\ov x}[t-1] + \Delta_{L_{\ov x}}$ is the lower Cholesky factor of $A (H_{\ov x} + \Delta_{ H_{\ov x}})^{-1}A^\top$
    \State $S \gets {\cal P}^{\cal B} ( \Lambda^{\circ} ( \mathsf{low}^\cT(i) ) ) $
    \State $\textsc{Update$L$}(S, \Delta_{L_{\ov x}}) $
    \State $\textsc{Update$H$}(i, \Delta_{H_{\ov x}, (i,i)})$
\EndProcedure 
\State {\bf end data structure}
\end{algorithmic} 
\end{algorithm} 

\begin{algorithm}[!ht]\caption{
\textsc{BlockBalancedSketch} Algorithm~\ref{alg:block_balanced_sketch_1}, \ref{alg:block_balanced_sketch_2} continued.
This is used in Algorithm~\ref{alg:batch_sketch_init_update} and \ref{alg:batch_sketch_query}.
}\label{alg:block_balanced_sketch_3} 
\begin{algorithmic}[1]
\State {\bf data structure} \textsc{BlockBalancedSketch} \Comment{Lemma~\ref{lem:block_balanced_sketch}}
\State {\bf private:}
\Procedure{Update$L$}{$S \subset {\cal B}$, $\Delta_{L_{\ov x}} \in \R^{m_\lp \times m_\lp}$} \Comment{Lemma~\ref{lem:block_sketch_update_L}}
    \For{all $v \in S$}
        \State $\delta_{Z_v} \gets - ( L_{\ov x}[t-1]^{-1} ( L_{\ov x}[t-1] - L_{\ov x}[t_v] ) \cdot I_{\Lambda(v)} \cdot Z_v^\top )^\top$
        \label{line:alg:block_balanced_sketch_3_updateZv1}
        \State $\delta_{Z_v}'\gets - ( L_{\ov x}[t]^{-1} \cdot \Delta_{L_{\ov x}} \cdot (Z_v+\delta_{Z_v})^\top )^\top$
        \label{line:alg:block_balanced_sketch_3_updateZv2}
        \State $Z_v \gets Z_v +\delta_{Z_v}+\delta_{Z_v}'$
        \State $\delta_{y_v^{\triangledown}} \gets (\delta_{Z_v}+\delta_{Z_v}')(I - I_{\Lambda(v)}) h$
        \State $y_v^{\triangledown} \gets y_v^{\triangledown} + \delta_{y_v^{\triangledown}}$
        \State $t_v \gets t$
    \EndFor
\EndProcedure 
\State {\bf private:}
\Procedure{Update$H$}{$i\in [n], \Delta_{H_{\ov x}, (i,i)} \in \R^{n_i\times n_i}$} \Comment{Lemma~\ref{lem:block_sketch_update_H}}
    \State Find $u$ such that $\chi(u) = \{i\}$
    \State $\Delta_{H_{\ov x}^{-1/2}, (i,i)} \gets (H_{\ov x,(i,i)} + \Delta_{H_{\ov x},(i,i)})^{-1/2} - H_{\ov x,(i,i)}^{-1/2}$
    \State $\delta_{J_u} \gets \Phi_{i} \cdot \Delta_{H_{\ov x}^{-1/2}, (i,i)} \cdot A^\top$
    \For{all $v \in {\cal P}^{\cS} (u)$}
        \State $J_v \gets J_v + \delta_{J_u} $
        \If{$v \in {\cal B}$}
            \State $\delta_{Z_v} \gets \delta_{J_v} \cdot L_{\ov x}[t_v]^{-\top}$
            \State $Z_v \gets Z_v + \delta_{Z_v}$
            \State $\delta_{y_v^{\triangledown}} \gets \delta_{Z_v} \cdot (I - I_{\Lambda(v)}) \cdot h$
            \State $y_v^{\triangledown} \gets y_v^{\triangledown} + \delta_{y_v^{\triangledown}}$
        \EndIf 
    \EndFor 
    \State $H_{\ov x} \gets H_{\ov x} + \Delta_{ H_{\ov x}, (i,i)}$
\EndProcedure
\State {\bf end data structure}
\end{algorithmic}
\end{algorithm}

The algorithm is a block version of \cite[Section 6.6.1]{dly21}. We nevertheless present a full proof for completeness.

Let us first describe the main idea of the algorithm.
Note that in \textsc{BlockBalancedSketch}, we have the freedom of choosing the partition tree $(\cS, \chi)$. Then this tree is used by \textsc{BatchSketch} and \textsc{BlockVectorSketch}.
Therefore we can choose a partition tree which works well for our purpose.

Let us consider the operations \textsc{BlockBalancedSketch} needs to support.
It needs to support updating $\ov x$ and $h$, and answering queries on a subtree $\chi(v)$.
Change of one block in $\ov x$ leads to change of one path in the block elimination tree $\cT$, and change of one block in $h$ leads to change of one subtree in $\cT$.
Therefore we essentially would like a data structure which supports subtree and path updates and subtree queries.
With this in mind, it is natural to use heavy-light decomposition \cite{sleator1981data}.
\begin{lemma}[Heavy-Light Decomposition~\cite{sleator1981data}] \label{lem:heavy_light}
Given a rooted tree $\cT$ with $m$ vertices, we can construct in $O(m)$ time an ordering $\pi$ of the vertices such that (1) every path in $\cT$ can be decomposed into $O(\log m)$ contiguous subseqeuences under $\pi$, and (2) every subtree in $\cT$ is a single contiguous subsequence under $\pi$.
\end{lemma}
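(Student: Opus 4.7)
The plan is to apply the classical heavy-light decomposition of Sleator and Tarjan and then choose $\pi$ as a specific preorder traversal. In a first $O(m)$-time DFS, I would compute $\mathrm{size}(v)$ for every vertex. In a second pass, for every non-leaf $v$ I would declare one child $c$ with $\mathrm{size}(c) \ge \mathrm{size}(v)/2$ to be the \emph{heavy child} of $v$ (ties broken arbitrarily); the edge $(v,c)$ is a \emph{heavy edge}, and all other edges from $v$ to its children are \emph{light edges}. The maximal connected components of heavy edges partition $V(\cT)$ into \emph{heavy paths} (a vertex with no heavy child is a singleton heavy path). Finally, I would define $\pi$ as the preorder obtained by a DFS that, at every non-leaf $v$, recurses into the heavy child first and then into the other children in arbitrary order. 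All three passes are $O(m)$ time.

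For property (2), I would observe that in any preorder traversal the vertices of the subtree rooted at $v$ occupy exactly the contiguous block $[\pi(v),\pi(v)+\mathrm{size}(v)-1]$, independent of the child order; this is immediate by induction on subtree depth. For property (1), I would first show that each heavy path $v_1\to v_2\to \cdots \to v_k$ (where $v_{i+1}$ is the heavy child of $v_i$) has $\pi(v_{i+1})=\pi(v_i)+1$. Indeed, the DFS visits $v_i$ and then immediately recurses into its heavy child, so $v_{i+1}$ receives the next label. Thus each heavy path is one contiguous block under $\pi$.

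Next, I would decompose any tree path from $u$ to $v$ through the LCA $w$ into an upward segment $u\to w$ and a downward segment $w\to v$, and then partition each segment into maximal runs of heavy edges. Each maximal heavy run lies inside a single heavy path, hence is a contiguous sub-block of $\pi$ by the previous paragraph. The runs are separated by light edges, so the number of contiguous pieces produced equals the number of light edges on the path plus $O(1)$. The standard doubling argument then bounds the number of light edges on any root-to-leaf path by $\log_2 m$: when we cross a light edge upward from $x$ to its parent $y$, the heavy-child rule guarantees $\mathrm{size}(y)\ge 2\,\mathrm{size}(x)$, so after $\log_2 m$ light-edge crossings the subtree size already exceeds $m$. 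Applying this to both halves of the $u\to v$ path yields at most $O(\log m)$ contiguous subsequences under $\pi$, giving (1).

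Assembling these ingredients is routine; the main obstacle is really only the bookkeeping to verify the ``heavy-child-first'' preorder makes each heavy path a contiguous block, plus the standard doubling argument for the light-edge count. Both are short and self-contained, so I expect the full write-up to closely mirror the sketch above without additional technical difficulty.
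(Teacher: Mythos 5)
The paper takes this lemma as a black box, citing Sleator--Tarjan \cite{sleator1981data} without proof, so there is no in-paper argument to compare against; your sketch is the standard heavy-light decomposition proof and its main ingredients are all sound (heavy-child-first preorder, contiguity of subtrees under any preorder, heavy paths being consecutive under $\pi$, and the doubling bound on the number of light edges along any root-to-leaf path).

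There is, however, a flaw in your definition of the heavy child. You declare ``one child $c$ with $\mathrm{size}(c) \ge \mathrm{size}(v)/2$'' to be the heavy child, but such a child need not exist: a vertex $v$ with three leaf children has $\mathrm{size}(v)=4$ while each child has size $1 < 2$, so the construction is undefined there. The correct (and standard) definition is to take the heavy child to be a child of \emph{maximum} subtree size, with ties broken arbitrarily. This always exists, and it still yields exactly the inequality your doubling argument uses: if $x$ is a \emph{light} child of $y$, then the heavy child $c'$ of $y$ has $\mathrm{size}(c')\ge \mathrm{size}(x)$, hence $\mathrm{size}(y) \ge 1 + \mathrm{size}(x) + \mathrm{size}(c') > 2\,\mathrm{size}(x)$. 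With that one substitution, the rest of your argument goes through unchanged: each heavy path occupies a contiguous block of $\pi$-labels under the heavy-child-first preorder, any $u$--$v$ path decomposes (through the LCA $w$) into at most $\lfloor\log_2 m\rfloor$ light edges on each half, and therefore into $O(\log m)$ contiguous sub-blocks; and property~(2) holds for any preorder.
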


\begin{definition}[Construction of Partition Tree] \label{defn:partition_tree_construction}
We fix an ordering $\pi$ of $[m]$ using the heavy-light decomposition (Lemma~\ref{lem:heavy_light}).
We construct complete binary tree $\cB$ with leaves $[m]$ and ordering $\pi$.

To get a partition tree, we need to add leaves $[n]$ to $\cB$.
For every coordinate $i\in [n]$, let $\mathsf{low}^\cT(i)$ be any vertex $v$ in $\cT$ such that the support of $A_{*,i}$ is contained in $\cP^\cT(v)$. (Recall that for a block elimination tree, support of $A_{*,i}$ is contained in a path for any $i\in [n]$.)
For any $j\in [m]$, we construct a complete binary tree with leaves $\{i\in [n]: \mathsf{low}^\cT(i) = j\}$ and hang this tree under leaf $j$ in $\cB$.
This finishes the construction of a partition tree $(\cS, \chi)$.
\end{definition}
As a sanity check, height of $\cB$ is $O(\log m)$, and height of each subtree under leaf $j\in \cB$ is $O(\log n)$.
So height of $\cS$ is $O(\log m + \log n) = O(\log n_\lp) = \wt O(1)$.
Also, from the definition, we see that construction of the partition tree is cheap, i.e., takes $O(\nnz(A))$ time.

The following definitions come from \cite{dly21}.
\begin{definition}
We make the following definitions. For $v\in \cB$, define
\begin{align*}
\Lambda(v) := \left (\bigcup_{i\in \ov \chi(v)} \cP^\cT(i) \right)\cap \left(\bigcup_{i\in \cT \backslash \ov \chi(v)} \cP^\cT(i)\right),\\
\ov \Lambda(v) := \left (\bigcup_{i\in \ov \chi(v)} \cP^\cT(i)\right)\backslash \left (\bigcup_{i\in \cT \backslash \ov \chi(v)} \cP^\cT(i)\right),
\end{align*}
where $\ov \chi(v)$ is the set of leaves in $\cB$ which are descendants of $v$.

For $u\in \cT$, define
$\Lambda^\circ(u)$ be the lowest vertex $v\in \cB$ such that $u\in \ov \Lambda(v)$.
In other words, $\Lambda^\circ(u)$ is the lowest vertex $v\in \cB$ such that $\ov \chi(v)$ contains $\cD^\cT(u)$ (set of descendants of $u$ in $\cT$). Therefore $\Lambda^\circ(u)$ is well-defined.
\end{definition}

\begin{lemma} \label{lem:block-sketch-lambda-size}
For any $v$, $\Lambda(v)$ is contained in the union of two paths in $\cT$.
In particular, $|\Lambda(v)| = O(\eta)$.
\end{lemma}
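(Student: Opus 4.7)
The plan is to exploit the heavy-light decomposition ordering $\pi$ used to build $\cB$. By Lemma~\ref{lem:heavy_light}, every subtree of $\cT$ corresponds to a contiguous subsequence under $\pi$. Since $\cB$ is a complete binary tree whose leaves are $[m]$ arranged in $\pi$-order, the set $\ov\chi(v)$ of leaves under any $v\in \cB$ is also a contiguous range $[a,b]$ in this order.

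With this in hand, I would translate membership in $\Lambda(v)$ into a condition about these ranges. A vertex $u \in \cT$ lies in $\Lambda(v)$ iff the descendant set $\cD^\cT(u)$ meets both $\ov\chi(v)$ and its complement. Writing $\cD^\cT(u)$ as a contiguous range $[c,d]$ under $\pi$, this happens exactly when $[c,d]$ straddles one of the endpoints of $[a,b]$, i.e., either $c<a\le d$ or $c\le b<d$. Thus $\Lambda(v)$ naturally decomposes as $\Lambda(v) = \Lambda_a(v)\cup \Lambda_b(v)$, where $\Lambda_a(v)$ (resp.~$\Lambda_b(v)$) collects the ancestors whose subtree straddles position $a$ (resp.~$b$).

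The key observation is that each piece is a path in $\cT$. Let $p_a$ be the vertex at position $a$ under $\pi$. Every $u\in \Lambda_a(v)$ is an ancestor of $p_a$, so $\Lambda_a(v)$ lies on the root-to-$p_a$ path. Moreover, if $u$ is an ancestor of $p_a$ whose subtree extends before $a$, then any further ancestor $u'$ of $u$ has an even larger subtree and therefore also extends before $a$; so $\Lambda_a(v)$ is upward-closed along the root-to-$p_a$ path and hence a contiguous initial segment of it. The symmetric argument applies to $\Lambda_b(v)$ via $p_b$. Each of these paths has length at most $\eta$ by the block-depth assumption on $\cT$, giving $|\Lambda(v)| \le 2\eta = O(\eta)$, and exhibiting $\Lambda(v)$ as contained in the union of two paths in $\cT$.

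The only mildly subtle step is verifying that $\ov\chi(v)$ is a contiguous $\pi$-interval (as opposed to just a subtree of $\cT$, which it is not), but this follows immediately from the construction of $\cB$ as a complete binary tree whose leaves are ordered by $\pi$, with no appeal to any tree structure on $\cT$ beyond the heavy-light ordering. I do not anticipate any serious obstacle here; the proof is essentially a clean combinatorial translation between the $\pi$-ordering and the ancestor structure in $\cT$.
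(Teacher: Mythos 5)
Your proof is correct and follows essentially the same approach as the paper: both use that each subtree of $\cT$ is a contiguous $\pi$-interval, so a vertex lies in $\Lambda(v)$ exactly when its subtree straddles an endpoint of the interval $\ov\chi(v)$, placing $\Lambda(v)$ inside two root-to-leaf paths of length at most $\eta$. The paper anchors the two paths at the vertices immediately outside $\ov\chi(v)$ while you anchor them at the endpoints of $\ov\chi(v)$ themselves; this is an immaterial variation, and your extra observation that each piece $\Lambda_a(v),\Lambda_b(v)$ is an upward-closed initial segment of its path is true but not needed for the bound.
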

\begin{proof}
The order $\pi$ in Lemma~\ref{lem:heavy_light} is a pre-order traversal of $\cT$. Let $u$ be the last vertex before $\ov \chi(v)$ under $\pi$, and $w$ be the first vertex after $\ov \chi(v)$ under $\pi$. Then $\Lambda(v)$ is contained in $\cP^\cT(u) \cup \cP^\cT(w)$.
\end{proof}

\begin{lemma} \label{lem:block_sketch_correct}
\textsc{BlockBalancedSketch} correctly maintains a sketch of $\cW^\top h$, and all query results are returned correctly.
\end{lemma}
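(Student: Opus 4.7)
The plan is to proceed by induction on the sequence of operations, maintaining three invariants stated below. Writing $t$ for the current global timestamp and letting $\tilde L_v$ denote the matrix obtained from the current $L_{\ov x}[t]$ by reverting, on rows/columns indexed by $\Lambda(v)$, all updates made after time $t_v$, the invariants will be:
\begin{align*}
&(\text{I1})\quad J_v = \Phi_{\chi(v)} H_{\ov x}^{-1/2} A^\top \text{ for all } v\in \cS,\\
&(\text{I2})\quad Z_v = J_v \tilde L_v^{-\top} \text{ for all } v\in \cB,\\
&(\text{I3})\quad y_v^{\triangledown} = Z_v (I - I_{\Lambda(v)})\, h \text{ for all } v\in \cB.
\end{align*}
Given these, correctness of $\textsc{Query}(v)$ is immediate: for $v \notin \cB$ the output $J_v L_{\ov x}[t]^{-\top} h$ equals $\Phi_{\chi(v)}(\cW^\top h)_{\chi(v)}$ by (I1); for $v \in \cB$, I would verify that the catch-up lines, which form $\Delta_{L_{\ov x}} = (L_{\ov x}[t]-L_{\ov x}[t_v]) I_{\Lambda(v)}$ and apply the Sherman--Morrison-style update, advance $Z_v$ from $J_v \tilde L_v^{-\top}$ to $J_v L_{\ov x}[t]^{-\top}$, after which the returned value $Z_v I_{\Lambda(v)} h + y_v^{\triangledown} = Z_v h$ equals $\Phi_{\chi(v)} (\cW^\top h)_{\chi(v)}$.

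Next, I would verify that each branch of \textsc{Update} preserves (I1)--(I3). For \textsc{Update$H$}, triggered by a change $\delta_{\ov x,i}$, the loop walks up $\cP^\cS(u)$ with $\chi(u)=\{i\}$, adjusting exactly those $J_v$ whose $\chi(v)$ contains block $i$ (preserving (I1)); for each $v\in \cB$ on this path, the companion increment $\delta_{J_v}\cdot L_{\ov x}[t_v]^{-\top}$ added to $Z_v$ preserves (I2) without advancing $t_v$, and $y_v^{\triangledown}$ is updated accordingly for (I3). For \textsc{Update$L$} with $S = \cP^\cB(\Lambda^\circ(\mathsf{low}^\cT(i)))$, I would identify $S$ as exactly the set of $v \in \cB$ whose non-lazy portion of $L_{\ov x}$ overlaps the support $\cP^\cT(\mathsf{low}^\cT(i))$ of $\Delta_{L_{\ov x}}$; for $v \in S$ the two-step rank update (first $\delta_{Z_v}$ then $\delta_{Z_v}'$) telescopes to move $Z_v$ from $J_v \tilde L_v^{-\top}$ at the old $t_v$ to $J_v L_{\ov x}[t]^{-\top}$, and setting $t_v \gets t$ restores (I2); while for $v \notin S$ the entire change lies within the $\Lambda(v)$-rows/columns of $\tilde L_v$ and is correctly absorbed into the lazy part, so leaving $t_v$ untouched also preserves (I2). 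The $\delta_h$ loop propagates changes along $\cP^\cB(\Lambda^\circ(i))$, and I would check that $i \notin \Lambda(u)$ for every $u$ on this path (so $y_u^{\triangledown}$ indeed changes by $Z_u I_{\{i\}}\delta_h$), whereas for any $v \in \cB$ off this path either $(Z_v)_{*,i}=0$ or $i \in \Lambda(v)$, so (I3) is preserved with no update.

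The hard part will be the ``locality'' identification $S = \cP^\cB(\Lambda^\circ(\mathsf{low}^\cT(i))) = \{v \in \cB : \cP^\cT(\mathsf{low}^\cT(i)) \not\subseteq \Lambda(v)\}$, together with the parallel identification for the $h$-update path that $i \notin \Lambda(u)$ exactly along $\cP^\cB(\Lambda^\circ(i))$. Both reduce to a case analysis of how a root-to-vertex path in the block elimination tree $\cT$ interacts with the subtree decomposition of $\cB$ induced by the heavy--light ordering, using the definitions of $\Lambda^\circ$ and $\ov\Lambda$ together with Lemma~\ref{lem:block-sketch-lambda-size}. Once this combinatorial identification is pinned down, the remaining algebra is a routine application of the Sherman--Morrison identity $(L+\Delta_L)^{-\top} - L^{-\top} = -(L+\Delta_L)^{-\top}\Delta_L^\top L^{-\top}$ to track the two-step rank updates.
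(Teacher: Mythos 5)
Your overall structure — induction on invariants, with (I1) and (I3) matching the paper's — is the right approach, and your $\tilde L_v$-based invariant (I2) is in fact implied by the paper's invariants. However, there are two connected gaps.

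First, you drop an invariant the paper carries explicitly, namely $(L[t]-L[t_v])\cdot I_{\ov\Lambda(v)} = 0$ for all $v\in\cB$. Your single invariant $Z_v = J_v\tilde L_v^{-\top}$ is weaker, and verifying that \textsc{Update$H$} preserves it requires knowing that $L[t_v]$ and $\tilde L_v$ agree on the $\ov\Lambda(v)$-columns (so that $\delta_{J_v}L[t_v]^{-\top} = \delta_{J_v}\tilde L_v^{-\top}$); that agreement is precisely the dropped invariant, so you would need to re-derive it anyway. The paper's formulation keeps this explicit, which makes the bookkeeping considerably cleaner.

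Second, and more seriously, your ``locality identification'' is wrong. You claim $S = \cP^\cB(\Lambda^\circ(\mathsf{low}^\cT(i)))$ equals $\{v\in\cB : \cP^\cT(\mathsf{low}^\cT(i))\not\subseteq\Lambda(v)\}$; the correct characterization, which the paper states and which follows from the monotonicity of the condition $\cD^\cT(u)\subseteq\ov\chi(v)$, is $S = \{v\in\cB: \mathsf{low}^\cT(i)\in\ov\Lambda(v)\}$. These are not the same set. The issue is that for a $v\in\cB$, the index set $[m]$ is partitioned into three pieces: $\Lambda(v)$, $\ov\Lambda(v)$, and the remainder $[m]\setminus\Lambda(v)\setminus\ov\Lambda(v)$. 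You conflate ``not contained in $\Lambda(v)$'' with ``meets $\ov\Lambda(v)$'', ignoring the third piece. Concretely, if $\cT$ is a root-to-leaf path $1\to 2\to 3$ and $\ov\chi(v)=\{1\}$, then $\Lambda(v)=\{1\}$ and $\ov\Lambda(v)=\emptyset$; for a variable with $\mathsf{low}^\cT(i)=3$, your condition puts $v$ in $S$ while the correct $S$ excludes $v$. This also breaks the complementary claim ``for $v\notin S$ the entire change lies within the $\Lambda(v)$-rows/columns'': the change may lie entirely in the third piece, where it is invisible to $J_vL^{-\top}$ not because it is absorbed into the lazy part but because the row support of $J_v$ is contained in the ancestor-closed set $\Lambda(v)\cup\ov\Lambda(v)$, so columns of $L$ outside this set never enter the product. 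You never articulate this support argument, and with your stated version of $S$ the invariant-preservation step would fail.
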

\begin{proof}
We prove that the following invariant always holds after every call to \textsc{BlockBalancedSketch}.
\begin{align}
    J_v &= \Phi_{\chi(v)} H_{\ov x}^{-1/2} A^\top \qquad \forall v\in \cS, \label{eqn:lem:block_sketch_invariant_Jv}\\
    Z_v &= J_v L_{\ov x}[t_v]^{-\top} \qquad \forall v\in \cB,
    \label{eqn:lem:block_sketch_invariant_Zv}\\
    y_v^\triangledown &= Z_v (I-I_{\Lambda(v)}) h \qquad \forall v\in \cB,
    \label{eqn:lem:block_sketch_invariant_yv}\\
    0 &= (L[t] - L[t_v]) \cdot I_{\ov \Lambda(v)} \qquad \forall v\in \cB.
    \label{eqn:lem:block_sketch_invariant_Ltv}
\end{align}

\textsc{Initialize}: The invariants are clearly satisfied after initialization.

\textsc{Query}: If $v\in \cS \backslash \cB$ then we compute $J_v L_{\ov x}[t]^{-\top} h$ directly and the result is correct.

Now assume $v\in \cB$. We update $t_v \gets t$, and update $Z_v$ and $y_v^\triangledown$ accordingly. We have
Note that
\begin{align*}
\delta_{Z_v} &= -(L_{\ov x}[t]^{-1} (L_{\ov x}[t] - L_{\ov x}[t_v]) I_{\Lambda(v)} Z_v^\top)^\top \\
&= -(L_{\ov x}[t]^{-1} (L_{\ov x}[t] - L_{\ov x}[t_v]) (I-I_{\ov \Lambda(v)}-I_{\cT \backslash \Lambda(v) \backslash \ov \Lambda(v)} Z_v^\top)^\top \\
&= -(L_{\ov x}[t]^{-1} (L_{\ov x}[t] - L_{\ov x}[t_v]) Z_v^\top)^\top
\end{align*}
because Invariant~\eqref{eqn:lem:block_sketch_invariant_Ltv} and column sparsity of $Z_v$.
So
\begin{align*}
    Z_v^\new &= Z_v + \delta_{Z_v} \\
    &= Z_v - (L_{\ov x}[t]^{-1} (L_{\ov x}[t] - L_{\ov x}[t_v]) Z_v^\top)^\top\\
    &= J_v L_{\ov x}[t_v]^{-\top} - J_v (L_{\ov x}[t_v]^{-\top} - L_{\ov x}[t]^{-\top}) \\
    &= J_v L_{\ov x}[t]^{-\top}.
\end{align*}
So Invariant~\eqref{eqn:lem:block_sketch_invariant_Zv} is satisfied.
Updating $y_v^\triangledown$ ensures that Invariant~\eqref{eqn:lem:block_sketch_invariant_yv} is satisfied.
Finally, the return value is correct because of definition of $y_v^\triangledown$ and $y_v^\triangleup$.

\textsc{Update}: We divide the proof into several steps.
Correctness of \textsc{Update$x$} follows from correctness of \textsc{Update$L$} and \textsc{Update$H$} (which we will prove below).

Correctness of \textsc{Update$H$}:
We update $t_v$, $Z_v$ and $y_v^\triangledown$ for $v\in S = \cP^\cB(\Lambda^\circ(\mathsf{low}^\cT(i)))$.
In other words, $S$ is the set of all vertices $v$ with $\mathsf{low}^\cT(i) \in \ov \Lambda(v)$.
For any $v\not \in S$, $L[t] \cdot I_{\ov \Lambda(v)}$ is not changed. So Invariant~\eqref{eqn:lem:block_sketch_invariant_Ltv} is preserved for $v\not \in S$.

Fix $v\in S$.
In Algorithm~\ref{alg:block_balanced_sketch_3}, Line~\ref{line:alg:block_balanced_sketch_3_updateZv1}, we update $t_v$ to $t-1$.
In Algorithm~\ref{alg:block_balanced_sketch_3}, Line~\ref{line:alg:block_balanced_sketch_3_updateZv2}, we update $t_v$ to $t$.
By a similar computation as the one we did for \textsc{Query}, $Z_v$ is updated correctly (i.e., Invariant~\eqref{eqn:lem:block_sketch_invariant_Zv} is preserved).
This implies $y_v^\triangledown$ is updated correctly (i.e., Invariant~\eqref{eqn:lem:block_sketch_invariant_yv} is preserved).

Correctness of \textsc{Update$L$}:
We update $J_v$, $Z_v$, $y_v^\triangledown$ for all $v\in \cP^\cS(u)$ (where $\chi(u) = \{i\}$).
Invariant~\eqref{eqn:lem:block_sketch_invariant_Ltv} is preserved because $t_v$ does not change.
Invariant~\eqref{eqn:lem:block_sketch_invariant_Jv}, \eqref{eqn:lem:block_sketch_invariant_Zv}, \eqref{eqn:lem:block_sketch_invariant_yv} are preserved by our choice of $\delta_{J_v}$, $\delta_{Z_v}$, $\delta_{y_v^\triangledown}$.

Correctness of Algorithm~\ref{alg:block_balanced_sketch_2}, Line~\ref{line:alg:block_balanced_sketch_2_updateh_start} to Line~\ref{line:alg:block_balanced_sketch_2_updateh_end}: This part is ``\textsc{Update$h$}''.
For $i\in [m]$ with $\delta_{h,i} \ne 0$, we update $y_u^\triangledown$ for $u\in \cB$ such that $i\in \ov \Lambda(u)$. Recall that $\cP^\cB(\Lambda^\circ(i))$ contains all vertices $u$ such that $i\in \ov \Lambda(u)$. So Invariant~\eqref{eqn:lem:block_sketch_invariant_yv} is satisfied.
\end{proof}

\begin{lemma} \label{lem:block_sketch_init}
\textsc{BlockBalancedSketch.Initialize}  (Algorithm~\ref{alg:block_balanced_sketch_1}) costs 
\begin{align*} 
\wt O(T_H + T_n + T_L + r\cdot (n n_{\max}^2 + T_Z + \nnz(A)))
\end{align*}
time.
\end{lemma}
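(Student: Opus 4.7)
I would analyze the cost of Algorithm~\ref{alg:block_balanced_sketch_1} line by line. The bookkeeping assignments at the top are $O(1)$. The line $H_{\ov x} \gets \nabla^2 \phi(\ov x)$ costs $T_H$ by Definition~\ref{defn:lp_general_param}, and computing the Cholesky factor $L_{\ov x}[t]$ of $A H_{\ov x}^{-1} A^\top$ costs $T_L$ by definition. The partition tree $(\cS,\chi)$ of Definition~\ref{defn:partition_tree_construction} is built in $O(\nnz(A) + n + m)$ time, which is absorbed into the bound. What remains are the two loops — one indexed by $v\in \cS$ that fills the matrices $J_v$, and one indexed by $v\in \cB$ that fills $Z_v$ and $y_v^\triangledown$.

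For the $J_v$ loop I would avoid touching every $v\in \cS$ individually and instead compute the global matrix $M := \Phi H_{\ov x}^{-1/2} A^\top \in \R^{r\times m_\lp}$ once, then obtain each $J_v$ as a partial sum over variable blocks in $\chi(v)$, stored hierarchically along $\cS$ (segment-tree style). The block-diagonal square root $H_{\ov x}^{-1/2}$ costs $\sum_i \Tmat(n_i) = T_n$, applying it on the left to $\Phi$ costs $O(r\sum_i n_i^2) = O(r\, n\, n_{\max}^2)$ by block-diagonal multiplication, and the right multiplication by $A^\top$ costs $O(r \nnz(A))$ by Lemma~\ref{lem:mat_vec_mult_time}\ref{item:lem_mat_vec_mult_time_ATv}. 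The total matches the $T_n + r(n n_{\max}^2 + \nnz(A))$ portion of the claim.

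For the $v \in \cB$ loop, the key identity is $Z_v^\top = L_{\ov x}^{-1} J_v^\top$, so for each of the $r$ columns of $J_v^\top$ I need one triangular solve with $L_{\ov x}$. By the block elimination tree (Definition~\ref{defn:block-elimination-tree}) the column $A^\top_{*,i}$ is supported on a single root-to-node path for every $i\in [n]$, hence each column of $J_v^\top$ is supported on $\bigcup_{i\in \chi(v)}\cP^\cT(i)$. By the partition-tree construction (Definition~\ref{defn:partition_tree_construction}), when $v$ is a leaf of $\cB$ corresponding to $j\in [m]$, this support collapses to the single path $\cP^\cT(j)$, and for internal $v$'s it is a union of such paths. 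This is precisely the regime counted by $T_Z$ in Definition~\ref{defn:lp_general_param}: the aggregate cost of the $r$-fold column solves across all $v\in \cB$ telescopes to $O(r T_Z)$. Once $Z_v$ is available, $y_v^\triangledown = Z_v(I-I_{\Lambda(v)})h$ is a sparse matrix-vector product, and since $|\Lambda(v)| = O(\eta)$ by Lemma~\ref{lem:block-sketch-lambda-size} and $Z_v$ inherits the column support above, this fits inside the same budget.

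\textbf{Main obstacle.} The delicate step is the $Z_v$ computation. Bounding it naively by $|\cB|\cdot r\cdot \eta^2 m_{\max}^2$ overcounts; one must show that the aggregate triangular solves across all $v\in \cB$ match the definition of $T_Z$ (each constraint block $i\in [m]$ contributes only $r$ path-supported solves of total cost $O(T_Z)$), which is exactly the reason the parameter $T_Z$ was introduced. Once this is established, summing the contributions $T_H + T_L + T_n + r(nn_{\max}^2 + \nnz(A) + T_Z)$ yields the claimed bound.
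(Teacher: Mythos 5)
Your proposal is correct and follows essentially the same route as the paper: compute $H_{\ov x}$ ($T_H$), the block-diagonal $H_{\ov x}^{-1/2}$ ($T_n$), and $L_{\ov x}$ ($T_L$); compute the local pieces $\Phi_i H_{\ov x,(i,i)}^{-1/2} A^\top$ in $r(n n_{\max}^2 + \nnz(A))$ time and propagate them bottom-up through the $\wt O(1)$-depth partition tree to get all $J_v$; and compute $Z_v$ at the leaves of $\cB$ via $r$-fold path-supported solves in $r T_Z$ time, then obtain internal nodes by summing from children. Your ``telescopes'' phrasing for the $Z_v$ step is imprecise (the cost is not telescoping but concentrated at the leaves, with internal nodes filled by aggregation), but the parenthetical --- each constraint block contributes only $r$ path-supported solves, totaling $r T_Z$ --- shows the right mechanism, which is exactly what the paper does.
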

\begin{proof}
Computing $H_{\ov x}$ takes $T_H$ time.
Computing $H_{\ov x}^{-1}$, $H_{\ov x}^{-1/2}$ takes $O(T_n)$ time.
Computing $L_{\ov x}[t]$ takes $T_L$ time.

Computation of $J_v$:
For all $i\in [n]$, computing $\Phi_{i} H_{\ov x,(i,i)}^{-1/2}$ takes $r n_i^2$ time.
Support of $\Phi_{i} H_{\ov x,(i,i)}^{-1/2}$ are all disjoint, so computing all $\Phi_{i} H_{\ov x,(i,i)}^{-1/2} A^\top$ takes $r \cdot \nnz(A)$ time.
Then we can compute all $J_v$ by summing from bottom to up.
Height of the tree is $\wt O(1)$, so every non-zero entry of $\Phi_{i} H_{\ov x,(i,i)}^{-1/2} A^\top$ gets propagated $\wt O(1)$ times.
So computing $J_v$ takes 
\begin{align*} 
\wt O(r (\sum_{i\in [n]} n_i^2 + \nnz(A))) = \wt O(r (n n_{\max}^2 + \nnz(A))) 
\end{align*}
time in total.

Computation of $Z_v$:
To compute $Z_v$, we first compute $Z_v$ for all leaves $v\in \cB$.
This takes $r T_Z$ time by assumption (Definition~\ref{defn:lp_general_param}).
Then we sum from bottom to up to compute $Z_v$ for all $v\in \cB$.
Because height of the partition tree is $\wt O(1)$, every non-zero entry in the leaves gets propagated $\wt O(1)$ times.
So computing $Z_v$ takes $\wt O(r T_Z)$ time in total.

Computation of $y_v^\triangledown$: Takes $O(\nnz(Z)) = \wt O(rT_Z)$ time.

Summing everything up we get the desired running time.
\end{proof}

\begin{lemma} \label{lem:block_sketch_update}
\textsc{BlockBalancedSketch.Update} (Algorithm~\ref{alg:block_balanced_sketch_2}) costs $$\wt O((T_{H,\max} + T_{\Delta L, \max} + \Tmat(n_{\max}) + r\eta^2 m_{\max}^2) \|\delta_{\ov x}\|_{2,0} + r m_{\max} \|\delta_h\|_{2,0})$$ time.
\end{lemma}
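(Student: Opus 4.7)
The proof will decompose $\textsc{Update}$ into its two top-level pieces in Algorithm~\ref{alg:block_balanced_sketch_2}: the outer loop over blocks $i \in [n]$ with $\delta_{\ov x, i} \ne 0$, each calling $\textsc{Update$\ov x$}$, and the loop over coordinates of $\delta_h$, each propagating along $\cP^\cB(\Lambda^\circ(i))$. I will bound these two contributions separately and then add them.

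For the $\delta_{\ov x}$ part, I would open $\textsc{Update$\ov x$}$ and account for its four components: computing $\Delta_{H_{\ov x},(i,i)} = \nabla^2 \phi_i(\ov x_i) - H_{\ov x,(i,i)}$ costs $O(T_{H,\max})$ by assumption; computing $\Delta_{L_{\ov x}}$ costs $O(T_{\Delta_L,\max})$; and the two subcalls $\textsc{Update$L$}$ and $\textsc{Update$H$}$ are handled by the to-be-proved Lemmas~\ref{lem:block_sketch_update_L} and~\ref{lem:block_sketch_update_H} which together contribute $\wt O(\Tmat(n_{\max}) + r \eta^2 m_{\max}^2)$ per block update. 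Summing over the $\|\delta_{\ov x}\|_{2,0}$ iterations gives the first term of the claimed bound.

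For the $\delta_h$ part, the set $S = \cP^\cB(\Lambda^\circ(i))$ has size $O(\log m) = \wt O(1)$ because $\cB$ is a balanced binary tree (Definition~\ref{defn:partition_tree_construction}). For each $u \in S$ and each nonzero coordinate inside block $i$, the update $y_u^\triangledown \gets y_u^\triangledown + Z_u \cdot I_{\{i\}} \cdot \delta_h$ simply reads one column of the stored sketch $Z_u$ (of length $r$) and adds it with the scalar from $\delta_h$, costing $O(r)$ per coordinate. Since block $i$ has at most $m_{\max}$ coordinates and there are $\|\delta_h\|_{2,0}$ nonzero blocks, Part~2 contributes $\wt O(r m_{\max} \|\delta_h\|_{2,0})$.

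The only subtle step is invoking the bound for $\textsc{Update$L$}$: the main obstacle there is showing that, even though the cumulative difference $L_{\ov x}[t-1] - L_{\ov x}[t_v]$ can be large, the matrix $(L_{\ov x}[t-1] - L_{\ov x}[t_v]) I_{\Lambda(v)}$ restricts to the columns indexed by $\Lambda(v)$, which has $O(\eta)$ blocks by Lemma~\ref{lem:block-sketch-lambda-size}, and hence each of its $r$ columns is supported on a union of $O(1)$ root paths in the block elimination tree. This lets one apply Lemma~\ref{lem:mat_vec_mult_time}\ref{item:lem_mat_vec_mult_time_Linvv_path} to get $O(\eta^2 m_{\max}^2)$ per sketch column, or $O(r \eta^2 m_{\max}^2)$ per node in $S$, and multiplying by $|S| = O(\log m)$ gives the claimed $\wt O(r \eta^2 m_{\max}^2)$ cost per outer iteration. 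Combining Parts~1 and~2 yields the stated running time.
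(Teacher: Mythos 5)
Your proposal is correct and follows essentially the same route as the paper's own proof: you split $\textsc{Update}$ into the $\delta_{\ov x}$ loop and the $\delta_h$ loop, bound the first by opening $\textsc{Update$\ov x$}$ (the paper packages this as Lemma~\ref{lem:block_sketch_update_block}, which in turn cites Lemmas~\ref{lem:block_sketch_update_L} and~\ref{lem:block_sketch_update_H}, exactly as you do), and bound the second by noting $|\cP^\cB(\Lambda^\circ(i))|=\wt O(1)$ and $O(r m_{\max})$ work per nonzero block of $\delta_h$. Your extra paragraph on why $\textsc{Update$L$}$ costs $\wt O(r\eta^2 m_{\max}^2)$ is a faithful sketch of the paper's Lemma~\ref{lem:block_sketch_update_L} — the only small imprecision is that the $\eta^2 m_{\max}^2$ there comes from two places, computing $(L[t-1]-L[t_v])I_{\Lambda(v)}$ via Lemma~\ref{lem:mat_vec_mult_time}\ref{item:lem_mat_vec_mult_time_Lv_sparse} and applying $L^{-1}$ via Lemma~\ref{lem:mat_vec_mult_time}\ref{item:lem_mat_vec_mult_time_Linvv_path}, whereas you only mention the latter — but since you are invoking that lemma rather than re-proving it, this does not affect correctness.
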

\begin{proof}
Each call to \textsc{Update$\ov x$} costs $$\wt O(T_{H,\max} + T_{\Delta_L, \max} + \Tmat(n_{\max})+ r \eta^2 m_{\max}^2)$$ time by Lemma~\ref{lem:block_sketch_update_block}.

For each $i\in [m]$ with $\delta_{h,i}\ne 0$ and each $v$, it takes $O(r m_{\max})$ time to update $y_u^\triangledown$.
So the total time needed to update $y_u^\triangledown$ is $O(r m_{\max} \|\delta_h\|_{2,0})$.
\end{proof}

\begin{lemma} \label{lem:block_sketch_update_block}
\textsc{BlockBalancedSketch.Update$\ov x$} (Algorithm~\ref{alg:block_balanced_sketch_2}) costs $$\wt O(T_{H,\max} + T_{\Delta_L, \max} + \Tmat(n_{\max})+ r \eta^2 m_{\max}^2)$$ time.
\end{lemma}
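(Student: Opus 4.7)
The plan is to walk through the body of \textsc{Update}$\ov x$ line by line and bound each cost against one of the terms in the claimed running time. First, the two ``preprocessing'' lines have direct bounds: computing $\Delta_{H_{\ov x},(i,i)} = \nabla^2\phi_i(\ov x_i + \delta_{\ov x,i}) - H_{\ov x,(i,i)}$ costs $T_{H,\max}$ by assumption, and computing $\Delta_{L_{\ov x}}$ costs $T_{\Delta_L,\max}$ by assumption. Building the set $S = \cP^\cB(\Lambda^\circ(\mathsf{low}^\cT(i)))$ is free since $\cB$ has depth $\wt O(1)$ by Definition~\ref{defn:partition_tree_construction}, so $|S| = \wt O(1)$. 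It then remains to bound the cost of \textsc{Update}$L(S,\Delta_{L_{\ov x}})$ and \textsc{Update}$H(i, \Delta_{H_{\ov x},(i,i)})$, each of which is a loop over $\wt O(1)$ vertices; I will argue that one iteration of each loop costs $\wt O(\Tmat(n_{\max}) + r\eta^2 m_{\max}^2)$, which yields the claim.

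For \textsc{Update}$L$, fix $v \in S$ and consider the two matrix expressions defining $\delta_{Z_v}$ and $\delta_{Z_v}'$. The key sparsity facts I will use are: (i) the column block-support of $L_{\ov x}[t-1] - L_{\ov x}[t_v]$ is contained in $\Lambda(v)$ by Invariant~\eqref{eqn:lem:block_sketch_invariant_Ltv}; (ii) $|\Lambda(v)| = O(\eta)$ by Lemma~\ref{lem:block-sketch-lambda-size}, and each column of $L$ is $O(\eta)$-block sparse by the block elimination tree structure; and (iii) $\Delta_{L_{\ov x}}$ is obtained from a single variable-block update and hence is supported on $O(\eta)$ blocks. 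So the intermediate matrix $(L_{\ov x}[t-1] - L_{\ov x}[t_v]) I_{\Lambda(v)}$ has $O(\eta^2)$ block non-zeros, i.e., $O(\eta^2 m_{\max}^2)$ coordinate non-zeros. Forming its product with $Z_v^\top$ and then applying $L_{\ov x}[t-1]^{-1}$ --- the latter via Lemma~\ref{lem:mat_vec_mult_time}\ref{item:lem_mat_vec_mult_time_Linvv_path} applied row-by-row to the $r$ rows of $Z_v$, which are supported on a union of $O(1)$ paths in $\cT$ --- gives $O(r\eta^2 m_{\max}^2)$ per iteration. The same accounting bounds $\delta_{Z_v}'$. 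Updating $y_v^{\triangledown}$ is then at most $\nnz(\delta_{Z_v}+\delta_{Z_v}')\cdot \wt O(1) = \wt O(r\eta m_{\max})$.

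For \textsc{Update}$H$, computing $\Delta_{H_{\ov x}^{-1/2},(i,i)}$ involves a matrix square root of an $n_i\times n_i$ block, which costs $\Tmat(n_{\max})$. Then $\delta_{J_u} = \Phi_i \cdot \Delta_{H_{\ov x}^{-1/2},(i,i)} \cdot A^\top$ has row support of size $O(\eta m_{\max})$ (since $A_{*,i}^\top$ is supported on a single path of $\cT$) and is formed in $O(r n_{\max}^2 + r n_{\max}\eta m_{\max}) \le O(r n_{\max}^2 + r\eta^2 m_{\max}^2)$ time. The loop is over $\cP^{\cS}(u)$ which has length $\wt O(1)$ since $\cS$ has depth $\wt O(1)$; within each iteration, updating $J_v$ is $\wt O(r\eta m_{\max})$, and updating $Z_v$ through multiplication by $L_{\ov x}[t_v]^{-\top}$ uses Lemma~\ref{lem:mat_vec_mult_time}\ref{item:lem_mat_vec_mult_time_Linvv_path} again for a total of $\wt O(r\eta^2 m_{\max}^2)$ per iteration, and updating $y_v^\triangledown$ is no more expensive.

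Summing across the $\wt O(1)$ iterations in both \textsc{Update}$L$ and \textsc{Update}$H$ and adding the preprocessing cost gives the total
$\wt O(T_{H,\max} + T_{\Delta_L,\max} + \Tmat(n_{\max}) + r\eta^2 m_{\max}^2)$, as claimed. The main obstacle is not any single calculation but keeping the sparsity bookkeeping straight: one has to be careful to apply $L^{-1}$ only to sparse-support inputs so that Lemma~\ref{lem:mat_vec_mult_time}\ref{item:lem_mat_vec_mult_time_Linvv_path} (rather than Lemma~\ref{lem:mat_vec_mult_time}\ref{item:lem_mat_vec_mult_time_Linvv}, which would incur an extra $m_\lp$ factor) is the one that applies, and to invoke the invariant~\eqref{eqn:lem:block_sketch_invariant_Ltv} on $L_{\ov x}[t-1]-L_{\ov x}[t_v]$ to rule out the potentially large accumulated change outside $\Lambda(v)$.
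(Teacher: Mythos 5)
Your proof is correct and takes essentially the same approach as the paper: the paper also charges $\Delta_{H_{\ov x}}$ to $T_{H,\max}$, charges $\Delta_{L_{\ov x}}$ to $T_{\Delta_L,\max}$, observes that $|S|=\wt O(1)$, and then invokes the bounds on \textsc{Update}$L$ and \textsc{Update}$H$, which the paper factors out into Lemmas~\ref{lem:block_sketch_update_L} and~\ref{lem:block_sketch_update_H} and you instead prove inline using the same ingredients ($|\Lambda(v)|=O(\eta)$ from Lemma~\ref{lem:block-sketch-lambda-size}, the path-sparsity bound from Lemma~\ref{lem:mat_vec_mult_time}\ref{item:lem_mat_vec_mult_time_Linvv_path}, and invariant~\eqref{eqn:lem:block_sketch_invariant_Ltv}).
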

\begin{proof}
Computing $\Delta_{H_{\ov x}}$ takes $O(T_{H,i})$ time.
Computing $\Delta_{L_{\ov x}[t]}$ takes $O(T_{\Delta_L,i})$ time.
Because $S$ lies on a path in $\cB$, we have $|S| = \wt O(1)$.
By Lemma~\ref{lem:block_sketch_update_H} and Lemma~\ref{lem:block_sketch_update_L}, updating $L$ costs $O(r \eta^2 m_{\max}^2 \log n)$ time
and updating $H$ costs $O( \Tmat(n_{\max}) + r \eta^2 m_{\max}^2 \log n)$ time.
\end{proof}

\begin{lemma} \label{lem:block_sketch_update_L}
\textsc{BlockBalancedSketch.Update$L$} (Algorithm~\ref{alg:block_balanced_sketch_3}) costs $O(r \eta^2 m_{\max}^2)$ time.
\end{lemma}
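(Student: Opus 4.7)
The plan is to bound the cost of processing each $v\in S$ and multiply by $|S|$. First, $|S| = O(\log m) = \wt{O}(1)$ because $S = \cP^\cB(\Lambda^\circ(\mathsf{low}^\cT(i)))$ is contained in a single root-to-leaf path of the balanced binary tree $\cB$, whose height is $O(\log m)$. So it suffices to show the per-vertex cost is $O(r\eta^2 m_{\max}^2)$.

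Fix $v\in S$. The key structural facts I will use are: (i) by Lemma~\ref{lem:block-sketch-lambda-size}, $\Lambda(v)$ is contained in the union of two root-to-vertex paths of $\cT$, hence has $O(\eta)$ blocks and $O(\eta m_{\max})$ coordinates; (ii) by the block-column sparsity coming from Definition~\ref{defn:block-elimination-tree}, every block column of $L$ is supported on a single root-to-vertex path in $\cT$, and the same holds column-wise for $L^{-1}$; in particular, applying $L^{-1}$ to a vector supported on a (union of two) root-to-vertex path(s) of $\cT$ yields a vector supported on the same path(s); and (iii) the update $\Delta_{L_{\ov x}}$ from a single block change at coordinate $i$ has its nonzero block columns contained in $\cP^\cT(i)$ and has $O(\eta^2 m_{\max}^2)$ nonzeros in total.

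For $\delta_{Z_v}$, the matrix $I_{\Lambda(v)} Z_v^\top$ has only $O(\eta m_{\max})$ nonzero rows; multiplying by $(L[t-1] - L[t_v])$ column-by-column uses the $O(\eta m_{\max})$ relevant columns of $L[t-1]-L[t_v]$ (each with $O(\eta m_{\max})$ nonzeros by block-column sparsity), producing $r$ columns each supported on a union of two paths of $\cT$, at cost $O(\eta^2 m_{\max}^2)$ per column. Applying $L[t-1]^{-1}$ to each such path-supported vector costs $O(\eta^2 m_{\max}^2)$ by Lemma~\ref{lem:mat_vec_mult_time}(iv), totaling $O(r\eta^2 m_{\max}^2)$. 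For $\delta_{Z_v}'$: $\Delta_{L_{\ov x}}$ has $O(\eta)$ nonzero block columns indexed by $\cP^\cT(i)$, so applying it to each of the $r$ columns of $(Z_v+\delta_{Z_v})^\top$ costs $O(\eta^2 m_{\max}^2)$ and yields a vector supported on $\cP^\cT(i)$; applying $L[t]^{-1}$ then costs $O(\eta^2 m_{\max}^2)$ per column via Lemma~\ref{lem:mat_vec_mult_time}(iv). Finally, $\delta_{y_v^\triangledown} = (\delta_{Z_v}+\delta_{Z_v}')(I-I_{\Lambda(v)})h$ costs $O(r\eta m_{\max})$ since each of the $r$ rows of $\delta_{Z_v}+\delta_{Z_v}'$ is supported on a union of two paths ($O(\eta m_{\max})$ coordinates).

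The main obstacle will be rigorously tracking the sparsity pattern of the intermediate matrix $(L[t-1]-L[t_v])\,I_{\Lambda(v)}\,Z_v^\top$ and verifying that each of its columns is supported on a union of two root-to-vertex paths of $\cT$, so that the path-restricted bound of Lemma~\ref{lem:mat_vec_mult_time}(iv) is applicable to the subsequent $L^{-1}$ multiplication. Using the snapshot mechanism from Remark~\ref{rmk:snapshot} we have access to $L[t_v]$, and we iterate only over the nonzero columns of $L[t-1]-L[t_v]$ restricted to $\Lambda(v)$, so no unnecessary entries of the Cholesky factors are touched.
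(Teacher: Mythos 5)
Your proof is correct and follows essentially the same line as the paper's: bound $|\Lambda(v)| = O(\eta)$ via Lemma~\ref{lem:block-sketch-lambda-size}, observe that $(L[t-1]-L[t_v])I_{\Lambda(v)}$ has $O(\eta^2 m_{\max}^2)$ nonzeros so forming $(L[t-1]-L[t_v])I_{\Lambda(v)}Z_v^\top$ costs $O(r\eta^2m_{\max}^2)$, then apply the path-supported solve from Lemma~\ref{lem:mat_vec_mult_time}(iv) for the front $L^{-1}$, and likewise for $\delta_{Z_v}'$. The detail you flag as a possible obstacle — that each column of $(L[t-1]-L[t_v])I_{\Lambda(v)}Z_v^\top$ lies on a union of two root-to-vertex paths — is not a gap: it follows directly from $\Lambda(v)\subseteq \cP^\cT(u)\cup\cP^\cT(w)$ (Lemma~\ref{lem:block-sketch-lambda-size}) combined with the fact that each block column $(L[t-1]-L[t_v])_{*,k}$ for $k\in\cP^\cT(u)$ is supported on $\cP^\cT(k)\subseteq\cP^\cT(u)$; the paper makes this exact observation in the analogous step of Lemma~\ref{lem:block_sketch_query}, and it is implicit in the proof of Lemma~\ref{lem:block_sketch_update_L}. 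Your remark that $|S|=\wt O(1)$ contributes an extra logarithmic factor is accurate; the paper absorbs that factor at the level of Lemma~\ref{lem:block_sketch_update_block} rather than in the statement of this lemma.
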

\begin{proof}
By Lemma~\ref{lem:block-sketch-lambda-size}, we have $|\Lambda(v)| = O(\eta)$.
So we can compute $(L_{\ov x}[t-1]-L_{\ov x}[t_v]) \cdot I_{\Lambda(v)}$ in $O(\eta^2 m_{\max}^2)$ time by Lemma~\ref{lem:mat_vec_mult_time}\ref{item:lem_mat_vec_mult_time_Lv_sparse}.
Then computing $(L_{\ov x}[t-1]-L_{\ov x}[t_v]) \cdot I_{\Lambda(v)} \cdot Z_v^\top$ takes $O(r \eta^2 m_{\max}^2)$ time.
Finally, computing $\delta_{Z_v} = L_{\ov x}[t-1]^{-1} \cdot (L_{\ov x}[t-1]-L_{\ov x}[t_v]) \cdot Z_v^\top$ takes
$O(r \eta^2 m_{\max}^2)$ time by Lemma~\ref{lem:mat_vec_mult_time}\ref{item:lem_mat_vec_mult_time_Linvv_path}.
Analysis for $\delta_{Z_v}'$ is the same.

Computing $\delta_{y_v}^\triangledown$ takes $O(r \eta m_{\max})$ time by sparsity pattern of $\delta_{Z_v}+\delta_{Z_v}'$.

Summing everything up we get the desired running time.
\end{proof}

\begin{lemma} \label{lem:block_sketch_update_H}
\textsc{BlockBalancedSketch.Update$H$} (Algorithm~\ref{alg:block_balanced_sketch_3}) costs $\wt O(\Tmat(n_{\max})+ r \eta^2 m_{\max}^2)$ time.
\end{lemma}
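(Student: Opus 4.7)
The plan is to bound the cost line-by-line in \textsc{Update$H$}, leveraging three structural facts: (i) the block elimination tree guarantees that all constraints containing any fixed variable block $i$ lie on a single root-to-node path in $\cT$ (argued in the proof of Lemma~\ref{lem:mat_vec_mult_coord_time}\ref{item:mat_vec_mult_coord_time_Wtvi}); (ii) by Definition~\ref{defn:partition_tree_construction} the partition tree $\cS$ has depth $\wt O(1)$, so $|\cP^\cS(u)|=\wt O(1)$; and (iii) Lemma~\ref{lem:mat_vec_mult_time}\ref{item:lem_mat_vec_mult_time_Linvv_path} yields an $O(\eta^2 m_{\max}^2)$ triangular solve whenever the input is supported on a path.

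First I would bound the purely local work at $u$, the leaf of $\cS$ with $\chi(u)=\{i\}$. Computing $\Delta_{H_{\ov x}^{-1/2},(i,i)}$ is an inverse square root of an $n_i\times n_i$ PSD matrix, costing $O(\Tmat(n_{\max}))$. Then $\delta_{J_u}=\Phi_i\cdot\Delta_{H_{\ov x}^{-1/2},(i,i)}\cdot A^\top$ factors as $\Phi_i\cdot\Delta$ in $O(rn_{\max}^2)$ time, followed by a multiplication against $A^\top_{\text{block }i,*}$, whose column support is the set of constraints touching block $i$, of size at most $\eta m_{\max}$ by (i). This second multiplication therefore costs $O(rn_{\max}\eta m_{\max})$. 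Using $rn_{\max}^2=\wt O(\Tmat(n_{\max}))$ (since $\Tmat(n)\ge n^2$ and $r$ is polylog) and $rn_{\max}\eta m_{\max}\le \tfrac{r}{2}(n_{\max}^2+\eta^2 m_{\max}^2)$ by AM-GM, both costs are absorbed into $\wt O(\Tmat(n_{\max})+r\eta^2 m_{\max}^2)$. The sparsity bound $\nnz(\delta_{J_u})=O(r\eta m_{\max})$ is inherited from the path support of $A^\top_{\text{block }i,*}$.

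Next I would walk up $\cP^\cS(u)$, which contains only $\wt O(1)$ vertices by (ii). Adding $\delta_{J_u}$ into $J_v$ touches only its $O(r\eta m_{\max})$ nonzeros, which is cheap. For each $v\in\cB$ the key step is $\delta_{Z_v}=\delta_{J_v}\cdot L_{\ov x}[t_v]^{-\top}$; I would carry it out as $r$ independent triangular solves, one per row. Each row of $\delta_{J_v}=\delta_{J_u}$ is path-supported, so (iii) gives $O(\eta^2 m_{\max}^2)$ per row and $O(r\eta^2 m_{\max}^2)$ in total. Moreover, Algorithm~\ref{alg:solve_Linvv} only updates ancestors of the current pivot, so $L^{-1}$ preserves path support; hence $\delta_{Z_v}$ is again $O(r\eta m_{\max})$-sparse and the follow-up $\delta_{y_v^{\triangledown}}=\delta_{Z_v}(I-I_{\Lambda(v)})h$ costs only $O(r\eta m_{\max})$. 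Summing the local cost at $u$ with the $\wt O(1)$ ancestor contributions produces the claimed $\wt O(\Tmat(n_{\max})+r\eta^2 m_{\max}^2)$.

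The main subtlety I expect to need to pin down is that the triangular solve uses the stale snapshot $L_{\ov x}[t_v]$ rather than $L_{\ov x}[t]$; fortunately this does not affect the running-time bound, because the estimate in (iii) depends only on the (fixed) shape of the block elimination tree rather than numerical entries of $L$, and historical snapshots incur only $\wt O(1)$ access overhead via Remark~\ref{rmk:snapshot}. Correctness of using a stale $L$ is already handled in Lemma~\ref{lem:block_sketch_correct}, so the present lemma is purely a running-time calculation.
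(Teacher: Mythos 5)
Your proof is correct and follows essentially the same line-by-line cost accounting as the paper's proof: $\Tmat(n_{\max})$ for the inverse square root, a sparsity bound for $\delta_{J_u}$, the path-supported triangular solve of Lemma~\ref{lem:mat_vec_mult_time}\ref{item:lem_mat_vec_mult_time_Linvv_path} for $\delta_{Z_v}$, and the $\wt O(1)$ bound on $|\cP^\cS(u)|$. Your accounting for $\delta_{J_u}$ (splitting into $O(rn_{\max}^2)$ and $O(rn_{\max}\eta m_{\max})$ and absorbing via AM-GM) is actually a bit more careful than the paper's terse ``$O(r\eta m_{\max}^2)$ by sparsity pattern of $A$,'' but both routes reach the same final bound.
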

\begin{proof}
Computing $\Delta_{H_{\ov x}^{-1/2}}$ takes $\Tmat(n_i)$ time.
Computing $\delta_{J_u}$ takes $O(r \eta m_{\max}^2)$ by sparsity pattern of $A$.
Furthermore, (each row of) $\delta_{J_u}$ is supported on a path in $\cT$.
Therefore, for every $v$, computing $\delta_{Z_v}$ takes $O(r \eta^2 m_{\max}^2)$ time by Lemma~\ref{lem:mat_vec_mult_time}\ref{item:lem_mat_vec_mult_time_Linvv_path}.
Afterwards, computing $\delta_{y_v^\triangledown}$ takes $O(r \eta m_{\max})$ time by sparsity pattern of $\delta_{Z_v}$.
Finally, $|\cP^\cS(u)| = \wt O(1)$, so the overall running time is $\wt O(\Tmat(n_{\max}) + r \eta^2 m_{\max}^2)$.
\end{proof}

\begin{lemma} \label{lem:block_sketch_query}
\textsc{BlockBalancedSketch.Query} (Algorithm~\ref{alg:block_balanced_sketch_1}) takes $O(r \eta^2 m_{\max}^2)$ time.
\end{lemma}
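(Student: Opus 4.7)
The plan is to analyze the two branches of the \textsc{Query} procedure separately, showing that in both branches the running time is dominated by the cost of applying $L_{\ov x}[t]^{-\top}$ (or equivalently $L_{\ov x}[t]^{-1}$) to $r$ vectors each supported on $O(\eta)$ blocks of the block elimination tree, which by Lemma~\ref{lem:mat_vec_mult_time}\ref{item:lem_mat_vec_mult_time_Linvv_path} costs $O(r \eta^2 m_{\max}^2)$ total.

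For the branch $v\in \cS \backslash \cB$, by the construction of the partition tree (Definition~\ref{defn:partition_tree_construction}) together with invariant~\eqref{eqn:lem:block_sketch_invariant_Jv}, the rows of $J_v$ are supported on a single path $\cP^\cT(j)$ in the block elimination tree, so each of the $r$ rows of $J_v L_{\ov x}[t]^{-\top}$ can be computed by applying $L_{\ov x}[t]^{-1}$ to a path-supported vector. By Lemma~\ref{lem:mat_vec_mult_time}\ref{item:lem_mat_vec_mult_time_Linvv_path} this costs $O(\eta^2 m_{\max}^2)$ per row and $O(r\eta^2 m_{\max}^2)$ in total; the resulting matrix still has column support of size $O(\eta m_{\max})$, so multiplying by $h$ adds only $O(r \eta m_{\max})$ time.

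For the branch $v\in \cB$, I would exploit the fact that $\Lambda(v)$ is covered by at most two paths in $\cT$ (Lemma~\ref{lem:block-sketch-lambda-size}). Hence $\Delta_{L_{\ov x}} = (L_{\ov x}[t] - L_{\ov x}[t_v]) \cdot I_{\Lambda(v)}$ has column support of size $O(\eta m_{\max})$ and, using the block-column sparsity of Cholesky factors coming from the block elimination tree, row support of size $O(\eta m_{\max})$ as well, giving $\nnz(\Delta_{L_{\ov x}}) = O(\eta^2 m_{\max}^2)$. Computing $\Delta_{L_{\ov x}} Z_v^\top$ then costs $O(r \cdot \nnz(\Delta_{L_{\ov x}})) = O(r\eta^2 m_{\max}^2)$, and each of its $r$ columns is path-supported, so applying $L_{\ov x}[t]^{-1}$ to all of them again costs $O(r\eta^2 m_{\max}^2)$ by Lemma~\ref{lem:mat_vec_mult_time}\ref{item:lem_mat_vec_mult_time_Linvv_path}. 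The remaining updates ($Z_v\mathrel{+}=\delta_{Z_v}$, computing $\delta_{y_v^\triangledown}$ against a vector with effective support $O(\eta m_{\max})$, and computing $y_v^\triangleup = Z_v I_{\Lambda(v)} h$) all cost only $O(r\eta m_{\max})$ because $\delta_{Z_v}$ and $I_{\Lambda(v)} h$ are sparse.

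The main obstacle — really the only nontrivial point — is tracking the sparsity patterns carefully to verify that the vectors to which we apply $L_{\ov x}[t]^{-1}$ are indeed supported on $O(1)$ paths of the block elimination tree, so that the path-version of the triangular solve (Lemma~\ref{lem:mat_vec_mult_time}\ref{item:lem_mat_vec_mult_time_Linvv_path}) applies rather than the generic $O(\eta m_\lp m_{\max})$ bound. Once these sparsity claims are in place, the two cases combine to give the claimed $O(r\eta^2 m_{\max}^2)$ total running time.
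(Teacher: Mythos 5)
Your proposal is correct and follows essentially the same route as the paper's proof: the same case split on $v\in\cS\setminus\cB$ versus $v\in\cB$, the same use of Lemma~\ref{lem:block-sketch-lambda-size} to bound $|\Lambda(v)|$, and the same reliance on the path-supported triangular solve (Lemma~\ref{lem:mat_vec_mult_time}\ref{item:lem_mat_vec_mult_time_Linvv_path}) as the dominant cost. The only difference is that you spell out the row/column-support and $\nnz(\Delta_{L_{\ov x}})$ bookkeeping slightly more explicitly, which the paper leaves implicit.
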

\begin{proof}
If $v\in \cS \backslash{\cB}$, then (each row of) $J_v$ is supported on a path.
So computing  $J_v L_{\ov x}[t]^{-\top} h$ takes $O(r \eta^2 m_{\max}^2)$ time by Lemma~\ref{lem:mat_vec_mult_time}\ref{item:lem_mat_vec_mult_time_Linvv_path}.

Now suppose $v\in \cB$.
By Lemma~\ref{lem:block-sketch-lambda-size}, we have $|\Lambda(v)| = O(\eta)$.
So we can compute $(L_{\ov x}[t]-L_{\ov x}[t_v]) \cdot I_{\Lambda(v)}$ in $O(\eta^2 m_{\max}^2)$ time by Lemma~\ref{lem:mat_vec_mult_time}\ref{item:lem_mat_vec_mult_time_Lv_sparse}.
Then computing $\Delta_{L_{\ov x}} \cdot Z_v^\top$ takes $O(r \eta^2 m_{\max}^2)$ time.
Furthermore, $\Delta_{L_{\ov x}}$ has columns supported on two paths.
Therefore, computing $L_{\ov x}[t]^{-1} \cdot \Delta_{L_{\ov x}} \cdot Z_v^\top$ takes
$O(r \eta^2 m_{\max}^2)$ time by Lemma~\ref{lem:mat_vec_mult_time}\ref{item:lem_mat_vec_mult_time_Linvv_path}.
So the total time needed to compute $\delta_{Z_v}$ is $O(r\eta^2 m_{\max}^2)$.

Computing $y_v^\triangledown$ takes $O(r \eta m_{\max})$ time by sparsity pattern of $Z_v$.
Computing $y_v^\triangleup$ takes $O(r \eta m_{\max})$ time because $|\Lambda(v)| = O(\eta)$.

Summing everything up we get the desired running time.
\end{proof}

Combining everything we finish the proof of Lemma~\ref{lem:block_balanced_sketch}.
\begin{proof}[Proof of Lemma~\ref{lem:block_balanced_sketch}]
Combining Lemma~\ref{lem:block_sketch_correct}, Lemma~\ref{lem:block_sketch_init}, Lemma~\ref{lem:block_sketch_update}, Lemma~\ref{lem:block_sketch_query}.
\end{proof}

\subsection{Analysis of \textsc{CentralPathMaintenance}}\label{sec:framework:cpm_analysis}
The goal of this section is to prove Theorem~\ref{thm:central_path_maintenance_general}.

We first prove correctness of \textsc{CentralPathMaintenance}.
\begin{lemma}[Correctness of \textsc{CentralPathMaintenance}]\label{lem:cpm_correct_general}
Algorithm~\ref{alg:cpm} implicitly
    \begin{align*}
        x &\leftarrow x + H_{\ov{x}}^{-1/2}(I-P_{\ov{x}}) H_{\ov{x}}^{-1/2} \delta_\mu(\ov{x}, \ov{s}, \ov{t})\\
        s &\leftarrow s + t \cdot  H_{\ov{x}}^{1/2} P_{\ov{x}} H_{\ov{x}}^{-1/2} \delta_\mu(\ov{x}, \ov{s}, \ov{t})
    \end{align*}
    where $H_{\ov{x}} := \nabla^2 \phi(\ov{x}) \in \R^{ n_{\lp} \times n_{\lp} }$, $P_{\ov{x}} := H_{\ov{x}}^{-1/2} A^\top (A H_{\ov{x}}^{-1} A^\top)^{-1} A H_{\ov{x}}^{-1/2} \in \R^{n_{\lp} \times n_{\lp}}$,
    and $\ov{t}$ is some earlier timestamp satisfying $|t-\ov{t}| \le \epsilon_t \cdot \ov{t}$.
    
    It also explicitly maintains $(\ov{x}, \ov{s}) \in \R^{n_{\lp} \times n_{\lp}}$ such that $\|\ov{x}_i-x_i\|_{\ov{x}_i} \le \ov{\epsilon}$ and $\|\ov{s}_i-s_i\|^*_{\ov{x}_i} \le t \ov{\epsilon} w_i$ for all $i\in [n]$ with probability at least $0.9$.
\end{lemma}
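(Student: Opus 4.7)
The plan is to separate the statement into two independent claims: (i) the implicit update of $(x,s)$ on every call to \textsc{MultiplyAndMove} matches the prescribed central-path step, and (ii) the explicit approximations $(\ov x, \ov s)$ stay within the tolerances $\|\ov x_i - x_i\|_{\ov x_i} \le \ov\epsilon$ and $\|\ov s_i - s_i\|_{\ov x_i}^* \le t \ov\epsilon w_i$ throughout the $N$ iterations with probability at least $0.9$. Claim (i) is essentially bookkeeping and reduces to the invariants already proved for \textsc{ExactDS}; claim (ii) is the probabilistic heart of the lemma.

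For (i), I would first note that by Lemma~\ref{lem:exactds_correctness}, at every point in the execution the \textsc{ExactDS} members satisfy
\begin{align*}
x = \wh x + H_{\ov x}^{-1/2} \beta_x c_x - H_{\ov x}^{-1/2} \cW^\top(\beta_x h + \epsilon_x), \qquad
s = \wh s + H_{\ov x}^{1/2} \cW^\top(\beta_s h + \epsilon_s),
\end{align*}
so what needs checking is that a single \textsc{Move} increments $(x,s)$ by exactly the direction appearing in the lemma statement. Unpacking the definitions of $c_x$ and $h$ at the current $(\ov x, \ov s, \ov t)$, one gets
\begin{align*}
H_{\ov x}^{-1/2}\beta_x c_x - H_{\ov x}^{-1/2} \cW^\top \beta_x h
&= \beta_x \bigl(H_{\ov x}^{-1} - H_{\ov x}^{-1} A^\top (A H_{\ov x}^{-1} A^\top)^{-1} A H_{\ov x}^{-1}\bigr) \ov\delta_\mu \\
&= \beta_x H_{\ov x}^{-1/2}(I - P_{\ov x}) H_{\ov x}^{-1/2} \ov\delta_\mu,
\end{align*}
and similarly for the slack side, so the step size $\beta_x \mathrel{{+}{=}} \ov\alpha^{-1/2}$ and $\beta_s \mathrel{{+}{=}} \ov t \ov\alpha^{-1/2}$ together with the normalization $\ov\delta_\mu$ by construction reproduces the robust-IPM step $\delta_\mu(\ov x, \ov s, \ov t)$ specified in Section~\ref{sec:robust_ipm}. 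The subsequent \textsc{Update} calls re-expand the implicit representation around the new $(\ov x, \ov s)$ without changing the value of $(x,s)$, again by Lemma~\ref{lem:exactds_correctness}. The restart branch is handled by explicitly calling \textsc{Output} (which is correct by Lemma~\ref{lem:exactds_output_time}) and then \textsc{Initialize} with the exact pair, so no information is lost.

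For (ii), the key mechanism is \textsc{ApproxDS.MoveAndQuery}: by Theorem~\ref{thm:approxds}, its output sets $L_x^{(\ell)}, L_s^{(\ell)}$ contain every block on which $\|H_{\ov x}^{1/2}(x_i - \ov x_i)\|_2$ exceeds $\epsilon_{\apx,x}$, resp.\ $\|H_{\ov x}^{-1/2}(s_i - \ov s_i)\|_2$ exceeds $\epsilon_{\apx,s}$, each call succeeding with probability at least $1 - \delta_\apx/q$. Since $\epsilon_{\apx,x} = \ov\epsilon$ and $\epsilon_{\apx,s} = \ov\epsilon \cdot \ov t \cdot w$ by the initialization in Algorithm~\ref{alg:cpm}, the relevant coordinates are picked up and copied into $(\ov x, \ov s)$ via \textsc{ExactDS.Update}, which restores the inequalities $\|\ov x_i - x_i\|_{\ov x_i} \le \ov\epsilon$ and $\|\ov s_i - s_i\|_{\ov x_i}^* \le t \ov\epsilon w_i$. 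Crucially, one must verify that the $\ell_2$ step-size hypothesis of Theorem~\ref{thm:approxds}, namely $\|H_{\ov x^{(\ell)}}^{1/2}(x^{(\ell)} - x^{(\ell+1)})\|_2 \le \zeta_x$ with $\zeta_x = 2\alpha w^{-1/2}$ (and the analog for $s$), holds for every $\ell$ within a restart window; this follows from the robust-IPM step bound in Section~\ref{sec:robust_ipm}, which controls $\|\delta_\mu(\ov x, \ov s, \ov t)\|_{\ov x}^*$ by $O(\alpha)$ per step and thus bounds the one-step displacement in the local norm.

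The main obstacle is the probability budget: across $N$ iterations we need a uniform success event, and \textsc{ApproxDS} only guarantees $1-\delta_\apx/q$ per query, while restarts reinitialize the sketch. I would handle this by taking a union bound over the $q$ queries between two consecutive restarts and over the $\lceil N/q\rceil$ restarts, which with $\delta_\apx = 1/N$ yields total failure probability $O(1)$; pushing the JL dimension $r = \Theta(\log^3 n_\lp \log(1/\delta_\apx))$ then makes the constant small enough to reach $0.9$. A secondary subtlety, flagged in Remark~\ref{rmk:dly_linf_approx_error}, is that the quoted bound on $L_x^{(\ell)}$ from Theorem~\ref{thm:approxds} is in terms of the step of $x$ at fixed $\ov x^{(\ell)}$, not at the updated $\ov x^{(\ell+1)}$; the top-down BFS inside \textsc{BatchSketch.Query}$x$ together with the telescoping argument already used in the proof of Theorem~\ref{thm:approxds} handles this, so I would invoke that guarantee rather than re-derive it.
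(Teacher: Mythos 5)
Your proposal is correct and follows essentially the same route as the paper's proof: both reduce the implicit-update claim to Lemma~\ref{lem:exactds_correctness}, verify the $\ell_2$ step-size hypothesis of Theorem~\ref{thm:approxds} via Lemma~\ref{lemma:DLYA.9} (showing $\|H_{\ov x}^{1/2}\delta_x^{(\ell)}\|_2 \le \zeta_x$ and its $s$-analog), and then invoke Theorem~\ref{thm:approxds} with $\epsilon_{\apx,x}=\ov\epsilon$, $\epsilon_{\apx,s}=\ov\epsilon\,\ov t\,w$ to conclude the $\ell_{2,\infty}$ bounds and hence the local-norm bounds. You fill in two details the paper leaves implicit (unpacking $c_x$ and $h$ to match the lemma's update formula, and the union bound with $\delta_\apx=1/N$ over all queries and restarts to reach $0.9$), but the decomposition and the supporting lemmas are the same.
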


\begin{proof}
We correctly maintain a multiscale representation of $(x,s)$ because of correctness of $\mathsf{exact}.\textsc{Update}$ (Lemma~\ref{lem:exactds_correctness}).

We show that $\| \ov x_i - x_i\|_{\ov x_i} \le \ov \epsilon$ and $\|\ov s_i - s_i\|^*_{\ov x_i} \le t \ov \epsilon w$ for all $i\in [n]$ (c.f.~Algorithm~\ref{alg:robust_ipm_centering_data_structure}, Line~\ref{line:cpm_guarantee}).
$\mathsf{approx}$ maintains an $\ell_{2,\infty}$-approximation of
$$H_{\ov x}^{1/2} x = H_{\ov x}^{1/2} \wh x+ \beta_x c_x -\cW^\top (\beta_x h+\epsilon_x).$$

For $\ell\le q$, we have
\begin{align*}
\|H_{\ov x}^{1/2} x^{(\ell+1)} - H_{\ov x}^{1/2} x^{(\ell)}\|_2 &= 
\|H_{\ov x}^{1/2} (x^{(\ell+1)} - x^{(\ell)})\|_2  \\
& = w^{-1/2} \|\delta_x^{(\ell)}\|_{\ov x} \\
& \le \frac{9}{8} \alpha w^{-1/2} \\
& \le  \zeta_{x},
\end{align*}
where the second step follows from definition of $\|\cdot\|_{\ov x}$, the third step follows from Lemma \ref{lemma:DLYA.9}, and the last step follows from our choice of $\zeta_{x}$.

By Theorem \ref{thm:approxds}, with probability at least $1-\delta_{\apx}$, 
$\mathsf{approx}$ correctly maintains $\wt x$ such that
\begin{align*}
\|\wt x - H_{\ov{x}}^{1/2} x\|_{2,\infty} \le \epsilon_{\apx,x} = \ov \epsilon.
\end{align*}

We set $\ov x = H_{\ov x}^{-1/2} \wt x$, so
\begin{align} \label{eqn:lem_correct_cpm_1}
\|H_{\ov x}^{1/2}(\ov x - x)\|_{2,\infty} \le \ov \epsilon.
\end{align}
Therefore
\begin{align*}
\| \ov x_i - x_i \|_{\ov x_i} 
= & ~ \| H_{\ov x,(i,i)}^{1/2} (\ov x_i - x_i) \|_2 \\
\le & ~ \| H_{\ov x}^{1/2} (\ov x - x) \|_{2,\infty} \\
\le & ~ \ov \epsilon.
\end{align*}
where the first step follows from definition of $\|\cdot\|_{\ov x_i}$, the second step follows from definition of $\|\cdot\|_{2,\infty}$, the third step follows from \eqref{eqn:lem_correct_cpm_1}.

The proof for $s$ is similar.
We have
\begin{align*}
\|H_{\ov x}^{-1/2} \delta_s^{(\ell)}\|_2 \le w^{1/2} \|\delta_s^{(\ell)}\|_{\ov x}^* \le \frac {9}{8} \alpha \ov t w^{1/2} \le \zeta_s,
\end{align*}
and
\begin{align*}
\|H_{\ov x}^{-1/2} (s-\ov s)\|_{2,\infty} \le \epsilon_{\apx,s} = \frac {1}{2} \ov \epsilon \cdot \ov t \cdot w.
\end{align*}
\end{proof}

Now we prove running time claims in Theorem~\ref{thm:central_path_maintenance_general}.
\begin{lemma}[\textsc{Initialize} part of Theorem~\ref{thm:central_path_maintenance_general}]\label{lem:init_time_general}
\textsc{CentralPathMaintenance.Initialize} (Algorithm~\ref{alg:cpm}) takes $\wt O(T_H + T_n + T_L + T_Z + \nnz(A) + \eta m_\lp m_{\max})$ time.
\end{lemma}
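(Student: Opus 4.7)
The plan is to read off the cost of \textsc{CentralPathMaintenance.Initialize} (Algorithm~\ref{alg:cpm}) directly, since the routine is a short driver consisting of one call to $\mathsf{exact}.\textsc{Initialize}$, a constant-size block of parameter assignments, and one call to $\mathsf{approx}.\textsc{Initialize}$.

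For the first, I would cite Lemma~\ref{lem:exactds_init_time}, giving $O(T_H + T_L + \nnz(A) + T_n + \eta m_\lp m_{\max})$. The scalar assignments (to $\ell, w, N, q, \epsilon_{\apx,x}, \zeta_x, \delta_{\apx}, \epsilon_{\apx,s}, \zeta_s$) are each $O(1)$ and altogether contribute $O(1)$ time; here it is worth verifying that the $q$, $w$, $N$ chosen in Algorithm~\ref{alg:cpm} match what \textsc{ApproxDS} and \textsc{BatchSketch} subsequently expect, since Theorem~\ref{thm:approxds} is sensitive to these inputs. For the third, I would invoke the \textsc{Initialize} clause of Theorem~\ref{thm:approxds}, which yields $\wt O(T_H + T_n + T_L + r\cdot(n\, n_{\max}^2 + T_Z + \nnz(A)))$, where $r = \Theta(\log^3(n_\lp)\log(1/\delta_{\apx})) = \wt O(1)$.

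Summing the three contributions and absorbing $r$ into the $\wt O(\cdot)$ notation gives
\[
\wt O\!\left(T_H + T_n + T_L + T_Z + \nnz(A) + \eta m_\lp m_{\max} + n\, n_{\max}^2\right).
\]
To reach the form stated in the lemma I would absorb the stray $n\, n_{\max}^2$: computing a single Hessian block $\nabla^2 \phi_i \in \R^{n_i \times n_i}$ costs at least $n_i^2$ time even just to write the output, so $T_{H,i} \gtrsim n_i^2$ and hence $T_H \gtrsim \sum_{i\in[n]} n_i^2$; in the (effectively) uniform-block-size regime of all our instantiations (Table~\ref{tab:args_general_with_choices}) this bounds $n\, n_{\max}^2$ by $T_H$ up to constants.

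The main ``obstacle'' here is not mathematical but bookkeeping: (a) confirming that the parameters $w, q, \epsilon_{\apx,x}, \ldots$ passed to $\mathsf{approx}.\textsc{Initialize}$ are polynomially bounded in $n_\lp$ so that $\log(1/\delta_{\apx})$ and $\log q$ remain $\polylog(n_\lp)$; (b) checking that the persistent-snapshot machinery of Remark~\ref{rmk:snapshot} really contributes only a multiplicative $\polylog(n_\lp)$ that is safely hidden by $\wt O(\cdot)$; and (c) making sure the $n\, n_{\max}^2$ absorption is legitimate in every application rather than only asymptotically. Once these three checks are performed, the stated bound follows immediately by addition.
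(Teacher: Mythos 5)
Your proposal is correct and takes essentially the same route as the paper, which simply cites Theorem~\ref{thm:exactds} (via Lemma~\ref{lem:exactds_init_time}) and the \textsc{Initialize} clause of Theorem~\ref{thm:approxds} and sums. You are right to flag the stray $r\cdot n\,n_{\max}^2$ term that the paper's one-line proof silently drops; note, though, that the cleaner absorption is into $T_n = \sum_{i\in[n]}\Tmat(n_i) \ge \sum_{i\in[n]} n_i^2$ rather than $T_H$ (indeed, the $n\,n_{\max}^2$ in Theorem~\ref{thm:approxds} is already a loose upper bound on the true cost $r\sum_i n_i^2$ appearing in Lemma~\ref{lem:block_sketch_init}, so $\sum_i n_i^2 \le T_n$ handles it without any appeal to block-size uniformity or to an implicit lower bound on $T_{H,i}$).
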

\begin{proof} 
By Theorem~\ref{thm:exactds} and Theorem~\ref{thm:approxds}.
\end{proof}

\begin{lemma}[\textsc{MuliplyAndMove} part of Theorem~\ref{thm:central_path_maintenance_general}]\label{lem:multiply_and_move_time_general}
Total running time of \textsc{MultiplyAndMove} (Algorithm~\ref{alg:cpm}) is
\begin{align*}
    & \wt{O} ( n^{0.5} \nu_{\max}^{0.5} \cdot (T_H+T_L+\eta T_m + T_Z)^{0.5} \\
&\cdot (\Tmat(n_{\max}) + T_{\Delta_L,\max} + T_{H,\max} + \eta^2 m_{\max}^2)^{0.5} \log(t_{\max}/t_{\min})).
\end{align*}
\end{lemma}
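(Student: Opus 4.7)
The plan is to bound the total work across all $N$ calls to \textsc{MultiplyAndMove} by decomposing it into four contributions: (i) restart cost for the data structures, (ii) central-path \textsc{Move} steps, (iii) sketch queries inside \textsc{MoveAndQuery}, and (iv) \textsc{ExactDS.Update} and \textsc{ApproxDS.Update} applied to each block returned by the query. After bounding each piece separately, I would sum them and verify that the choice of $q$ set in \textsc{Initialize} equalizes restart and update contributions, yielding the claimed bound.

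First I would count restarts. A restart is triggered when $\ell$ exceeds $q$ or when $|\ov t-t|>\epsilon_t\ov t$; both kinds of events happen at most $\wt O(N/q)$ times over $N$ calls (since $t$ shrinks by a factor $(1-\Theta(N^{-1}))$ per iteration, so the drift condition fires at most every $\Theta(N \epsilon_t)$ iterations, which is $\wt O(N/q)$ as long as $q \le N \epsilon_t / \polylog$). By Lemma~\ref{lem:init_time_general} each restart costs
\[
T_{\text{init}} = \wt O(T_H + T_n + T_L + T_Z + \nnz(A) + \eta m_\lp m_{\max}),
\]
and the terms $T_n$, $\nnz(A)$, $\eta m_\lp m_{\max}$ are all dominated by $T_H + T_L + \eta T_m + T_Z$ under the generic bounds $T_n \le T_H$, $\nnz(A) \le T_L$, and $\eta m_\lp m_{\max} \le \eta T_m$ (the last because $T_m = \sum_j \Tmat(m_j) \ge m_\lp m_{\max}$). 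Thus the total restart cost is $\wt O((N/q)(T_H+T_L+\eta T_m+T_Z))$.

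Second, I would bound the work between two consecutive restarts. There are at most $q$ \textsc{Move} steps, each costing $O(1)$ by Lemma~\ref{lem:exactds_move_time}, contributing only $O(N)$ in total. For the queries, the robust IPM step-size guarantees $\|\delta_x^{(\ell)}\|_{\ov x}\le \tfrac98\alpha$ and $\|\delta_s^{(\ell)}\|_{\ov x}^*\le \tfrac98\alpha\ov t$, so
\[
\|H_{\ov x^{(\ell)}}^{1/2}(x^{(\ell+1)}-x^{(\ell)})\|_2 \le \zeta_x = 2\alpha w^{-1/2}, \qquad
\|H_{\ov x^{(\ell)}}^{-1/2}(s^{(\ell+1)}-s^{(\ell)})\|_2 \le \zeta_s = 2\alpha \ov t w^{1/2},
\]
which are exactly the parameters fed to \textsc{ApproxDS.Initialize}. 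Applying Theorem~\ref{thm:approxds}, the total number of blocks returned across one phase satisfies $\sum_\ell(|L_x^{(\ell)}|+|L_s^{(\ell)}|) = \wt O(\epsilon_{\apx,x}^{-2}\zeta_x^2 q^2 + \epsilon_{\apx,s}^{-2}\zeta_s^2 q^2) = \wt O(q^2/w)$, and the total query time across the phase is $\wt O(q^2 w^{-1} r(n_{\max}^2+\eta^2 m_{\max}^2))$, dominated by the update cost below.

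Third, each returned block triggers one \textsc{ExactDS.Update} and one \textsc{ApproxDS.Update}. By Theorem~\ref{thm:exactds} the former costs $\wt O(\Tmat(n_{\max})+T_{\Delta_L,\max}+T_{H,\max}+\eta^2 m_{\max}^2)$ per changed block, and by Theorem~\ref{thm:approxds} the latter adds only an $r$-factor to the $\eta^2 m_{\max}^2$ term plus $r m_{\max}$-weighted contributions from the output vectors $\delta_h,\delta_{\epsilon_x},\delta_{\epsilon_s}$, which by the sparsity guarantee in Theorem~\ref{thm:exactds} each live on $O(\eta)$ blocks per changed coordinate, and therefore also fit into the same bound. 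So the amortized per-change cost is
\[
T_{\text{upd}} = \wt O(\Tmat(n_{\max})+T_{\Delta_L,\max}+T_{H,\max}+\eta^2 m_{\max}^2),
\]
and multiplying by the total number of changes $\wt O(Nq/w)$ gives total update cost $\wt O(Nq w^{-1} T_{\text{upd}})$. Summing everything:
\[
\wt O\Bigl(\tfrac{N}{q}(T_H+T_L+\eta T_m+T_Z) + \tfrac{Nq}{w} T_{\text{upd}}\Bigr).
\]
Plugging in $N=\sqrt{n\nu_{\max} w}$ and $w=\nu_{\max}$ and the choice $q = \sqrt{w(T_H+T_L+\eta T_m+T_Z)/T_{\text{upd}}}$ specified in \textsc{Initialize}, both summands equalize to
$\wt O(n^{1/2}\nu_{\max}^{1/2}(T_H+T_L+\eta T_m+T_Z)^{1/2} T_{\text{upd}}^{1/2})$, which matches the claim after folding in the $\log(t_{\max}/t_{\min})$ factor absorbed into $N$ from the geometric schedule on $t$.

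The main obstacle is step three: although each call to \textsc{ExactDS.Update} can propagate through $\delta_h,\delta_{\epsilon_x},\delta_{\epsilon_s}$ supported on $O(\eta)$ blocks per changed coordinate, which then get piped into \textsc{ApproxDS.Update} and cause further sketch updates, one must verify that all these secondary propagations are already captured by the $\eta^2 m_{\max}^2$ and $r\eta^2 m_{\max}^2$ per-change bounds of Theorems~\ref{thm:exactds} and~\ref{thm:approxds}. To avoid double-counting one has to invoke Lemma~\ref{lem:mat_vec_mult_time}\ref{item:lem_mat_vec_mult_time_Linvv_path} for the path-supported Cholesky solves and rely on the block-column sparsity of $L_{\ov x}$ induced by the block elimination tree.
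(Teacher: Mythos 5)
Your proposal follows essentially the same decomposition as the paper's proof of this lemma: count restarts, bound the per-restart cost via \textsc{Initialize}, bound the per-iteration cost via the $\zeta_x,\zeta_s$ step-size control and Theorems~\ref{thm:exactds}/\ref{thm:approxds}, sum, and balance with the choice of $q$. The balancing identity $q=\sqrt{wA/T_{\mathrm{upd}}}$ you write down is in fact the correct one (the $n^{0.5}$ factor appearing in the paper's Algorithm~\ref{alg:cpm} and Eq.~\eqref{eqn:general_choice_of_q} is a typo; only $q=\nu_{\max}^{0.5}A^{0.5}T_{\mathrm{upd}}^{-0.5}$ equalizes the two terms $\frac{N}{q}A$ and $\frac{Nq}{w}T_{\mathrm{upd}}$).

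There is, however, a genuine gap in your attempt to reconcile the restart cost term $T_H+T_n+T_L+T_Z+\nnz(A)+\eta m_\lp m_{\max}$ (coming from Lemma~\ref{lem:init_time_general}) with the lemma's stated $T_H+T_L+\eta T_m+T_Z$. Your justification for $\eta m_\lp m_{\max}\le\eta T_m$ asserts $T_m=\sum_j\Tmat(m_j)\ge m_\lp m_{\max}$, but this inequality is backwards: since $m_j\le m_{\max}$ we have $\sum_j m_j^2\le m_{\max}\sum_j m_j=m_\lp m_{\max}$, and $\Tmat(m_j)\approx m_j^\omega$ can likewise be much smaller than $m_j m_{\max}$ when one block is large and the rest are tiny (take a single block of size $m_{\max}$ and $m-1$ singleton blocks with $m_{\max}=o(m^{1/\omega})$: then $T_m\approx m\ll m\,m_{\max}=m_\lp m_{\max}$). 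Similarly, $T_n\le T_H$ and $\nnz(A)\le T_L$ are not consequences of Definition~\ref{defn:lp_general_param} alone. To be fair, the paper itself exhibits the same inconsistency: the lemma statement uses $T_H+T_L+\eta T_m+T_Z$ while the in-line derivation inside the proof (and the statement of Theorem~\ref{thm:central_path_maintenance_general}) uses $T_H+T_n+T_L+T_Z+\nnz(A)+\eta m_\lp m_{\max}$, without any argument that the two coincide. In the concrete instantiations of Table~\ref{tab:args_general_with_choices} all blocks have essentially equal size, and then $T_m\eqsim m_\lp\Tmat(m_{\max})/m_{\max}\gtrsim m_\lp m_{\max}$ does hold, so the final running-time bounds in Sections~\ref{sec:sdp_first}--\ref{sec:lp} are unaffected; but as a statement about the general framework the generic domination you assert would need either a balancedness hypothesis on the block sizes or a direct switch to the expression $T_H+T_n+T_L+T_Z+\nnz(A)+\eta m_\lp m_{\max}$ as in Theorem~\ref{thm:central_path_maintenance_general}.
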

\begin{proof}
The choice of parameters under modification of $w$ is summarized in Table \ref{tab:internal_param_general}.

\begin{table}[!ht]
    \centering
    \begin{tabular}{|l|l|l|l|} \hline 
        {\bf Notation} & {\bf Range} & {\bf Meaning} & {\bf Choice} \\ \hline
        $w$ & $\mathbb{R}_{\ge 1}$ & weight & $\nu_{\max}$ \\ \hline
        $N$ & $\mathbb{N}_+$ & number of central path steps & $\sqrt{n \nu_{\max} w}$ \\ \hline
        $q$ & $\mathbb{N}_+$ & number of steps before restart & See Eq.~\eqref{eqn:general_choice_of_q} \\ \hline
        $r$ & $\mathbb{N}_+$ & JL dimension &  $\log^3(n_\lp)$ \\ \hline
        $\epsilon_{\apx,x}$ & $\in \R_{>0}$ & Theorem \ref{thm:approxds} approx parameter& $\ov \epsilon$ \\ \hline
        $\epsilon_{\apx,s}$ & $\in \R_{>0}$ & Theorem \ref{thm:approxds} approx parameter & $  \ov \epsilon \ov t w$ \\ \hline
        $\delta_\apx$ & $\in (0,0.1)$ & Theorem \ref{thm:approxds} failure probability & $1/N$ \\ \hline
        $\eta$  & $\in \mathbb{N}_+$ & elimination tree depth & $\log n$ \\ \hline
        $\zeta_x$ & $\in \R_{>0}$ & $\ell_2$ step size & $2 \alpha w^{-1/2}$ \\ \hline
        $\zeta_s$ & $\in \R_{>0}$ & $\ell_2$ step size & $2 \alpha \ov t w^{1/2}$ \\ \hline
        $\epsilon_t$ & $\in \R_{>0}$ & data structure restart threshold & $\ov \epsilon \min\{1, w/\nu_{\max}\}$\\ \hline
    \end{tabular}
    \caption{Internal parameters of Theorem~\ref{thm:algo_general} and their values. Caller has no access to these variables. For simplicity, we ignore the $O()$ and $\Theta()$ in the table.}
    \label{tab:internal_param_general}
\end{table}

Between two restarts, the total size of $|L_x|$ returned by $\mathsf{approx}.\textsc{Query}$ is bounded by $\wt O(q^2 \zeta_{x}^2 / \epsilon_{\apx,x}^2)$ by Theorem~\ref{thm:approxds}.
By plugging in $\zeta_x = 2 \alpha w^{-1/2}$,  $\epsilon_{\apx,x} = \ov \epsilon$, we have
\begin{align*} 
\sum_{\ell\in [q]} |L_x^{(\ell)}| = \wt O(q^2 w^{-1}).
\end{align*}

Similarly, for $s$, we have
\begin{align*}
\sum_{\ell\in [q]} |L_s^{(\ell)}| 
= & ~ \wt O(q^2 \zeta_{s}^2 / \epsilon_{\apx,s}^2) \\
= & ~ \wt O(q^2 \cdot \Theta(\ov \epsilon \cdot \ov t \cdot w^{1/2})^2 / \Theta(\ov \epsilon \cdot \ov t w)^2) \\
= & ~ \wt O(q^2 w^{-1}),
\end{align*}
where the first step is by Theorem~\ref{thm:approxds}, the second step is by our choice $\zeta_s = 2\alpha \ov t w^{1/2}$, $\epsilon_{\apx, s} = \ov \epsilon \ov t w$.

{\bf Cost per iteration.}

By Theorem~\ref{thm:exactds} and Theorem \ref{thm:approxds}, in a sequence of $q$ update/queries,
\begin{itemize}
\item the total cost for update is
\begin{align*}
&~ \text{number of block updates} \cdot \text{time per block update} \\
= &~ \wt O(q^2 w^{-1} ) \cdot
(\Tmat(n_{\max}) + T_{\Delta_L,\max} + T_{H,\max} + \eta^2 m_{\max}^2),
\end{align*}
\item the total cost for query is
$\wt O(q^2 w^{-1} \cdot \eta^2 m_{\max}^2)$.
\end{itemize}

Therefore (amortized) cost per update is $\wt O(q w^{-1} (\Tmat(n_{\max}) + T_{\Delta_L,\max} + T_{H,\max} + \eta^2 m_{\max}^2))$.

{\bf Time for init/restart.}

We restart the data structure whenever $k > q$ or $|\ov t-t| > \ov t \epsilon_t$, so
there are 
\begin{align*} 
    O(\frac Nq + \log (t_{\max}/t_{\min}) \epsilon_t^{-1})
\end{align*}
restarts in total.
By Theorem~\ref{thm:exactds}, Theorem~\ref{thm:approxds}, time cost per restart is
\begin{align*}
    \wt O(T_H + T_n + T_L + T_Z + \nnz(A) + \eta m_\lp m_{\max})
\end{align*}

and the total initialization time is
\begin{align*}
&~ \text{number of restarts} \cdot \text{time per restart}\\
=&~ \wt O((\frac Nq + \log (t_{\max}/t_{\min}) \epsilon_t^{-1}) \cdot (T_H + T_n + T_L + T_Z + \nnz(A) + \eta m_\lp m_{\max})).
\end{align*}

{\bf Total time.}

The overall running time is
\begin{align*}
\text{~total~} = & ~ 
\text{~init/restart~time} + N \cdot \text{cost~per~iter} \\
= & ~ \wt{O} ( (\frac{N}{q}+\log(t_{\max}/t_{\min}) \epsilon_t^{-1})  \cdot (T_H + T_n + T_L + T_Z + \nnz(A) + \eta m_\lp m_{\max}) \\
&~+ N \cdot q w^{-1} (\Tmat(n_{\max}) + T_{\Delta_L,\max} + T_{H,\max} + \eta^2 m_{\max}^2) )\\
= & ~ \wt{O} ( (\frac{n^{0.5} \nu_{\max}}{q} + \log(t_{\max}/t_{\min}))  \cdot (T_H + T_n + T_L + T_Z + \nnz(A) + \eta m_\lp m_{\max}) \\
&~+ n^{0.5} q (\Tmat(n_{\max}) + T_{\Delta_L,\max} + T_{H,\max} + \eta^2 m_{\max}^2) ) \\
=&~ \wt{O} ( n^{0.5} \nu_{\max}^{0.5} \cdot (T_H + T_n + T_L + T_Z + \nnz(A) + \eta m_\lp m_{\max})^{0.5} \\
&\cdot (\Tmat(n_{\max}) + T_{\Delta_L,\max} + T_{H,\max} + \eta^2 m_{\max}^2)^{0.5} \log(t_{\max}/t_{\min})).
\end{align*}
The third step is by taking $w = \nu_{\max}$, $N = \sqrt{n \nu_{\max} w}$, $\epsilon_t = \frac 12 \ov \epsilon$.
The fourth step is by taking
\begin{align} \label{eqn:general_choice_of_q}
    q =&~ n^{0.5} \nu_{\max}^{0.5} (T_H + T_n + T_L + T_Z + \nnz(A) + \eta m_\lp m_{\max})^{0.5}  \nonumber \\
    &~\cdot (\Tmat(n_{\max}) + T_{\Delta_L,\max} + T_{H,\max} + \eta^2 m_{\max}^2)^{-0.5}
\end{align}
Note that because the initialization time is bounded above by time for running $n$ updates, we always have
\begin{align*}
n (\Tmat(n_{\max}) + T_{\Delta_L,\max} + T_{H,\max} + \eta^2 m_{\max}^2) \ge (T_H + T_n + T_L + T_Z + \nnz(A) + \eta m_\lp m_{\max}).
\end{align*}
Therefore $q \le n^{0.5} \nu_{\max}^{0.5} \le N$ and \eqref{eqn:general_choice_of_q} is a valid choice for $q$.
\end{proof}
\begin{lemma}[\textsc{Output} part of Theorem~\ref{thm:central_path_maintenance_general}]\label{lem:output_time_general}
\textsc{CentralPathMaintenance.Output} (Algorithm~\ref{alg:cpm}) takes $O(\nnz(A) + T_m)$ time.
\end{lemma}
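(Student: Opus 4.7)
The plan is to prove the running time by unwinding \textsc{CentralPathMaintenance.Output} into its single delegate $\mathsf{exact}.\textsc{Output}()$ and then analyzing the explicit formula (from Lemma~\ref{lem:exactds_correctness})
\begin{align*}
x = \wh x + H_{\ov x}^{-1/2}\beta_x c_x - H_{\ov x}^{-1}A^\top L_{\ov x}^{-\top}(\beta_x h + \epsilon_x), \qquad s = \wh s + A^\top L_{\ov x}^{-\top}(\beta_s h + \epsilon_s),
\end{align*}
after substituting $\cW = L_{\ov x}^{-1}A H_{\ov x}^{-1/2}$. Correctness is immediate from the invariant established in Lemma~\ref{lem:exactds_correctness}, so the entire lemma reduces to accounting runtime for each matrix–vector step.

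First I would form the two $m_\lp$-vectors $u_x := \beta_x h + \epsilon_x$ and $u_s := \beta_s h + \epsilon_s$ directly (cost $O(m_\lp) \le O(\nnz(A))$), using the already-maintained members $h,\epsilon_x,\epsilon_s,\beta_x,\beta_s$. Next I would compute the two $A^\top$ applications against the eventual back-solved vectors, contributing $O(\nnz(A))$ time each. The final assembly is $n_\lp$ block additions against the stored $\wh x, \wh s, c_x$, absorbed in $O(\nnz(A))$. The non-trivial pieces are the block Cholesky back-solves $L_{\ov x}^{-\top}u_x, L_{\ov x}^{-\top}u_s$ and the block-diagonal applications of $H_{\ov x}^{-1/2}$ and $H_{\ov x}^{-1}$.

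For the block-diagonal applications, since $\mathsf{exact}$ already caches $H_{\ov x}$ (Algorithm~\ref{alg:exact_ds_part_2} maintains it incrementally and each diagonal inverse/square root is obtained block by block in $\Tmat(n_i)$ time), I would argue that the cost of applying $H_{\ov x}^{-1/2}$ or $H_{\ov x}^{-1}$ to a single $n_\lp$-vector is $\sum_i \Tmat(n_i)=T_n$, which in the parameter regimes of Table~\ref{tab:args_general_with_choices} is dominated by $T_m$. For the triangular back-solves, I would package the block version of Algorithm~\ref{alg:solve_Linvv} so that the cost consists of (i) one $\Tmat(m_i)$ diagonal inverse application per block, totaling $\sum_i \Tmat(m_i)=T_m$, and (ii) the off-diagonal block multiplies, which are bounded by $\nnz(L_{\ov x})$ summed with per-pair matrix-multiply cost and re-absorbed into $T_m$ using that $L_{\ov x}$'s block-nonzero pattern is contained in the block elimination tree.

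The main obstacle will be exactly this second point. A naive application of Lemma~\ref{lem:mat_vec_mult_time}\ref{item:lem_mat_vec_mult_time_Linvv} charges $O(\eta\,m_\lp m_{\max})$, which is weaker than $T_m$; so the key step is to use the cached diagonal inverses $L_{i,i}^{-1}$ together with the block-level elimination-tree traversal, and to charge each off-diagonal block multiply $L_{i,j}^\top\cdot v_j$ to the block-matrix-multiplication budget $\Tmat(\max(m_i,m_j))$, which sums telescopically over the tree to $O(T_m)$. Once this tighter accounting is done, combining with the $O(\nnz(A))$ cost of the two $A^\top$ applications and the $O(T_m)$ cost of the block-diagonal $H_{\ov x}$ applications yields the claimed $O(\nnz(A)+T_m)$ bound.
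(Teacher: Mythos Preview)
Your proposal goes well beyond what the paper does and, in doing so, introduces two unjustified steps.

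The paper's proof is a one-line citation of Theorem~\ref{thm:exactds} (equivalently Lemma~\ref{lem:exactds_output_time}), which bounds \textsc{ExactDS.Output} by $O(\nnz(A)+T_n+\eta\,m_\lp m_{\max})$ via Lemma~\ref{lem:mat_vec_mult_time}\ref{item:lem_mat_vec_mult_time_WTv}. That is, the paper does not try to establish the literal bound $O(\nnz(A)+T_m)$ stated in the lemma; indeed, the statement of Theorem~\ref{thm:central_path_maintenance_general} and the proof of Theorem~\ref{thm:algo_general} both quote the Output cost as $O(\nnz(A)+T_H+\eta T_m)$, so the lemma's bound appears to be a typo inconsistent with the rest of the paper.

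Your attempt to actually reach $O(\nnz(A)+T_m)$ has two gaps. First, the claim that $T_n$ is dominated by $T_m$ ``in the parameter regimes of Table~\ref{tab:args_general_with_choices}'' is false: in the SDP First column, $T_n=n_\sdp\tau_\sdp^{2\omega}$ while $T_m=m_\lp=O(n_\sdp\tau_\sdp^2)$, so $T_n\gg T_m$. Second, the assertion that the off-diagonal block multiplies in the back-solve ``sum telescopically over the tree to $O(T_m)$'' is not substantiated. The cost of those multiplies is $\sum_{j}\sum_{i\in\cP(j)} m_i m_j$, and when all blocks have size $m_{\max}$ this is $\Theta(\eta\,m\,m_{\max}^2)$, while $T_m=\sum_i\Tmat(m_i)=m\,\Tmat(m_{\max})$; you would need $\eta\le m_{\max}^{\omega-2}$, which fails badly e.g.\ in the LP regime where $m_{\max}=1$ and $\eta=\wt O(\tau)$. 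There is no telescoping here---each off-diagonal multiply is a genuinely separate matrix--vector product, and the paper's own Lemma~\ref{lem:mat_vec_mult_time}\ref{item:lem_mat_vec_mult_time_Linvv} only delivers $O(\eta\,m_\lp m_{\max})$.

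In short: the paper's proof is just a pointer to the \textsc{ExactDS} output bound; the lemma's stated bound $O(\nnz(A)+T_m)$ does not match that and is not what the rest of the paper uses; and your attempt to bridge the gap relies on claims that do not hold in the stated generality.
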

\begin{proof}

The proof directly follows from Theorem~\ref{thm:exactds}.
\end{proof}

\subsection{Proofs of Main Result}\label{sec:framework:main}
In this section we combine everything and prove Theorem~\ref{thm:central_path_maintenance_general} and Theorem~\ref{thm:algo_general}.

\begin{proof}[Proof of Theorem~\ref{thm:central_path_maintenance_general}]

Using Lemma~\ref{lem:cpm_correct_general}, we finish the correctness part.

Using Lemma
~\ref{lem:init_time_general}, we prove the running time for initialization,
\begin{align*} 
    {\cal T}_{\mathrm{init}} = \wt O(\nnz(A) + T_H + T_L + \eta T_m).
\end{align*}

Using Lemma~\ref{lem:multiply_and_move_time_general}, we prove the running time for multiply and move,
\begin{align*} 
    {\cal T}_{\mathrm{multiply}} = & \wt{O} ( n^{0.5} \nu_{\max}^{0.5} \cdot (T_H+T_L+\eta T_m + T_Z)^{0.5} \\
    &\cdot (\Tmat(n_{\max}) + T_{\Delta_L,\max} + T_{H,\max} + \eta^2 m_{\max}^2)^{0.5} \cdot \log(t_{\max}/t_{\min})).
\end{align*}

By Lemma~\ref{lem:output_time_general}, we show the output time,
\begin{align*} 
     {\cal T}_{\mathrm{output}}  = O(\nnz(A) + T_H + \eta T_m)
\end{align*} 

\end{proof}

\begin{proof}[Proof of Theorem~\ref{thm:algo_general}]

So, the total running time is 
\begin{align*}
 & ~{\cal T}_{\mathrm{init}} + {\cal T}_{\mathrm{multiply}} + {\cal T}_{\mathrm{output}} \\
 = & ~  \wt{O} ( n^{0.5} \nu_{\max}^{0.5} \cdot (T_H+T_L+\eta T_m + T_Z)^{0.5} \\
&\cdot (\Tmat(n_{\max}) + T_{\Delta_L,\max} + T_{H,\max} + \eta^2 m_{\max}^2)^{0.5} \\
&\cdot \log (R/(r\epsilon))),
\end{align*}
where we use Theorem~\ref{thm:central_path_maintenance_general} and Theorem~\ref{thm:DLYA.1} (by setting $t_{\max}/t_{\min} = R  /(r\epsilon)$).

\end{proof}

\newpage
\clearpage
\section{Our First Result} \label{sec:sdp_first}
In this section, we present an $\wt O(n_\sdp\cdot \poly(\tau_\sdp))$ algorithm for low-treewidth SDP.
Outline of this section is as follows.
\begin{itemize}
    \item In Section~\ref{sec:sdp_first:main}, we present the main result of this section, Theorem~\ref{thm:main_sdp_first}.
    \item In Section~\ref{sec:sdp_first:reduction}, we reduce our SDP to a form which can be handled by Theorem~\ref{thm:algo_general}.
    \item In Section~\ref{sec:sdp_first:choice_of_parameters}, we state and prove parameters needed to apply Theorem~\ref{thm:algo_general}.
    \item In Section~\ref{sec:sdp_first:main_proof}, we plug in all parameters and finish proof of Theorem~\ref{thm:main_sdp_first}.
    \item In Section~\ref{sec:sdp_first:discuss_inequality}, we discuss how to deal with inequality constraints.
\end{itemize}

\subsection{Main Statement}\label{sec:sdp_first:main}
We consider semidefinite program of form~\eqref{eqn:sdp-primal}. Let us restate it here for clarity.

\begin{align}\label{eqn:sdp-primal-restated}
    \min \qquad & C \bullet X\\
    \text{s.t.} \qquad & A_i \bullet X = b_i \qquad \forall i\in [m_\lp] \nonumber\\
    & X \succeq 0 \nonumber
\end{align}
with $C, X \in \R^{n_\sdp \times n_\sdp}$, $A\in \R^{m_\sdp \times n_\sdp^2}$, $b\in \R^{m_\sdp}$.
(We add subscript $m_\sdp$ to avoid confusion with parameters in Theorem~\ref{thm:algo_general}.)

\begin{definition} \label{defn:sdp-assumptions}
We make the following assumptions on program \eqref{eqn:sdp-primal-restated}.
\begin{itemize}
    \item Assume that constraint matrices $A_1,\ldots, A_{m_\sdp} \in \R^{n_\sdp^2}$ are linearly independent.
    \item Assume that we are given a tree decomposition (Definition~\ref{defn:treewidth}) of the SDP graph (Definition~\ref{def:sdp-graph}) of \eqref{eqn:sdp-primal-restated} with maximum bag size $\tau_\sdp$.
    \footnote{If such a tree decomposition is not given, then using \cite{bernstein2022deterministic}, we can find a tree decomposition (Definition~\ref{defn:treewidth}) with maximum bag size $\wt O(\tw(G))$, where $\tw(G)$ is treewidth of the SDP graph $G$.}
    \item Assume that any feasible solution $X\in \bS^{n_\sdp}$ satisfies $\|X\|_\op \le R$.
    \item There exists $Z\in \bS^{n_\sdp}$ such that $A\bullet Z = b$ and $\lambda_{\min}(Z) \ge r$.
\end{itemize}
\end{definition}

\begin{theorem}[Our first result]\label{thm:main_sdp_first}
Under assumptions in Definition~\ref{defn:sdp-assumptions}, for any $0<\epsilon\le \frac12$, there is an algorithm that outputs a solution $U\in \R^{n_\sdp \times \tau_\sdp}$
such that for $X = UU^\top$, we have $A \bullet X=b$ and
\begin{align*}
    C\bullet X \le \min_{A \bullet X' = b, X'\in \bS^{n_\sdp}} C\bullet X + \epsilon \|C\|_2 R
\end{align*}
in expected time
\begin{align*}
\wt O(n_\sdp \tau_\sdp^{6.5} \log(R/(r \epsilon)))
\end{align*}
where $\wt O$ hides $n^{o(1)}$ terms.
\end{theorem}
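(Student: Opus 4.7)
The plan is to derive Theorem~\ref{thm:main_sdp_first} as a direct application of the general framework of Theorem~\ref{thm:algo_general}, once we reduce the SDP~\eqref{eqn:sdp-primal-restated} to an instance of Program~\eqref{eqn:lp_general}. The overall structure mirrors the ``SDP First'' column of Table~\ref{tab:args_general_with_choices}, so our main tasks are: (i) perform the variable reduction described informally in Section~\ref{sec:previous_technique}; (ii) define the block elimination tree and verify its parameters; and (iii) plug the resulting numbers into Theorem~\ref{thm:algo_general}.

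First I would make the variable reduction precise. Using Assumption~\ref{assump:linear-independence} and the given tree decomposition (bags of size $\le \tau_\sdp$), I would rewrite the SDP as Program~\eqref{eqn:sdp-ctc}, introducing one PSD block $X_j \in \bS^{\tau_\sdp}$ per bag along with overlap constraints $\cN_{i,j}(X_i) = \cN_{j,i}(X_j)$ for adjacent bags. By (if necessary) inserting $O(n_\sdp)$ auxiliary bags, I may assume the tree decomposition has maximum degree $O(1)$, without changing asymptotics. Each PSD block fits into $\cK_i = \bS_+^{\tau_\sdp}$ with the log-barrier $\phi_i(X_j) = -\log\det(X_j)$, which by Lemma~\ref{lem:log-barrier-psd} is $\tau_\sdp$-self-concordant; hence $\nu_i = \nu_{\max} = \tau_\sdp$. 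After vectorizing, this produces an instance of~\eqref{eqn:lp_general} with $n = n_\sdp$ blocks, block size $n_i = \tau_\sdp^2$, total variables $n_\lp = n_\sdp \tau_\sdp^2$, and $m_\lp = m_\sdp + O(n_\sdp \tau_\sdp^2)$ scalar constraints (original plus overlap). I would also verify that after reduction the constraint matrix is still linearly independent (the overlap constraints are naturally so, and the original $A_i$'s are by Assumption~\ref{assump:linear-independence}).

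Next I would construct the block elimination tree. Since we treat each scalar constraint as its own block ($m_i = 1$, $m = m_\lp$), I would take the elimination tree itself to come from the tree decomposition: at each bag, eliminate the constraints attached to that bag, ordered from leaves to root. Lemma~\ref{lem:prove_block_elim_tree} (applied with $m_i = 1$, reducing to Lemma~\ref{lem:prove_elim_tree}) shows this yields a valid elimination tree of depth $\eta = O(\tau_\lp) = O(\tau_\sdp^2)$, because a bag of size $\tau_\sdp$ contributes $O(\tau_\sdp^2)$ constraints. The LP treewidth is $\tau_\lp = O(\tau_\sdp^2)$. Now I would verify the entries of Table~\ref{tab:args_general_with_choices}: $T_{H,\max} = \Tmat(\tau_\sdp^2) = \tau_\sdp^{2\omega}$ for computing Hessian of one log-det block (and its inverse/square root); $T_H = T_n = n_\sdp \tau_\sdp^{2\omega}$; $\nnz(A) = O(n_\sdp \tau_\sdp^4)$ since each overlap/SDP constraint has support $O(\tau_\sdp^2)$; $T_L = O(n_\lp \tau_\lp^2) = O(n_\sdp \tau_\sdp^6)$ by Lemma~\ref{lem:known_cholesky_TL}; $T_{\Delta_L,\max} = O(\tau_\lp^3) = O(\tau_\sdp^6)$ by Lemma~\ref{lem:known_cholesky_TDeltaLmax}; $T_Z = O(\eta^2 m m_{\max}^2) = O(m_\lp \tau_\sdp^4)$ by Lemma~\ref{lem:generic_bound_TZ}; and $\eta m_\lp m_{\max} = O(m_\lp \tau_\sdp^2)$.

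Finally, I would plug these into the runtime bound of Theorem~\ref{thm:algo_general}. The restart cost factor is $T_H + T_n + T_L + T_Z + \nnz(A) + \eta m_\lp m_{\max} = \wt O(n_\sdp \tau_\sdp^6)$, and the per-block-update cost factor is $\Tmat(n_{\max}) + T_{\Delta_L,\max} + T_{H,\max} + \eta^2 m_{\max}^2 = \wt O(\tau_\sdp^6)$. Multiplying by $n^{0.5} \nu_{\max}^{0.5} = (n_\sdp \tau_\sdp)^{0.5}$ gives
\begin{align*}
\wt O\bigl((n_\sdp \tau_\sdp)^{0.5}\cdot (n_\sdp \tau_\sdp^6)^{0.5}\cdot (\tau_\sdp^6)^{0.5}\cdot \log(R/(r\epsilon))\bigr)
= \wt O(n_\sdp \tau_\sdp^{6.5}\log(R/(r\epsilon))).
\end{align*}
The approximation guarantee follows because Theorem~\ref{thm:algo_general} returns a feasible point whose objective is within $\epsilon \|c\|_2 R$ of optimal in the reduced program, and \cite{zl18}'s reconstruction (obtaining $X = UU^\top$ with $U \in \R^{n_\sdp \times \tau_\sdp}$ from the bag variables $X_j$) preserves the objective value. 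I expect the main obstacle to be not the runtime arithmetic, which is mechanical, but rather (a) verifying cleanly that the reduced program satisfies all the geometric input assumptions of Theorem~\ref{thm:algo_general}—in particular exhibiting an interior initial point with the required $B(x,r) \subseteq \cK$ diameter bounds starting from the hypothesized $Z$ and $R$—and (b) carefully accounting for the fact that $m = m_\lp$ here, so the ``generic'' bound on $T_Z$ is tight and must be threaded through correctly.
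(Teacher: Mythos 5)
Your proposal is correct and matches the paper's own proof essentially step for step: the reduction to Program~\eqref{eqn:sdp-ctc-restated}, the constant-degree tree-decomposition assumption, the scalar ($m_{\max}=1$) block elimination tree with $\eta = \wt O(\tau_\sdp^2)$ from the bag decomposition, the parameter table (Lemma~\ref{lem:sdp_first_param}), and the final arithmetic all coincide with Section~\ref{sec:sdp_first}. The obstacles you anticipate at the end (exhibiting the interior point and handling $m=m_\lp$) are indeed real bookkeeping steps, but the paper discharges them exactly as you sketch via Definition~\ref{defn:sdp-assumptions} and Lemmas~\ref{lem:zl18-reduction}, \ref{lem:sdp-ctc-treewidth}, and~\ref{lem:sdp_first_block_elim}.
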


Our proof of Theorem~\ref{thm:main_sdp_first} is in two steps. First we reduce program~\eqref{eqn:sdp-primal-restated} to form~\eqref{eqn:lp_general}, then we apply Theorem~\ref{thm:algo_general} by plugging in needed parameters.

\subsection{Reduce Low-Treewidth SDP to General Treewidth Program} \label{sec:sdp_first:reduction}
In this section we reduce program~\eqref{eqn:sdp-primal-restated} satisfying Definition~\ref{defn:sdp-assumptions} to form~\eqref{eqn:lp_general}.

Recall that we are given a tree decomposition $T,J_1,\ldots, J_n$ of the SDP graph with maximum bag size $\tau_\sdp$, where $T$ is a tree on $n$ vertices and and $J_j$ are the bags.

Using Lemma~\ref{lem:tree-decomp-const-deg}, we can WLOG assume that $T$ has maximum degree $O(1)$.
\begin{lemma} \label{lem:tree-decomp-const-deg}
Given any tree decomposition $T$ with $n$ bags, we can construct another tree decomposition with at most $2n$ bags, with the same maximum bag size, and maximum degree at most $3$.
\end{lemma}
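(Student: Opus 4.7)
The plan is to \emph{binarize} the given tree decomposition by rooting it and then splitting every high-degree vertex into a path of duplicate bags. First I would root $T$ at an arbitrary vertex, so each vertex $v$ has a well-defined parent and some number $k_v$ of children. Observe that in a rooted tree, the degree of $v$ equals $k_v$ (for the root) or $k_v + 1$ (otherwise), so it suffices to ensure every vertex has at most $2$ children in the new tree; this will give maximum degree at most $3$.

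Next I would perform the following local replacement at every vertex $v$ with $k_v > 2$ children $c_1, \ldots, c_{k_v}$: replace $v$ by a path $v_1 - v_2 - \cdots - v_{k_v - 1}$ of $k_v - 1$ fresh vertices, each carrying the same bag $J_v$. The parent edge of $v$ (if any) is reattached to $v_1$. The children are distributed as follows: $v_1$ takes $c_1$ as a child; $v_i$ for $2 \le i \le k_v - 2$ takes $c_i$ as a child; and $v_{k_v - 1}$ takes both $c_{k_v - 1}$ and $c_{k_v}$ as children. Each $v_i$ then has at most $2$ children (so degree at most $3$), and no other vertex has its degree changed.

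To bound the number of bags, note that this procedure adds $k_v - 2$ new bags whenever $k_v > 2$, and $0$ otherwise. Since $\sum_v k_v = n - 1$ (each non-root is a child of exactly one vertex), the total number of bags is at most
\begin{align*}
n + \sum_{v : k_v \ge 2} (k_v - 2) \;\le\; n + \sum_v k_v \;=\; 2n - 1 \;<\; 2n.
\end{align*}
The maximum bag size is clearly preserved since every new bag is a copy of some $J_v$.

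Finally I would verify that the result is still a valid tree decomposition. The connectivity axiom is the only nontrivial property: for each vertex $u$ of the underlying graph $G$, I need $\{j : u \in J_j\}$ to induce a subtree in the new tree. Since every bag $J_v$ is replaced by a connected path of duplicates, and the old edges of $T$ between distinct bags are preserved (routed through the appropriate endpoint of each path), the set of new bags containing $u$ is obtained from the old subtree by replacing each of its vertices with a path; the result is still connected. The edge covering axiom is trivial since the collection of bags (as sets) is unchanged. None of the steps should pose a real obstacle; the only thing to be careful about is the counting, since a naive "path of length $k_v$" construction gives a $3n$ rather than $2n$ bound, which is why I use $k_v - 1$ copies above instead of $k_v$.
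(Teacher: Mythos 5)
Your proof is correct and uses essentially the same approach as the paper: replace each high-degree bag by a path of duplicate copies and reattach its neighbors along the path. The paper states this in one line (replace a vertex of degree $d_j > 3$ with $d_j - 1$ copies of the same bag, each of degree $3$) without working through the rooting, the child distribution, or the count; your write-up supplies those details and gives a cleaner verification of the $2n$ bound.
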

\begin{proof}
For every vertex $j\in [n]$ with degree $d_j > 3$, we can replace it with $d_j-1$ vertices, each of degree $3$, with the corresponding bags equal to $J_j$. It is easy to verify that this is a valid tree decomposition, and that the total number of bags after this transformation is at most $2n$.
\end{proof}

By Lemma~\ref{lem:zl18-reduction}, to solve program~\eqref{eqn:sdp-primal-restated}, it suffices to solve the following program with fewer variables:
\begin{align} \label{eqn:sdp-ctc-restated}
    \min \qquad & \sum_{j\in [n]} C_j \bullet X_j\\
    \text{s.t.} \qquad & A_i \bullet X_{j_i} = b_i \qquad \forall i\in [m_\sdp] \nonumber\\
    & \cN_{i,j} (X_i) = \cN_{j,i} (X_j) \qquad \forall (i,j)\in E(T) \nonumber\\
    & X_j \succeq 0  \qquad \forall j\in [n] \nonumber
\end{align}
where $j_i$ is any number with $\supp A_i \subseteq J_{j_i}$, and the $\cN$ constraints asserts consistency between different bags.
Here $X_j \in \R^{|J_j| \times |J_j|}$, $A_i \in \R^{|J_{j_i}| \times |J_{j_i}|}$, $b\in \R^{m_\sdp}$, $\cN_{i,j} \in \R^{|J_i\cap J_j|^2 \times |J_i|^2}$.
(We view $X_i$ in $\cN_{i,j}(X_i)$ as a vector of length $\R^{|J_i|^2}$.)
For all $j\in [n]$, $X_j$ corresponds to the $J_j\times J_j$ minor of $X$ (in \eqref{eqn:sdp-primal-restated}).
By properties of the tree decomposition, we have $n = O(n_\sdp)$ and $|J_j| = \wt O(\tau_\sdp)$ for all $j\in [n]$.
So the total variable dimension of program \eqref{eqn:sdp-ctc-restated} is
\begin{align*}
n_\lp = \sum_{j\in n} |J_j|^2 = \wt O(n_\sdp \tau_\sdp^2).
\end{align*}

\begin{lemma}[{\cite[Section 3.3]{zl18}}] \label{lem:zl18-reduction}
Under assumptions in Definition~\ref{defn:sdp-assumptions}, the optimal value of program \eqref{eqn:sdp-primal-restated} is equal to the optimal value of program \eqref{eqn:sdp-ctc-restated}.
Furthermore, given a solution $X_1,\ldots, X_n$ of program \eqref{eqn:sdp-ctc-restated}, there is an algorithm that takes $O(n_\sdp \tau_\sdp^3)$ time and outputs a matrix $U\in \bR^{n_\sdp \times \tau_\sdp}$ such that for $X=UU^\top$, $X$ is a solution of Eq.~\eqref{eqn:sdp-primal} with the same objective value.
\end{lemma}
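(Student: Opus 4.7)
\textbf{Proof proposal for Lemma~\ref{lem:zl18-reduction}.}
The plan is to establish both inequalities between the optimal values via two reductions, then extract a rank-$\tau_\sdp$ factorization from any feasible point of~\eqref{eqn:sdp-ctc-restated}. For the easy direction, given any $X \succeq 0$ feasible for~\eqref{eqn:sdp-primal-restated}, simply set $X_j := X|_{J_j \times J_j}$ for each $j \in [n]$. Then $X_j \succeq 0$ (principal minors of PSD matrices are PSD), the overlap constraints $\cN_{i,j}(X_i) = \cN_{j,i}(X_j)$ hold trivially since both sides equal $X|_{(J_i\cap J_j)\times(J_i\cap J_j)}$, and $A_i \bullet X_{j_i} = A_i \bullet X = b_i$ because $\supp A_i \subseteq J_{j_i} \times J_{j_i}$. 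Since $C_{u,v} \ne 0$ forces $\{u,v\}$ to lie in some bag, one can split $C = \sum_j C_j$ with $\supp C_j \subseteq J_j \times J_j$ so that $\sum_j C_j \bullet X_j = C \bullet X$, matching objectives.

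For the nontrivial direction, I would invoke the chordal PSD completion theorem of Grone--Johnson--Sá--Wolkowicz: the SDP graph admits a chordal cover whose maximal cliques are precisely the bags $J_j$, so a consistent family of PSD principal minors $(X_j)_{j\in[n]}$ extends to a global $X \succeq 0$ of size $n_\sdp \times n_\sdp$. Rather than invoking this abstractly, I would give an algorithmic version producing $U \in \R^{n_\sdp \times \tau_\sdp}$ with $X = UU^\top$. Root the tree decomposition $T$ at an arbitrary bag; I would process bags in a BFS order. At the root bag $J_{\mathrm{root}}$, compute a square-root factorization $X_{\mathrm{root}} = L_{\mathrm{root}} L_{\mathrm{root}}^\top$ with $L_{\mathrm{root}} \in \R^{|J_{\mathrm{root}}| \times \tau_\sdp}$ (padding with zero columns up to width $\tau_\sdp$), and set rows of $U$ indexed by $J_{\mathrm{root}}$ to the rows of $L_{\mathrm{root}}$. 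For each child bag $J_j$ with parent $J_p$, let $S = J_j \cap J_p$ and $T = J_j \setminus J_p$. The rows $U_S$ are already defined and satisfy $U_S U_S^\top = X_j|_{S \times S}$ by the overlap constraint. Compute a factorization $X_j = L_j L_j^\top$ with $L_j \in \R^{|J_j|\times \tau_\sdp}$; then $U_S$ and $L_j|_S$ have the same Gram matrix, so there exists an orthogonal $Q_j \in \R^{\tau_\sdp \times \tau_\sdp}$ with $U_S = L_j|_S \cdot Q_j$, and I would compute $Q_j$ by aligning the two factorizations (e.g.\ via SVD on $L_j|_S^\dagger U_S$, extended to an orthogonal matrix). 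Then assign rows of $U$ indexed by $T$ to $L_j|_T \cdot Q_j$, which gives $U_{J_j} U_{J_j}^\top = L_j Q_j Q_j^\top L_j^\top = X_j$. The ``running intersection'' property of tree decompositions guarantees that no vertex is touched twice in a conflicting way, so $U$ is well-defined.

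Once $U$ is built, $X := UU^\top$ satisfies $X|_{J_j \times J_j} = X_j$ for every bag, hence $A_i \bullet X = A_i \bullet X_{j_i} = b_i$ and $C \bullet X = \sum_j C_j \bullet X_j$, matching the objective value; combined with the first direction this proves equality of the optimal values. For the running time, each bag requires one $|J_j| \times |J_j|$ factorization and one orthogonal alignment, costing $O(\tau_\sdp^3)$ per bag; over $n = O(n_\sdp)$ bags this totals $O(n_\sdp \tau_\sdp^3)$, as claimed.

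\textbf{Main obstacle.} The conceptual content is the chordal PSD completion step, and the technical subtlety is making the alignment $Q_j$ orthogonal \emph{and} of exact width $\tau_\sdp$ when $U_S$ may not span $\R^{\tau_\sdp}$. I would handle this by zero-padding the factorizations, using the pseudoinverse plus an arbitrary orthogonal extension on the complementary subspace; the Schur-complement identity $X_j|_{T\times T} - X_j|_{T\times S}(X_j|_{S\times S})^\dagger X_j|_{S\times T} \succeq 0$, which follows from $X_j \succeq 0$, is what guarantees the rows assigned to $T$ are consistent with any prior choice of $Q_j$ on the kernel of $U_S$.
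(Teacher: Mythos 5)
The paper does not prove this lemma — it is cited to \cite[Section~3.3]{zl18} and used as a black box, so there is no in-text argument to compare against; your proof is a correct and essentially self-contained reconstruction of the cited result. Your construction — restrict $X$ to bags for the easy direction, then tree-traversal stitching of per-bag factorizations $X_j = L_j L_j^\top$ by orthogonal matrices, with the running-intersection property guaranteeing each row of $U$ is assigned exactly once — is the standard constructive form of the Grone--Johnson--S\'a--Wolkowicz chordal PSD completion, and it matches the high-level description in Section~2.1 of the paper (the $X = UU^\top$ output with $U \in \R^{n_\sdp\times\tau_\sdp}$ in $O(n_\sdp\tau_\sdp^3)$ time).
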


Now we have reduced program~\eqref{eqn:sdp-primal-restated} to program~\eqref{eqn:sdp-ctc-restated}, which is of form~\eqref{eqn:lp_general}.
Let us compute the treewidth of program~\ref{eqn:sdp-ctc-restated}.

\begin{lemma} \label{lem:sdp-ctc-treewidth}
We can construct a tree decomposition of the LP dual graph (Definition~\ref{def:general_lp_treewidth}) with maximum bag size $\tau_\lp = O(\tau_\sdp^2)$ and at most $n$ blocks.
\end{lemma}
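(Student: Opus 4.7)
The plan is to construct the tree decomposition of the LP dual graph by reusing the tree $T$ from the SDP tree decomposition. Specifically, I will use $T$ itself (with at most $n$ vertices) as the underlying tree of the LP dual graph tree decomposition, and assign to each node $j \in [n]$ a bag $B_j$ containing every constraint of program~\eqref{eqn:sdp-ctc-restated} that touches the variable block $X_j$. Concretely, $B_j$ consists of the linear constraints $A_i \bullet X_{j_i} = b_i$ with $j_i = j$, together with the overlap constraints $\cN_{j,k}(X_j) = \cN_{k,j}(X_k)$ for every neighbor $k$ of $j$ in $T$.

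To verify that $(T, \{B_j\}_{j\in[n]})$ is a valid tree decomposition of the LP dual graph, I will check the two defining properties from Definition~\ref{defn:treewidth}. For the edge-cover condition: two constraints are adjacent in the LP dual graph (Definition~\ref{def:general_lp_treewidth}) exactly when they share some variable block $X_j$, and by construction both of them then lie in $B_j$. For the subtree condition: a linear constraint $A_i \bullet X_{j_i} = b_i$ appears only in the single bag $B_{j_i}$, trivially a connected subtree; an overlap constraint $\cN_{i,j}(X_i) = \cN_{j,i}(X_j)$ appears in exactly $B_i$ and $B_j$, and these bags correspond to adjacent vertices of $T$, so they also form a connected subtree.

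It remains to bound $|B_j|$ by $O(\tau_\sdp^2)$. I will split $|B_j|$ into its two components. The number of overlap constraints contributed to $B_j$ equals the degree of $j$ in $T$, which is $O(1)$ after applying Lemma~\ref{lem:tree-decomp-const-deg}. The key step, and the main obstacle, is bounding the number of linear constraints with $j_i = j$. Here I will invoke Assumption~\ref{assump:linear-independence} (linear independence of $A_1,\ldots,A_{m_\sdp}$): since each $A_i$ with $j_i = j$ is supported inside $J_j \times J_j$ and is symmetric, it lies in the space of symmetric $\tau_\sdp \times \tau_\sdp$ matrices, which has dimension at most $\tau_\sdp(\tau_\sdp+1)/2 = O(\tau_\sdp^2)$. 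Hence at most $O(\tau_\sdp^2)$ linearly independent constraints can be assigned to any single bag.

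Combining the two contributions gives $|B_j| = O(\tau_\sdp^2) + O(1) = O(\tau_\sdp^2)$ for every $j$, so $\tau_\lp = O(\tau_\sdp^2)$, and the tree decomposition has at most $n$ bags as required. The only subtle point is ensuring that the ambiguity in the choice of $j_i$ (any bag containing $\supp A_i$) does not break the linear-independence argument, but this is immediate: whichever assignment is chosen, all constraints attributed to bag $j$ remain supported in $J_j \times J_j$, so the dimensional bound applies.
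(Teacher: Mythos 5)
Your construction follows the same template as the paper's (reuse the SDP tree $T$; put constraints in a bag if they touch that bag's variable block), and it works. In fact your bags are slightly smaller than the paper's, which assigns $A_i$ to $B_j$ whenever $j_i = j$ \emph{or} $(j_i, j) \in T$, while you only use $j_i = j$; your version still satisfies the edge-cover and subtree conditions, as you verify, so the extra inclusion in the paper is unnecessary.

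There is one miscounting you should fix. The LP dual graph has one vertex per \emph{scalar} constraint, and a single block overlap constraint $\cN_{i,j}(X_i) = \cN_{j,i}(X_j)$ is a system of $|J_i \cap J_j|^2$ scalar equations (this is why $\cN_{i,j} \in \R^{|J_i\cap J_j|^2 \times |J_i|^2}$). So the number of vertices in $B_j$ coming from overlap constraints is not $\deg_T(j) = O(1)$ but $\sum_{k : (j,k) \in T} |J_j \cap J_k|^2 = O(\deg_T(j)) \cdot \tau_\sdp^2 = O(\tau_\sdp^2)$; the paper explicitly flags this subtlety. The final bound $|B_j| = O(\tau_\sdp^2) + O(\tau_\sdp^2) = O(\tau_\sdp^2)$ is unchanged, so this is a local error in bookkeeping rather than a gap, but it should be corrected for the argument to read cleanly. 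The rest of the proof, including the observation that the linear-independence bound is robust to the arbitrary choice of $j_i$, is sound.
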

\begin{proof}

Let us define a bag decomposition for the LP dual graph.
For every $j\in [n]$, we define a bag $B_j$ containing
\begin{itemize}
    \item all type-$A$ constraints $A_i$ with $j_i=j$ or $(j_i,j)\in T$, and
    \item all type-$N$ constraints $\cN_{i,j}$ with $(i,j)\in T$.
\end{itemize}
We connect $B_j$ with $B_i$ if and only if $(i,j)\in T$.

Because of linear independence assumption, number of type-$A$ constraints in a block is at most $|J_i|^2$.
Because $T$ has constant maximum degree, number of type-$N$ constraints in a block is at most $O(|J_i|^2)$ (recall that one constraint $\cN_{i,j}(X_i) = \cN_{j,i}(X_j)$ is in fact $|J_i\cap J_j|^2$ equations).
So $|B_j| = O(|J_i|^2)$. The maximum bag size is $\tau_\lp = O(\max_j |J_j|^2) = O(\tau_\sdp^2)$.

Finally, we prove that $(T, B_1,\ldots, B_n)$ is a valid tree decomposition of the LP dual graph.
For any two constraints sharing a variable $X_j$, they must both be in $B_j$. So the first condition in Definition~\ref{defn:treewidth} is satisfied.
The second condition in Definition~\ref{defn:treewidth} is clearly satisfied.
So $(T, B_1,\ldots, B_n)$ is a valid tree decomposition.
\end{proof}
This enables solving program~\eqref{eqn:sdp-ctc-restated} using Theorem~\ref{thm:algo_general}.

\subsection{Choice of Parameters}\label{sec:sdp_first:choice_of_parameters}
In this section we present value of parameters needed to apply Theorem~\ref{thm:algo_general}.
\begin{lemma}[Choice of parameters in the first result] \label{lem:sdp_first_param}
Under the setting of Theorem~\ref{thm:main_sdp_first}, we can choose the following set of parameters.
\begin{enumerate}[label=(\roman*)]
    \item \label{item:lem:sdp_first_n} $n = O(n_\sdp)$, $n_\lp = O(n_\sdp\tau_\sdp^2)$, $n_{\max} = O(\tau_\sdp^2)$.
    \item \label{item:lem:sdp_first_m} $m = O(n_\sdp \tau_\sdp^2)$, $m_\lp = O(n_\sdp \tau_\sdp^2)$, $m_{\max} = 1$.
    \item \label{item:lem:sdp_first_taulp} $\tau_\lp = O(\tau_\sdp^2)$.
    \item \label{item:lem:sdp_first_eta} $\eta = \wt O(\tau_\sdp^2)$.
    \item \label{item:lem:sdp_first_nu} $\nu_{\max} = O(\tau_\sdp)$.
    \item \label{item:lem:sdp_first_nnz} $\nnz(A) = O(n_\sdp \tau_\sdp^4)$.
    \item \label{item:lem:sdp_first_TnTm} $T_n = O(n_\sdp \tau_\sdp^{2\omega})$, $T_m = O(n_\sdp \tau_\sdp^2)$.
    \item \label{item:lem:sdp_first_TH} $T_H = O(n_\sdp \tau_\sdp^{2\omega})$, $T_{H,\max} = O(\tau_\sdp^{2\omega})$.
    \item \label{item:lem:sdp_first_TL} $T_L = \wt O(n_\sdp \tau_\sdp^6)$.
    \item \label{item:lem:sdp_first_TZ} $T_Z = \wt O(n_\sdp \tau_\sdp^6)$.
    \item \label{item:lem:sdp_first_TDeltaLmax} $T_{\Delta L,\max} = \wt O(\tau_\sdp^6)$.
\end{enumerate}
\end{lemma}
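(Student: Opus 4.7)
\textbf{Proof proposal for Lemma~\ref{lem:sdp_first_param}.}

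The plan is to verify each of the eleven parameter choices by unpacking the reduction of Section~\ref{sec:sdp_first:reduction} and combining it with the structural lemmas proven earlier. Parts \ref{item:lem:sdp_first_n}--\ref{item:lem:sdp_first_taulp} will follow directly from what was already established in the reduction: by Lemma~\ref{lem:tree-decomp-const-deg} we may assume $T$ has at most $2n_\sdp$ bags and constant degree, so the number of variable blocks is $n = O(n_\sdp)$; since each block $X_j$ lies in $\R^{|J_j|\times |J_j|}$ with $|J_j|\le \tau_\sdp$, we have $n_{\max}=O(\tau_\sdp^2)$ and $n_\lp=O(n_\sdp\tau_\sdp^2)$. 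For the constraints, each $A$-constraint and each scalar equation inside a $\cN$-constraint forms its own block ($m_i=1$, $m_{\max}=1$); the linear independence assumption combined with constant-degree $T$ gives $m_\lp=O(n_\sdp\tau_\sdp^2)$. Item \ref{item:lem:sdp_first_taulp} is exactly Lemma~\ref{lem:sdp-ctc-treewidth}.

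Part \ref{item:lem:sdp_first_eta} is where I expect the main obstacle: naively a tree decomposition could be a long path and yield an elimination tree of depth $\Omega(n)$. I would resolve this with a standard centroid/balanced-separator trick: starting from the tree decomposition $T$, one inductively splits at a centroid bag, producing a recursion tree of depth $O(\log n_\sdp)$ each level of which contributes at most one full bag to the elimination ordering, so the resulting block elimination tree has depth $O(\tau_\lp\log n_\sdp)=\wt O(\tau_\sdp^2)$. Validity (in the sense of Lemma~\ref{lem:prove_block_elim_tree}) is then checked via the tree-decomposition property that any two bags containing a shared variable communicate through a path of bags all containing that variable, which in turn forces their common ancestors in the centroid tree to contain the relevant constraint. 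This balanced construction is precisely what will then feed into the Cholesky and $T_Z$ bounds.

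For \ref{item:lem:sdp_first_nu}, since $\phi_j(X_j)=-\log\det X_j$ on the $|J_j|\times |J_j|$ PSD cone, Lemma~\ref{lem:log-barrier-psd} gives $\nu_j\le \tau_\sdp$. For \ref{item:lem:sdp_first_nnz}, each $A_i$ is supported on $J_{j_i}\times J_{j_i}$ so contributes at most $\tau_\sdp^2$ nonzeros; summing over $m_\sdp=O(n_\sdp\tau_\sdp^2)$ constraints plus the (much smaller) contribution from $\cN$-constraints gives $O(n_\sdp\tau_\sdp^4)$. For \ref{item:lem:sdp_first_TnTm}, block-diagonal products with blocks of size $\tau_\sdp^2$ (resp.\ $1$) cost $n_\sdp\cdot\Tmat(\tau_\sdp^2)=O(n_\sdp\tau_\sdp^{2\omega})$ and $O(m_\lp)=O(n_\sdp\tau_\sdp^2)$ respectively. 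Item \ref{item:lem:sdp_first_TH} uses the fact that $\nabla^2(-\log\det X_j)=X_j^{-1}\otimes X_j^{-1}$, which takes $\Tmat(\tau_\sdp)+O(\tau_\sdp^4)=O(\tau_\sdp^{2\omega})$ per block.

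The remaining items plug the $\eta=\wt O(\tau_\sdp^2)$ bound from \ref{item:lem:sdp_first_eta} into the generic Cholesky lemmas. Item \ref{item:lem:sdp_first_TL} follows from Lemma~\ref{lem:known_cholesky_TL} (with $m_{\max}=1$ block depth equals coordinate depth): $T_L=O(m_\lp\eta^2)=\wt O(n_\sdp\tau_\sdp^2\cdot\tau_\sdp^4)=\wt O(n_\sdp\tau_\sdp^6)$. Item \ref{item:lem:sdp_first_TZ} follows from Lemma~\ref{lem:generic_bound_TZ}: $T_Z=O(\eta^2 m m_{\max}^2)=\wt O(\tau_\sdp^4\cdot n_\sdp\tau_\sdp^2)=\wt O(n_\sdp\tau_\sdp^6)$. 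Finally \ref{item:lem:sdp_first_TDeltaLmax}: a single variable-block change alters one $n_i\times n_i$ diagonal block of $H$, producing a rank-$O(\tau_\sdp^2)$ update to $A H A^\top$; by Lemma~\ref{lem:known_cholesky_TDeltaLmax} each rank-one update costs $O(\eta^2)=\wt O(\tau_\sdp^4)$, so the total is $\wt O(\tau_\sdp^6)$. With all eleven bounds verified, the lemma is established; the one genuinely non-routine step is producing a balanced block elimination tree of depth $\wt O(\tau_\lp)$ from an arbitrary input tree decomposition.
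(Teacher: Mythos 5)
Your proposal is correct and follows essentially the same route as the paper: items (i)--(iii), (v)--(xi) all reduce to plugging in the reduction from Section~\ref{sec:sdp_first:reduction} plus the generic Cholesky lemmas, and the crux — constructing a depth-$\wt O(\tau_\lp)$ block elimination tree — is handled via the same centroid-decomposition argument as Lemma~\ref{lem:sdp_first_block_elim}. (One stylistic nit: you bound $\nnz(A)$ by counting row-wise over $m_\lp$ rows while the paper counts column-wise over blocks, but both give the same $O(n_\sdp\tau_\sdp^4)$.)
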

\begin{proof}
\begin{enumerate}[label=(\roman*)]
    \item Values for $n$, $n_\lp$, $n_{\max}$ have been discussed in Section~\ref{sec:sdp_first:reduction}.
    \item By discussion in last section, we have $m_\lp = O(n_\sdp \tau_\sdp^2)$.
    We choose block structure $(1,\ldots, 1)$, i.e., $m = m_\lp = O(n_\sdp \tau_\sdp^2)$ and $m_{\max} = 1$.
    \item Proved in Lemma~\ref{lem:sdp-ctc-treewidth}.
    \item By Lemma~\ref{lem:sdp_first_block_elim}.
    \item We use the log barrier (Definition~\ref{defn:log-barrier-psd}).
    By Lemma~\ref{lem:log-barrier-psd}, we have $\nu_{\max} = O(\tau_\sdp)$.
    \item Every $X_j$ is in at most $\tau_\lp = O(\tau_\sdp^2)$ constraints, so $\nnz(A) = O(n_\sdp \tau_\sdp^2 \cdot \tau_\lp) = O(n_\sdp \tau_\sdp^4)$.
    \item Bounds on $T_n$ and $T_m$ are direct consequences of \ref{item:lem:sdp_first_n}, \ref{item:lem:sdp_first_m}.
    \item Computing Hessian of the log-barrier takes $O(\tau_\lp^\omega) = O(\tau_\sdp^{2\omega})$ time. So $T_H = O(n_\sdp \tau_\sdp^{2\omega})$, $T_{H,\max} = O(\tau_\sdp^{2\omega})$.
    \item By Lemma~\ref{lem:known_cholesky_TL}, we have $T_L = O(m_\lp \eta^2) = \wt O(n_\sdp \tau_\sdp^6)$.
    \item By Lemma~\ref{lem:generic_bound_TZ}, we have $T_Z = O(\eta^2 m m_{\max}^2) = \wt O(n_{\sdp} \tau_{\sdp}^6)$.
    \item Updating $AH_{\ov x}A^\top =LL^\top$ under change of one block involves $O(n_{\max}) = O(\tau_\sdp^2)$ rank-$1$ updates, and each such update costs $O(\eta^2) = \wt O(\tau_\sdp^4)$ time by Lemma~\ref{lem:known_cholesky_TDeltaLmax}.
    So $T_{\Delta L,\max} = \wt O(\tau_\sdp^6)$.
\end{enumerate}
\end{proof}

\begin{lemma} \label{lem:sdp_first_block_elim}
We can construct a block elimination tree with block structure $(1,\ldots, 1)$ (i.e., $m=m_\lp$, $m_{\max}=1$) and maximum block-depth $\eta = \wt O(\tau_\lp)$.
\end{lemma}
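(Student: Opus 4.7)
The plan is to build a standard (non-blocked, i.e., each block a single vertex) elimination tree of depth $O(\tau_\lp \log n)$ for the LP dual graph by recursive centroid decomposition of the given tree decomposition, and then invoke Lemma~\ref{lem:prove_elim_tree} to certify correctness.

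First, let $T,B_1,\ldots,B_n$ be the tree decomposition of the LP dual graph produced by Lemma~\ref{lem:sdp-ctc-treewidth}, where every bag has size at most $\tau_\lp=O(\tau_\sdp^2)$. I find a centroid bag $B_{j^*}$ of $T$, i.e., a bag whose removal from $T$ splits $T$ into subtrees, each containing at most half of the remaining bags. I place the vertices of $B_{j^*}$ (in any order) at the top of the elimination tree as a single root-to-some-point chain, then recurse independently on each of the subtrees of $T - \{j^*\}$, hanging the resulting elimination subtrees off the bottom of that chain. Since each recursive call halves the number of bags, the recursion has depth $O(\log n)$, and each level contributes at most $\tau_\lp$ to the depth of the elimination tree, so the total depth is $O(\tau_\lp \log n) = \wt O(\tau_\lp)$.

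To prove correctness via Lemma~\ref{lem:prove_elim_tree}, I need to show that for any path $u = v_1, v_2, \ldots, v_k = v$ in the LP dual graph, some $v_i$ is an ancestor of both $u$ and $v$ in the elimination tree. The key observation is that by the tree decomposition property, for every edge $(v_i,v_{i+1})$ in the LP dual graph, there exists a bag of $T$ containing both endpoints, and the set of bags containing any fixed vertex $w$ forms a connected subtree of $T$. Thus a path in the LP dual graph induces a walk in $T$ from some bag containing $u$ to some bag containing $v$. Consider the recursive step that first separates $u$ from $v$, i.e., the first centroid bag $B_{j^*}$ such that in the recursion on $T-\{j^*\}$, $u$ and $v$ end up in different subtrees (or one of them lies in $B_{j^*}$ itself). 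The walk in $T$ between $u$'s and $v$'s bags must cross $B_{j^*}$, so some bag along the walk intersects $B_{j^*}$; by connectivity of the subtrees of $T$ containing a given vertex $w$, this forces some vertex $v_i$ on the path to lie in $B_{j^*}$. By our construction, every vertex placed at this recursion level is an ancestor of everything placed in the recursive subcalls, and in particular is an ancestor of both $u$ and $v$. This verifies the hypothesis of Lemma~\ref{lem:prove_elim_tree}.

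The main obstacle I anticipate is the last step: carefully identifying the ``first separating'' recursion level and showing that the edge-by-edge walk in $T$ induced by a path in the LP dual graph must actually pass through the centroid bag at that level, rather than merely passing through bags that are siblings of the centroid in $T$. This amounts to exploiting connectivity of the bag-subtrees for each vertex together with the fact that removing $B_{j^*}$ disconnects $T$; once this is set up cleanly, the rest is bookkeeping. The remaining parameters $m = m_\lp$ and $m_{\max}=1$ follow immediately from choosing trivial singleton blocks.
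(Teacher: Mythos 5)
Your approach is essentially the same as the paper's: both construct the elimination tree by recursive centroid decomposition of the tree decomposition, placing the centroid bag as a vertical chain and hanging the recursive subtrees below it, and both certify validity via Lemma~\ref{lem:prove_elim_tree} (the paper states it through the block version, Lemma~\ref{lem:prove_block_elim_tree}, but with singleton blocks these coincide). Your depth analysis ($\wt O(1)$ recursion levels, $O(\tau_\lp)$ per level) also matches the paper's.

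There is one real gap in your construction, though. You recurse on the subtrees of $T - \{j^*\}$ using the bags \emph{as given}. But bags of a tree decomposition overlap, so a vertex in $B_{j^*} \cap B_j$ for a bag $B_j$ in one of the subtrees would be placed twice --- once in the centroid chain and once deeper in the recursion --- and the result would not be a tree on the vertex set at all. The paper's Algorithm~\ref{alg:sdp_first_block_elim} fixes this by recursing on the bags $J_j - J_i$, i.e., it removes the centroid bag from all remaining bags before recursing. That subtraction preserves the tree-decomposition property on each piece (each vertex still occupies a connected, now possibly empty, set of bags) and ensures the resulting elimination tree is a genuine partition of the vertices. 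Your correctness argument via bag-subtree connectivity is otherwise sound and is the same line of reasoning the paper relies on; once the bag subtraction is added, the proof goes through.
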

\begin{proof}
This basically follows from \cite{bodlaender1995approximating}.
We present the construction here for completeness.
Consider Algorithm~\ref{alg:sdp_first_block_elim}.
Because in each step we choose the centroid of $T$, the maximum recursion depth is at most $\wt O(1)$.
Every bag $J_i$ has size $O(\tau_\lp)$, so maximum depth of the constructed tree is $\wt O(\tau_\lp)$.

It remains to prove that the constructed tree $\cT$ is a valid block elimination tree.
By properties of a bag decomposition, the condition in Lemma~\ref{lem:prove_block_elim_tree} is satisfied. So $\cT$ is a valid block elimination tree.
\end{proof}

\begin{algorithm}[!ht]\caption{Construct an Elimination Tree}\label{alg:sdp_first_block_elim}
\begin{algorithmic}[1]
\Procedure{\textsc{ConstructElimTree}}{$T,J_1,\ldots,J_k$}
\State \textbf{Input:} A tree decomposition of a certain graph
\State \textbf{Output:} Root of the constructed elimination tree
\If{$k\le 1$}
\State Construct a rooted path using an arbitrary ordering of $J_1$. Let $r$ be the root.
\State \Return $r$.
\EndIf
\State Let $i\in [k]$ be the centroid of $T$.
\State Let $[k] = S' \cup S'' \cup \{i\}$ be a partition such that $|S'|, |S''| \le \frac 23 k$
\State $r'\gets \textsc{ConstructElimTree}(T', J_j-J_i: j\in S')$ where $T'$ is the spanning forest of $S'$.
\State $r''\gets \textsc{ConstructElimTree}(T'', J_j-J_i: j\in S'')$ where $T''$ is the spanning forest of $S''$.
\State Construct a rooted path using an arbitrary ordering of $J_i$. Let $r$ be the root, $u$ be the leaf.
\State Set $u$ as the parent of $r'$ and $r''$.
\State \Return $r$.
\EndProcedure
\end{algorithmic}
\end{algorithm}

\subsection{Proof of Theorem~\ref{thm:main_sdp_first}} \label{sec:sdp_first:main_proof}
In this section, we prove Theorem~\ref{thm:main_sdp_first}.

\begin{proof}[Proof of Theorem~\ref{thm:main_sdp_first}]
According to Theorem~\ref{thm:algo_general}, there is an algorithm solving SDP in
\begin{align*}
&\wt{O} ( n^{0.5} \nu_{\max}^{0.5} \cdot (T_H + T_n + T_L + T_Z + \nnz(A) + \eta m_\lp m_{\max})^{0.5} \\
&\cdot (\Tmat(n_{\max}) + T_{\Delta_L,\max} + T_{H,\max} + \eta^2 m_{\max}^2)^{0.5} \log(R/(r\epsilon)))
\end{align*}
time.

It remains to compute the parameters $T_H$, $T_L$, $\eta$, $T_m$, $m_{\max}$, $n_{\max}$, $T_{\Delta_L, \max}$, $T_{H,\max}$, the details can be found in Lemma~\ref{lem:sdp_first_param}.

Let $A $ and $B$ be defined as:
\begin{align*}
    A: = & ~ T_H + T_n + T_L  + T_Z + \nnz(A) + \eta m_{\lp} m_{\max}, \\
    B := & ~ \Tmat(n_{\max}) + T_{\Delta_L,\max} + T_{H,\max} + \eta^2 m_{\max}^2.
\end{align*}

We can show that
\begin{align}\label{eq:sdp_first_A}
    A = & ~ T_H + T_n + T_L + T_Z + \nnz(A) + \eta m_{\lp} m_{\max} \notag \\
    = & ~ O(n_{\sdp} \tau_{\sdp}^{2\omega}) + T_n + T_L + T_Z + \nnz(A) + \eta m_{\lp} m_{\max} \notag \\
    = & ~ O(n_{\sdp} \tau_{\sdp}^{2\omega}) + O(n_{\sdp} \tau_{\sdp}^{2\omega}) + T_L + T_Z + \nnz(A) + \eta m_{\lp} m_{\max} \notag \\
    = & ~ O(n_{\sdp} \tau_{\sdp}^{2\omega}) + T_L + T_Z + \nnz(A) + \eta m_{\lp} m_{\max} \notag \\
    = & ~ O(n_{\sdp} \tau_{\sdp}^{6})  + O(n_{\sdp} \tau_{\sdp}^{2\omega}) + T_Z + \nnz(A) + \eta m_{\lp} m_{\max} \notag \\
    = & ~ O(n_{\sdp} \tau_{\sdp}^{6}) + T_Z + \nnz(A) + \eta m_{\lp} m_{\max} \notag \\
    = & ~ O(n_{\sdp} \tau_{\sdp}^{6}) + O(n_{\sdp} \tau_{\sdp}^{6}) + \nnz(A) + \eta m_{\lp} m_{\max} \notag \\
    = & ~ O(n_{\sdp} \tau_{\sdp}^{6}) + \nnz(A) + \eta m_{\lp} m_{\max} \notag \\
    = & ~ O(n_{\sdp} \tau_{\sdp}^{6}) + O(n_{\sdp} \tau_{\sdp}^{4}) + \eta m_{\lp} m_{\max} \notag \\
    = & ~ O(n_{\sdp} \tau_{\sdp}^{6}) + O(n_{\sdp} \tau_{\sdp}^{4}) + O(n_{\sdp} \tau_{\sdp}^{4}) \notag \\
    = & ~ O(n_{\sdp} \tau_{\sdp}^{6})
\end{align}
where the second step follows from Lemma~\ref{lem:sdp_first_param}\ref{item:lem:sdp_first_TH} ($T_H = O(n_{\sdp} \tau_{\sdp}^{2\omega})$) and the third step follows from Lemma~\ref{lem:sdp_first_param}\ref{item:lem:sdp_first_n} ($T_n = O(n_{\sdp} \tau_{\sdp}^{2\omega})$), the forth step follows from merging the terms, the fifth step follows from Lemma~\ref{lem:sdp_first_param}\ref{item:lem:sdp_first_TL} ($T_L = \wt{O}(\tau_{\sdp} \tau^{2\omega})$, the sixth step follows from merging the terms, the seventh step follows from $T_Z = O(n_{\sdp} \tau_{\sdp}^6)$, the eighth step follows from merging the terms, the ninth step follows   Lemma~\ref{lem:sdp_first_param}\ref{item:lem:sdp_first_nnz} ($\nnz(A) = n_{\sdp} \tau_{\sdp}^4$), and the tenth step follows from Lemma~\ref{lem:sdp_first_param}\ref{item:lem:sdp_first_eta} and Lemma~\ref{lem:sdp_first_param}\ref{item:lem:sdp_first_m}.

For the term $B$, we have
\begin{align}\label{eq:sdp_first_B}
    B = & ~ \Tmat(n_{\max}) + T_{\Delta_L,\max} + T_{H,\max} + \eta^2 m_{\max}^2 \notag \\
    = & ~ O( \tau_{\lp}^{\omega} ) + T_{\Delta_L,\max} + T_{H,\max} + \eta^2 m_{\max}^2 \notag \\
    = & ~ O( \tau_{\lp}^{\omega} ) + O( \tau_{\lp}^3 ) + T_{H,\max} + \eta^2 m_{\max}^2 \notag \\
    = & ~ O( \tau_{\lp}^{\omega} ) + O( \tau_{\lp}^3 ) + O( \tau_{\lp}^{\omega} ) + \eta^2 m_{\max}^2 \notag \\
    = & ~ O( \tau_{\lp}^{\omega} ) + O( \tau_{\lp}^3 ) + O( \tau_{\lp}^{\omega} ) + \wt{O}( \tau_{\lp}^2 ) \notag \\
    = & ~ O(\tau_{\lp}^3) \notag \\
    = & ~ O(\tau_{\sdp}^6)
\end{align}
where the second step follows Lemma~\ref{lem:sdp_first_param}\ref{item:lem:sdp_first_n} ($n_{\max} = \tau_{\lp}$), and the third step follows from Lemma~\ref{lem:sdp_first_param}\ref{item:lem:sdp_first_TDeltaLmax} ($T_{\Delta L, \max} = O(\tau_{\lp}^3)$), the forth step follows from Lemma~\ref{lem:sdp_first_param}\ref{item:lem:sdp_first_TH} ($T_{H,\max} = O(\tau_{\lp}^{\omega})$), the fifth step follows from Lemma~\ref{lem:sdp_first_param}\ref{item:lem:sdp_first_eta} ($\eta = \wt{O}( \tau_{\lp} )$ and $m_{\max} = O(1)$), and the last step follows from $\tau_{\lp} = \tau_{\sdp}^2$.

Finally, we have
\begin{align*}
    & ~ \wt{O}( n^{0.5} \nu_{\max}^{0.5} \cdot A^{0.5} \cdot B^{0.5} \cdot \log(1/\epsilon) ) \\
    = & ~ \wt{O} (n_{\sdp}^{0.5} \tau_{\sdp}^{0.5} \cdot A^{0.5} \cdot B^{0.5} \cdot \log(1/\epsilon)) \\
    = & ~ \wt{O} (n_{\sdp}^{0.5} \tau_{\sdp}^{0.5} \cdot (n_{\sdp} \tau_{\sdp}^6)^{0.5} \cdot (\tau_{\sdp}^6)^{0.5} \cdot \log(1/\epsilon)) \\
    = & ~ \wt{O}( n_{\sdp} \tau_{\sdp}^{6.5} \cdot \log(1/\epsilon) )
\end{align*}
where the first step follows from $n = n_{\sdp}$ and Lemma~\ref{lem:sdp_first_param}\ref{item:lem:sdp_first_eta} ($\nu_{\max} = O(\tau_{\sdp})$), the second step follows from $A = \wt{O}(n_{\sdp} \tau_{\sdp}^6)$ (see Eq.~\eqref{eq:sdp_first_A}) and $B=\wt{O}(\tau_{\sdp}^6)$ (see Eq.~\eqref{eq:sdp_first_B}).
Thus, we complete the proof.

\end{proof}

\subsection{Discussions on Inequality Constraints} \label{sec:sdp_first:discuss_inequality}
In certain problems (e.g., $\mathsf{MaxCut}$) there are inequality constraints. In this section we briefly discuss how to deal with  inequality constraints.

For every inequality constraint of form $A_i \bullet X_{J_i} \ge b_i$ (where $J_i$ is a bag in the tree decomposition), we add a real variable $v_i\in \R$ and replace the inequality constraint with $A_i \bullet X_{J_i} - v_i = b_i$ and $v_i\ge 0$.

This adds $m_{\sdp} = O(n_\sdp \tau_\sdp)$ real variables in total.
Let us discuss the variable block structure.
For every $j\in [n]$, consider the set $S_j := \{i\in [m]: J_i = j\}$.
We divide $S_j$ into $\lceil |S_j| / \tau_\sdp\rceil$ blocks, each with $O(\tau_\sdp)$ variables.
In this way the number of variable blocks $n$ increases by at most a constant factor, maximum size $n_{\max}$ of a variable block does not change, and $\nu_{\max}$ is still $O(\tau_\sdp)$.

Number of constraints $m_\lp$ stays the same, and we can use the same block structure as in Lemma~\ref{lem:sdp_first_block_elim} or Lemma~\ref{lem:sdp_second_block_elim}.

Therefore, the running time claims of Theorem~\ref{thm:main_sdp_first} and Theorem~\ref{thm:main_sdp_second} still hold in the presence of inequality constraints.
\newpage
\clearpage

\section{Our Second Result, An Improved Version of Our First Result}
\label{sec:sdp_second}
In this section, we improve our algorithm in Section~\ref{sec:sdp_first} to get an $\wt O(n_\sdp \tau^{2\omega+0.5})$ time algorithm for low-treewidth SDP.
Outline of this section is as follows.

\begin{itemize}
    \item In Section~\ref{sec:sdp_second:main}, we present the main result of this section, Theorem~\ref{thm:main_sdp_second}.
    \item In Section~\ref{sec:sdp_second:choice_of_parameters}, we state and prove parameters needed to apply Theorem~\ref{thm:algo_general}.
    
    \item In Section~\ref{sec:sdp_second:block_cholesky}, we develop improved algorithms for Cholesky-related computation.
    \item In Section~\ref{sec:sdp_second:main_proof}, we plug in all parameters and finish proof of Theorem~\ref{thm:main_sdp_second}.
\end{itemize}

\subsection{Main Statement}\label{sec:sdp_second:main}
We work under the same setting as Section~\ref{sec:sdp_first}, where we are given program~\eqref{eqn:sdp-ctc-restated} with assumptions in Definition~\ref{defn:sdp-assumptions}.
\begin{theorem}[Our second result]\label{thm:main_sdp_second}
Under assumptions in Definition~\ref{defn:sdp-assumptions}, for any $0<\epsilon\le \frac 12$, there is an algorithm that outputs a solution $U\in \R^{n_\sdp \times \tau_\sdp}$ such that for $X = UU^\top$, we have $A\bullet X = b$ and 
\begin{align*}
    C\bullet X \le \min_{A \bullet X' = b, X'\in \bS^{n_\sdp}} C\bullet X + \epsilon \|C\|_2 R
\end{align*}
in expected time
\begin{align*}
\wt O( n \cdot \tau^{2\omega + 0.5} \log(R/(r\epsilon)))
\end{align*}
where $\wt O$ hides $n^{o(1)}$ terms.
\end{theorem}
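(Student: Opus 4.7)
The plan is to apply the general framework Theorem~\ref{thm:algo_general} to the same reduced program~\eqref{eqn:sdp-ctc-restated} as in Section~\ref{sec:sdp_first}, but with a fundamentally different choice of constraint-block structure that exploits the block-level sparsity of the Cholesky factor. In Section~\ref{sec:sdp_first} the constraint blocks were singletons ($m_{\max} = 1$), forcing the block elimination tree to have block-depth $\eta = \wt O(\tau_\lp)$ and restricting Cholesky-related computations to the scalar level, with costs $T_L = \wt O(n_\lp \tau_\lp^2)$ and $T_{\Delta_L,\max} = \wt O(\tau_\lp^3)$. I would instead group all constraints associated with a single bag $J_j$ into one block of size $\tau_\lp = O(\tau_\sdp^2)$, giving $m = n_\sdp$ constraint blocks of maximum size $m_{\max} = \tau_\lp$.

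The key observation enabling the improvement is that, by the properties of the tree decomposition used to build~\eqref{eqn:sdp-ctc-restated}, if two bags $J_i,J_j$ are non-adjacent in the tree, then no LP variable $X_k$ appears in both the $B_i$ and $B_j$ constraint blocks, and so the corresponding $(i,j)$ block of $A H_{\ov x}^{-1} A^\top$ is identically zero. Applying the centroid construction from Algorithm~\ref{alg:sdp_first_block_elim} at the bag level (rather than the element level) produces a block elimination tree of block-depth $\eta = \wt O(1)$ whose every block-column of the Cholesky factor $L_{\ov x}$ is $\wt O(1)$-block-sparse. At this point one can run Cholesky decomposition block-by-block, performing $\wt O(1)$ products of $\tau_\lp \times \tau_\lp$ matrices per block for a total of $T_L = \wt O(n_\sdp \Tmat(\tau_\lp)) = \wt O(n_\sdp \tau_\lp^\omega)$; and a single-block change to $H_{\ov x}$ touches only $\wt O(1)$ block-columns of $L_{\ov x}$, giving $T_{\Delta_L,\max} = \wt O(\tau_\lp^\omega)$. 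The remaining entries of the SDP Second column in Table~\ref{tab:args_general_with_choices} (notably $T_n, T_m, T_H, T_Z, T_{H,\max} = \wt O(\tau_\lp^\omega)$ per block) follow from standard block matrix arithmetic.

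The main obstacle I expect is carefully stating and proving the block Cholesky lemma (referenced as Lemma~\ref{lem:block_cholesky_decomposition} in Section~\ref{sec:improve_bottleneck}): one must verify that the elimination ordering induced by a post-order traversal of the balanced bag tree produces a Cholesky factor whose block fill-in is controlled only by ancestor relations in the bag tree. This amounts to establishing the block analogue of Lemma~\ref{lem:elim_tree_imply_cholesky} via a block version of Lemma~\ref{lem:prove_block_elim_tree}, and then carefully accounting for the cost of the block-level analogues of forward/backward substitution (used throughout Section~\ref{sec:framework:block_elim_tree}) so that the batched computation $T_Z$ also stays within the $\wt O(n_\sdp \tau_\lp^\omega)$ budget. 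A subtle point is that inside each $\tau_\lp\times\tau_\lp$ diagonal block of $L_{\ov x}$ we still need a full in-block factorization, but this is a single $\Tmat(\tau_\lp)$ call.

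With these parameters plugged into Theorem~\ref{thm:algo_general}, the running time becomes
\begin{align*}
\wt O\!\left( n_\sdp^{0.5} \cdot \tau_\sdp^{0.5} \cdot (n_\sdp \tau_\sdp^{2\omega})^{0.5} \cdot (\tau_\sdp^{2\omega})^{0.5} \cdot \log(R/(r\epsilon)) \right) = \wt O\!\left(n_\sdp \tau_\sdp^{2\omega+0.5}\log(R/(r\epsilon))\right),
\end{align*}
matching the claimed bound. The reduction of \eqref{eqn:sdp-primal-restated} to \eqref{eqn:sdp-ctc-restated} and the $O(n_\sdp \tau_\sdp^3)$-time recovery of $X = UU^\top$ are inherited verbatim from Section~\ref{sec:sdp_first:reduction} via Lemma~\ref{lem:zl18-reduction}, so no additional argument is needed for feasibility or the low-rank form of the output.
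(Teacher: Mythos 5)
Your proposal is correct and follows essentially the same route as the paper: you re-block the constraints by bag so that $m = O(n_\sdp)$, $m_{\max} = O(\tau_\lp)$, apply the centroid construction to obtain a block elimination tree of block-depth $\wt O(1)$ (this is exactly Lemma~\ref{lem:sdp_second_block_elim}), then use block-level Cholesky factorization and rank-one-block updates (Lemma~\ref{lem:block_cholesky_decomposition} and Lemma~\ref{lem:block_cholesky_update}) to get $T_L = \wt O(n_\sdp \tau_\lp^\omega)$ and $T_{\Delta_L,\max} = \wt O(\tau_\lp^\omega)$, and plug all parameters into Theorem~\ref{thm:algo_general}. The only cosmetic difference is that you use a slightly looser generic $T_Z = \wt O(n_\sdp \tau_\lp^\omega)$ while the paper uses $T_Z = \wt O(n_\sdp \tau_\sdp^4)$ from Lemma~\ref{lem:generic_bound_TZ}, but since $A$ is dominated by $T_L$ and $T_H$ this does not affect the final bound.
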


\subsection{Choice of Parameters}\label{sec:sdp_second:choice_of_parameters}
In this section we present value of parameters needed to apply Theorem~\ref{thm:algo_general}.

\begin{lemma}[Choice of parameters in the second result]\label{lem:sdp_second_param}
Under the setting of Theorem~\ref{thm:main_sdp_second}, we can choose the following set of parameters.
\begin{enumerate}[label=(\roman*)]
    \item \label{item:lem:sdp_second_n} $n = O(n_\sdp)$, $n_\lp = O(n_\sdp\tau_\sdp^2)$, $n_{\max} = O(\tau_\sdp^2)$.
    \item \label{item:lem:sdp_second_m} $m = O(n_\sdp)$, $m_\lp = O(n_\sdp \tau_\sdp^2)$, $m_{\max} = O(\tau_\sdp^2)$.
    \item \label{item:lem:sdp_second_taulp} $\tau_\lp = O(\tau_\sdp^2)$.
    \item \label{item:lem:sdp_second_eta} $\eta = \wt O(1)$.
    \item \label{item:lem:sdp_second_nu} $\nu_{\max} = O(\tau_\sdp)$.
    \item \label{item:lem:sdp_second_nnz} $\nnz(A) = O(n_\sdp \tau_\sdp^4)$.
    \item \label{item:lem:sdp_second_TnTm} $T_n = \wt O(n_\sdp \tau_\sdp^{2\omega})$, $T_m = \wt O(n_\sdp \tau_\sdp^{2\omega})$.
    \item \label{item:lem:sdp_second_TH} $T_H = \wt O(n_\sdp \tau_\sdp^{2\omega})$, $T_{H,\max} = \wt O(\tau_\sdp^{2\omega})$.
    \item \label{item:lem:sdp_second_TL} $T_L = \wt O(n_\sdp \tau_\sdp^{2\omega})$.
    \item \label{item:lem:sdp_second_TZ} $T_Z = \wt O(n_\sdp \tau_\sdp^{4})$.
    \item \label{item:lem:sdp_second_TDeltaLmax} $T_{\Delta L,\max} = \wt O(\tau_\sdp^{2\omega})$.
\end{enumerate}
\end{lemma}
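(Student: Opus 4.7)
\textbf{Proof proposal for Lemma~\ref{lem:sdp_second_param}.} The plan is to re-use the reduction of Section~\ref{sec:sdp_first:reduction} (which gives program~\eqref{eqn:sdp-ctc-restated}) but to choose a coarser block structure on the constraints and to invoke block-level Cholesky routines.  Specifically, I will group the $m_\lp=O(n_\sdp\tau_\sdp^2)$ constraints into $n$ blocks indexed by the bags $J_1,\ldots,J_n$ of the tree decomposition, where block $B_j$ contains every type-$A$ constraint $A_i$ with $j_i=j$ together with all type-$N$ constraints $\cN_{i,j}$ incident to $j$. By linear independence and the fact (from Lemma~\ref{lem:tree-decomp-const-deg}) that $T$ has maximum degree $O(1)$, each such block has size $m_j = O(\tau_\sdp^2)$, yielding \ref{item:lem:sdp_second_m}, namely $m=O(n_\sdp)$ and $m_{\max}=O(\tau_\sdp^2)$. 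Items \ref{item:lem:sdp_second_n}, \ref{item:lem:sdp_second_taulp}, \ref{item:lem:sdp_second_nu}, \ref{item:lem:sdp_second_nnz} are inherited verbatim from Lemma~\ref{lem:sdp_first_param}, and \ref{item:lem:sdp_second_TnTm}, \ref{item:lem:sdp_second_TH} follow immediately from $n_i, m_i = O(\tau_\sdp^2)$ together with block-diagonal multiplication $\sum_j \Tmat(O(\tau_\sdp^2)) = \wt O(n_\sdp \tau_\sdp^{2\omega})$ and the observation that Hessian of the log-barrier on a $\tau_\sdp\times\tau_\sdp$ PSD minor costs $O(\tau_\sdp^{2\omega})$.

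For \ref{item:lem:sdp_second_eta} I will run Algorithm~\ref{alg:sdp_first_block_elim} at the \emph{bag} level instead of the coordinate level: at each recursion pick the centroid bag, make it the root block, and recurse on the two subforests. Since each recursive call halves the bag count up to a constant factor, the recursion depth is $\wt O(1)$, and since we no longer unfold each bag into a chain of singletons, the block-depth of the resulting tree is $\eta=\wt O(1)$. Validity of the block elimination property follows from Lemma~\ref{lem:prove_block_elim_tree} applied to the bag-level tree, exactly as in the proof of Lemma~\ref{lem:sdp_first_block_elim}. Given \ref{item:lem:sdp_second_eta} and \ref{item:lem:sdp_second_m}, item \ref{item:lem:sdp_second_TZ} is then a direct application of Lemma~\ref{lem:generic_bound_TZ}: $T_Z = O(\eta^2 m m_{\max}^2) = \wt O(n_\sdp \cdot (\tau_\sdp^2)^2) = \wt O(n_\sdp \tau_\sdp^4)$.

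The main obstacle, and where the real gain over Section~\ref{sec:sdp_first} comes from, is items \ref{item:lem:sdp_second_TL} and \ref{item:lem:sdp_second_TDeltaLmax}. Here the naive column-sparse Cholesky bound of Lemma~\ref{lem:known_cholesky_TL} gives only $\wt O(n_\lp \tau_\lp^2) = \wt O(n_\sdp \tau_\sdp^6)$, which is too slow. The idea is to exploit the block non-zero pattern: with the block elimination tree chosen above, every block-column of $L_{\ov x}$ has at most $\eta=\wt O(1)$ non-zero blocks, each of size $O(\tau_\sdp^2)\times O(\tau_\sdp^2)$. The block version of the standard left-looking Cholesky algorithm then performs $\wt O(m)=\wt O(n_\sdp)$ pivot steps, each consisting of $\wt O(1)$ dense block multiplications/inversions of $\tau_\sdp^2\times\tau_\sdp^2$ matrices, giving $T_L = \wt O(n_\sdp \cdot \Tmat(\tau_\sdp^2)) = \wt O(n_\sdp \tau_\sdp^{2\omega})$; this is the content of the forthcoming Lemma~\ref{lem:block_cholesky_decomposition} alluded to in Section~\ref{sec:improve_bottleneck}. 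By the same token, a single diagonal block change in $H$ alters only the blocks of $L_{\ov x}$ lying on the (block-)path from that block to the root, which by \ref{item:lem:sdp_second_eta} is of length $\wt O(1)$, so a block version of Lemma~\ref{lem:known_cholesky_TDeltaLmax} updates those $\wt O(1)$ affected blocks in $\wt O(\Tmat(\tau_\sdp^2)) = \wt O(\tau_\sdp^{2\omega})$ time, yielding \ref{item:lem:sdp_second_TDeltaLmax}. The technical care required is to verify that the block non-zero pattern is preserved under the block-elimination ordering (this is an immediate consequence of the block analogue of Lemma~\ref{lem:elim_tree_imply_cholesky}) and to check that matrix inverses arising from dense diagonal blocks can indeed be absorbed into $\Tmat(\tau_\sdp^2)$.
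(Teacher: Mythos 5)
Your proposal is correct and follows essentially the same approach as the paper: group the constraints into $O(n_\sdp)$ blocks of size $O(\tau_\sdp^2)$, construct a block elimination tree via centroid decomposition at the bag level so that $\eta = \wt O(1)$, and then invoke block-level Cholesky factorization and update routines to get $T_L = \wt O(m\tau_\lp^\omega) = \wt O(n_\sdp\tau_\sdp^{2\omega})$ and $T_{\Delta L,\max} = \wt O(\tau_\lp^\omega) = \wt O(\tau_\sdp^{2\omega})$; this is exactly what the paper does via Lemma~\ref{lem:sdp_second_block_elim} (Algorithm~\ref{alg:sdp_second_block_elim}), Lemma~\ref{lem:block_cholesky_decomposition}, and Lemma~\ref{lem:block_cholesky_update}. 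The one small imprecision is that you describe the constraint blocks as ``every type-$N$ constraint $\cN_{i,j}$ incident to $j$,'' which would place $\cN_{i,j}$ in two blocks; the paper avoids this ambiguity by letting Algorithm~\ref{alg:sdp_second_block_elim} implicitly define the partition (each constructed block is a subset of a bag after removing the already-eliminated centroid bag), but your version is easily patched by assigning each overlap constraint to a fixed one of its two endpoints, and the resulting quantitative bounds are identical.
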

\begin{proof}
\begin{enumerate}[label=(\roman*)]
    \item Same as Lemma~\ref{lem:sdp_first_param}\ref{item:lem:sdp_first_n}.
    \item Bound on $m_\lp$ is same as Lemma~\ref{lem:sdp_first_param}\ref{item:lem:sdp_first_m}.
    For choice of block structure (and thus $m$ and $m_{\max}$), see Lemma~\ref{lem:sdp_second_block_elim}.
    \item Same as Lemma~\ref{lem:sdp_first_param}\ref{item:lem:sdp_first_taulp}.
    \item By Lemma~\ref{lem:sdp_second_block_elim}.
    \item Same as Lemma~\ref{lem:sdp_first_param}\ref{item:lem:sdp_first_nu}.
    \item Same as Lemma~\ref{lem:sdp_first_param}\ref{item:lem:sdp_first_nnz}.
    \item Bounds on $T_n$ and $T_m$ are direct consequences of \ref{item:lem:sdp_second_n}, \ref{item:lem:sdp_second_m}.
    \item Same as Lemma~\ref{lem:sdp_first_param}\ref{item:lem:sdp_first_TH}.
    \item By Lemma~\ref{lem:block_cholesky_decomposition}, we have $T_L  = \wt O(m \tau_\lp^\omega) = \wt O(n_\sdp \tau_\sdp^{2\omega})$.
    \item By Lemma~\ref{lem:generic_bound_TZ}, we have $T_Z = O(\eta^2 m m_{\max}^2) = \wt O(n_\sdp \tau_{\sdp}^4)$.
    \item  By Lemma~\ref{lem:block_cholesky_update}, we have $T_{\Delta L, \max} = \wt O(\tau_{\lp}^\omega) = \wt O(\tau_\sdp^{2\omega})$.
\end{enumerate}
\end{proof}

\begin{lemma} \label{lem:sdp_second_block_elim}
We can construct a block elimination tree with a block structure where $m = O(n_\sdp)$, $m_{\max}=O(\tau_\lp)$, such that the maximum block-depth is $\eta = \wt O(1)$.
\end{lemma}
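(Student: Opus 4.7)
The plan is to reuse the centroid-decomposition strategy of Algorithm~\ref{alg:sdp_first_block_elim}, but stop one level earlier: instead of unrolling each bag of the LP dual graph tree decomposition (Lemma~\ref{lem:sdp-ctc-treewidth}) into a path of individual constraints (which gave block-depth $\wt O(\tau_\lp)$ in Lemma~\ref{lem:sdp_first_block_elim}), I would keep each entire bag as a single block of size $O(\tau_\lp)$. Since Lemma~\ref{lem:sdp-ctc-treewidth} already produces a tree decomposition with $n = O(n_\sdp)$ bags on an underlying tree $T$ of constant maximum degree (Lemma~\ref{lem:tree-decomp-const-deg}), this immediately targets $m = O(n_\sdp)$ and $m_{\max} = O(\tau_\lp)$, and the standard centroid-decomposition depth bound $O(\log n) = \wt O(1)$ yields $\eta = \wt O(1)$.

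First I would turn the (overlapping) bag covering of Lemma~\ref{lem:sdp-ctc-treewidth} into a genuine partition by designating a canonical home for each constraint: assign every type-$A$ constraint $A_i$ to $B_{j_i}$, and every type-$N$ constraint $\cN_{i,j}$ to $B_{\min(i,j)}$. Each block still has size $O(\tau_\lp)$. Next I would recursively pick the centroid $c$ of $T$, set $B_c$ as the root, and recurse on the components of $T-c$; this produces a rooted tree $\cT$ on the $n$ blocks of depth $O(\log n)$.

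The remaining step is to verify that $\cT$ satisfies the hypothesis of Lemma~\ref{lem:prove_block_elim_tree}. The key structural claim is that whenever an edge of the LP dual graph has its two endpoints in distinct blocks $B_j$ and $B_k$, one has $(j,k)\in E(T)$. I would prove this by a short case analysis: a type-$A$ constraint $A_i$ involves only the variable block $X_{j_i}$, so any cross-block neighbor must be a type-$N$ constraint $\cN_{j_i,k}$ with $(j_i,k)\in E(T)$; and two type-$N$ constraints sharing a variable are themselves attached to endpoints of an edge of $T$. Consequently, any path $v_1,\ldots, v_k$ of blocks with consecutive LP-dual adjacency is a path in $T$, and by the standard property of centroid decompositions the lowest common ancestor of $v_1$ and $v_k$ in $\cT$ lies on this $T$-path, so it equals $v_i$ for some $i$; this is exactly the ancestor required by Lemma~\ref{lem:prove_block_elim_tree}.

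The main obstacle is only the bookkeeping in the case analysis above; once it is checked, the depth, block count, and block size bounds fall out of the centroid decomposition essentially for free, and no further technical difficulty is expected.
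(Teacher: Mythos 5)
Your construction diverges from the paper's in one essential respect, and the divergence introduces a genuine gap.

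The paper's Algorithm~\ref{alg:sdp_second_block_elim} does \emph{not} pre-assign each constraint to a canonical home bag before building the tree. Instead, it picks the centroid $c$, sets the block for that node to be the entire bag $J_c$, and then \emph{subtracts $J_c$} from every remaining bag before recursing on the components of $T-c$. The net effect is that a constraint shared by several bags gets assigned to whichever of those bags has the smallest depth in the centroid-decomposition tree $\cT$ — an assignment that is determined by the recursion itself, not fixed in advance.

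Your pre-assignment rule $\cN_{i,j}\mapsto B_{\min(i,j)}$ is incompatible with the centroid decomposition, and the structural claim you build on it is false. Concretely, take a vertex $k$ of $T$ with two neighbors $b,b'$, both smaller than $k$. Then $\cN_{k,b}$ and $\cN_{k,b'}$ share the variable $X_k$, so they are adjacent in the LP dual graph, and under your rule they land in $B_b$ and $B_{b'}$, respectively. But $b$ and $b'$ are \emph{not} adjacent in $T$ — they are both adjacent to $k$. If the centroid recursion chooses $k$ before $b$ and $b'$ (which it may well do), then $b$ and $b'$ fall into different components of $T-k$, hence into disjoint subtrees of $\cT$. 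So $B_b$ and $B_{b'}$ are siblings rather than ancestor–descendant, and the hypothesis of Lemma~\ref{lem:prove_block_elim_tree} fails for the two-vertex path $(B_b,B_{b'})$. This is not just a failure of the sufficient condition: since $A H_{\ov x}^{-1}A^\top$ has a nonzero entry between the rows $\cN_{k,b}$ and $\cN_{k,b'}$, the Cholesky factor would have a nonzero between two blocks that are not in an ancestor–descendant relation in $\cT$, violating Definition~\ref{defn:block-elimination-tree} outright.

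Your Case 3 analysis is where the argument breaks: two type-$N$ constraints sharing a variable are \emph{not} always ``attached to endpoints of an edge of $T$.'' $\cN_{k,b}$ and $\cN_{k,b'}$ share $X_k$ precisely because $k$ is a common endpoint of the two $T$-edges, while $b$ and $b'$ can be arbitrary distinct neighbors of $k$. Therefore, consecutive blocks along a path in the block-level adjacency graph need not form a path in $T$, which is the step your proof relies on.

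The repair is exactly what the paper's algorithm does: do not commit to a home bag up front. Either follow Algorithm~\ref{alg:sdp_second_block_elim} verbatim (remove $J_c$ from all bags before recursing), or, equivalently, first build the centroid-decomposition tree on $T$ and \emph{then} assign each constraint to the bag containing it whose node is highest in $\cT$. With that assignment, if two constraints share a variable, they share a bag in the tree decomposition, and the highest such bag is an ancestor of both their assigned blocks, which is what Lemma~\ref{lem:prove_block_elim_tree} needs. The bounds $m=O(n_\sdp)$, $m_{\max}=O(\tau_\lp)$, $\eta=\wt O(1)$ then come out just as you describe.
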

\begin{proof}
We run Algorithm~\ref{alg:sdp_second_block_elim}, a block version of Algorithm~\ref{alg:sdp_first_block_elim}.
The difference is that, for every block, we create a new vertex containing all elements in the block, instead of constructing a path.
In this way, the constructed block elimination tree $\cT$ has block depth $\wt O(1)$. Every block in $\cT$ is contained in an original block $J_i$, so maximum block size is $O(\tau_\lp)$.
Furthermore, number of vertices in $T$ is at most number of original blocks, which is at most $n = O(n_\sdp)$ by Lemma~\ref{lem:sdp-ctc-treewidth}.

It remains to prove that the constructed tree $\cT$ is a valid block elimination tree.
By properties of a bag decomposition, the condition in Lemma~\ref{lem:prove_block_elim_tree} is satisfied. So $\cT$ is a valid block elimination tree.
\end{proof}

\begin{algorithm}[!ht]\caption{Construct a Block Elimination Tree}\label{alg:sdp_second_block_elim}
\begin{algorithmic}[1]
\Procedure{\textsc{ConstructBlockElimTree}}{$T,J_1,\ldots,J_k$}
\State \textbf{Input:} A tree decomposition of a graph
\State \textbf{Output:} Root of the constructed elimination tree
\If{$k\le 1$}
\State Let $r$ be a single vertex containing block $J_1$.
\State \Return $r$.
\EndIf
\State Let $i\in [k]$ be the centroid of $T$.
\State Let $[k] = S' \cup S'' \cup \{i\}$ be a partition such that $|S'|, |S''| \le \frac 23 k$
\State $r'\gets \textsc{ConstructBlockElimTree}(T', J_j-J_i: j\in S')$ where $T'$ is the spanning forest of $S'$.
\State $r''\gets \textsc{ConstructBlockElimTree}(T'', J_j-J_i: j\in S'')$ where $T''$ is the spanning forest of $S''$.
\State Let $r$ be a single vertex containing block $J_i$.
\State Set $r$ as the parent of $r'$ and $r''$.
\State \Return $r$.
\EndProcedure
\end{algorithmic}
\end{algorithm}

\subsection{Cholesky Decomposition Using Block Structures} \label{sec:sdp_second:block_cholesky}
In this section we discuss how to utilize the block elimination tree constructed in Lemma~\ref{lem:sdp_second_block_elim} to compute and update Cholesky decomposition faster.

\begin{algorithm}[!ht] \caption{Compute Cholesky factorization using Block Structure}
\label{alg:block_cholesky}
\begin{algorithmic}[1] 
\Procedure{BlockCholesky}{$M, \cT$} \Comment{Lemma~\ref{lem:block_cholesky_decomposition}}
\State \textbf{Input:} PSD matrix $M$ with block elimination tree $\cT$
\State \textbf{Output:} Lower-triangular $L$ such that $M=LL^\top$
\For{$j=1$ to $m$}
\State $L_{j,j} \leftarrow (M_{j,j} - \sum_{k\in \cD(j)\backslash j} L_{j,k} L_{j,k}^\top)^{1/2}$
\Comment{$\cD(j)$ is the set of descendants of $j$}
\label{line:alg_block_cholesky_sqrt}
\For{$i\in \cP(j)$}
\Comment{$\cP(j)$ is the set of ancestors of vertex $j$}
\State $L_{i,j} \leftarrow (M_{i,j} - \sum_{k\in \cD(j)\backslash j} L_{i,k} L_{j,k}^\top) L_{j,j}^{-\top}$ \label{line:alg_block_cholesky_non_diag}
\EndFor
\EndFor
\label{line:alg_block_cholesky_part1end}
\For{$j=1$ to $m$} \label{line:alg_block_cholesky_part2}
\State Compute QR decomposition $L_{j,j} = Q_j R_j$
\State $L_{i,j} \gets L_{i,j} Q_j^\top$ for $i\in \cP(j)$ \label{line:alg_block_cholesky_ortho}
\EndFor
\State \Return $L$
\EndProcedure
\end{algorithmic}
\end{algorithm}

\begin{lemma}[]\label{lem:block_cholesky_decomposition}
Let $M\in \R^{m_\lp\times m_\lp}$ be a PSD matrix with block elimnation tree $\cT$, such that the coordinates have been re-ordered in post-traversal order of $\cT$.
Suppose that $\cT$ has $m$ vertices, has maximum depth $\wt O(1)$, and maximum block size $O(\tau)$.
Then we can compute the Cholesky factorization $M = LL^\top$ in $\wt O(m \tau^\omega)$ time.
\end{lemma}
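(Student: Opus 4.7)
The plan is to verify that Algorithm~\ref{alg:block_cholesky} correctly produces a Cholesky factorization $M = LL^\top$ within the $\wt O(m\tau^\omega)$ budget, splitting the argument into correctness and running time.

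For correctness, I would first prove by induction on $j$ in the postorder of $\cT$ that, after Part~1 (Lines~4--\ref{line:alg_block_cholesky_part1end}), the block lower-triangular matrix $\tilde L$ with symmetric PSD diagonal blocks satisfies $\tilde L\tilde L^\top = M$ and is supported on ancestor-descendant pairs of $\cT$, i.e., $\tilde L_{i,k}\ne 0$ only if $i \in \cP(k)\cup\{k\}$. This sparsity claim is the block analogue of Lemma~\ref{lem:elim_tree_imply_cholesky} and collapses the naive sum $\sum_{k'}\tilde L_{i,k'}\tilde L_{j,k'}^\top$ to $\sum_{k'\in\cD(j)}\tilde L_{i,k'}\tilde L_{j,k'}^\top$. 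Isolating the $k'=j$ term, the recurrences at Lines~\ref{line:alg_block_cholesky_sqrt} (matrix square root) and \ref{line:alg_block_cholesky_non_diag} (back-substitution against $L_{j,j}^{-\top}$) are exactly what is needed to enforce $(\tilde L\tilde L^\top)_{i,j}=M_{i,j}$ for $i\in \cP(j)\cup\{j\}$; for pairs $(i,j)$ off the ancestor-descendant relation, $M_{i,j}=0$ and both sides are automatically zero by the sparsity pattern. Part~2 then post-multiplies each column $j$ by an orthogonal $Q_j^\top$ chosen so that $\tilde L_{j,j}Q_j^\top$ is lower triangular (an LQ-type factorization; the pseudocode writes this as a QR decomposition), yielding a fully lower-triangular $L$. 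Because the same $Q_j$ acts on every entry of column $j$, one has $L_{*,j}L_{*,j}^\top = \tilde L_{*,j}Q_j^\top Q_j \tilde L_{*,j}^\top = \tilde L_{*,j}\tilde L_{*,j}^\top$, so the invariant $LL^\top = \tilde L\tilde L^\top = M$ is preserved.

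For the running time, the dominant cost lies in Part~1. For each $j$, each $k\in\cD(j)\setminus\{j\}$, and each $i\in\cP(j)\cup\{j\}$, the algorithm performs one $\tau\times\tau$ block multiplication at cost $\Tmat(\tau)=O(\tau^\omega)$. Exchanging the order of summation bounds the number of these triples by
\begin{align*}
\sum_{j\in[m]}(|\cP(j)|+1)\,(|\cD(j)|-1)
\le \sum_{k\in[m]}\sum_{j\in\cP(k)\cup\{k\}}(|\cP(j)|+1)
\le m(\eta+1)^2=\wt O(m),
\end{align*}
using $|\cP(j)|,|\cP(k)|\le \eta=\wt O(1)$. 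Precomputing $L_{j,j}^{-1}$ once per column, together with the matrix square root at Line~\ref{line:alg_block_cholesky_sqrt} and the LQ at Line~\ref{line:alg_block_cholesky_ortho}, each costs $O(\tau^\omega)$ per column and sums to $O(m\tau^\omega)$; the per-column orthogonal updates in Part~2 cost $O((\eta+1)\tau^\omega)$ and sum to $\wt O(m\tau^\omega)$. Combining everything yields a total of $\wt O(m\tau^\omega)$.

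The main obstacle is the block sparsity claim $\tilde L_{i,k}\ne 0 \Rightarrow i\in\cP(k)\cup\{k\}$, which simultaneously (i) makes Part~1 well-defined, since the $L_{i,k}$ needed for $k\in\cD(j)$ are exactly the blocks already computed in earlier postorder iterations, (ii) collapses the double sum so the amortized triple count is $m\eta^2$ rather than $m^2$, and (iii) guarantees that off-tree entries of $\tilde L\tilde L^\top$ vanish to match $M_{i,j}=0$. I would establish it via Lemma~\ref{lem:prove_block_elim_tree} applied to $\cT$, leveraging the postorder labeling of blocks and the defining property of the block elimination tree; without this depth-$\eta$ structure, both the correctness of the collapsed sums and the $\wt O(m\tau^\omega)$ bound would fail.
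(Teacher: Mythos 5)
Your proposal is correct and follows essentially the same route as the paper's proof: verify the block recurrences column-by-column, exploit the ancestor-supported sparsity pattern to collapse the inner sums, and bound the running time by counting triples $(i,j,k)$ with $i\in\cP(j)$, $j\in\cP(k)$, each contributing $\wt O(\tau^\omega)$ work. Two small points you leave underargued. First, the sparsity claim for $\tilde L$ is immediate by construction (the algorithm never writes outside the ancestor support), so Lemma~\ref{lem:prove_block_elim_tree} is not the right hook; what you actually need from Definition~\ref{defn:block-elimination-tree} is that $M_{i,j}=0$ for non-ancestor-descendant pairs, which follows because the (true) Cholesky factor of $M$ is supported on the ancestor relation, forcing $M=LL^\top$ to be as well. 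Second, you assert the diagonal residuals are symmetric PSD so the matrix square root at Line~\ref{line:alg_block_cholesky_sqrt} exists, but do not sketch why; the paper proves it via the Schur-complement identity
\begin{align*}
M_{j,j}-L_{j,\cD'(j)}L_{j,\cD'(j)}^\top = M_{j,j}-M_{j,\cD'(j)}M_{\cD'(j),\cD'(j)}^{-1}M_{\cD'(j),j}\succeq 0,
\end{align*}
which is worth including since the correctness of the whole block recursion hinges on it.
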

\begin{proof}
We run Algorithm~\ref{alg:block_cholesky}.

\textbf{Correctness:}
From the algorithm we can see $L_{i,j}\ne 0$ only when $i\in \cP(j)$.
So for $i,j\in [m]$, if $(LL^\top)_{i,j}\ne 0$, then either $i\in \cP(k)$ or $j\in \cP(i)$.
WLOG assume that $i\in \cP(j)$.
If $j=i$, then Line~\ref{line:alg_block_cholesky_sqrt} shows that $(LL^\top)_{i,i} = M_{i,i}$.
If $j\ne i$, then Line~\ref{line:alg_block_cholesky_non_diag} shows that $(LL^\top)_{i,j} = M_{i,j}$.
So $LL^\top = M$.

Then we prove that the square root in Line~\ref{line:alg_block_cholesky_sqrt} always exists.
Let $\cD'(j) := \cD(j) \backslash j$.
Recall that
\begin{align*}
L_{j, \cD'(j)} L_{\cD'(j),\cD'(j)}^{\top} = M_{j, \cD'(j)}.
\end{align*}
So
\begin{align*}
&~M_{j,j} - L_{j, \cD'(j)}L_{j, \cD'(j)}^\top\\
=&~ M_{j,j} - M_{j, \cD'(j)} L_{\cD'(j),\cD'(j)}^{-\top} L_{\cD'(j),\cD'(j)}^{-1} M_{\cD'(j), j}\\
=&~ M_{j,j} - M_{j, \cD'(j)} M_{\cD'(j),\cD'(j)}^{-1} M_{\cD'(j), j} \\
\succeq &~ 0
\end{align*}
where the last step is by property of PSD matrix.
So the square root in Line~\ref{line:alg_block_cholesky_sqrt} can always be taken.

Finally, let us examine the effect of Line~\ref{line:alg_block_cholesky_ortho}.
Note that before Line~\ref{line:alg_block_cholesky_part2}, all $L_{j,j}$ are PSD matrices.
Line~\ref{line:alg_block_cholesky_ortho} makes update $L_{j,j} \gets L_{j,j} Q_j^\top = (Q_j L_{j,j})^\top = R_j^\top$, which makes $L_{j,j}$ a lower triangular matrix.
So in the end $L$ is a lower triangular matrix as desired.

\textbf{Running time:}
Because the block elimination tree has block depth $\wt O(1)$, there are $\wt O(m)$ triples $(i,j,k)$ with $i\in \cP(j)$, $j\in \cP(k)$. For each such triple, we take $\wt O(\tau^\omega)$ time to perform the corresponding computations.
So computation before Line~\ref{line:alg_block_cholesky_part1end} takes $\wt O(m \tau^\omega)$ time.
By \cite{demmel2007fast}, computing QR decomposition of a matrix of size $\tau$ takes $\wt O(\tau^{\omega})$ time.
So computation starting from Line~\ref{line:alg_block_cholesky_part2} takes $\wt O(m\tau^{\omega})$ time.
Therefore the whole algorithm runs in $\wt O(m \tau^\omega)$ time.
\end{proof}

\begin{algorithm}[!ht] \caption{Update Cholesky factorization using Block Structure}
\label{alg:block_cholesky_update}
\begin{algorithmic}[1] 
\Procedure{BlockCholeskyUpdate}{$M, \cT, \Delta_M, v$} \Comment{Lemma~\ref{lem:block_cholesky_update}}
\State $L^\new \gets L$
\For{$j\in \cP(v)$ from bottom to up}
\State $L^\new_{j,j} \gets (L_{j,j} L_{j,j}^\top + \Delta_{M,(j,j)}- \sum_{k\in \cP(v) \cap (\cD(j)\backslash j)} (L_{j,k}^\new L_{j,k}^{\new \top} - L_{j,k}L_{j,k}^\top))^{1/2}$
\label{line:alg_block_cholesky_update_sqrt}
\For{$i\in \cP(j)$}
\State $L_{i,j}^\new \leftarrow (L_{i,j} L_{j,j}^\top + \Delta_{M,(i,j)}- \sum_{k\in \cP(v) \cap (\cD(j)\backslash j)} (L_{i,k}^\new L_{j,k}^{\new \top} - L_{i,k} L_{j,k}^\top))(L^{\new}_{j,j})^{-\top}$
\label{line:alg_block_cholesky_update_non_diag}

\EndFor
\EndFor \label{line:alg_block_cholesky_update_part1end}
\For{$j\in \cP(v)$ from bottom to up}
\State Compute QR decomposition $L^\new_{j,j} = Q_j R_j$
\State $L^\new_{i,j} \gets L^\new_{i,j} Q_j^\top$ for $i\in \cP(j)$
\EndFor
\State \Return $(L^\new-L)_{\cP(v),v}$
\EndProcedure
\end{algorithmic}
\end{algorithm}

\begin{lemma}\label{lem:block_cholesky_update}
Work under the setting of Lemma~\ref{lem:block_cholesky_decomposition}. In addition, assume that we already computed the Cholesky decomposition $M=LL^\top$.
Suppose we perform an update $M\gets M + \Delta_M$, where support of $\Delta_M$ is contained in the union of block row $v$ and block column $v$, for some vertex $v\in \cT$.
Then in $\wt O(\tau^\omega)$ time, we can compute $\Delta_L$ such that $M+\Delta_M = (L+\Delta_L) (L+\Delta_L)^\top$ is the Cholesky factorization of $M+\Delta_M$.
\end{lemma}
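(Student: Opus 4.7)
The plan is to mirror the proof of Lemma~\ref{lem:block_cholesky_decomposition} but localize all work to the ancestor path $\cP(v)$. First I would establish a support claim: in $\Delta_L := L^\new - L$, only blocks $(i,j)$ with $j\in \cP(v)$ and $i\in \cP(j)$ can be nonzero. The reason is that by the block elimination tree property $L_{i,j}\ne 0$ only if $i\in \cP(j)$, and the Cholesky factors of block column $j\not \in \cP(v)$ are determined by $M$ restricted to rows/columns in $\cD(j)$, none of which intersect the support of $\Delta_M$ (since $\supp(\Delta_M)$ lives in block row/column $v$, and $v\not \in \cD(j)$ when $j\not \in \cP(v)$).

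Next, for correctness, I would walk through Algorithm~\ref{alg:block_cholesky_update} and verify that for each $j\in \cP(v)$ (processed bottom-up), the formulas in Lines~\ref{line:alg_block_cholesky_update_sqrt} and \ref{line:alg_block_cholesky_update_non_diag} reproduce exactly the values $L_{j,j}^\new$ and $L_{i,j}^\new$ that Algorithm~\ref{alg:block_cholesky} would compute if applied to $M+\Delta_M$ from scratch. The key identity is
\begin{align*}
\sum_{k\in \cD(j)\backslash j} L^\new_{j,k} L^{\new\top}_{j,k} - \sum_{k\in \cD(j)\backslash j} L_{j,k} L^\top_{j,k} = \sum_{k\in \cP(v)\cap (\cD(j)\backslash j)} \bigl(L^\new_{j,k} L^{\new\top}_{j,k} - L_{j,k} L^\top_{j,k}\bigr),
\end{align*}
which holds because by the support claim above, $L^\new_{*,k} = L_{*,k}$ whenever $k\not \in \cP(v)$. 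Rearranging yields precisely the recurrences in Lines~\ref{line:alg_block_cholesky_update_sqrt} and \ref{line:alg_block_cholesky_update_non_diag}, and an analogous computation handles $L^\new_{i,j}$. Existence of the square root in Line~\ref{line:alg_block_cholesky_update_sqrt} follows from the same Schur-complement argument as in Lemma~\ref{lem:block_cholesky_decomposition} applied to $M+\Delta_M$ (which we assume stays PSD). The final QR pass makes $L^\new$ lower triangular by the same reasoning as in Lemma~\ref{lem:block_cholesky_decomposition}.

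For the running time, $|\cP(v)| = \wt O(1)$ by the maximum block-depth assumption. For each $j\in \cP(v)$, the inner loop ranges over $i\in \cP(j)$, which is also of size $\wt O(1)$, and each inner correction sum ranges over $k\in \cP(v)\cap (\cD(j)\backslash j)$, again of size $\wt O(1)$. Every individual block operation (matrix multiplication, inversion, symmetric square root, and QR) on blocks of size $O(\tau)$ costs $\wt O(\tau^\omega)$, using \cite{demmel2007fast} for QR and the square root. Multiplying, the total running time is $\wt O(\tau^\omega)$.

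The main obstacle I expect is verifying the support claim cleanly and then carrying it into the difference identity, so that one sees the ``only $\cP(v)$ terms contribute'' cancellation with no book-keeping errors. Once that localization is rigorously in place, both correctness and the $\wt O(\tau^\omega)$ bound follow from essentially the same analysis as Lemma~\ref{lem:block_cholesky_decomposition}, restricted to the short ancestor path $\cP(v)$.
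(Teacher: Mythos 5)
Your proof takes essentially the same approach as the paper's: walk through Algorithm~\ref{alg:block_cholesky_update}, use the cancellation identity to match what Algorithm~\ref{alg:block_cholesky} would produce on $M+\Delta_M$ along the ancestor path $\cP(v)$, handle the square root and QR pass exactly as in Lemma~\ref{lem:block_cholesky_decomposition}, and count the $\wt O(1)$ nested-ancestor triples for the time bound. The one place where you add detail beyond the paper is the justification of the support claim, and there is a small gap there: the $j$-th block column of $L$ also depends on $M_{\cP(j),\cD(j)}$ (not only on $M_{\cD(j),\cD(j)}$), so observing that $v\notin\cD(j)$ does not by itself rule out an effect on column $j$ when $j\in\cD(v)$; what actually makes the support claim go through is that $\Delta_M$ is supported on $\cP(v)\times\cP(v)$ in the IPM application, which is also what Algorithm~\ref{alg:block_cholesky_update} and the paper's own one-line assertion implicitly assume.
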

\begin{proof}
We run Algorithm~\ref{alg:block_cholesky_update}.

\textbf{Correctness:}
From Algorithm~\ref{alg:block_cholesky} we can see that only $L_{*,j}$ for $j\in \cP(v)$ are update. So we do not need to care about $L_{*,j}$ for $j\not \in \cP(v)$.
Let $M^\new = M + \Delta_M$.
In Algorithm~\ref{alg:block_cholesky_update}, Line~\ref{line:alg_block_cholesky_update_sqrt}, we have
\begin{align*}
L^\new_{j,j} &= (L_{j,j} L_{j,j}^\top + \Delta_{M,(j,j)}- \sum_{k\in \cP(v) \cap (\cD(j)\backslash j)} (L_{j,k}^\new L_{j,k}^{\new \top} - L_{j,k}L_{j,k}^\top))^{1/2}\\
&= (M_{j,j} + \Delta_{M,(j,j)} - \sum_{k\in \cD(j)\backslash(j)} L_{j,k} L_{j,k}^\top - \sum_{k\in \cP(v) \cap (\cD(j)\backslash j)} (L_{j,k}^\new L_{j,k}^{\new \top} - L_{j,k}L_{j,k}^\top))^{1/2} \\
&= (M^\new_{j,j} - \sum_{k\in \cD(j)\backslash j} L_{j,k}^\new L_{j,k}^{\new \top})^{1/2}.
\end{align*}
In Algorithm~\ref{alg:block_cholesky_update}, Line~\ref{line:alg_block_cholesky_update_non_diag}, we have
\begin{align*}
    L_{i,j}^\new &= (L_{i,j} L_{j,j}^\top + \Delta_{M,(i,j)}- \sum_{k\in \cP(v) \cap (\cD(j)\backslash j)} (L_{i,k}^\new L_{j,k}^{\new \top} - L_{i,k} L_{j,k}^\top))(L^{\new}_{j,j})^{-\top} \\
    &= (M_{i,j} + \Delta_{M,(i,j)} - \sum_{k\in \cD(j) \backslash j} L_{i,k} L_{j,k}^\top -  \sum_{k\in \cP(v) \cap (\cD(j)\backslash j)} (L_{i,k}^\new L_{j,k}^{\new \top} - L_{i,k} L_{j,k}^\top))(L^{\new}_{j,j})^{-\top}\\
    &= (M^\new_{i,j} - \sum_{k\in \cD(j) \backslash j} L_{i,k}^\new L_{j,k}^{\new \top}) (L^{\new}_{j,j})^{-\top}.
\end{align*}
Comparing with Algorithm~\ref{alg:block_cholesky}, we see that the updated values correct, i.e., at the end of Algorithm~\ref{alg:block_cholesky_update}, Line~\ref{line:alg_block_cholesky_update_part1end}, we have $M^\new = L^{\new} L^{\new \top}$. Finally, we rotate the updated columns by orthogonal matrices to achieve lower triangular matrix.

\textbf{Running time:}
There are $\wt O(1)$ tuples $(i,j,k)$ such that $k\in \cP(v), j\in \cP(k), i\in \cP(j)$.
For every such tuple, computation time is $\wt O(\tau^\omega)$.
So total update time is $\wt O(\tau^\omega)$.
\end{proof}

\subsection{Proof of Theorem~\ref{thm:main_sdp_second}} \label{sec:sdp_second:main_proof}

\begin{proof}[Proof of Theorem~\ref{thm:main_sdp_second}]
According to Theorem~\ref{thm:algo_general}, there is an algorithm solving SDP in
\begin{align*}
&\wt{O} ( n^{0.5} \nu_{\max}^{0.5} \cdot (T_H + T_n + T_L + T_Z + \nnz(A) + \eta m_\lp m_{\max})^{0.5} \\
&\cdot (\Tmat(n_{\max}) + T_{\Delta_L,\max} + T_{H,\max} + \eta^2 m_{\max}^2)^{0.5} \log(R/(r\epsilon)))
\end{align*}
time.

It remains to compute the parameters $T_H$, $T_L$, $\eta$, $T_m$, $m_{\max}$, $n_{\max}$, $T_{\Delta_L, \max}$, $T_{H,\max}$, the details can be found in Lemma~\ref{lem:sdp_second_param}.

Let $A $ and $B$ be defined as:
\begin{align*}
    A: = & ~ T_H + T_n + T_L + T_Z + \nnz(A) + \eta m_{\lp} m_{\max}, \\
    B := & ~ \Tmat(n_{\max}) + T_{\Delta_L,\max} + T_{H,\max} + \eta^2 m_{\max}^2.
\end{align*}

We can show that
\begin{align}\label{eq:sdp_second_A}
    A = & ~ T_H + T_n + T_L + T_Z + \nnz(A) + \eta m_{\lp} m_{\max} \notag \\
    = & ~ O(n_{\sdp} \tau_{\sdp}^{2\omega}) + T_n + T_L + T_Z + \nnz(A) + \eta m_{\lp} m_{\max} \notag \\
    = & ~ O(n_{\sdp} \tau_{\sdp}^{2\omega}) + O(n_{\sdp} \tau_{\sdp}^{2\omega}) + T_L + T_Z + \nnz(A) + \eta m_{\lp} m_{\max} \notag \\
    = & ~ O(n_{\sdp} \tau_{\sdp}^{2\omega}) + T_L + T_Z + \nnz(A) + \eta m_{\lp} m_{\max} \notag \\
    = & ~ O(n_{\sdp} \tau_{\sdp}^{2\omega})  + O(n_{\sdp} \tau_{\sdp}^{2\omega}) + T_Z + \nnz(A) + \eta m_{\lp} m_{\max} \notag \\
    = & ~ O(n_{\sdp} \tau_{\sdp}^{2\omega}) + T_Z + \nnz(A) + \eta m_{\lp} m_{\max} \notag \\
    = & ~ O(n_{\sdp} \tau_{\sdp}^{2\omega}) + O(n_{\sdp} \tau_{\sdp}^{2\omega}) + \nnz(A) + \eta m_{\lp} m_{\max} \notag \\
    = & ~ O(n_{\sdp} \tau_{\sdp}^{2\omega}) + \nnz(A) + \eta m_{\lp} m_{\max} \notag \\
    = & ~ O(n_{\sdp} \tau_{\sdp}^{2\omega}) + O(n_{\sdp} \tau_{\sdp}^{4}) + \eta m_{\lp} m_{\max} \notag \\
    = & ~ O(n_{\sdp} \tau_{\sdp}^{2\omega}) + O(n_{\sdp} \tau_{\sdp}^{4}) + O(n_{\sdp} \tau_{\sdp}^{4}) \notag \\
    = & ~ O(n_{\sdp} \tau_{\sdp}^{2\omega})
\end{align}
where the second step follows from Lemma~\ref{lem:sdp_second_param}\ref{item:lem:sdp_second_TH} ($T_H = O(n_{\sdp} \tau_{\sdp}^{2\omega})$) and the third step follows from Lemma~\ref{lem:sdp_second_param}\ref{item:lem:sdp_second_n} ($T_n = O(n_{\sdp} \tau_{\sdp}^{2\omega})$), the forth step follows from merging the terms, the fifth step follows from Lemma~\ref{lem:sdp_second_param}\ref{item:lem:sdp_second_TL} ($T_L = \wt{O}(\tau_{\sdp} \tau^{2\omega})$, the sixth step follows from merging the terms, the seventh step follows from $T_Z = O(n_{\sdp} \tau_{\sdp}^4)$, the eighth step follows from merging the terms, the ninth step follows   Lemma~\ref{lem:sdp_second_param}\ref{item:lem:sdp_second_nnz} ($\nnz(A) = n_{\sdp} \tau_{\sdp}^4$), and the tenth step follows from Lemma~\ref{lem:sdp_second_param}\ref{item:lem:sdp_second_eta} and Lemma~\ref{lem:sdp_second_param}\ref{item:lem:sdp_second_m}.

For the term $B$, we have
\begin{align}\label{eq:sdp_second_B}
    B = & ~ \Tmat(n_{\max}) + T_{\Delta_L,\max} + T_{H,\max} + \eta^2 m_{\max}^2 \notag \\
    = & ~ O( \tau_{\lp}^{\omega} ) + T_{\Delta_L,\max} + T_{H,\max} + \eta^2 m_{\max}^2 \notag \\
    = & ~ O( \tau_{\lp}^{\omega} ) + O( \tau_{\lp}^{\omega} ) + T_{H,\max} + \eta^2 m_{\max}^2 \notag \\
    = & ~ O( \tau_{\lp}^{\omega} ) + O( \tau_{\lp}^{\omega} ) + O( \tau_{\lp}^{\omega} ) + \eta^2 m_{\max}^2 \notag \\
    = & ~ O( \tau_{\lp}^{\omega} ) + O( \tau_{\lp}^{\omega} ) + O( \tau_{\lp}^{\omega} ) + \wt{O}( \tau_{\lp}^2 ) \notag \\
    = & ~ O(\tau_{\lp}^{\omega}) \notag \\
    = & ~ O(\tau_{\sdp}^{2\omega})
\end{align}
where the second step follows Lemma~\ref{lem:sdp_second_param}\ref{item:lem:sdp_second_n} ($n_{\max} = \tau_{\lp}$), and the third step follows from Lemma~\ref{lem:sdp_second_param}\ref{item:lem:sdp_second_TDeltaLmax} ($T_{\Delta L, \max} = O(\tau_{\lp}^{\omega})$), the forth step follows from Lemma~\ref{lem:sdp_second_param}\ref{item:lem:sdp_second_TH} ($T_{H,\max} = O(\tau_{\lp}^{\omega})$), the fifth step follows from Lemma~\ref{lem:sdp_second_param}\ref{item:lem:sdp_second_eta} ($\eta = \wt{O}(1) $ and $m_{\max} = ({\tau}_{\lp})$), and the last step follows from $\tau_{\lp} = \tau_{\sdp}^2$.

Finally, we have
\begin{align*}
    & ~ \wt{O}( n^{0.5} \nu_{\max}^{0.5} \cdot A^{0.5} \cdot B^{0.5} \cdot \log(1/\epsilon) ) \\
    = & ~ \wt{O} (n_{\sdp}^{0.5} \tau_{\sdp}^{0.5} \cdot A^{0.5} \cdot B^{0.5} \cdot \log(1/\epsilon)) \\
    = & ~ \wt{O} (n_{\sdp}^{0.5} \tau_{\sdp}^{0.5} \cdot (n_{\sdp} \tau_{\sdp}^{2\omega})^{0.5} \cdot (\tau_{\sdp}^{2\omega})^{0.5} \cdot \log(1/\epsilon)) \\
    = & ~ \wt{O}( n_{\sdp} \tau_{\sdp}^{2\omega+0.5} \cdot \log(1/\epsilon) )
\end{align*}
where the first step follows from $n = n_{\sdp}$ and Lemma~\ref{lem:sdp_second_param}\ref{item:lem:sdp_second_eta} ($\nu_{\max} = O(\tau_{\sdp})$), the second step follows from $A = \wt{O}(n_{\sdp} \tau_{\sdp}^{2\omega})$ (see Eq.~\eqref{eq:sdp_second_A}) and $B=\wt{O}(\tau_{\sdp}^{2\omega})$ (see Eq.~\eqref{eq:sdp_second_B}). 

Thus, we complete the proof.

\end{proof}

\newpage
\clearpage
\section{Decomposable SDP} \label{sec:sdp_general}
In this section we give a more general version of Theorem~\ref{thm:main_sdp_second}, which can handle more general decomposable SDPs.
Outline of this section is as follows.

\begin{itemize}
    \item In Section~\ref{sec:sdp_general:main}, we present the main result of this section, Theorem~\ref{thm:main_sdp_general}.
    \item In Section~\ref{sec:sdp_general:reduction}, we reduce decomposable SDPs to a form which can be handled by Theorem~\ref{thm:algo_general}.
    \item In Section~\ref{sec:sdp_general:choice_of_parameters}, we state and prove parameters needed to apply Theorem~\ref{thm:algo_general}.
  
    \item In Section~\ref{sec:sdp_general:main_proof}, we plug in all parameters and finish proof of Theorem~\ref{thm:main_sdp_general}.
\end{itemize}

\subsection{Main Statement} \label{sec:sdp_general:main}
We consider semidefinite programs of form \eqref{eqn:sdp-primal-restated}.
Instead of assumptions in Definition~\ref{defn:sdp-assumptions}, we make the following assumptions on the program.

\begin{definition} \label{defn:sdp-general-assumptions}
We make the following assumptions on program \eqref{eqn:sdp-primal-restated}.
\begin{itemize}
    \item Assume that constraint matrices $A_1,\ldots, A_{m_\sdp} \in \R^{n_\sdp \times n_{\sdp}}$ are linearly independent.
    \item Assume that we are given a tree decomposition (Definition~\ref{defn:treewidth}) $(T, J_1,\ldots, J_k)$ of the sparsity graph (Definition~\ref{def:sparsity-graph}) of \eqref{eqn:sdp-primal-restated} with maximum bag size $\tau_\sdp$.
    Note that here we work with the sparsity graph, not the SDP graph (Definition~\ref{def:sdp-graph}).
    \item Assume that for every constraint $A_i$, we have a connected (in the given tree decomposition) set $S_i$ of bags, such that the union of these bags contains the support of $A_i$, i.e.,
    \begin{align*}
        \{(u,v)\in [n_\sdp] \times [n_\sdp] : A_{i,u,v}\ne 0\} \subseteq \bigcup_{j\in S_i} J_j\times J_j.
    \end{align*}
    Let $\gamma_{\max}$ be the maximum number of constraints a bag correspond to.
    \item Assume that any feasible solution $X\in \bS^{n_\sdp}$ satisfies $\|X\|_\op \le R$.
    \item Assume that there exists $Z\in \bS^{n_\sdp}$ such that $A\bullet Z = b$ and $\lambda_{\min}(Z) \ge r$.
\end{itemize}
\end{definition}

\begin{theorem}\label{thm:main_sdp_general}
Under assumptions in Definition~\ref{defn:sdp-general-assumptions}, for any $0<\epsilon\le \frac 12$, there is an algorithm that outputs a solution $U\in \R^{n_\sdp \times \tau_\sdp}$ such that for $X = UU^\top$, we have $A\bullet X = b$ and
\begin{align*}
    C\bullet X \le \min_{A\bullet X' = b, X' \in \bS^{n_\sdp}} C\bullet X + \epsilon \|C\|_2 R
\end{align*}
in expected time
\begin{align*}
    \wt O(n \cdot \tau_\sdp^{0.5} (\tau_\sdp^2+ \gamma_{\max})^\omega)
\end{align*}
where $\wt O$ hides $n^{o(1)}$ terms.
\end{theorem}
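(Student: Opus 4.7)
}

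The plan is to reduce the decomposable SDP to an instance of the general program~\eqref{eqn:lp_general} satisfying the hypotheses of Theorem~\ref{thm:algo_general}, and then plug in parameters in the same style as Section~\ref{sec:sdp_second}. The reduction will mirror the one used to derive~\eqref{eqn:sdp-ctc-restated} from~\eqref{eqn:sdp-primal-restated} (Lemma~\ref{lem:zl18-reduction}), but we must additionally handle constraints whose support spans more than one bag. First, for every bag $J_j$ of the given tree decomposition of the sparsity graph, I introduce a PSD variable $X_j \in \R^{|J_j|\times|J_j|}$ and the overlap constraints $\cN_{i,j}(X_i)=\cN_{j,i}(X_j)$ for every tree edge $(i,j)\in E(T)$. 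Second, for each constraint $A_i$, because the set $S_i$ of bags covering $\supp(A_i)$ is connected in $T$, I decompose $A_i=\sum_{j\in S_i}A_{i,j}$ so that each $A_{i,j}$ is supported on $J_j\times J_j$, pick a root $j_i^{\ast}\in S_i$, and propagate partial sums up the subtree $S_i$: for every non-root $j\in S_i$ I add a scalar auxiliary variable $y_{i,j}$ and a scalar equation
\begin{align*}
y_{i,j}=A_{i,j}\bullet X_j+\sum_{j'\in S_i:\, \text{parent}(j')=j} y_{i,j'},
\end{align*}
with a final equation $b_i=A_{i,j_i^{\ast}}\bullet X_{j_i^{\ast}}+\sum_{j'\in S_i:\,\text{parent}(j')=j_i^{\ast}} y_{i,j'}$ at the root. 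Correctness and equivalence of optimal values (together with recovery of a rank-$\tau_\sdp$ factorization $X=UU^\top$) follow exactly as in Lemma~\ref{lem:zl18-reduction}, because the propagation chain only enforces $\sum_{j\in S_i}A_{i,j}\bullet X_j=b_i$ whenever the $X_j$'s satisfy the overlap constraints.

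Next I would quantify the resulting program. Variable blocks are indexed by bags, giving $n=O(n_\sdp)$ blocks of size $n_{\max}=O(\tau_\sdp^2)$, hence $n_\lp=O(n_\sdp\tau_\sdp^2)$. Each bag is incident to $O(\tau_\sdp^2)$ overlap rows, $O(\tau_\sdp^2)$ original rows of the form $A_{i,j}\bullet X_j=\ldots$, and at most $\gamma_{\max}$ propagation rows coming from constraints $A_i$ with $j\in S_i$; I group these into a single constraint block of size $m_{\max}=O(\tau_\sdp^2+\gamma_{\max})$, yielding $m=O(n_\sdp)$ constraint blocks and $\tau_\lp=O(\tau_\sdp^2+\gamma_{\max})$ by a bag-based tree-decomposition argument identical to Lemma~\ref{lem:sdp-ctc-treewidth}. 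Using the log-determinant barrier on each PSD block and the log-barrier on each scalar $y_{i,j}\ge 0$ part (or simply a quadratic self-concordant potential if we treat the auxiliary variables as unconstrained equations---which is the cleaner route since they are equality constraints, leaving $\nu_{\max}=O(\tau_\sdp)$). I also need $\nnz(A)=O(n_\sdp\tau_\sdp^2(\tau_\sdp^2+\gamma_{\max}))$, which is immediate.

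For the block elimination tree I would run the exact centroid recursion of Algorithm~\ref{alg:sdp_second_block_elim} on the bag tree $T$, producing a block elimination tree with $m=O(n_\sdp)$ vertices, block depth $\eta=\wt O(1)$, and maximum block size $O(\tau_\lp)$; validity follows verbatim from Lemma~\ref{lem:prove_block_elim_tree} since adjacency in the LP dual graph forces two constraint blocks to live on bags connected in $T$. Armed with this block elimination tree, the block Cholesky decomposition and update routines from Lemma~\ref{lem:block_cholesky_decomposition} and Lemma~\ref{lem:block_cholesky_update} give $T_L=\wt O(n_\sdp\tau_\lp^\omega)$ and $T_{\Delta_L,\max}=\wt O(\tau_\lp^\omega)$. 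Hessian-related parameters are block-diagonal computations: $T_H=\wt O(n_\sdp\tau_\sdp^{2\omega})$, $T_{H,\max}=\wt O(\tau_\sdp^{2\omega})$, $T_n=\wt O(n_\sdp\tau_\sdp^{2\omega})$, and $T_Z=\wt O(\eta^2 m\,m_{\max}^2)$ by Lemma~\ref{lem:generic_bound_TZ}.

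Finally I would substitute all these parameters into the running-time expression of Theorem~\ref{thm:algo_general}, exactly as in Section~\ref{sec:sdp_second:main_proof}: the dominant terms in $A:=T_H+T_n+T_L+T_Z+\nnz(A)+\eta m_\lp m_{\max}$ collapse to $\wt O(n_\sdp\tau_\lp^\omega)$ and the dominant term in $B:=\Tmat(n_{\max})+T_{\Delta_L,\max}+T_{H,\max}+\eta^2 m_{\max}^2$ collapses to $\wt O(\tau_\lp^\omega)$, giving the desired $\wt O(n_\sdp^{0.5}\nu_{\max}^{0.5}\cdot A^{0.5}\cdot B^{0.5}\cdot\log(R/(r\epsilon)))=\wt O(n_\sdp\,\tau_\sdp^{0.5}(\tau_\sdp^2+\gamma_{\max})^\omega\log(R/(r\epsilon)))$ bound. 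The main obstacle I expect is the bookkeeping for the new propagation constraints: verifying both that the $(S_i)_{i\in[m_\sdp]}$ fit inside a single LP-dual tree decomposition bag per graph edge (so that $\tau_\lp$ is indeed $O(\tau_\sdp^2+\gamma_{\max})$ rather than larger) and that the block elimination tree accommodates them without inflating $\eta$. The remaining steps are parameter-plug-in and mirror Section~\ref{sec:sdp_second} essentially verbatim.
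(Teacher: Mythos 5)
Your reduction departs from the paper's in a way that introduces genuine trouble. Where you encode a multi-bag constraint $A_i\bullet X=b_i$ by a chain of propagation equations with auxiliary scalar variables $y_{i,j}$, the paper simply keeps the single row $\sum_{j\in S_i}A_{i,j}\bullet X_j=b_i$ and assigns this one constraint vertex of the LP dual graph to \emph{every} bag $B_j$ with $j\in S_i$. Because the hypothesis of Definition~\ref{defn:sdp-general-assumptions} guarantees $S_i$ is connected in $T$, those bags form a connected subtree, which is exactly the second requirement of Definition~\ref{defn:treewidth}; this is precisely what Lemma~\ref{lem:sdp-general-reduced-treewidth} checks. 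No auxiliary variables or telescoping are needed. The key observation you miss is that a single constraint row is permitted to live in several bags of the LP-dual tree decomposition, so a constraint spanning a connected set of bags does not force you to ``localize'' it.

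Your detour is not merely extra work; it runs into a structural issue with the framework. The $y_{i,j}$ are partial sums and are therefore free real variables, not nonnegative (your first suggestion $y_{i,j}\ge 0$ is simply wrong). Your fallback, treating them as unconstrained with a ``quadratic self-concordant potential,'' does not fit Theorem~\ref{thm:algo_general} either: the framework requires each variable block $x_i$ to range over a convex set $\cK_i$ equipped with a $\nu_i$-self-concordant barrier (blow-up at the boundary), and there is no such barrier for $\R$. Either you would need to extend the framework to handle free variables, or you would eliminate the $y_{i,j}$ by back-substitution along the tree, which recovers exactly the paper's one-row-per-constraint formulation. You also do not resolve the block bookkeeping for the new variables (there can be up to $\Theta(n_\sdp\gamma_{\max})$ of them, so either $n_{\max}$ or $n$ must grow), which you flagged yourself as an ``obstacle.'' Once you switch to the direct formulation, however, your parameter choices and the final plug-in into Theorem~\ref{thm:algo_general} are correct and coincide with the paper's Lemma~\ref{lem:sdp_general_param} and Section~\ref{sec:sdp_general:main_proof}.
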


\subsection{Reduce Decomposable SDP to General Treewidth Program} \label{sec:sdp_general:reduction}
In this section we reduce program~\eqref{eqn:sdp-primal-restated} satisfying Definition~\ref{defn:sdp-general-assumptions} to form \eqref{eqn:lp_general}.

Let $T, J_1,\ldots, J_n$ be the given tree decomposition. Using Lemma~\ref{lem:tree-decomp-const-deg}, we can WLOG assume that $T$ has maximum degree $O(1)$.

Lemma~\ref{lem:zl18-reduction-general} tells us that we can reduce program~\ref{eqn:sdp-primal-restated} to the following program.
\begin{align} \label{eqn:sdp-general-reduced}
    \min \qquad & C \bullet X\\
    \text{s.t.} \qquad & A_i \bullet X = b_i \qquad \forall i\in [m_\sdp] \nonumber\\
    & X_{J_i,J_i} \succeq 0 \qquad  \forall i\in [n] \nonumber
\end{align}

\begin{lemma}[{\cite[Section 3.3]{zl18}}] \label{lem:zl18-reduction-general}
Under assumptions in Definition~\ref{defn:sdp-general-assumptions}, the optimal value of program \eqref{eqn:sdp-primal-restated} is equal to the optimal value of \eqref{eqn:sdp-general-reduced}.

Furthermore, given a solution $X_1,\ldots, X_n$ of program \eqref{eqn:sdp-ctc-restated}, there is an algorithm that takes $O(n_\sdp \tau_\sdp^3)$ time and outputs a matrix $U\in \bR^{n_\sdp \times \tau_\sdp}$ such that for $X=UU^\top$, $X$ is a solution of Eq.~\eqref{eqn:sdp-primal} with the same objective value.
\end{lemma}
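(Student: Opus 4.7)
\medskip

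\noindent\textbf{Proof proposal.} The plan is to prove the two claims (equality of optimal values, and explicit low-rank reconstruction of $X$) by invoking the classical PSD matrix completion theorem for chordal graphs, specialized to the chordal graph induced by the given tree decomposition.

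First, I would observe that every entry of $X$ that actually appears in the objective or in any constraint of \eqref{eqn:sdp-primal-restated} is indexed by a pair $(u,v)$ that lies inside some bag $J_j$. Indeed, by definition of the sparsity graph (Definition~\ref{def:sparsity-graph}) and because $(T,J_1,\dots,J_n)$ is a tree decomposition of that graph, every edge $(u,v)$ with $C_{u,v}\ne 0$ or with $A_{i,u,v}\ne 0$ is contained in some $J_j\times J_j$. Consequently, both $C\bullet X$ and $A_i\bullet X$ depend only on the principal submatrices $X_{J_j,J_j}$ for $j\in[n]$. This immediately gives the easy inequality: any solution of \eqref{eqn:sdp-primal-restated} is feasible for \eqref{eqn:sdp-general-reduced} with the same objective value (since $X\succeq 0$ implies every principal submatrix is PSD), so the optimum of \eqref{eqn:sdp-general-reduced} is at most that of \eqref{eqn:sdp-primal-restated}.

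For the reverse direction, I would apply the Grone--Johnson--S\'a--Wolkowicz PSD completion theorem: for a chordal graph $G$ whose maximal cliques are $J_1,\dots,J_n$, a partial symmetric matrix with entries specified on $\bigcup_j J_j\times J_j$ can be completed to a full PSD matrix iff every specified principal submatrix $X_{J_j,J_j}$ is PSD. The chordal graph we need is precisely the one whose maximal cliques are the bags of our tree decomposition; such a chordal graph always exists (take the intersection graph / clique tree), and any feasible $(X_{J_j,J_j})_{j\in[n]}$ of \eqref{eqn:sdp-general-reduced} thus admits a PSD completion $\tilde X\in\bS^{n_\sdp}$. Because the completion agrees on every specified entry, it satisfies $A_i\bullet \tilde X=b_i$ and achieves the same $C\bullet \tilde X$, proving the matching upper bound on the optimum.

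Next I would turn the existence of the completion into the algorithmic claim, producing $U\in\R^{n_\sdp\times \tau_\sdp}$ with $UU^\top=X$ in $O(n_\sdp \tau_\sdp^3)$ time. I would process the bags in a post-order traversal of $T$, maintaining for each processed bag $J_i$ a block of rows $U_{J_i,*}$ whose Gram matrix reproduces $X_{J_i,J_i}$. Writing the separator $S_i:=J_i\cap J_{\mathrm{parent}(i)}$, the rows of $U$ indexed by $S_i$ are already fixed by the parent bag; the remaining $|J_i\setminus S_i|$ new rows are obtained by a Schur-complement / Cholesky-style update, which amounts to one $O(\tau_\sdp^3)$ linear solve plus a few $O(\tau_\sdp^3)$ matrix multiplications per bag. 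Since the bags partition $[n_\sdp]$ into the ``new vertices'' $J_i\setminus S_i$ and there are at most $O(n_\sdp)$ bags of size at most $\tau_\sdp$, the total cost is $O(n_\sdp\tau_\sdp^3)$, and $U$ has $\tau_\sdp$ columns.

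The main obstacle, as I see it, is not the equality of optimal values (which follows cleanly from chordal completion once the sparsity-pattern containment is pinned down) but rather the simultaneous control of rank and running time in the reconstruction step: the naive chordal completion via positive definite extensions can produce $X$ of much larger rank, and one has to verify that the recursive clique-tree construction yields rank exactly $\tau_\sdp$ and that every Schur complement encountered is invertible (or handle rank-deficient bags by passing to pseudoinverses). Once that is settled, citing the detailed implementation in \cite{zl18} completes the proof.
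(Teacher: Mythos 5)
Your proof is correct and is essentially the chordal-completion argument from \cite[Section~3.3]{zl18} that the paper cites without reproducing. The two ingredients you identify---the Grone--Johnson--S\'a--Wolkowicz PSD completion theorem for the equality of optimal values, and the clique-tree Schur-complement recursion (with pseudoinverses to handle rank-deficient separators) for the $O(n_\sdp\tau_\sdp^3)$-time reconstruction of a rank-$\le\tau_\sdp$ factor $U$---are exactly the right ones.
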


Therefore, similar to Section \ref{sec:sdp_first:reduction}, we decompose $X$ into $n$ smaller PSD variables $X_1,\ldots, X_n$, with $X_i \in \bS^{J_i}$, satisfying overlapping constraints.

It remains to decompose the constraints $A_i$ to handle the new variables.
Recall that we are given the set $S_i$.
Because union of $(J_j \times J_j)_{j\in S_i}$ contains the support of $i$, we can find constraints $A_{i,1}, \ldots, A_{i,|S_i|}$ with $A_{i,j} \in \R^{J_j \times J_j}$ such that
\begin{align*}
    A_i \bullet X = \sum_{j\in S_i} A_{i,j} \bullet X_{J_j,J_j}.
\end{align*}
We also decompose $C$ into $C_1,\ldots, C_n$ with $C_i \in \R^{J_i\times J_i}$
such that
\begin{align*}
    C\bullet X = \sum_{j\in [n]} C_j \bullet X_{J_j, J_j}.
\end{align*}
In this way, we further reduce \eqref{eqn:sdp-general-reduced} into the following form.
\begin{align} \label{eqn:sdp-general-lp-form}
    \min \qquad & \sum_{j\in [n]} C_j \bullet X_j\\
    \text{s.t.} \qquad & \sum_{j\in S_i} A_{i,j} \bullet X_j = b_i \qquad \forall i\in [m_\sdp] \nonumber\\
    & \cN_{i,j}(X_i) =\cN_{j,i}(X_j) \qquad \forall (i,j)\in E(T) \nonumber\\
    & X_j \succeq 0 \qquad  \forall j\in [n] \nonumber
\end{align}
where $X_j\in \R^{|J_j|\times |J_j|}$, $A_{i,j} \in \R^{|J_j| \times |J_j|}$, $\cN$ constraints asserts consistency between different bags, $\cN_{i,j}\in \R^{|J_i\cap J_j|^2 \times |J_i|^2}$.

Program~\eqref{eqn:sdp-general-lp-form} is a form which can be handled by Theorem~\ref{thm:algo_general}.
Let us compute the treewidth of program~\eqref{eqn:sdp-general-lp-form}.

\begin{lemma} \label{lem:sdp-general-reduced-treewidth}
We can construct a treewidth decomposition of the LP dual graph (Definition~\ref{def:general_lp_treewidth}) with maximum bag size $\tau_\lp = O(\tau_\sdp^2 + \gamma_{\max})$ and at most $n$ blocks.
\end{lemma}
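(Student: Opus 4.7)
The plan is to mimic the construction in Lemma~\ref{lem:sdp-ctc-treewidth}, but with the bag-size accounting adjusted to incorporate the parameter $\gamma_{\max}$ in place of the linear-independence bound used there. First, I would apply Lemma~\ref{lem:tree-decomp-const-deg} to assume without loss of generality that the given sparsity-graph tree decomposition $(T, J_1,\ldots, J_n)$ has maximum degree $O(1)$; this keeps the number of type-$N$ constraints per variable block bounded.

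Next, for each $j\in[n]$ define a bag $B_j$ of constraints of Program~\eqref{eqn:sdp-general-lp-form} consisting of (i) every type-$A$ constraint $A_i$ such that $j\in S_i$, and (ii) every type-$N$ constraint $\cN_{i,j}$ (or $\cN_{j,i}$) with $(i,j)\in E(T)$. I would then connect $B_i$ and $B_j$ in the decomposition tree whenever $(i,j)\in E(T)$; this is a tree with $n$ nodes as required.

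To bound $|B_j|$, observe that the number of type-$A$ constraints $A_i$ with $j\in S_i$ is at most $\gamma_{\max}$ by the definition of $\gamma_{\max}$. The number of scalar type-$N$ constraints in $B_j$ is $\sum_{i:(i,j)\in E(T)} |J_i\cap J_j|^2 = O(\tau_\sdp^2)$, using the $O(1)$ degree bound on $T$ and $|J_i|\le \tau_\sdp$. Hence $|B_j| = O(\tau_\sdp^2 + \gamma_{\max})$, giving $\tau_\lp = O(\tau_\sdp^2 + \gamma_{\max})$.

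Finally, I would verify the two axioms of a tree decomposition (Definition~\ref{defn:treewidth}) applied to the LP dual graph. For the edge axiom: any two constraints in Program~\eqref{eqn:sdp-general-lp-form} that share a variable block $X_k$ both lie in $B_k$ by construction (checking the four cases type-$A$/type-$A$, type-$A$/type-$N$, and type-$N$/type-$N$ using the definitions of $S_i$ and of $\cN_{i,j}$). For the connectivity axiom: the bags containing a type-$A$ constraint $A_i$ are exactly $\{B_j : j\in S_i\}$, which is connected in the decomposition tree because $S_i$ is assumed to be a connected set of bags in $T$; and the bags containing a type-$N$ constraint $\cN_{i,j}$ are $\{B_i, B_j\}$ joined by edge $(i,j)\in E(T)$. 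The mildest subtlety, and the only step I would write out carefully, is the edge-axiom case between a type-$A$ constraint $A_k$ and a type-$N$ constraint $\cN_{i,j}$ sharing variable $X_i$: one must check that $i\in S_k$, so that $A_k\in B_i$ and hence both constraints sit in $B_i$. This will follow directly from the assumption that $\supp(A_k) \subseteq \bigcup_{l\in S_k} J_l\times J_l$ together with the definition of which variable blocks $A_k$ interacts with in Program~\eqref{eqn:sdp-general-lp-form}.
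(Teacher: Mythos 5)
Your construction coincides with the paper's: you define the same bags $B_j$ (type-$A$ constraints with $j\in S_i$ plus type-$N$ constraints on edges incident to $j$), use the same $O(1)$-degree reduction to bound the $\cN$-contribution by $O(\tau_\sdp^2)$, bound the $A$-contribution by $\gamma_{\max}$, and verify the two tree-decomposition axioms in the same way (connectivity of $S_i$ for the connectivity axiom, and shared variable $X_k$ implying co-membership in $B_k$ for the edge axiom). The "subtlety" you flag — that a type-$A$ constraint $A_k$ sharing variable $X_i$ with $\cN_{i,j}$ forces $i\in S_k$ — is immediate from the construction of Program~\eqref{eqn:sdp-general-lp-form}, where $A_k$'s split constraint only references blocks in $S_k$; the paper treats this as obvious, and your slightly more explicit case analysis is otherwise identical.
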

\begin{proof}
For every $j\in [n]$, we define a bag $B_j$ containing
\begin{itemize}
    \item all type-$A$ constraints $A_i$ with $j\in S_i$,
    \item all type-$N$ constraints $\cN_{i,j}$ with $(i,j)\in T$.
\end{itemize}
We connected $B_i$ with $B_j$ if and only if $(i,j)\in T$.

By assumption, number of type-$A$ constraints in a bag is at most $\gamma_{\max}$.
Number of type-$N$ constraints in a bag is at most $O(|J_j|^2)$ because $T$ has constant maximum degree.
So $|B_j| \le O(|J_j|^2 + \gamma_{\max})$.
The maximum bag size is $\tau_\lp = O(\max_j |J_j|^2 + \gamma_{\max}) = O(\tau_\sdp^2 + \gamma_{\max})$.

Finally, we prove that $(T, B_1,\ldots, B_n)$ is a valid tree decomposition of the LP dual graph.
Recall that we assume that every $S_i$ is connected in $T$. So the second condition in Definition~\ref{defn:treewidth} is satisfied.
For any two constraints sharing a variable $X_j$, they must both be in $B_j$. So the first condition in Definition~\ref{defn:treewidth} is satisfied.
Therefore $(T, B_1,\ldots, B_n)$ is a valid tree decomposition.
\end{proof}

\subsection{Choice of Parameters} \label{sec:sdp_general:choice_of_parameters}
In this section we present value of parameters needed to apply Theorem~\ref{thm:algo_general}.

\begin{lemma}[Choice of parameters in the general result]\label{lem:sdp_general_param}
Under the setting of Theorem~\ref{thm:main_sdp_general}, we can choose the following set of parameters.

\begin{enumerate}[label=(\roman*)]
    \item \label{item:lem:sdp_general_n} $n = O(n_\sdp)$, $n_\lp = O(n_\sdp\tau_\sdp^2)$, $n_{\max} = O(\tau_\sdp^2)$.
    \item \label{item:lem:sdp_general_m} $m = O(n_\sdp)$, $m_\lp = O(n_\sdp (\tau_\sdp^2+\gamma_{\max}))$, $m_{\max} = O(\tau_\sdp^2+\gamma_{\max})$.
    \item \label{item:lem:sdp_general_taulp} $\tau_\lp = O(\tau_\sdp^2+\gamma_{\max})$.
    \item \label{item:lem:sdp_general_eta} $\eta = \wt O(1)$.
    \item \label{item:lem:sdp_general_nu} $\nu_{\max} = O(\tau_\sdp)$.
    \item \label{item:lem:sdp_general_nnz} $\nnz(A) = O(n_\sdp \tau_\sdp^2(\tau_\sdp^2+\gamma_{\max}))$.
    \item \label{item:lem:sdp_general_TnTm} $T_n = \wt O(n_\sdp \tau_\sdp^{2\omega})$, $T_m = \wt O(n_\sdp (\tau_\sdp^2+\gamma_{\max})^\omega)$.
    \item \label{item:lem:sdp_general_TH} $T_H = \wt O(n_\sdp \tau_\sdp^{2\omega})$, $T_{H,\max} = \wt O(\tau_\sdp^{2\omega})$.
    \item \label{item:lem:sdp_general_TL} $T_L = \wt O(n_\sdp (\tau_\sdp^2+\gamma_{\max})^\omega)$.
    \item \label{item:lem:sdp_general_TZ} $T_Z = \wt O(n_\sdp (\tau_\sdp^2+\gamma_{\max})^2)$.
    \item \label{item:lem:sdp_general_TDeltaLmax} $T_{\Delta L,\max} = \wt O((\tau_\sdp^2+\gamma_{\max})^\omega)$.
\end{enumerate}
\end{lemma}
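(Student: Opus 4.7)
The plan is to mirror the argument of Lemma~\ref{lem:sdp_second_param} almost line-for-line, replacing the ``second result'' reduction in Section~\ref{sec:sdp_first:reduction} with the decomposable reduction in Section~\ref{sec:sdp_general:reduction}, and carrying the extra parameter $\gamma_{\max}$ through the bookkeeping. First I would apply Lemma~\ref{lem:tree-decomp-const-deg} to the given tree decomposition to make its maximum degree $O(1)$ while keeping $\tau_\sdp$ and $\gamma_{\max}$ unchanged (each bag is duplicated $O(1)$ times). Items (i) and (v) are then immediate: there are $n = O(n_\sdp)$ variable blocks, each of size $n_{\max} = |J_j|^2 = O(\tau_\sdp^2)$, giving $n_\lp = O(n_\sdp \tau_\sdp^2)$, and the log barrier on each $|J_j|\times |J_j|$ PSD cone is $|J_j|$-self-concordant by Lemma~\ref{lem:log-barrier-psd}.

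The constraint side --- (ii), (iii), (iv), (vi) --- is where the new $\gamma_{\max}$ appears. The tree decomposition $(T, B_1,\ldots, B_n)$ of the LP dual graph from Lemma~\ref{lem:sdp-general-reduced-treewidth} already packages the combined count of $O(\tau_\sdp^2)$ overlap constraints per edge and at most $\gamma_{\max}$ original constraints per bag into $\tau_\lp = O(\tau_\sdp^2 + \gamma_{\max})$; summing over bags gives $m_\lp = O(n_\sdp(\tau_\sdp^2 + \gamma_{\max}))$. For the block structure, I would run Algorithm~\ref{alg:sdp_second_block_elim} on this decomposition exactly as in Lemma~\ref{lem:sdp_second_block_elim}: the centroid recursion gives $m = O(n_\sdp)$ blocks of size $m_{\max} = O(\tau_\lp)$ and block depth $\eta = \wt O(1)$, and the resulting tree is a valid block elimination tree by Lemma~\ref{lem:prove_block_elim_tree} applied to $(T, B_1,\ldots, B_n)$. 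For (vi), each variable block appears in at most $\tau_\lp$ constraints of total support size $O(\tau_\sdp^2)$, so $\nnz(A) \le n \cdot n_{\max} \cdot \tau_\lp = O(n_\sdp \tau_\sdp^2(\tau_\sdp^2 + \gamma_{\max}))$.

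The remaining entries (vii)--(xi) are now bookkeeping. $T_n$ and $T_{H,\max}$ follow from the block-diagonal Hessian of the log barrier on blocks of size $\tau_\sdp^2$ (inversion/square root in $\Tmat(\tau_\sdp^2) = O(\tau_\sdp^{2\omega})$); $T_m$ uses the larger block size $\tau_\lp$, yielding $\wt O(n_\sdp(\tau_\sdp^2 + \gamma_{\max})^\omega)$. The two Cholesky bounds (ix) and (xi) are the most substantive and use the block-aware algorithms developed for the second result: Lemma~\ref{lem:block_cholesky_decomposition} gives $T_L = \wt O(m \cdot \tau_\lp^\omega) = \wt O(n_\sdp(\tau_\sdp^2+\gamma_{\max})^\omega)$, and Lemma~\ref{lem:block_cholesky_update} gives $T_{\Delta L,\max} = \wt O(\tau_\lp^\omega)$. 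Finally, (x) is immediate from the generic bound in Lemma~\ref{lem:generic_bound_TZ}: $T_Z = O(\eta^2 m m_{\max}^2) = \wt O(n_\sdp(\tau_\sdp^2+\gamma_{\max})^2)$.

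The main obstacle I anticipate is purely a verification one rather than a technical one: checking that the constant-degree reduction (Lemma~\ref{lem:tree-decomp-const-deg}) does not inflate $\gamma_{\max}$ --- it does not, since duplicating a bag $J_j$ does not change the collection of constraints assigned to any one bag, provided we assign every constraint in the original $S_i$ to only one of the copies. With this in hand, all the ``second result'' lemmas in Section~\ref{sec:sdp_second} apply verbatim to the enlarged bag size $\tau_\lp = \tau_\sdp^2 + \gamma_{\max}$, and the eleven items follow.
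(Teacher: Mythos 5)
Your proof takes essentially the same route as the paper's: the same reduction from Section~\ref{sec:sdp_general:reduction}, the same bag bound from Lemma~\ref{lem:sdp-general-reduced-treewidth}, the block elimination tree from Lemma~\ref{lem:sdp_second_block_elim}, and then the same appeals to Lemmas~\ref{lem:block_cholesky_decomposition}, \ref{lem:block_cholesky_update}, \ref{lem:generic_bound_TZ} and the log-barrier self-concordance (Lemma~\ref{lem:log-barrier-psd}) to fill in the eleven items. Your extra remark that the degree-reduction step of Lemma~\ref{lem:tree-decomp-const-deg} does not inflate $\gamma_{\max}$ is a worthwhile sanity check that the paper leaves implicit (though to keep each $S_i$ connected you should assign a constraint to the whole path of copies of a bag it used, not a single copy --- this still preserves $\gamma_{\max}$ since every copy carries the same constraint set).
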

\begin{proof}
\begin{enumerate}[label=(\roman*)]
    \item By discussion in Section~\ref{sec:sdp_general:reduction}.
    \item For block structure, we apply Lemma~\ref{lem:sdp_second_block_elim}.
    So $m = O(n_\sdp)$, $m_{\max} = O(\tau_\lp) = O(\tau^2+ \gamma_{\max})$.
    \item By Lemma~\ref{lem:sdp-general-reduced-treewidth}.
    \item By Lemma~\ref{lem:sdp_second_block_elim}.
    \item Same as Lemma~\ref{lem:sdp_first_param}\ref{item:lem:sdp_first_nu}.
    \item Every $X_j$ is in at most $\tau_\lp = O(\tau_\sdp^2 + \gamma_{\max})$ constraints, so $\nnz(A) = O(n_\sdp \tau_\sdp^2 \cdot \tau_\lp) = O(n_\sdp \tau_\sdp^2(\tau_\sdp^2 + \gamma_{\max}))$.
    \item Bounds on $T_n$ and $T_m$ are direct consequences of \ref{item:lem:sdp_second_n}, \ref{item:lem:sdp_second_m}.
    \item Same as Lemma~\ref{lem:sdp_first_param}\ref{item:lem:sdp_first_TH}.
    \item By Lemma~\ref{lem:block_cholesky_decomposition}, we have $T_L = \wt O(m \tau_\lp^\omega) = \wt O(n_\sdp (\tau_\sdp^2 + \gamma_{\max})^\omega)$.
    \item By Lemma~\ref{lem:generic_bound_TZ}, we have $T_Z = O(\eta^2 m m_{\max}^2) = \wt O(n_\sdp (\tau_\sdp^2 + \gamma_{\max})^2)$.
    \item  By Lemma~\ref{lem:block_cholesky_update}, we have $T_{\Delta L, \max} = \wt O(\tau_\lp^\omega) = \wt O((\tau_\sdp^2 + \gamma_{\max})^\omega)$.
\end{enumerate}
\end{proof}

\subsection{Proof of Theorem~\ref{thm:main_sdp_general}} \label{sec:sdp_general:main_proof}
In this section, we prove Theorem~\ref{thm:main_sdp_general}.

\begin{proof}[Proof of Theorem~\ref{thm:main_sdp_general}]

It remains to compute the parameters $T_H$, $T_L$, $\eta$, $T_m$, $m_{\max}$, $n_{\max}$, $T_{\Delta_L, \max}$, $T_{H,\max}$, the details can be found in Lemma~\ref{lem:sdp_second_param}.

Let $A $ and $B$ be defined as:
\begin{align*}
    A: = & ~ T_H + T_n + T_L + T_Z + \nnz(A) + \eta m_{\lp} m_{\max}, \\
    B := & ~ \Tmat(n_{\max}) + T_{\Delta_L,\max} + T_{H,\max} + \eta^2 m_{\max}^2.
\end{align*}

We can show that
\begin{align}\label{eq:sdp_general_A}
    A = & ~ T_H + T_n + T_L + T_Z + \nnz(A) + \eta m_{\lp} m_{\max} \notag \\
    = & ~ O(n_{\sdp} \tau_{\sdp}^{2\omega}) + T_n + T_L + T_Z + \nnz(A) + \eta m_{\lp} m_{\max} \notag \\
    = & ~ O(n_{\sdp} \tau_{\sdp}^{2\omega}) + O(n_{\sdp} \tau_{\sdp}^{2\omega}) + T_L + T_Z + \nnz(A) + \eta m_{\lp} m_{\max} \notag \\
    = & ~ O(n_{\sdp} \tau_{\sdp}^{2\omega}) + T_L + T_Z + \nnz(A) + \eta m_{\lp} m_{\max} \notag \\
    = & ~ O(n_{\sdp} \tau_{\sdp}^{2\omega})  + O(n_{\sdp} (\tau_{\sdp}^{2} + \gamma_{\max} )^{\omega} ) + T_Z + \nnz(A) + \eta m_{\lp} m_{\max} \notag \\
    = & ~ O(n_{\sdp} ( \tau_{\sdp}^{2} + \gamma_{\max})^{\omega}) + T_Z + \nnz(A) + \eta m_{\lp} m_{\max} \notag \\
    = & ~ O(n_{\sdp} ( \tau_{\sdp}^{2} + \gamma_{\max})^{\omega}) + O(n_{\sdp} \tau_{\sdp}^{2\omega}) + \nnz(A) + \eta m_{\lp} m_{\max} \notag \\
    = & ~ O(n_{\sdp} ( \tau_{\sdp}^{2} + \gamma_{\max})^{\omega}) + \nnz(A) + \eta m_{\lp} m_{\max} \notag \\
    = & ~ O(n_{\sdp} ( \tau_{\sdp}^{2} + \gamma_{\max})^{\omega}) + O(n_{\sdp} \tau_{\sdp}^{2} ( \tau_{\sdp}^2 + \gamma_{\max} ) ) + \eta m_{\lp} m_{\max} \notag \\
    = & ~ O(n_{\sdp} ( \tau_{\sdp}^{2} + \gamma_{\max})^{\omega}) + O(n_{\sdp} \tau_{\sdp}^{2} (\tau_{\sdp}^2 + \gamma_{\max}) ) + O(n_{\sdp} ( \tau_{\sdp}^{2} + \gamma_{\max} )^2 ) \notag \\
    = & ~ O(n_{\sdp} ( \tau_{\sdp}^{2} + \gamma_{\max})^{\omega})
\end{align}
where the second step follows from Lemma~\ref{lem:sdp_general_param}\ref{item:lem:sdp_general_TH} ($T_H = O(n_{\sdp} \tau_{\sdp}^{2\omega})$) and the third step follows from Lemma~\ref{lem:sdp_general_param}\ref{item:lem:sdp_general_n} ($T_n = O(n_{\sdp} \tau_{\sdp}^{2\omega})$), the forth step follows from merging the terms, the fifth step follows from Lemma~\ref{lem:sdp_general_param}\ref{item:lem:sdp_general_TL} ($T_L = \wt{O}(\tau_{\sdp} ( \tau_{\sdp}^{2} + \gamma_{\max} )^{\omega} )$, the sixth step follows from merging the terms, the seventh step follows from $T_Z = O(n_{\sdp} ( \tau_{\sdp}^2 + \gamma_{\max})^2 )$, the eighth step follows from merging the terms, the ninth step follows   Lemma~\ref{lem:sdp_general_param}\ref{item:lem:sdp_general_nnz} ($\nnz(A) = n_{\sdp} \tau_{\sdp}^2 ( \tau_{\sdp}^2 + \gamma_{\max}) $), and the tenth step follows from Lemma~\ref{lem:sdp_general_param}\ref{item:lem:sdp_general_eta} and Lemma~\ref{lem:sdp_general_param}\ref{item:lem:sdp_general_m}.

For the term $B$, we have
\begin{align}\label{eq:sdp_general_B}
    B = & ~ \Tmat(n_{\max}) + T_{\Delta_L,\max} + T_{H,\max} + \eta^2 m_{\max}^2 \notag \\
    = & ~ O( \tau_{\sdp}^{2 \omega} ) + T_{\Delta_L,\max} + T_{H,\max} + \eta^2 m_{\max}^2 \notag \\
    = & ~ O( \tau_{\sdp}^{2\omega} ) + O( ( \tau_{\sdp}^{2} + \gamma_{\max} )^{\omega} ) + T_{H,\max} + \eta^2 m_{\max}^2 \notag \\
    = & ~ O( \tau_{\sdp}^{2\omega} ) + O( ( \tau_{\sdp}^{2} + \gamma_{\max} )^{\omega} ) + O( \tau_{\sdp}^{2\omega} ) + \eta^2 m_{\max}^2 \notag \\
    = & ~ O( \tau_{\sdp}^{2\omega} ) + O( ( \tau_{\sdp}^{2} + \gamma_{\max} )^{\omega} ) + O( \tau_{\sdp}^{2\omega} ) + \wt{O}( \tau_{\sdp}^4 ) \notag \\
    = & ~ O( (\tau_{\sdp}^{2} + \gamma_{\max} )^{\omega} )
\end{align}
where the second step follows Lemma~\ref{lem:sdp_general_param}\ref{item:lem:sdp_general_n} ($n_{\max} = \tau_{\sdp}^2$), and the third step follows from Lemma~\ref{lem:sdp_general_param}\ref{item:lem:sdp_general_TDeltaLmax} ($T_{\Delta L, \max} = O(\tau_{\lp}^{\omega})$), the forth step follows from Lemma~\ref{lem:sdp_general_param}\ref{item:lem:sdp_general_TH} ($T_{H,\max} = O(\tau_{\lp}^{\omega})$), the fifth step follows from Lemma~\ref{lem:sdp_general_param}\ref{item:lem:sdp_general_eta} ($\eta = \wt{O}(1) $ and $m_{\max} = ({\tau}_{\sdp}^2)$). 

Finally, we have
\begin{align*}
    & ~ \wt{O}( n^{0.5} \nu_{\max}^{0.5} \cdot A^{0.5} \cdot B^{0.5} \cdot \log(1/\epsilon) ) \\
    = & ~ \wt{O} (n_{\sdp}^{0.5} \tau_{\sdp}^{0.5} \cdot A^{0.5} \cdot B^{0.5} \cdot \log(1/\epsilon)) \\
    = & ~ \wt{O} (n_{\sdp}^{0.5} \tau_{\sdp}^{0.5} \cdot (n_{\sdp} ( \tau_{\sdp}^{2} + \gamma_{\max} )^{\omega} )^{0.5} \cdot (\tau_{\sdp}^{2\omega})^{0.5} \cdot \log(1/\epsilon)) \\
    = & ~ \wt{O}( n_{\sdp} \tau_{\sdp}^{ 0.5} \cdot  (\tau_{\sdp}^2 + \gamma_{\max})^{\omega} \cdot \log(1/\epsilon) )
\end{align*}
where the first step follows from $n = n_{\sdp}$ and Lemma~\ref{lem:sdp_general_param}\ref{item:lem:sdp_general_eta} ($\nu_{\max} = O(\tau_{\sdp})$), the second step follows from $A = \wt{O}(n_{\sdp} (  \tau_{\sdp}^{2} + \gamma_{\max})^{\omega} )$ (see Eq.~\eqref{eq:sdp_general_A}) and $B=\wt{O}( ( \tau_{\sdp}^{2} + \gamma_{\max} )^{\omega} )$ (see Eq.~\eqref{eq:sdp_general_B}). 

Thus, we complete the proof.
\end{proof}

\newpage
\clearpage
\section{Improving Running Time for Linear Programming}
\label{sec:lp}

In this section, we discuss our result for linear programming,

\begin{itemize}
    \item In Section~\ref{sec:lp:main}, we present the main result in this Section, Theorem~\ref{thm:main_lp_formal}.
    \item In Section~\ref{sec:lp:choice_of_parameters}, we state and prove parameters needed to apply Theorem~\ref{thm:algo_general}.
    \item In Section~\ref{sec:lp:cholesky}, we develop improved algorithms for Cholesky-related computation.
    \item In Section~\ref{sec:lp:main_proof}, we plug in all parameters and finish proof of Theorem~\ref{thm:main_lp_formal}.
\end{itemize}

\subsection{Main Statement}\label{sec:lp:main}
We consider linear program of form~\eqref{eqn:lp-primal}. Let us restate it here for clarity.
\begin{align}\label{eqn:lp-primal-restated}
    \min \qquad & c^\top x\\
    \text{s.t.} \qquad & A x = b \nonumber\\
    & \ell \le x \le u \nonumber
\end{align}
\begin{definition} \label{defn:lp-assumptions}
We make the following assumptions on program~\eqref{eqn:lp-primal-restated}.
\begin{itemize}
    \item Assume that the constraint matrix $A\in \R^{m \times n}$ has full column rank.
    \item Assume that we are given a tree decomposition (Definition~\ref{defn:treewidth}) of the LP dual graph (Definition~\ref{defn:lp-dual-graph}) with maximum bag size $\tau$.
    \item Assume that there exists $x$ such that $A x = b$ and $\ell +r \le x \le u-r$, for some $r > 0$.
\end{itemize}
\end{definition}

\begin{theorem}[Our LP result, formal version of Theorem~\ref{thm:main_lp_informal}]\label{thm:main_lp_formal}
Under assumptions in Definition~\ref{defn:lp-assumptions}, for any $0<\epsilon\le \frac 12$, there is an algorithm that outputs a solution $x\in \R^n$ such that $Ax=b$, $\ell \le x\le u$, and
\begin{align*}
    c^\top x \le \min_{Ax'=b, \ell \le x'\le u, x'\in \R^n} c^\top x' + \epsilon L R
\end{align*}
where $L=\|c\|_2$, $R=\|u-\ell\|_2$, in expected time
\begin{align*}
\wt O( n \cdot \tau^{(\omega+1)/2} \log(R/(r\epsilon))).
\end{align*}
\end{theorem}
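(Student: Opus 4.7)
The plan is to invoke Theorem~\ref{thm:algo_general} directly on the LP, which is already of the form~\eqref{eqn:lp_general} with scalar variable blocks ($n_i = 1$, $\cK_i = [\ell_i, u_i]$). We use the standard log-barrier $\phi_i(x_i) = -\log(x_i - \ell_i) - \log(u_i - x_i)$, which is $O(1)$-self-concordant and computable in constant time per coordinate. Thus $\nu_{\max} = O(1)$, $T_{H,\max} = O(1)$, $T_H = O(n)$, and $T_n = T_m = O(n)$. By the linear independence assumption and Definition~\ref{defn:lp-dual-graph}, we may assume $m \le n$, and the treewidth hypothesis gives $\nnz(A) = O(n\tau)$.

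Next, I would construct the block elimination tree at the coordinate level by running a centroid-decomposition scheme on the given tree decomposition (analogous to Algorithm~\ref{alg:sdp_first_block_elim}), producing a tree with scalar blocks ($m_i = 1$, hence $m = m_\lp$ and $m_{\max} = 1$) and block depth $\eta = \wt O(\tau)$. The bound $T_{\Delta_L,\max} = \wt O(\tau^2)$ then follows immediately from Lemma~\ref{lem:known_cholesky_TDeltaLmax}, since a single variable update produces a rank-$1$ change in $A H A^\top$ and a column-sparsity of order $\eta$.

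The critical new step is to establish $T_L = \wt O(n\tau^{\omega-1})$ and the matching bound on $T_Z$ via an \emph{internal} block structure on the Cholesky computation. Even though the block structure exported to the framework has $m_{\max} = 1$, the elimination tree of depth $\wt O(\tau)$ can be partitioned, using a heavy-path decomposition of the elimination tree, into $\wt O(n/\tau)$ \emph{super-paths} each of length $\wt O(\tau)$, so that the diagonal block of $A H A^\top$ corresponding to each super-path is a dense $\tau \times \tau$ matrix. On every such super-path we can perform the Cholesky update using fast matrix multiplication in $\wt O(\tau^\omega)$ time, mirroring the reasoning of Lemma~\ref{lem:block_cholesky_decomposition}. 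Summing over the $\wt O(n/\tau)$ super-paths yields $T_L = \wt O(n \tau^{\omega-1})$, and an analogous path-wise batching bounds $T_Z$ by the same quantity.

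Finally, I would plug the parameters into Theorem~\ref{thm:algo_general}. Set
\[ A := T_H + T_n + T_L + T_Z + \nnz(A) + \eta m_\lp m_{\max} = \wt O(n\tau^{\omega-1}), \]
using $\omega \ge 2$ to absorb the $n\tau$ terms, and
\[ B := \Tmat(n_{\max}) + T_{\Delta_L,\max} + T_{H,\max} + \eta^2 m_{\max}^2 = \wt O(\tau^2). \]
The running time bound of Theorem~\ref{thm:algo_general} becomes
\[ \wt O\bigl(n^{0.5} \nu_{\max}^{0.5} \cdot A^{0.5} \cdot B^{0.5} \cdot \log(R/(r\epsilon))\bigr) = \wt O\bigl(n^{0.5} \cdot (n\tau^{\omega-1})^{0.5} \cdot \tau \cdot \log(R/(r\epsilon))\bigr) = \wt O\bigl(n\tau^{(\omega+1)/2}\log(R/(r\epsilon))\bigr), \]
as claimed. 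The principal obstacle is the improved block Cholesky bound $T_L = \wt O(n\tau^{\omega-1})$ in this scalar-block regime: the naive sparse Cholesky of Lemma~\ref{lem:known_cholesky_TL} only gives $O(n\tau^2)$, and the speedup requires carefully designing an internal path decomposition of the elimination tree so that (i) the induced super-blocks are compatible with the non-zero pattern of $L$, and (ii) fast matrix multiplication can be invoked on each super-block while correctly propagating off-path fill-in, which is where the block version of the classical algorithm must be adapted to the LP setting.
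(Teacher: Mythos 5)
Your high-level strategy is the same as the paper's: instantiate Theorem~\ref{thm:algo_general} with scalar variable and constraint blocks ($n_{\max}=m_{\max}=1$, $\eta=\wt O(\tau)$), so that the per-update quantities $T_{\Delta_L,\max},T_{H,\max},\eta^2 m_{\max}^2$ land at $\wt O(\tau^2)$, while separately using a coarser block structure to drive the initialization quantities $T_L,T_Z$ down to $\wt O(n\tau^{\omega-1})$, and then balancing $A$ and $B$. Your parameter accounting and the final arithmetic match the paper exactly, and you correctly identify the improved block Cholesky bound $T_L=\wt O(n\tau^{\omega-1})$ as the crux.

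The gap is in how you propose to build that coarser structure. You claim a heavy-path decomposition of the depth-$\wt O(\tau)$ scalar elimination tree yields ``$\wt O(n/\tau)$ super-paths each of length $\wt O(\tau)$.'' Heavy-light decomposition gives no such count: it partitions the tree into heavy paths whose \emph{number} is governed by the number of leaves (hence can be $\Omega(n)$), and whose lengths range from $1$ up to the depth, with no lower bound. Also, your assertion that the diagonal block of $AH A^\top$ corresponding to each super-path is dense is beside the point; what must be controlled is the block fill-in pattern of the factor $L$, i.e.\ that $L_{i,j}\ne 0$ only for $i$ an ancestor of $j$ at the block level, which is exactly the definition of a block elimination tree. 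As written, your construction neither has the claimed size nor verifies this compatibility condition.

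The paper obtains the super-block structure differently. It runs the bag-level centroid construction (\textsc{ConstructBlockElimTree}, Lemma~\ref{lem:sdp_second_block_elim}) to get a block elimination tree with blocks of size $O(\tau)$ and block depth $\wt O(1)$, but possibly $O(n)$ blocks. It then applies Lemma~\ref{lem:lp:reduce_num_blocks} — a bottom-up merge of under-full blocks into their parents (with a binary-tree fix-up to keep degree $O(1)$) — to obtain $O(n/\tau)$ blocks of size $O(\tau)$ while preserving the block-elimination-tree property and $\wt O(1)$ depth. Lemma~\ref{lem:block_cholesky_decomposition} then gives $T_L=\wt O((n/\tau)\cdot\tau^\omega)=\wt O(n\tau^{\omega-1})$, and Lemmas~\ref{lem:fast_cholesky_inverse} and~\ref{lem:fast_cholesky_inverse_vector} give the matching $T_Z$ bound by batching the vectors $v_i$ by the block their support lies in and invoking rectangular fast matrix multiplication. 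If you replace the heavy-path step with the merge-small-blocks argument (and carry out the fill-in-compatibility check via Lemma~\ref{lem:prove_block_elim_tree}), your proof closes; as stated, the $\wt O(n/\tau)$-blocks claim is unsupported.
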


\subsection{Choice of Parameters}\label{sec:lp:choice_of_parameters}
\begin{lemma}[Choice of parameters in the LP result]\label{lem:lp_param}
Under the setting of Theorem~\ref{thm:main_lp_formal}, we can choose the following set of parameters.
\begin{enumerate}[label=(\roman*)]
    \item \label{item:lem:lp_n} $n_\lp=n$, $n_{\max} = 1$.
    \item \label{item:lem:lp_m} $m = O(n)$, $m_\lp = O(n)$, $m_{\max} = 1$.
    \item \label{item:lem:lp_taulp} $\tau_\lp = O(\tau)$.
    \item \label{item:lem:lp_eta} $\eta = \wt O(\tau)$.
    \item \label{item:lem:lp_nu} $\nu_{\max} = O(1)$.
    \item \label{item:lem:lp_nnz} $\nnz(A) = O(n \tau)$.
    \item \label{item:lem:lp_TnTm} $T_n = O(n)$, $T_m = O(n)$.
    \item \label{item:lem:lp_TH} $T_H = O(n)$, $T_{H,\max} = O(1)$.
    \item \label{item:lem:lp_TL} $T_L = O(n \tau^{\omega-1})$.
    \item \label{item:lem:lp_TZ} $T_Z = O(n \tau^{\omega-1})$.
    \item \label{item:lem:lp_TDeltaLmax} $T_{\Delta L,\max} = O(\tau^2)$.
\end{enumerate}
\end{lemma}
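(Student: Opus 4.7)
The plan is to verify each parameter in turn, with most items following by direct specialization of the general treewidth-program framework to scalar-valued LP.

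First I would dispatch the ``trivial'' parameters. Since each LP variable is a scalar, setting $n_i=1$ gives $n_\lp=n$, $n_{\max}=1$, and block-diagonal matrix multiplication becomes linear, yielding $T_n=O(n)$; taking the trivial constraint block structure $m_i=1$ likewise gives $m=m_\lp$, $m_{\max}=1$, $T_m=O(n)$, where $m_\lp=O(n)$ comes from the full column-rank assumption together with standard redundancy removal. The one-dimensional log-barrier $\phi_i(x_i)=-\log(x_i-\ell_i)-\log(u_i-x_i)$ is $O(1)$-self-concordant with $O(1)$-time Hessian evaluation, so $\nu_{\max}=O(1)$, $T_{H,\max}=O(1)$, and $T_H=O(n)$. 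For $\nnz(A)$, the set of constraints containing a common variable is a clique in the LP dual graph and therefore lies in some single bag, so each variable participates in at most $\tau$ constraints, giving $\nnz(A)=O(n\tau)$. Finally, $\tau_\lp=\tau$ is immediate from Definition~\ref{def:general_lp_treewidth}.

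Next I would construct the block elimination tree by applying the centroid-based recursion of Algorithm~\ref{alg:sdp_first_block_elim} to the given tree decomposition, with each constraint forming its own (singleton) block. The centroid recursion has $\wt O(1)$ levels and each bag contributes $O(\tau)$ singletons along the corresponding root-to-leaf path, so the resulting block depth is $\eta=\wt O(\tau)$. The correctness of this as a valid block elimination tree follows from Lemma~\ref{lem:prove_block_elim_tree} applied to the tree decomposition. With $m_{\max}=1$ in place, Lemma~\ref{lem:known_cholesky_TDeltaLmax} immediately yields $T_{\Delta L,\max}=O(\eta^2)=\wt O(\tau^2)$, since one coordinate update to $AH_{\ov x}^{-1}A^\top$ is a rank-$1$ change whose propagation through the Cholesky factor touches only ancestor coordinates.

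The two parameters requiring genuine work are $T_L$ and $T_Z$, both claimed as $\wt O(n\tau^{\omega-1})$. Here I would exploit a coarser partition than the singleton block elimination tree: group the $m_\lp$ rows/columns of $AH_{\ov x}^{-1}A^\top$ into $\wt O(n/\tau)$ super-blocks of size $O(\tau)$, one per centroid level of the decomposition, chosen so that each super-block column of the Cholesky factor is $\wt O(1)$-super-block sparse. Running a super-block analogue of Algorithm~\ref{alg:block_cholesky} then performs $\wt O(n/\tau)$ super-block eliminations, each reducing to a constant number of dense $O(\tau)\times O(\tau)$ matrix operations at cost $\Tmat(\tau)=O(\tau^\omega)$, for a total of $\wt O(n\tau^{\omega-1})$. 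The same super-block pass handles $T_Z$: the $m$ path-supported backsolves $L^{-1}v_i$ can be batched, one super-block at a time, so that each super-block contributes $O(\tau^\omega)$ total work, again summing to $\wt O(n\tau^{\omega-1})$. This super-block Cholesky analysis is the main obstacle; the combinatorial content is verifying that the centroid (or heavy-light) partition simultaneously preserves the $\wt O(1)$-depth and $\wt O(1)$ super-block fill-in properties, after which the arithmetic cost follows from standard dense matrix multiplication inside each super-block.
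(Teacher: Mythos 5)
Your proposal follows essentially the same route as the paper, and the easy items (i)--(viii), (xi) are handled correctly. The two places where you are a bit too informal are worth flagging, because they are exactly where the paper has to work.

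For $T_L$ and $T_Z$ you assert you can ``group the $m_\lp$ rows/columns \ldots into $\wt O(n/\tau)$ super-blocks of size $O(\tau)$'' that are $\wt O(1)$-block sparse. The centroid recursion (the paper's Lemma~\ref{lem:sdp_second_block_elim}) produces blocks of size $O(\tau)$ with block depth $\wt O(1)$, but it does \emph{not} by itself give $O(n/\tau)$ blocks: if the original bags are small, the centroid tree can have $\Theta(n)$ tiny blocks. The paper needs a separate merging pass (Lemma~\ref{lem:lp:reduce_num_blocks}) that, bottom-up, absorbs sub-$\tau$ blocks into their parents and then restores constant degree, to guarantee $b'=O(m/\tau)$ while keeping block size $O(\tau)$ and depth $\wt O(1)$. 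Without some version of this step your claimed $\wt O(n/\tau)\cdot\Tmat(\tau)$ bound is not justified. Also, for the elimination-tree construction the relevant tool is the centroid decomposition, not heavy-light (heavy-light is used only for the partition tree inside \textsc{BlockBalancedSketch}), so dropping ``(or heavy-light)'' would be more precise.

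The second subtlety is in $T_Z$. The quantity $T_Z$ is defined relative to the \emph{singleton} block elimination tree $\cT_1$ with depth $\wt O(\tau)$ (the one you use for $\eta$ and $T_{\Delta L,\max}$), so each $v_i$ is supported on a root-to-$i$ path in $\cT_1$; but your batched backsolves run over the coarser super-block tree $\cT_2$. You need the observation, which the paper records explicitly in its proof of item~(x), that every path in $\cT_1$ is contained in a path in $\cT_2$ (and conversely), so the two supports are compatible and Lemma~\ref{lem:lp:TZ} (via Lemmas~\ref{lem:fast_cholesky_inverse} and~\ref{lem:fast_cholesky_inverse_vector}) applies. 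Once those two points are made explicit, your argument matches the paper's.
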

\begin{proof}
\begin{enumerate}[label=(\roman*)]
    \item We let every variable block contain a single element.
    \item Because $A$ is of full rank, $m\le n$. We let every constraint block contain a single element.
    \item The LP dual graph for program~\ref{eqn:lp_general} and \ref{eqn:lp-primal-restated} are the same graphs.
    \item Follows from Lemma~\ref{lem:sdp_first_block_elim}.
    \item We use the log barrier $\phi_i(x_i) = -\log(u_i-x_i)-\log(x_i-l_i)$.
    \item Every variable is contained in at most $\tau$ constraints, so $\nnz(A) = O(n\tau)$.
    \item Bounds on $T_n$ and $T_m$ are direct consequences of \ref{item:lem:lp_n}, \ref{item:lem:lp_m}.
    \item Computing a single Hessian takes $O(1)$ time.
    \item Follows from Lemma~\ref{lem:lp:TL}.
    \item Follows from Lemma~\ref{lem:lp:TZ}. There is one caveat: in the statement of Lemma~\ref{lem:lp:TZ}, we use a block elimination tree $\cT_2$ constructed using Lemma~\ref{lem:sdp_second_block_elim}. For definition of $T_Z$, we use block elimination tree $\cT_1$ constructed using Lemma~\ref{lem:sdp_first_block_elim}. However by examining both algorithms, we see that every path in $\cT_1$ is contained in a path in $\cT_2$, and vice versa. So Lemma~\ref{lem:lp:TZ} can be applied here.
    \item Follows from Lemma~\ref{lem:known_cholesky_TDeltaLmax}.
\end{enumerate}
\end{proof}

\subsection{Improvement for Cholesky-Related Computation}\label{sec:lp:cholesky}
In this section we prove Lemma~\ref{lem:lp:TL} and Lemma~\ref{lem:lp:TZ}.

In Lemma~\ref{lem:lp_param} we choose constraint block structure $(1,\ldots, 1)$ for the purpose of smaller updating time (e.g., $T_{\Delta L,\max}$).
Nevertheless, we can utilize another block structure to achieve faster initialization (e.g., $T_L$, $T_Z$).

\begin{lemma} \label{lem:lp:TL}
Let $M\in \R^{m\times m}$ be a PSD matrix.
Let $G$ be its adjacency graph (i.e., there is an edge $(i,j)$ if and only if $M_{i,j}\ne 0$).
Suppose we are given a tree decomposition of $G$ with maximum bag size $\tau$.
Then we can compute the Cholesky factorization $M = LL^\top$ in $\wt O(m \tau^{\omega-1})$ time.
\end{lemma}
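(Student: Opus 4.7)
My plan is to reduce Lemma~\ref{lem:lp:TL} to Lemma~\ref{lem:block_cholesky_decomposition} by constructing a block elimination tree whose parameters, when plugged into the bound $\wt O((\text{\# blocks}) \cdot (\text{max block size})^{\omega})$, yield $\wt O(m \tau^{\omega - 1})$. Since each block has size at most $\tau$ and the blocks must cover all $m$ vertices of the adjacency graph of $M$, at least $\Omega(m/\tau)$ blocks are required. So the task is to ensure we do not use too many small blocks: we aim for a block elimination tree with $O(m/\tau)$ blocks of size $O(\tau)$ and block depth $\wt{O}(1)$.

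To achieve this, I would first pre-process the given tree decomposition $(T, J_1, \ldots, J_n)$ of the adjacency graph of $M$. Using iterative bag-merging (contract an edge $(i,j) \in E(T)$ whenever $|J_i \cup J_j| \le 2\tau$), we obtain a new tree decomposition where every bag has size $O(\tau)$ and where no two adjacent bags can be merged. A simple charging argument then shows that the total number of bags is $O(m/\tau)$: every remaining bag is either ``large'' (size $\ge \tau$, and there are at most $m/\tau$ such bags since bags partition at most $m$ elements up to constant multiplicity) or is adjacent only to bags whose union exceeds $2\tau$, contributing an amortized $\Omega(\tau)$ to $m$.

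Next, I would invoke the centroid-decomposition construction from Algorithm~\ref{alg:sdp_second_block_elim} (as in Lemma~\ref{lem:sdp_second_block_elim}) applied to this reduced tree decomposition. The centroid recursion produces a block elimination tree with one block per bag (and hence $O(m/\tau)$ blocks total), maximum block size $O(\tau)$, and block depth $\wt O(1)$. The fact that this is a valid block elimination tree for the adjacency graph of $M$ follows from Lemma~\ref{lem:prove_block_elim_tree}, since the tree decomposition property guarantees the path condition required there.

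Finally, Lemma~\ref{lem:block_cholesky_decomposition} applied to this block elimination tree computes the Cholesky factorization $M = LL^\top$ in time
\[
    \wt O\bigl( (m/\tau) \cdot \tau^\omega \bigr) \;=\; \wt O(m \tau^{\omega-1}).
\]
The main obstacle in this plan is the preprocessing step: one must verify that greedy bag-merging actually terminates with the claimed $O(m/\tau)$ bound on the number of bags, rather than merely a weaker $O(m)$ bound. The cleanest way to see this is through a potential argument on $\sum_i \max(0, \tau - |J_i|)$, or alternatively by invoking a standard balanced tree-decomposition result (e.g.\ of Bodlaender) that directly guarantees $O(m/\tau)$ bags of width $O(\tau)$. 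Everything else is mechanical: constructing the centroid tree and applying Lemma~\ref{lem:block_cholesky_decomposition} are already black-boxed in the paper.
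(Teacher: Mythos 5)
Your high-level plan matches the paper's: build a block elimination tree with $O(m/\tau)$ blocks of size $O(\tau)$ and depth $\wt O(1)$, then invoke Lemma~\ref{lem:block_cholesky_decomposition} to get $\wt O((m/\tau)\cdot \tau^\omega) = \wt O(m\tau^{\omega-1})$. The divergence — and the gap — is in how you achieve $O(m/\tau)$ blocks.

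You propose to merge \emph{tree-decomposition bags} greedily (contract an edge when the union is $\le 2\tau$) and claim the result has $O(m/\tau)$ bags via a charging argument that "bags partition at most $m$ elements up to constant multiplicity." That premise is false: tree-decomposition bags overlap, and a single vertex can appear in $\Omega(m)$ bags, so the total bag mass can be far larger than $m$. Concretely, take a star tree decomposition with a shared core $\{1,\dots,\tau-1\}$, center bag $J_0=\{0,1,\dots,\tau-1\}$, and leaves $J_i=\{v_i,1,\dots,\tau-1\}$ for $i\in[k]$ with $k\gg\tau$, so $m=\tau+k$. Your greedy contraction absorbs about $\tau$ leaves into the center (reaching size $2\tau$), after which every remaining leaf union with the center exceeds $2\tau$ and no two leaves are adjacent, leaving $\Theta(k)=\Theta(m)$ bags of size $\tau$ — not $O(m/\tau)$. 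All of these are "large," so neither branch of your charging argument helps. The fallback appeal to a "standard balanced tree-decomposition result" is also not something I can locate: the standard guarantees are $O(m)$ bags, not $O(m/\tau)$.

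The paper's proof sidesteps this entirely by postponing the merge until \emph{after} building the block elimination tree (Lemma~\ref{lem:sdp_second_block_elim}), whose blocks by construction \emph{partition} $[m]$ (the centroid recursion passes $J_j - J_i$ to subproblems). Because the total block mass is exactly $m$, the bottom-up merge of Lemma~\ref{lem:lp:reduce_num_blocks} trivially gives $O(m/\tau)$ blocks of size $O(\tau)$. To repair your argument you should merge at the level of elimination-tree blocks (a partition), not tree-decomposition bags (which overlap) — which is exactly what the paper does.
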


\begin{proof}
Using Lemma~\ref{lem:sdp_second_block_elim}, we can construct a block elmination tree $\cT$ with constant maximum degree, maximum depth $\wt O(1)$ and maximum bag size $O(\tau)$.
However, this block elimination tree could potentially have $\Omega(m)$ vertices.
So we use Lemma~\ref{lem:lp:reduce_num_blocks} to compute a new block elimination tree $\cT'$ with number of blocks $O(m/\tau)$.
Finally, we use Lemma~\ref{lem:block_cholesky_decomposition} to compute Cholesky factorization using $\cT'$, which takes $\wt O(m/\tau \cdot \tau^\omega) = \wt O(m \tau^{\omega-1})$ time.
\end{proof}

\begin{lemma}\label{lem:lp:reduce_num_blocks}
Given a block elimination tree $(\cT, B_1,\ldots, B_b)$ with maximum degree $O(1)$, maximum depth $\wt O(1)$, maximum block size $\tau$, and total block size $m$, we can construct a block elimination tree $(\cT', B'_1,\ldots, B'_{b'})$ with maximum degree $O(1)$, maximum depth $\wt O(1)$, maximum block size $O(\tau)$, and $O(m/\tau)$ blocks in total (i.e., $b' = O(m/\tau)$).
\end{lemma}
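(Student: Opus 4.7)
My plan is to produce $\cT'$ by a two-stage construction: (i) a bottom-up greedy clustering of $\cT$ that merges the small blocks into clusters of weight $\Theta(\tau)$, and (ii) a binarization pass that reduces the degree of the resulting cluster tree to $O(1)$.

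For stage (i), I would perform a post-order DFS on $\cT$ while maintaining, for each vertex $v$, a pending connected subtree $S(v)$ consisting of $v$ together with the still-uncommitted pending subtrees of its children. Whenever $\sum_{u\in S(v)} |B_u| \ge \tau$, commit $S(v)$ as one new block $B'$ and mark its vertices as committed; after processing the root, any still-uncommitted vertices form one final block (attached above the root of the committed forest). Because $\cT$ has constant maximum degree and every uncommitted child contributes weight strictly less than $\tau$, each newly created block has weight in the range $[\tau, O(\tau)]$ (apart from possibly the leftover block), so the number of blocks is $b' = O(m/\tau)$ and the maximum block size is $O(\tau)$. The new blocks are connected subtrees of $\cT$, and the induced cluster tree (parent of $C'$ is the cluster containing the $\cT$-parent of the root of $C'$) has depth bounded by the depth of $\cT$, which is $\wt O(1)$. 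Lemma~\ref{lem:prove_block_elim_tree} then shows that the cluster tree is a valid block elimination tree: for any path $v_1, \ldots, v_k$ in the underlying graph, the common $\cT$-ancestor $v_i$ provided by the original block elimination tree lies inside some cluster $C$ that remains a common ancestor of the two endpoints' clusters.

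For stage (ii), a cluster $C$ of size $O(\tau)$ may still have up to $O(\tau)$ child clusters because every vertex in $C$ may have a committed child subtree, so I would insert auxiliary clusters with empty block content to reroute each cluster's children through a balanced binary tree. For a cluster with $k$ children, this introduces $k-1$ auxiliary clusters, so across all clusters the total number of blocks grows by at most a constant factor and remains $O(m/\tau)$. Each path from a cluster to one of its original children now passes through $O(\log \tau) = \wt O(1)$ auxiliary clusters, so the depth becomes $\wt O(1) \cdot O(\log \tau) = \wt O(1)$; the maximum block size is unchanged at $O(\tau)$ and the maximum degree is now constant.

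The main obstacle I expect is verifying that stage (ii) preserves the block elimination property in the sense of Lemma~\ref{lem:prove_block_elim_tree}: I need to check that for every path in the underlying graph on $[m_\lp]$, a common ancestor still exists in $\cT'$. Since the auxiliary clusters carry no original coordinates, every such path runs among the stage-(i) clusters; the common ancestor $C$ from stage (i) is still a common ancestor in $\cT'$ because inserting auxiliaries only lengthens root-to-leaf chains and does not disturb the ancestor/descendant relationships among the original clusters. With this verified, the four parameter claims in the lemma follow directly from the two-stage construction.
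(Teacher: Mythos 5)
Your proposal is correct and matches the paper's proof: the paper performs the same bottom-up merging of sub-$\tau$ blocks into parents (yielding clusters of size $\Theta(\tau)$ and count $O(m/\tau)$), followed by the same binarization of high-degree cluster nodes via inserting empty auxiliary blocks. The paper cites Lemma~\ref{lem:prove_elim_tree} where you cite Lemma~\ref{lem:prove_block_elim_tree}; your citation is the more apt one, and your explicit check that the binarization step preserves ancestor--descendant relations among original clusters fills in a step the paper leaves implicit.
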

\begin{proof}
We perform a bottom up process to construct the new tree $\cT'$.

For each node $v$ from bottom to up, if any of its children is in a block of size smaller than $\tau$, then we combine their blocks with $v$ (if there are multiple such children, then all of their blocks are merged).

In this way every block (except possibly the root block) has size at least $\tau$.
Because $\cT$ has $O(1)$ maximum degree, every new block has $O(\tau)$ size.
So the number of blocks in $\cT'$ is $O(m/\tau)$.

Because $\cT'$ preserves all ancestor-descendant relationships in $\cT$, condition in Lemma~\ref{lem:prove_elim_tree} is satisfied by $\cT'$. So $\cT'$ is still a block elimination tree.

The only remaining problem is that $\cT'$ could have nodes with large degree.
For every node $v$ with number of children $c$ larger than $3$, we replace this node with a perfect binary tree with $c$ leaves, where the root node is $v$, and all other nodes are empty. Then we link $v$'s original children to leaves of the perfect binary tree.

The number of added nodes is at most $O(m/\tau)$. So this final tree satisfies all requirements.
\end{proof}

\begin{lemma} \label{lem:lp:TZ}
Under the setting of Lemma~\ref{lem:lp:TL}, there is an algorithm to compute $L^{-1} v_i$ for all $i\in [m]$, where $v_i$ is supported on a single path in $\cT$, in $O(m \tau^{\omega-1})$ time.
\end{lemma}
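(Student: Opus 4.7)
The plan is to exploit the block elimination tree $\cT'$ furnished by Lemma~\ref{lem:lp:reduce_num_blocks}, which has $O(m/\tau)$ blocks of maximum size $O(\tau)$ and depth $\wt O(1)$, and carry out the $m$ triangular solves in grouped batches using fast matrix multiplication rather than one at a time.

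First I would compute the Cholesky factorization $M=LL^\top$ with respect to $\cT'$ exactly as in the proof of Lemma~\ref{lem:lp:TL}. The structural payoff is that every block column of $L$ is supported on its ancestors in $\cT'$ (Lemma~\ref{lem:elim_tree_imply_cholesky}), so for any root-to-vertex path $S$ in $\cT'$, the relevant work for solving against a vector supported on $S$ only touches the principal submatrix $L_{S,S}$, which is lower triangular of size $\wt O(\tau)$.

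Second, I would group the input vectors by their bottommost block in $\cT'$: for each block $B$ of $\cT'$, collect the indices $i\in[m]$ whose support-path terminates inside $B$, and stack the corresponding $v_i$'s as columns of a matrix $V_B$ of size $\wt O(\tau)\times k_B$ with $k_B\le|B|=O(\tau)$ and common row support $S_B:=B\cup\bigcup_{B'\in\cP_{\cT'}(B)}B'$. The caveat noted in Lemma~\ref{lem:lp_param}\ref{item:lem:lp_TZ} — that paths in the single-element tree and in $\cT'$ correspond — guarantees that each $v_i$ is contained in $S_B$ for exactly one choice of $B$, and hence $\sum_B k_B=m$. For each $B$ I would then perform a single batched triangular solve $L_{S_B,S_B}^{-1}V_B$ in $\wt O(\tau^\omega)$ time by recursive block forward-substitution with fast matrix multiplication, and scatter the columns back into $\R^{m}$. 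Summing over the $O(m/\tau)$ blocks gives total time $\wt O(m\tau^{\omega-1})$ as required.

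The main obstacle is the bookkeeping step: confirming that each $v_i$ admits a well-defined bottommost block $B$ in $\cT'$ whose induced path $S_B$ contains $\mathrm{supp}(v_i)$, and that grouping preserves the total count $\sum_B k_B=m$. Once that is in place, the rest is the standard reduction of a rectangular lower-triangular solve of size $\wt O(\tau)\times\wt O(\tau)$ to matrix multiplication, which contributes only $\wt O(\tau^\omega)$ per block and absorbs easily into the $\wt O$ notation.
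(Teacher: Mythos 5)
Your proposal is correct and achieves the stated $\wt O(m\tau^{\omega-1})$ bound, but it takes a somewhat different route from the paper's. The paper proceeds in two modular stages: it first computes $L^{-1}$ \emph{explicitly} as a block-sparse lower-triangular matrix compatible with $\cT'$ (Lemma~\ref{lem:fast_cholesky_inverse}, via Algorithm~\ref{alg:block_cholesky_inverse}), and then batch-multiplies $L^{-1}v_i$ after decomposing each $v_i$ into $\wt O(1)$ single-block-supported pieces and grouping those pieces by block (Lemma~\ref{lem:fast_cholesky_inverse_vector}). You instead skip the explicit inversion entirely: you group the $v_i$'s by the bottommost block $b_i\in\cT'$ of their support path, observe that the solution $L^{-1}v_i$ stays supported on the root path $S_{b_i}$ (so the system closes on the principal submatrix $L_{S_{b_i},S_{b_i}}$), and do one batched block forward-substitution per block with fast matrix multiplication. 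The underlying structural fact is identical---block-column $j$ of $L$ is supported on $\cP^{\cT'}(j)$, so a path-supported right-hand side gives a path-supported solution---and the counting $\sum_B k_B=m$ with $k_B=O(\tau)$ per block is exactly what makes the cost $\sum_B\wt O(\Tmat(\tau,\tau,\tau+k_B))=\wt O(m\tau^{\omega-1})$. The paper's factoring into two reusable lemmas is slightly cleaner to cite elsewhere, while your direct batched-solve version avoids materializing $L^{-1}$ and is a bit more self-contained; either is a valid proof.
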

\begin{proof}
By our construction in proof of Theorem~\ref{lem:lp:TL}, every block in $\cT'$ is the union of several blocks in $\cT$.
For $i\in [m]$, let $b_i\in [b']$ be the block it belongs to in $\cT'$.
Note that $\cT'$ preserves all ancestor-descendant relationships in $\cT$.
So for every $i\in [b]$, we have
\begin{align*}
    \bigcup_{j\in \cP^{\cT}(i)} B_j \subseteq \bigcup_{j\in \cP^{\cT'}(b_i)} B'_j.
\end{align*}
So every path in $\cT$ is contained in a path in $\cT'$.

Therefore, we only need to solve the following problem:
Compute $L^{-1} v_i$ for $i\in [m]$, where every $v_i$ is supported on a single path in $\cT'$.
Because $\cT'$ has maximum depth $\wt O(1)$, we can assume that every $v_i$ is supported on a single block in $\cT'$ (with an $\wt O(1)$ factor loss in running time).
Then the desired result follows from combining Lemma~\ref{lem:fast_cholesky_inverse} and Lemma~\ref{lem:fast_cholesky_inverse_vector} on $\cT'$ and $L$.
\end{proof}

\begin{lemma}[Inverse of Cholesky factor] \label{lem:fast_cholesky_inverse}
Suppose we have a block elimination tree $(\cT, B_1,\ldots, B_b)$ with maximum depth $\wt O(1)$, maximum block size $O(\tau)$ and the total number of blocks is $b=O(m/\tau)$.
Let $L\in \R^{m\times m}$ be a lower-triangular matrix compatible with $\cT$, i.e., for $i,j\in [b]$, $L_{i,j}\ne 0$ only if $i\in \cP(j)$.
(Note that here $L_{i,j}$ denotes the $i$-th block-row and $j$-th block-column.)

Then $L^{-1}$ is also a lower-triangular matrix compatible with $\cT$, and we can compute $L^{-1}$ in $\wt O(n \tau^{\omega-1})$ time.
\end{lemma}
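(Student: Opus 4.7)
The plan is to proceed in two stages: first establish the sparsity structure of $L^{-1}$, then compute it by block back-substitution exploiting the shallow depth of $\cT$.

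For the structural claim, I would argue that if $L$ is block lower-triangular with $L_{i,j}\ne 0$ only when $i\in \cP(j)\cup\{j\}$, then $L^{-1}$ has the same block sparsity pattern. Write $L = D + N$ where $D$ is the block diagonal and $N$ is strictly lower-triangular with the same tree-compatible pattern. Then $L^{-1} = (I + D^{-1} N)^{-1} D^{-1}$, and expanding as a finite Neumann-type series (noting that $(D^{-1}N)^{\eta+1} = 0$ because nilpotency order is bounded by the depth $\eta = \wt O(1)$) shows that $L^{-1}$ is a polynomial of degree at most $\eta$ in $D^{-1} N$. Each term is supported on chains $i = k_0, k_1, \ldots, k_t = j$ with $k_s \in \cP(k_{s+1})$, and such a chain exists iff $i\in \cP(j)\cup \{j\}$. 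Hence $L^{-1}$ is tree-compatible.

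For the algorithmic stage, I would compute $L^{-1}$ one block column at a time in post-order. First precompute the $b$ diagonal block inverses $L_{i,i}^{-1}$, each costing $\Tmat(\tau) = O(\tau^\omega)$, for a total of $O(b \tau^\omega) = O(m \tau^{\omega-1})$ work. Then for each leaf-to-root column $j$, the nonzero entries of $(L^{-1})_{*,j}$ lie exactly on the path $\cP(j)\cup\{j\}$, of length $\eta$. Starting with $(L^{-1})_{j,j} = L_{j,j}^{-1}$, I would walk up the path and apply the identity
\begin{align*}
(L^{-1})_{i,j} = -L_{i,i}^{-1} \sum_{k \,:\, i \in \cP(k),\ k \in \cP(j)\cup\{j\}} L_{i,k}\,(L^{-1})_{k,j},
\end{align*}
which follows from the block equation $(LL^{-1})_{i,j} = 0$ restricted to the nonzero pattern. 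By the structural step, only $k$ on the path from $j$ to $i$ contribute, so each such entry costs $O(\eta \cdot \tau^\omega)$ work.

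The total cost is then $\sum_j |\cP(j)\cup\{j\}| \cdot O(\eta \tau^\omega) \le O(b \eta^2 \tau^\omega)$, which is $\wt O(b \tau^\omega) = \wt O((m/\tau) \cdot \tau^\omega) = \wt O(m \tau^{\omega-1})$ since $\eta = \wt O(1)$ and $b = O(m/\tau)$. The main obstacle I anticipate is simply being careful with the sparsity bookkeeping in the back-substitution recurrence: one must verify that the summation index $k$ really does range only over the path segment between $i$ and $j$, so that no spurious fill-in is introduced and so that the $O(\eta)$ bound on summands is correct. Everything else is either standard Cholesky-style block arithmetic or a direct consequence of the assumed depth and block-count bounds on $\cT$.
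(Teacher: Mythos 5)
Your proof is correct and takes essentially the same approach as the paper: exploit the tree-compatible block sparsity of $L^{-1}$ to reduce the computation to $\wt O(b\,\eta^2) = \wt O(m/\tau)$ block operations, each costing $O(\tau^\omega)$, giving $\wt O(m\tau^{\omega-1})$. The paper's Algorithm~\ref{alg:block_cholesky_inverse} is a row-oriented forward elimination that establishes the sparsity pattern of $L^{-1}$ by induction, whereas you establish it via a finite Neumann expansion of $(I+D^{-1}N)^{-1}$ and then do a column-oriented back-substitution along ancestor paths; these are dual presentations of the same arithmetic with identical work counts.
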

\begin{proof}

Run Algorithm \ref{alg:block_cholesky_inverse}. Correctness is obvious.
Let us focus on running time.

By induction, we can see that $X_{j,k} \ne 0$ only when $j\in \cP(k)$.
So we perform $O(1)$ matrix multiplications for every tuple $(i,j,k)$ with $i\in \cP(j)$, $j\in \cP(k)$. Because maximum depth is $\wt O(1)$, number of such triples is $\wt O(m/\tau)$.
So total running time is $\wt O(m/\tau \cdot \tau^{\omega-1}) = \wt O(m \tau^{\omega-1}).$
\end{proof}

\begin{algorithm}[!ht] \caption{} \label{alg:block_cholesky_inverse}
\begin{algorithmic}[1] 
\Procedure{BlockCholeskyInverse}{$ $}
 \Comment{Lemma~\ref{lem:fast_cholesky_inverse}}
\State $V\gets I, X\gets 0$
\For{$j \gets 1$ to $b$}
\State $X_{j,*} \gets L_{j,j}^{-1} V_{j,*}$
\State $V \gets V - L_{*,j} X_{j,*}$
\EndFor
\State \Return $X$
\EndProcedure
\end{algorithmic}
\end{algorithm}

\begin{lemma} \label{lem:fast_cholesky_inverse_vector}
Work under the setting of Lemma~\ref{lem:fast_cholesky_inverse}.
Let $P$ be a lower-triangular matrix compatible with $\cT$.
Let $v_1,\ldots,v_m\in \R^{m}$ where support of every $v_i$ is contained in a block.
Then we can compute $P v_1,\ldots, P v_m$ in $\wt O(n \tau^{\omega-1})$ time.
\end{lemma}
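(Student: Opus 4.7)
\textbf{Proof proposal for Lemma~\ref{lem:fast_cholesky_inverse_vector}.}
The plan is to group the input vectors by their supporting block and reduce the whole task to a batch of rectangular matrix multiplications, each of which is carried out in dense block form so that we benefit from fast matrix multiplication. Concretely, for each $k\in [b]$ let $S_k := \{i\in [m] : \supp(v_i)\subseteq B_k\}$ and stack the nonzero entries of $\{v_i\}_{i\in S_k}$ as columns of a matrix $V_k\in\R^{|B_k|\times |S_k|}$. Then $\sum_{k\in [b]} |S_k| = m$, and the desired outputs $\{Pv_i\}_{i\in S_k}$ are exactly the columns of $P_{*,B_k} V_k$.

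Next I would exploit the compatibility of $P$ with $\cT$: since $P_{i,j}=0$ unless $i\in \cP(j)$ (in the block sense), the rows of $P_{*,B_k}$ are supported on $\bigcup_{j\in\cP(k)} B_j$, a subset of size $\wt O(\tau)$ because $\cT$ has block depth $\wt O(1)$ and maximum block size $O(\tau)$. Therefore the nontrivial computation is $P_{\cP(k),B_k}V_k$, where $P_{\cP(k),B_k}$ is $\wt O(\tau)\times O(\tau)$ and $V_k$ is $O(\tau)\times |S_k|$. If $|S_k|\le \tau$, I pad and do one square multiplication at cost $\wt O(\tau^\omega)$; if $|S_k|>\tau$, I slice $V_k$ into $\lceil |S_k|/\tau\rceil$ panels of width $\tau$ and multiply each panel separately, for total cost $\wt O(|S_k|\tau^{\omega-1})$.

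Summing over $k$ gives
\begin{align*}
\sum_{k\in [b]} \bigl(\wt O(\tau^\omega) + \wt O(|S_k|\tau^{\omega-1})\bigr)
= \wt O\bigl(b\tau^\omega + m\tau^{\omega-1}\bigr)
= \wt O(m\tau^{\omega-1}),
\end{align*}
using $b = O(m/\tau)$. Correctness is immediate from the block sparsity pattern of $P$: $Pv_i$ has support contained in $\bigcup_{j\in\cP(k)}B_j$ when $i\in S_k$, so writing the nonzero rows of $Pv_i$ as the corresponding column of $P_{\cP(k),B_k}V_k$ recovers the full answer.

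The only mildly subtle step will be bookkeeping the grouping and the padding so that the $b$ ``overhead'' multiplications of cost $\wt O(\tau^\omega)$ really do sum to $\wt O(m\tau^{\omega-1})$; this is where the assumption $b=O(m/\tau)$ inherited from Lemma~\ref{lem:fast_cholesky_inverse} is essential, and it is also the place where one must be careful that the blocks $\cP(k)$ have total row count $\wt O(\tau)$ rather than $O(\eta\tau)$ with a nontrivial $\eta$. No rank-structured algorithms or sophisticated data structures are needed beyond fast dense matrix multiplication.
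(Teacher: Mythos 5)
Your proposal is correct and follows essentially the same approach as the paper: group the input vectors by their supporting block, batch them into a dense matrix $V_k \in \R^{|B_k|\times |S_k|}$, multiply by the block rows $P_{\cP(k),B_k}$ using fast (rectangular) matrix multiplication, and sum the costs using $|\cP(k)|=\wt O(1)$ and $b=O(m/\tau)$. The paper writes the per-block cost as $\Tmat(\tau,\tau,\tau+p_k)=\wt O((\tau+p_k)\tau^{\omega-1})$, which is exactly what your explicit padding-and-slicing argument justifies.
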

\begin{proof}
For $j\in [b]$, let $p_j$ be the number of vectors whose support is contained in $j$-th block. We combine these vectors into a matrix $V$ of size $\R^{B_j \times p_j}$.
To compute $P V$, it suffices to compute $P_{i,j} V$ for $i\in \cP(j)$.
We use fast matrix multiplication to perform this task.

Overall running time is
\begin{align*}
&~ \sum_{j\in [b]} \sum_{i\in \cP(j)} \wt O(\Tmat(|B_i|, |B_j|, p_j)) \\
=&~ \sum_{j\in [b]} \sum_{i\in \cP(j)} \wt O(\Tmat(\tau, \tau, \tau+p_j)) \\
=&~ \sum_{j\in [b]} \wt O(\Tmat(\tau, \tau, \tau+p_j)) \\
=&~ \sum_{j\in [b]} \wt O((\tau+p_j) \tau^{\omega-1}) \\
=&~ \sum_{j\in [b]} \wt O(m\tau^{\omega-1})
\end{align*}
where in the last step we use that $b = O(m/\tau)$ and $\sum_{j\in [b]} p_j = O(m)$.
\end{proof}

\subsection{Proof of Theorem~\ref{thm:main_lp_formal}} \label{sec:lp:main_proof}
\begin{proof}
According to Theorem~\ref{thm:algo_general}, there is an algorithm solving LP in
\begin{align*}
&\wt{O} ( n^{0.5} \nu_{\max}^{0.5} \cdot (T_H + T_n + T_L + T_Z + \nnz(A) + \eta m_\lp m_{\max})^{0.5} \\
&\cdot (\Tmat(n_{\max}) + T_{\Delta_L,\max} + T_{H,\max} + \eta^2 m_{\max}^2)^{0.5} \log(R/(r\epsilon)))
\end{align*}
time.

It remains to compute the parameters $T_H$, $T_L$, $\eta$, $T_m$, $m_{\max}$, $n_{\max}$, $T_{\Delta_L, \max}$, $T_{H,\max}$,
the details can be found in Lemma~\ref{lem:lp_param}.

Let $A $ and $B$ be defined as: 
\begin{align*}
    A: = & ~ T_H + T_n + T_L + T_Z + \nnz(A) + \eta m_{\lp} m_{\max}, \\
    B := & ~ \Tmat(n_{\max}) + T_{\Delta_L,\max} + T_{H,\max} + \eta^2 m_{\max}^2.
\end{align*}

We can show that
\begin{align}\label{eq:lp_A}
    A = & ~ T_H + T_n + T_L + T_Z + \nnz(A) + \eta m_{\lp} m_{\max} \notag \\
    = & ~ O(n ) + T_n + T_L + T_Z + \nnz(A) + \eta m_{\lp} m_{\max}  \notag \\
    = & ~ O(n ) + O(n \tau^{\omega-1}) + T_L + T_Z + \nnz(A) + \eta m_{\lp} m_{\max} \notag \\
    = & ~ O(n ) + O(n \tau^{\omega-1}) + \wt{O}(n \tau^{\omega-1} ) + T_Z + \nnz(A) + \eta m_{\lp} m_{\max} \notag \\
    = & ~ \wt{O}(n \tau^{\omega-1} ) + T_Z + \nnz(A) + \eta m_{\lp} m_{\max} \notag \\
    = & ~   \wt{O}(n \tau^{\omega-1} ) +  O(n \tau^{\omega-1} ) + \nnz(A) + \eta m_{\lp} m_{\max}  \notag \\
    = & ~  \wt{O}(n \tau^{\omega-1} ) +  O(n \tau^{\omega-1} ) + O(n\tau) + \eta m_{\lp} m_{\max}  \notag \\
    = & ~  \wt{O}(n \tau^{\omega-1} ) +  O(n \tau^{\omega-1} ) + O(n \tau ) + O(n \tau)  \notag \\
    = & ~ \wt{O}(n \tau^{\omega-1} )
\end{align}
where the second step follows from Lemma~\ref{lem:lp_param}\ref{item:lem:lp_TH} ($T_H = O(n)$) and the third step follows from Lemma~\ref{lem:lp_param}\ref{item:lem:lp_TnTm} ($T_n = O(n \tau^{\omega-1})$), the forth step follows from Lemma~\ref{lem:lp_param}\ref{item:lem:lp_TL} ($T_L = O(n \tau^{\omega-1})$), the fifth step follows from merging the terms, the six step follows Lemma~\ref{lem:lp_param}\ref{item:lem:lp_TZ}, the seventh step follows from Lemma~\ref{lem:lp_param}\ref{item:lem:lp_nnz}, the eighth step follows from Lemma~\ref{lem:lp_param}\ref{item:lem:lp_eta} ($\eta = \wt{O}(\tau), m_{\lp} = O(n), m_{\max} = 1$), and the last step follows from merging the terms.

For the term $B$, we have
\begin{align}\label{eq:lp_B}
    B = & ~ \Tmat(n_{\max}) + T_{\Delta_L,\max} + T_{H,\max} + \eta^2 m_{\max}^2 \notag \\
    = & ~ O( 1 ) + T_{\Delta_L,\max} + T_{H,\max} + \eta^2 m_{\max}^2 \notag \\
    = & ~ O( 1 ) + O( \tau^2 ) + T_{H,\max} + \eta^2 m_{\max}^2 \notag \\
    = & ~ O( 1 ) + O( \tau^2 ) + O( 1 ) + \eta^2 m_{\max}^2 \notag \\
    = & ~ O( 1 ) + O( \tau^2 ) + O( 1 ) + \wt{O}( \tau^2 ) \notag \\
    = & ~ \wt{O}(\tau^2) 
\end{align}
where the second step follows Lemma~\ref{lem:lp_param}\ref{item:lem:lp_n} ($n_{\max} = O(1)$), and the third step follows from Lemma~\ref{lem:lp_param}\ref{item:lem:lp_TDeltaLmax} ($T_{\Delta L, \max} = O(\tau^2)$), the forth step follows from Lemma~\ref{lem:lp_param}\ref{item:lem:lp_TH} ($T_{H,\max} = O(1)$), the fifth step follows from Lemma~\ref{lem:lp_param}\ref{item:lem:lp_eta} ($\eta = \wt{O}( \tau )$ and $m_{\max} = O(1)$), and the last step follows from merging the terms.

Finally, we have
\begin{align*}
    & ~ \wt{O}( n^{0.5} \nu_{\max}^{0.5} \cdot A^{0.5} \cdot B^{0.5} \cdot \log(1/\epsilon) ) \\
    = & ~ \wt{O} (n^{0.5}  \cdot A^{0.5} \cdot B^{0.5} \cdot \log(1/\epsilon)) \\
    = & ~ \wt{O} (n^{0.5} \cdot (n \tau^{\omega-1} )^{0.5} \cdot (\tau^2)^{0.5} \cdot \log(1/\epsilon)) \\
    = & ~ \wt{O}( n \tau^{(\omega+1)/2} \cdot \log(1/\epsilon) )
\end{align*}
where the first step follows from Lemma~\ref{lem:lp_param}\ref{item:lem:lp_eta} ($\nu_{\max} = O(1)$), the second step follows from $A = \wt{O}(n \tau^{\omega-1})$ (see Eq.~\eqref{eq:lp_A}) and $B=\wt{O}(\tau^2)$ (see Eq.~\eqref{eq:lp_B}). 

Thus, we complete the proof.
\end{proof}

\newpage

\bibliographystyle{alpha}
\bibliography{ref}

\newpage
\appendix 
\section*{Appendix}

\section{Robust IPM Analysis} \label{sec:robust_ipm}
The goal of this section is to present some existing tools which give us a bound on the number of iterations of IPM. To really improve the total running time, we still need to improve the cost per iteration which is a major contribution of our work. Those discussions can be found in Section~\ref{sec:framework}.

Let us begin with a roadmap for this section. 
In Section~\ref{sec:robust_ipm:convergence}, we present the main convergence statement. In Section~\ref{sec:robust_ipm:definitions}, we explain the the choice of step and present a useful lemma.

\subsection{Main Statement}\label{sec:robust_ipm:convergence}
We consider program of form~\eqref{eqn:lp_general}. The following theorem gives a bound on the number of iterations of a robust IPM algorithm.

\begin{theorem}[{\cite[Theorem A.1]{dly21}}] \label{thm:DLYA.1}
Consider program of form~\eqref{eqn:lp_general}.
Assume we are given $\nu_i$-self-concordant barriers $\phi_i : \cK_i \rightarrow \mathbb{R}$. Furtheremore, assume that the program satisfies the following properties.
\begin{itemize}
    \item Inner radius $r$: There exists a $z$ such that $A z = b$ and $B(z,r) \subset \cK$.
    \item Outer radius $R$: We have $\cK \subset B(x,R)$ for some $x \in \R^{n_\lp}$.
    \item Lipschitz constant $L$: $\| c \|_2 \leq L$.
\end{itemize}
Let $w \in \R_{\geq 1}^n$ be any weight vector, and $\kappa = \sum_{i=1}^n w_i \nu_i$. For any $\epsilon \in (0,1/2]$, there is an algorithm that runs in $O(\sqrt \kappa \log n \log(\frac{n_{\lp} \kappa R}{\epsilon r}))$ iterations and output $x\in \cK$ and $Ax=b$ such that:
\begin{align}
    c^\top x \le \min_{Ax=b,x\in K} c^\top x + \epsilon L R.
\end{align}
\end{theorem}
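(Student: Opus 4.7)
The plan is to follow the standard weighted path-following interior point method with a robustness tolerance, along the lines of \cite[Theorem A.1]{dly21}. First, I would set up the perturbed weighted central path
\begin{align*}
    s/t + w \nabla \phi(x) &= \mu, \\
    Ax &= b, \\
    A^\top y + s &= c,
\end{align*}
where $\phi = \sum_i \phi_i$ is the self-concordant barrier, $w \in \R_{\ge 1}^n$ is the weight vector, and the slack $\mu$ measures deviation from the exact central path ($\mu = 0$). Closeness to the path is tracked by the smoothed potential $\Psi_\lambda(x,s,t) = \sum_i \cosh\!\bigl(\tfrac{\lambda}{w_i}\gamma_i(x,s,t)\bigr)$, with $\gamma_i = \|\mu_i\|^*_{\ov{x}_i}$ and $\lambda = \Theta(\log n)$; this choice of $\lambda$ supplies the extra $\log n$ factor in the iteration count.

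Second, I would analyze one iteration. The greedy update picks $\delta_\mu = \arg\min_{\|\delta_\mu\|^*_{\ov{x}} = \alpha} \langle \nabla_\mu \Psi_\lambda,\, \mu+\delta_\mu\rangle$ and splits into primal $\delta_x \in \ker A$ and dual $\delta_s \in \mathrm{range}(A^\top)$ under the metric $H_{\ov{x}}^{-1}$, all evaluated at approximations $(\ov{x},\ov{s},\ov{t})$. The crucial lemma is a one-step potential bound: under the tolerances $\|\ov{x}_i-x_i\|_{\ov{x}_i} \le \ov{\epsilon}$, $\|\ov{s}_i-s_i\|^*_{\ov{x}_i} \le t\ov{\epsilon} w_i$, and $|\ov{t}-t| \le \epsilon_t \ov{t}$, taking the step with $\alpha = \Theta(1)$ followed by $t \gets t(1-h)$ for $h = \Theta(\lambda/\sqrt{\kappa})$ preserves an invariant $\Psi_\lambda = O(\mathrm{poly}(n))$. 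Self-concordance of each $\phi_i$ bounds the Hessian distortion between $x_i$ and $\ov{x}_i$, and the $\cosh$ weighting makes the greedy selection smooth enough that the descent term dominates the $O(\alpha^2)$ quadratic error.

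Third, I would count iterations and convert to an objective guarantee. Driving $t$ from an initial $t_{\max} = \mathrm{poly}(n_\lp, R)$ down to $t_{\min} = \Theta(\epsilon r / (n_\lp R))$ takes $O(h^{-1} \log(t_{\max}/t_{\min})) = O(\sqrt{\kappa} \log n \log(n_\lp \kappa R / (\epsilon r)))$ outer iterations. The warm start exploits $B(z,r) \subset \cK$ together with $\cK \subseteq B(\cdot, R)$ to place the initial $(x,s,t)$ near the central path at $t_{\max}$ within the same asymptotic budget. Once $t \le t_{\min}$, a standard duality-gap argument using $s \approx -t w \nabla \phi(x)$ and primal-dual feasibility yields
\begin{align*}
c^\top x \;-\; \min_{Ax'=b,\,x' \in \cK} c^\top x' \;\le\; t\kappa + O(\ov{\epsilon}\kappa) \;\le\; \epsilon L R,
\end{align*}
closing the bound.

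The hard part will be the one-step potential lemma under robustness. The delicate point is bounding cross-terms between the ideal direction $\delta_\mu(\ov{x},\ov{s},\ov{t})$ and the errors $\ov{x}-x$, $\ov{s}-s$, $\ov{t}-t$ when transferring first- and second-order bounds on $\Psi_\lambda$ from the exact point to the approximation. The standard remedy combines self-concordance (to control Hessian perturbations of size $\ov{\epsilon}$) with the $\cosh$ smoothing (to keep the greedy descent term dominant), but carrying this through the block-weighted setting with per-block $w_i$ and $\nu_i$ requires careful block-wise bookkeeping; this is precisely the analysis inherited from \cite{dly21}.
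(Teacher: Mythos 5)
The paper does not actually prove Theorem~\ref{thm:DLYA.1}: it is imported verbatim as \cite[Theorem A.1]{dly21}, and the appendix only records the algorithm (Algorithms~\ref{alg:robust_ipm_main} and~\ref{alg:robust_ipm_centering_data_structure}), the weighted potential, and Lemma~\ref{lemma:DLYA.9}, deferring the proof to \cite{dly21}. Your sketch correctly identifies the strategy --- a $\cosh$-smoothed potential on the weighted deviation $\gamma_i$, a greedy $\delta_\mu$ in the local dual norm split into a Newton step $(\delta_x,\delta_s)$, a phase-one warm start, and a duality-gap termination --- which is indeed the argument in \cite{dly21}.

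There is, however, a concrete error in the parameters that breaks the iteration count. You take $\alpha = \Theta(1)$ and $h = \Theta(\lambda/\sqrt{\kappa})$. The paper (Algorithm~\ref{alg:robust_ipm_centering_data_structure}) sets $\alpha \eqsim \ov{\epsilon} \eqsim 1/\lambda$, and the $t$-shrink rate must then be $h = \Theta(\alpha/\sqrt{\kappa}) = \Theta(1/(\lambda\sqrt{\kappa}))$. A sanity check on your values: the iteration count is $\Theta(h^{-1}\log(t_{\max}/t_{\min}))$, and with $h = \Theta(\lambda/\sqrt{\kappa})$ this gives $\Theta\bigl(\tfrac{\sqrt{\kappa}}{\log n}\log(\cdot)\bigr)$, underestimating the stated $O(\sqrt{\kappa}\log n\log(\cdot))$ by a $\log^2 n$ factor. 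The invariant would also fail: the $t$-shrink perturbs the potential at a rate $\Theta(h\sqrt{\kappa}\lambda)=\Theta(\lambda^2)$, which cannot be offset by the $\Theta(\alpha\lambda)$ decrease from the descent step unless $\alpha = \Omega(\lambda)$, contradicting the robustness requirement that $\alpha$ be $O(\ov{\epsilon})$. Flipping $\lambda$ into the denominator of $h$ and setting $\alpha\eqsim 1/\lambda$ recovers both the bounded potential and the $\sqrt{\kappa}\log n$ factor, and is precisely why the $\log n$ appears (it is not an artifact of the choice $\lambda=\Theta(\log n)$ alone, as your wording suggests, but of $\alpha$ being forced down to $1/\lambda$ by the robustness tolerances).

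A minor secondary issue: your $t_{\min} = \Theta(\epsilon r/(n_\lp R))$ does not match the algorithm, which stops phase two at $t_{\mathrm{end}} = \epsilon/(4\sum_i w_i\nu_i) = \Theta(\epsilon/\kappa)$ so that the duality gap $\approx t\kappa$ lands at $\epsilon LR$ after starting from $t=LR$. The $r$-dependence in the final log term comes from the phase-one warm start ($t_{\mathrm{start}}=(n_\lp+\kappa)^5 LR^2/(\delta r)$), not from $t_{\min}$.
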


The above Theorem~\ref{thm:DLYA.1} provides the iteration bound for Algorithm~\ref{alg:robust_ipm_main}. 
The overall running time largely depends on cost per iteration, which is decided by the implementation of \textsc{CentralPathMaintenance}.
The high level framework of Algorithm~\ref{alg:robust_ipm_main} 
 and~\ref{alg:robust_ipm_centering_data_structure} are the same as \cite{dly21}, but the reduction to form~\eqref{eqn:lp_general} and the implementations of \textsc{CentralPathMaintenance} are different.

\begin{algorithm}[!ht]\caption{Robust IPM algorithm. This decides the number of iterations but not the cost per iteration.
} \label{alg:robust_ipm_main}
\begin{algorithmic}[1]
\Procedure{RobustIPM}{$A \in \R^{m_\lp \times n_\lp}, b\in \R^{m_\lp}, c\in \R^{n_\lp}, (\phi_i : \R^{n_i}\to \R)_{i\in [n]}, w\in \R^n$}
    \State \textbf{Input:} Program of form~\eqref{eqn:lp_general} satisfying the assumptions in Theorem~\ref{thm:DLYA.1}.
    \State \textbf{Output:} A solution $x$ satisfying the statement of Theorem~\ref{thm:DLYA.1}.
    \State Let $\phi(x) = \sum_{i=1}^n w_i \phi_i(x_i)$, $L = \| c \|_2$, $\kappa = \sum_{i=1}^n w_i \nu_i$
    \State Let $t = (n_\lp+\kappa)^5 \cdot \frac{LR}{\delta} \cdot \frac{R}{r}$ with $\delta =1/128$
    \State Compute $x_c = \arg\min_{x \in \cK} c^\top x+ t \cdot \phi(x)$ and $x_{\circ} = \arg\min_{Ax = b} \| x - x_c \|_2$
    \State Let $x = (x_c, 3R+x_{\circ} - x_c, 3R)$ and $s = ( -\nabla \phi(x_c), \frac{t}{3R+x_{\circ} - x_c} , \frac{t}{3R})$
    \State Let the new matrix $A^{\new} = [A, A, -A]$, the new barrier and new weight
    \begin{align*}
        \phi_i^{\new} 
        = \begin{cases}
        \phi_i, & \text{~if~} i \in [n_\lp],\\
        -\log x, & \text{else}
        \end{cases}
        \text{~~~and~~~}
            w_i^{\new} = 
            \begin{cases}
                    w_i, & \text{~if~} i \in [n_\lp], \\
                    1, & \text{else}
            \end{cases}
    \end{align*}
    \State $( x^{(1)}, x^{(2)}, x^{(3)} ), (s^{(1)}, s^{(2)}, s^{(3)}) \gets \textsc{Centering}(A^{\new}, \phi^{\new}, w^{\new}, x, s, t, LR)$
    \State $(x,s) \gets (x^{(1)} + x^{(2)} - x^{(3)}, s^{(1)})$
    \State $(x,s) \gets \textsc{Centering}(A,\phi,w,x,s,LR, \frac{\epsilon}{4 \sum_i w_i \nu_i})$
    \State \Return $x$
\EndProcedure 
\end{algorithmic}
\end{algorithm}

\begin{algorithm}[!ht]\caption{Centering Algorithm. Continuation of Algorithm~\ref{alg:robust_ipm_main}.
} \label{alg:robust_ipm_centering_data_structure}
\begin{algorithmic}[1]
\Procedure{Centering}{$A, \phi, w, x, s, t_{\mathrm{start}}, t_{\mathrm{end}}$}
\State {\bf private : member}
\State \hspace{4mm} \textsc{CentralPathMaintenance} cp \Comment{Algorithm~\ref{alg:cpm}}
\State {\bf end members}
    \State {\color{blue}/* Choose parameters */}
    \State $\lambda \eqsim \log( n \sum_{i=1}^{n} w_i ), \ov{\epsilon} \eqsim 1/\lambda$, $\alpha \eqsim \ov{\epsilon}$
    \State $\epsilon_t \eqsim \ov{\epsilon} \cdot \min_{i \in [n] } \frac{w_i}{w_i + \nu_i}$
    \State {\color{blue}/* Definition of several functions */}
     \For{$i = 1 \to n$}
        \State $\mu_i^t(x,s) := s_i/t+ w_i \nabla \phi_i(x_i) $ 
        \State $\gamma_i^t(x,s) : = \| \mu_i^t(x,s) \|_{x_i}^* $
        \State  $c_i^t(x,s):= \frac{ \sinh( \lambda w_i^{-1} \gamma_i^t(x,s) ) }{ \gamma_i^t(x,s) \cdot ( \sum_{j=1}^{n} w_j^{-1} \cosh^2( \lambda w_i^{-1} \gamma_j^t(x,s) ) )^{1/2} } $
        \State $\delta_{\mu,i}(x,s,t) \gets -\alpha \cdot c_i^{t}(x, s) \cdot \mu_i^{t}(x, s)$ for all $i\in [n]$
     \EndFor
    \State $\Psi_{\lambda}(r) := \sum_{i=1}^n \cosh( \lambda r_i / w_i )$
    \State $\Phi^t(x,s) := \Psi_{\lambda}(\gamma^t(x,s))$
    \State $H_x := \nabla^2 \phi(x)$
    \State $P_x := H_x^{-1/2} A^\top ( A H_x^{-1} A^\top )^{-1} A H_x^{-1/2}$ 
    \State {\color{blue}/* Main Loop */}
    \State $t \gets t_{\mathrm{start}}$
    \State cp.\textsc{Initialize}($x, s, t, \ov \epsilon$) \Comment{Algorithm~\ref{alg:cpm}}
    \While{$t \geq t_{\mathrm{end}}$}
        \State cp.\textsc{MultiplyAndMove}($t$) \Comment{Algorithm~\ref{alg:cpm}}
        \State \Comment{In the above line, we maintain $(\ov{x}, \ov{s}, \ov{t})$ such that $\| \ov{x}_i - x_i \|_{ \ov{x}_i } \leq \ov{\epsilon}$, $\| \ov{s}_i - s_i \|_{ \ov{x}_i }^* \leq \ov{t} \ov{\epsilon} w_i$, $\forall i \in [n]$ and $|\ov{t} - t| \leq \epsilon_t \cdot \ov{t}$}, \label{line:cpm_guarantee}
        \State \Comment{and perform central path step $x \gets x+ \delta_x$, $s \gets s+ \delta_s$, where $\delta_x$ and $\delta_s$ satisfy $A \delta_x = 0$, $\delta_s \in \mathrm{Range}(A^\top)$ and
        \begin{align*}
            \| H_{\ov{x}}^{1/2} \delta_x - (I - P_{\ov{x}} ) H_{\ov{x}}^{-1/2} \delta_{\mu}(\ov x, \ov s, \ov t) \|_2 \leq & ~ \ov{\epsilon} \cdot \alpha \\
            \| \ov{t}^{-1} H_{\ov{x}}^{-1/2} \delta_s - P_{\ov{x} } H_{\ov{x}}^{-1/2} \delta_{\mu}(\ov x, \ov s, \ov t) \|_2 \leq & ~ \ov{\epsilon} \cdot \alpha.
        \end{align*}}
        \State $t \gets \max\{ (1-h) t, t_{\mathrm{end}} \}$
    \EndWhile 
    \State \Return cp.\textsc{Output}() \Comment{Algorithm~\ref{alg:cpm}}
\EndProcedure 
\end{algorithmic}
\end{algorithm}

\subsection{Definitions and Useful Lemmas}\label{sec:robust_ipm:definitions}
We define the following induced norms.
\begin{definition}[{\cite[Definition A.5]{dly21}}]
For each block $\cK_i$, we define 
\begin{align*}
\| v \|_{x_i} := & ~ \| v \|_{ \nabla^2 \phi_i (x_i) } \\
\| v \|_{x_i}^* := & ~ \| v \|_{ ( \nabla^2 \phi_i(x_i) )^{-1} }
\end{align*}
for $v \in \R^{n_i}$.

For the whole domain $\cK = \prod_{i=1}^n \cK_i$, we define
\begin{align*}
    \| v \|_x := \| v \|_{\nabla^2 \phi(x) } = ( \sum_{i=1}^n w_i \| v_i \|_{x_i}^2 )^{1/2}
\end{align*}
and 
\begin{align*}
    \| v \|_x^{*} := \| v \|_{ ( \nabla^2 \phi(x) )^{-1} } = ( \sum_{i=1}^n w_i^{-1} ( \| v_i \|_{x_i}^* )^2 )^{1/2}
\end{align*}
for $v \in \R^{n_\lp}$.
\end{definition}

The central path is
\begin{align*}
    x(t) := \arg\min_{Ax = b} c^\top x + t \cdot \phi(x) \mathrm{~~~with~~~} \phi(x) := \sum_{i=1}^n w_i \phi_i(x_i).
\end{align*}

Instead of following the path $x(t)$ exactly, we follow the path
\begin{align*}
    \frac{s}{t} + w \cdot \nabla \phi(x) = & ~ \mu, \\
    A x = & ~ b, \\
    A^\top y + s = & ~ c 
\end{align*}
where $\mu$ is close to $0$ under $\|\cdot\|_x^*$. The norm of $\mu$ is controlled using the following potential function.
\begin{definition}[{Potential function, \cite[Definition A.7]{dly21}}]
For $i\in [n]$, define error at $i$-th variable block as
\begin{align*}
    \mu_i^t(x,s) : = \frac{s_i}{t} + w_i \cdot \nabla \phi_i(x_i).
\end{align*}
Define $\gamma_i^t(x,s) : =\| \mu_i^t(x,s) \|_{x_i}^*$.
Define the soft-max function as
\begin{align*}
    \Psi_{\lambda}( r ) := \sum_{i=1}^{n} \cosh( \lambda \frac{r_i}{w_i} ) 
\end{align*}
for some $\lambda > 0$.
Finally, the potential function is the soft-max of norm of the error at each variable block
\begin{align*}
    \Phi^t(x,s) = \Psi_{\lambda} ( \gamma^t (x,s) ).
\end{align*}
\end{definition}

Since our goal is to decrease $\Phi(x,s) = \Psi_{\lambda}(\gamma)$, a natural choice is the steepest descent direction (\cite[Section A.4]{dly21}):
\begin{align}
    \delta_\mu^* = \arg \min_{ \| \delta_{\mu} \|_x^* = \alpha } \langle \nabla_{\mu} \Psi_{\lambda} ( \| \mu_i \|_{x_i}^* ) , \mu + \delta_\mu \rangle
\end{align}
with step size $\alpha$. 

Solving this gives
\begin{align*}
    \delta_{\mu,i}^*(x,s,t) = - \frac{ \alpha \sinh( \frac{\lambda}{w_i} \gamma_i^t(x,s) ) }{ \gamma_i^t(x,s) \cdot ( \sum_{j\in [n]} w_j^{-1} \sinh^2( \frac{\lambda}{w_j} \gamma_j^t(x,s) ) )^{1/2} } \cdot \mu_i^t(x,s).
\end{align*}
(In the actual algorithm, $\sinh$ in the denominator is replaced by $\cosh$ for easier analysis.)
To move $\mu \in \R^{n_\lp}$ to $\mu+\delta_{\mu} \in \R^{n_\lp}$ approximately, we take Newton step $\delta_x^*, \delta_s^* \in \R^{n_\lp}$:
\begin{align*}
\frac{1}{t} \delta_s^* + \nabla^2 \phi(x) \delta_x^* = & ~  \delta_{\mu}^*(x,s) \\
A \delta_x^* = & ~ 0, \\
A^\top \delta_y^* + \delta_s^* = & ~ 0. 
\end{align*}

Using $H_x$ to denote $\nabla^2 \phi(x)$ and solve the above equations, we get 
\begin{align*}
    \delta_x^* &= H_x^{-1} \delta_{\mu}^* - H_x^{-1} A^\top (A H_{\ov x}^{-1} A^\top)^{-1} A H_x^{-1} \delta_\mu^*(x, s, t),\\
    \delta_s^* &= t A^\top (A H_x^{-1} A^\top)^{-1} A H_x^{-1} \delta_\mu^*(x, s, t).
\end{align*}

This is the ideal IPM step. In robust IPM, we compute the steps using $(\ov x, \ov s, \ov t)$, a sparsely changing approximation of $(x, s, t)$, giving
\begin{align*}
    \delta_x &= H_{\ov x}^{-1} \delta_{\mu}^* - H_{\ov x}^{-1} A^\top (A H_{\ov x}^{-1} A^\top)^{-1} A H_{\ov x}^{-1} \delta_\mu(\ov x, \ov s, \ov t),\\
    \delta_s &= \ov t A^\top (A H_{\ov x}^{-1} A^\top)^{-1} A H_{\ov x}^{-1} \delta_\mu(\ov x, \ov s, \ov t).
\end{align*}

We state a useful lemma for bounding the step size.
\begin{lemma}[{\cite[Lemma A.9]{dly21}}] \label{lemma:DLYA.9}
The steps $\delta_x$ and $\delta_s$ satisfy
\begin{align*}
&( \sum_{i\in [n]} w_i \|\delta_{x,i}\|_{\ov x_i}^2 )^{1/2} \le \frac{9}{8} \alpha,\\
&( \sum_{i\in [n]} w_i^{-1} (\|\delta_{s,i}\|_{\ov x_i}^*)^2 )^{1/2} \le \frac{9}{8} \alpha \cdot t.
\end{align*}
\end{lemma}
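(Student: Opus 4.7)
The plan is to exploit the projection structure of the approximate Newton step together with a sharp bound $\|\delta_\mu\|_{\ov x}^* \le \alpha$ on the steepest-descent direction itself. By the guarantee stated in the comment of Algorithm~\ref{alg:robust_ipm_centering_data_structure}, the steps satisfy
\[
\|H_{\ov x}^{1/2} \delta_x - (I - P_{\ov x}) H_{\ov x}^{-1/2} \delta_\mu\|_2 \le \ov\epsilon\,\alpha,\qquad \|\ov t^{-1} H_{\ov x}^{-1/2} \delta_s - P_{\ov x} H_{\ov x}^{-1/2} \delta_\mu\|_2 \le \ov\epsilon\,\alpha,
\]
where $\delta_\mu = \delta_\mu(\ov x, \ov s, \ov t)$. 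The block-diagonal structure $\nabla^2 \phi(\ov x) = \diag(w_i \nabla^2 \phi_i(\ov x_i))$ identifies the two target quantities as $\|H_{\ov x}^{1/2} \delta_x\|_2^2 = \sum_i w_i \|\delta_{x,i}\|_{\ov x_i}^2$ and $\|H_{\ov x}^{-1/2} \delta_s\|_2^2 = \sum_i w_i^{-1} (\|\delta_{s,i}\|_{\ov x_i}^*)^2$, so it suffices to control these $\ell_2$-norms.

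Since $P_{\ov x}$ is an orthogonal projection, both $P_{\ov x}$ and $I - P_{\ov x}$ are $\ell_2$-contractions. A triangle-inequality estimate then reduces the lemma to bounding the dual norm of $\delta_\mu$:
\[
\|H_{\ov x}^{1/2}\delta_x\|_2 \le \|\delta_\mu\|_{\ov x}^* + \ov\epsilon\,\alpha,\qquad \|\ov t^{-1} H_{\ov x}^{-1/2}\delta_s\|_2 \le \|\delta_\mu\|_{\ov x}^* + \ov\epsilon\,\alpha.
\]
The heart of the argument is then a direct computation: substituting $\delta_{\mu,i} = -\alpha\, c_i^{\ov t}(\ov x,\ov s)\,\mu_i^{\ov t}(\ov x,\ov s)$ and using $\|\mu_i\|_{\ov x_i}^* = \gamma_i$ gives
\[
(\|\delta_\mu\|_{\ov x}^*)^2 = \alpha^2 \cdot \frac{\sum_i w_i^{-1} \sinh^2(\lambda w_i^{-1} \gamma_i)}{\sum_j w_j^{-1} \cosh^2(\lambda w_j^{-1} \gamma_j)} \le \alpha^2,
\]
because $\sinh^2 \le \cosh^2$ termwise. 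This normalization is precisely why the algorithm uses $\cosh$ in the denominator of $c_i^{\ov t}$.

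Combining the displays yields $\|\delta_x\|_{\ov x} \le (1+\ov\epsilon)\alpha$ and $\|\delta_s\|_{\ov x}^* \le (1+\ov\epsilon)\alpha\cdot\ov t$. To finish I would convert $\ov t$ to $t$ via $|\ov t - t| \le \epsilon_t \ov t$ and choose the parameters $\ov\epsilon,\epsilon_t$ (as prescribed in Algorithm~\ref{alg:robust_ipm_centering_data_structure}, where $\ov\epsilon\asymp 1/\lambda$ and $\epsilon_t$ is a small multiple of $\ov\epsilon$) so that $(1+\ov\epsilon)(1+\epsilon_t) \le \tfrac{9}{8}$. The main obstacle I expect is precisely this bookkeeping of slack factors: the constant $\tfrac{9}{8}$ is tight enough that the approximation tolerances $\ov\epsilon$ and $\epsilon_t$ must be coordinated, and one has to check that the bound $\|\delta_\mu\|_{\ov x}^* \le \alpha$ is genuinely unaffected by the replacement of $(x,s,t)$ by $(\ov x,\ov s,\ov t)$ throughout — which is possible because $\delta_\mu$ is defined using the approximate quantities to begin with.
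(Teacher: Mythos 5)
The paper does not prove this lemma --- it is quoted verbatim from~\cite{dly21} (Lemma~A.9 there) --- so there is no internal argument of the paper to compare yours against. That said, your reconstruction is sound and almost certainly matches the cited proof in spirit: the three observations doing the real work are exactly the right ones, namely (i)~the block-diagonal structure of $H_{\ov x}=\nabla^2\phi(\ov x)$ identifies $(\sum_i w_i\|\delta_{x,i}\|_{\ov x_i}^2)^{1/2}$ with $\|H_{\ov x}^{1/2}\delta_x\|_2$ and likewise for $\delta_s$; (ii)~$P_{\ov x}$ and $I-P_{\ov x}$ are orthogonal projections, hence $\ell_2$-contractions; and (iii)~the $\cosh$ normalization of $\delta_\mu(\ov x,\ov s,\ov t)$ gives $\|\delta_\mu\|_{\ov x}^*\le\alpha$ via the termwise bound $\sinh^2\le\cosh^2$. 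You also correctly note that $\delta_\mu$ is defined purely from $(\ov x,\ov s,\ov t)$, so the norm bound on it is unaffected by the $x$-versus-$\ov x$ gap. One small bookkeeping correction at the end: $|\ov t-t|\le\epsilon_t\ov t$ gives $\ov t\le t/(1-\epsilon_t)$, so the condition you actually need is $(1+\ov\epsilon)/(1-\epsilon_t)\le\tfrac{9}{8}$ rather than $(1+\ov\epsilon)(1+\epsilon_t)\le\tfrac{9}{8}$; these agree to first order and both follow from the choices $\ov\epsilon\eqsim 1/\lambda$, $\epsilon_t\eqsim\ov\epsilon\cdot\min_i w_i/(w_i+\nu_i)$ in Algorithm~\ref{alg:robust_ipm_centering_data_structure}, so this is not a substantive gap.
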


\end{document}